\newtheorem{theorem}{Theorem}
\newtheorem{theoremann}{Theorem}
\numberwithin{theorem}{section}
\theoremstyle{plain}
\newtheorem*{acknowledgement}{Acknowledgement}
\newtheorem*{conjecture}{Conjecture}
\newtheorem{corollary}[theorem]{Corollary}
\newtheorem{definition}[theorem]{Definition}
\newtheorem{lemma}[theorem]{Lemma}
\newtheorem{problem}[theorem]{Problem}
\newtheorem{proposition}[theorem]{Proposition}
\theoremstyle{remark}
\newtheorem{remark}[theorem]{Remark}
\newtheorem{pitfalls}[theorem]{Pitfall}
\newtheorem{vista}[theorem]{Vista}
\newtheorem{example}[theorem]{Example}
\numberwithin{equation}{section}
\definecolor{lg}{rgb}{0.8,0.8,0.8}
\newcommand*{\Scale}
[2][4]{\scalebox{#1}{$#2$}}
\begin{document}
\title[Local compactness and finite generation]{Local compactness as the $K(1)$-local dual of\linebreak finite generation}
\author{Oliver Braunling}
\address{University of Wuppertal, Germany}
\email{\texttt{oliver.braunling@gmail.com}}
\subjclass[2020]{ 19F05, 55P42, 22B05}
\keywords{$K$-theory of locally compact modules, Anderson dual, Poitou--Tate duality,
$K$-theory with compact supports}

\begin{abstract}
Suppose $R$ is any localization of the ring of integers of a number field. We
show that the $K$-theory of finitely generated $R$-modules, and the $K$-theory
of locally compact $R$-modules, are Anderson duals in the $K(1)$-local
homotopy category. The same is true for $p$-adic and finite fields.

\end{abstract}
\maketitle

\section{Introduction}

Let $p$ be an odd prime. We prove that the $K$-theories of the following two
exact categories are $K(1)$-local mutual Anderson duals:%
\[
\text{finitely generated }R\text{-modules}\quad\leftrightarrow\quad
\text{locally compact }R\text{-modules}%
\]
in the special case where $\operatorname*{Spec}R$ is a
\textit{one-dimensional} arithmetic scheme flat over $\mathbf{Z[}\frac{1}{p}%
]$, and up to finite-dimensional real vector spaces. Suitably interpreted,
this duality should be expected to hold in arbitrary dimension. Before we say
more about this, here is a precise formulation of what we prove. We write
$\mathcal{O}_{S}$ for the ring of $S$-integers.

\begin{theoremann}
[Duality]\label{thmann_ThmA}Let $p$ be an odd prime, $F$ a number field, $S$ a
(possibly infinite) set of finite places of $F$ such that $\frac{1}{p}%
\in\mathcal{O}_{S}$. Then%
\[
I_{\mathbf{Z}_{p}}L_{K(1)}K(\mathcal{O}_{S})\cong L_{K(1)}K(\mathsf{LCA}%
_{\mathcal{O}_{S}}^{\circ})\text{.}%
\]
If $S$ contains only finitely many places, this is a mutual duality.
\end{theoremann}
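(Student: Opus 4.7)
The plan is to realise the sought Anderson duality as the $K(1)$-localisation of a geometric Poitou--Tate pairing. Concretely, the goal is to construct a natural pairing
\[
L_{K(1)}K(\mathcal{O}_S)\wedge L_{K(1)}K(\mathsf{LCA}_{\mathcal{O}_S}^\circ)\longrightarrow I_{\mathbf{Z}_p}
\]
whose adjoint realises the claimed equivalence. The pairing itself should be categorical in origin: tensor product turns $\mathsf{LCA}_{\mathcal{O}_S}^\circ$ into a module category over the finitely generated $\mathcal{O}_S$-modules, and the counit of Pontryagin duality combined with a local residue/trace should supply the map to the Anderson dualising spectrum.

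Checking perfectness is then the bulk of the work. First I would use the fact that every locally compact $\mathcal{O}_S$-module admits a compact-open submodule to produce a localisation fiber sequence decomposing $K(\mathsf{LCA}_{\mathcal{O}_S}^\circ)$ into a global ``discrete'' part, matching $K(\mathcal{O}_S)$ plus something computable, and purely local contributions $K(\mathsf{LCA}^\circ_{F_v})$ at each finite place $v\notin S$. Each local summand should then be controlled by Tate local duality, which is precisely the local version of the theorem mentioned for $p$-adic and finite fields in the abstract. After applying $L_{K(1)}$ and Thomason descent, both sides of the theorem become étale cohomology spectra: the left-hand side computes the étale cohomology of $\operatorname{Spec}\mathcal{O}_S$ with $\mathbf{Z}_p(\ast)$-coefficients, while the right-hand side, assembled from the local pieces, should be the compactly supported variant. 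The sought duality is then exactly Poitou--Tate, promoted to spectra via the descent spectral sequence.

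The main obstacle I foresee is controlling the geometric pairing as a map of spectra rather than just on homotopy groups, and in particular showing that it is compatible with the localisation fiber sequence so that Poitou--Tate can be applied summand by summand; this is where most of the technical machinery of the paper is presumably spent. The asymmetry in the statement for infinite $S$ should fall out because compactly supported étale cohomology does not commute with the relevant filtered colimit over finite $S\subseteq S'$; only one direction of the Anderson duality survives that colimit, matching exactly what is asserted. Archimedean places do not intervene because $L_{K(1)}K(\mathbf{R})=0$, so they contribute nothing to either side and the ``up to finite-dimensional real vector spaces'' caveat of the informal introduction disappears automatically in the $K(1)$-local formulation.
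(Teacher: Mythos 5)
Your high-level strategy is essentially the one the paper follows: a pairing built from the $K(\mathcal{O}_S)$-module structure on $K(\mathsf{LCA}_{\mathcal{O}_S}^\circ)$ and a trace map, a fiber sequence decomposing the locally compact $K$-theory, Thomason descent to pass to Galois cohomology, and Poitou--Tate to identify the $E_2$-pages. That part is on target. But two of the details you sketch are wrong in ways that matter.

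The more serious one is your claim that $L_{K(1)}K(\mathbf{R})=0$. At an odd prime $p$ this is false: the Thomason descent spectral sequence for $\mathbf{R}$ collapses to the single row $H^0(\mathbf{R},\mathbf{Z}/p^k(j))$, which is nonzero for every even $j$ (and similarly for $\mathbf{C}$, where it is nonzero for all $j$). The subtlety is that Thomason descent computes \emph{ordinary} Galois cohomology, whereas Poitou--Tate uses \emph{Tate} cohomology $\widehat{H}^\bullet$ at the infinite places, and $\widehat{H}^0$ of $\operatorname{Gal}(\mathbf{C}/\mathbf{R})$ with odd-$p$-torsion coefficients does vanish. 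So the two sides do not match at $\infty$: the $K(1)$-local locally compact $K$-theory picks up genuine $p$-primary archimedean $H^0$-contributions that have no counterpart in the $P^i_S$-groups. This is precisely why the statement needs the $(-)^\circ$ quotient, and why the paper devotes a whole section (\S\ref{sect_NALCAModules}) to the modified category $\left.\mathsf{LC}\mathcal{O}_S\right.$ and several remarks (Rmk. \ref{rmk_TheStoryAboutTateCohomology2}, Pitfall \ref{pitfall_l1}) to this Tate-vs-ordinary mismatch. Far from ``disappearing automatically in the $K(1)$-local formulation,'' the archimedean subtlety is one of the main technical difficulties; without $(-)^\circ$ the theorem would simply be false.

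The second point is the direction of the local decomposition. You localize over the finite places $v\notin S$ (i.e., the primes of $\operatorname{Spec}\mathcal{O}_S$), but the decomposition that controls $K(\mathsf{LCA}_{\mathcal{O}_S}^\circ)$ runs over $v\in S$, the places that are \emph{inverted} in $\mathcal{O}_S$. The relevant fiber sequence is
\[
K(\mathcal{O}_S)\longrightarrow K(\mathsf{LCA}_{\mathcal{O}_S,ad})\longrightarrow K(\mathsf{LCA}_{\mathcal{O}_S}^\circ),
\]
where the middle term is identified with the restricted product $\prod'_{v\in S}\bigl(K(F_v):K(\mathcal{O}_v)\bigr)$ induced by tensoring with the $S$-adèles $\mathbb{A}_S=\prod'_{v\in S}(F_v:\mathcal{O}_v)$; $K(\mathcal{O}_S)$ is the fiber, not a summand. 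The Poitou--Tate groups $P^i_S$ on the dual side are then also restricted products over $v\in S$, so the index sets line up — but only because you use the inverted places, not the closed points of $\operatorname{Spec}\mathcal{O}_S$.
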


As in our previous papers on the subject, $\mathsf{LCA}_{\mathcal{O}_{S}}$
denotes the exact category of $\mathcal{O}_{S}$-modules which are equipped
with a locally compact topology. They need not be finitely generated, but
morphisms as well as the scalar action by $\mathcal{O}_{S}$ are required to be
continuous. The superscript $(-)^{\circ}$ stands for quotienting out all such
modules which topologically are a real vector space. The duality exchanges
pullbacks and pushforwards along finite morphisms.

In the $K(1)$-local world, all popular concepts of duality, notably
$p$-complete Anderson duals, Brown--Comenetz duals or Spanier--Whitehead
duals, agree up to shifts and twists. Thus the \textit{finite generation
}$\leftrightarrow$ \textit{local compactness} dualism is a phenomenon valid
for all these duals. We chose to focus on the Anderson dual since it leads to
the simplest formulation.

The duality can also be phrased in terms of a pairing%
\begin{equation}
\mathsf{LCA}_{\mathcal{O}_{S}}\times\mathsf{Proj}_{\mathcal{O}_{S}%
,fg}\longrightarrow\mathsf{LCA}_{\mathcal{O}_{S}} \label{lwa0}%
\end{equation}
of exact categories, which induces a right $K(\mathcal{O}_{S})$-module
structure%
\begin{equation}
K(\mathsf{LCA}_{\mathcal{O}_{S}})\otimes K(\mathcal{O}_{S})\longrightarrow
K(\mathsf{LCA}_{\mathcal{O}_{S}})\text{.} \label{lwa1}%
\end{equation}
Once one moves to the $K(1)$-local homotopy category, one can set up a trace
map%
\begin{equation}
L_{K(1)}K(\mathsf{LCA}_{\mathcal{O}_{S}})\longrightarrow I_{\mathbf{Z}_{p}%
}\mathbb{S}_{\widehat{p}} \label{lwa2}%
\end{equation}
and composing Eq. \ref{lwa1} with Eq. \ref{lwa2}, one obtains the pairing
inducing the duality.

Our whole approach, and especially the idea to study a pairing as in Eq.
\ref{lwa0} is inspired by Clausen's work \cite{clausenthesis,clausen}, and
confirms most (so far unproven) expectations about a $K(1)$-local duality to
something like `$K$-theory with compact supports' (at least in dimension $1$
and away from $p$). I do not know whether Clausen's setting in \cite{clausen}
would literally exhibit the same duality. Instead of $\mathsf{LCA}%
_{\mathcal{O}_{S}}^{\circ}$ he considers $\infty$-categorical $\mathbf{Z}%
$-linear functors from $F$-vector spaces to bounded complexes in
$\mathsf{LCA}_{\mathbf{Z}}$, and he does not quotient out real vector spaces.
The pairings end up being essentially the same, but I think that the dual
might come out a tiny bit too big.

Blumberg--Mandell \cite{MR4121155} compute $I_{\mathbf{Z}_{p}}L_{K(1)}%
K(\mathcal{O}_{S})$ in the special case where $S$ are solely the places over
$p$, but they express the dual as some abstract fiber called $\left.
\mathit{Fib}(\kappa)\right.  $ in their paper, and do not talk about local
compactness at all. The main novelty of the present text is the explicit model
of the dual in terms of local compactness, as well as allowing much more
general choices of $S$ than in Blumberg--Mandell, including the technically
delicate situation of infinite $S$. To reach a duality for the $K$-theory of
the whole number field $I_{\mathbf{Z}_{p}}L_{K(1)}K(F)$, the situation in
Clausen's work, it is crucial to allow infinite sets $S$.

As the reader will easily notice, these works of Clausen and Blumberg--Mandell
have served as the key inspiration for us. Our perspective, moving away from
class field theory and towards a duality between finite generation and local
compactness for module categories over $\mathcal{O}_{S}$, appears to be new.

More generally, $\mathcal{O}_{S}\mapsto K(\mathsf{LCA}_{\mathcal{O}_{S}})$ is
expected to generalize to arbitrary (qcqs) input schemes and should extend to
something like `$K$-theory with compact supports', see \cite{clausenthesis}.
On regular schemes which are flat, finite type and separated over some ring of
$S$-integers $\mathcal{O}_{S}$ (still assuming $\frac{1}{p}\in\mathcal{O}_{S}%
$), a version of Theorem \ref{thmann_ThmA} is expected to hold. Computations
in \cite{lcahighdim} show that the definition of $\mathsf{LCA}_{\mathcal{O}%
_{S}}$ for higher-dimensional schemes requires derived local compactness or
more sophisticated concepts of topology, e.g., condensed methods or $n$-Tate
local compactness instead of plain local compactness. But this is a topic for
the follow-up article. A local and finite field counterpart of Theorem
\ref{thmann_ThmA} is the following.

\begin{theoremann}
\label{thmann_ThmA_LocalVersion}Let $p$ be an odd prime and $F$ a finite
extension of $\mathbf{Q}_{\ell}$ or $\mathbf{F}_{\ell}$ for any prime
$\ell\neq p$, or a finite extension of the reals $\mathbf{R}$. Then%
\[
I_{\mathbf{Z}_{p}}L_{K(1)}K(F)\cong L_{K(1)}K(\mathsf{LCA}_{F})\text{.}%
\]
This is a mutual duality.
\end{theoremann}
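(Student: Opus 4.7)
My plan is to mirror the template of Theorem~\ref{thmann_ThmA}: construct an exact pairing
\[
\mathsf{LCA}_F \times \mathsf{Proj}_{F,fg} \longrightarrow \mathsf{LCA}_F,
\]
build a $K(1)$-local trace $L_{K(1)} K(\mathsf{LCA}_F) \to I_{\mathbf{Z}_p} \mathbb{S}_{\widehat{p}}$ as in Eq.~\ref{lwa2}, and verify that the resulting adjoint $L_{K(1)} K(\mathsf{LCA}_F) \to I_{\mathbf{Z}_p} L_{K(1)} K(F)$ is an equivalence. The crucial simplification relative to Theorem~\ref{thmann_ThmA} is that $F$ is now a field, so $\mathsf{Proj}_{F,fg}$ is just finite-dimensional $F$-vector spaces, and in each of the three cases $\mathsf{LCA}_F$ admits a much more explicit structure theory.

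For $F$ archimedean, the classical fact that every Hausdorff locally compact topological $\mathbf{R}$- or $\mathbf{C}$-vector space is finite-dimensional identifies $\mathsf{LCA}_F \simeq \mathsf{Proj}_{F,fg}$, so both sides of the claimed equivalence collapse to $L_{K(1)} K(F)$, and the statement reduces to the known $K(1)$-local Anderson self-duality of $KO$ and $KU$ at odd $p$ (using Suslin rigidity to identify $L_{K(1)} K(\mathbf{R})$ and $L_{K(1)} K(\mathbf{C})$ with their topological counterparts). For $F$ finite, or non-archimedean local with residue characteristic $\ell \neq p$, every object of $\mathsf{LCA}_F$ sits in a canonical extension of a discrete $F$-module by a compact $\mathcal{O}_F$-module (with $\mathcal{O}_F = F$ in the finite-field case), and Pontryagin duality provides an exact self-equivalence $\mathsf{LCA}_F \simeq \mathsf{LCA}_F^{op}$ that swaps these two extremes. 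In the non-archimedean case this assembles into a localization sequence of exact categories
\[
\mathsf{LCA}_{\mathcal{O}_F}^{\text{cpt}} \hookrightarrow \mathsf{LCA}_F \twoheadrightarrow \mathsf{Mod}_F^{\text{disc}},
\]
which yields a $K(1)$-local fiber sequence whose end terms are computable from $K(\mathcal{O}_F)$ and a filtered colimit of $K(F)$, respectively.

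The main obstacle is to show that categorical Pontryagin duality, after $K(1)$-localization, genuinely realizes the Anderson dual in a way \emph{compatible with the trace}. I plan to identify the trace with Tate duality on the residue field — the hypothesis $\ell \neq p$ ensures that the relevant étale cohomology is controlled by the residue field alone — and then leverage Thomason-type étale descent for $L_{K(1)} K(F)$ to reduce the equivalence claim to a pairing computation on $\pi_0$, where it becomes the standard evaluation pairing. The finite-field case $F = \mathbf{F}_\ell$ is the simplest instance of this argument and serves as an inductive base; the non-archimedean local case is then lifted through the localization sequence above, and mutuality follows because Pontryagin duality is an involution on $\mathsf{LCA}_F$.
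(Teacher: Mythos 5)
Your archimedean case is in the right spirit (and the paper itself dismisses it as a "straightforward direct verification"), but the other two cases have genuine problems.

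For $F/\mathbf{Q}_\ell$ non-archimedean, the proposed localization sequence $\mathsf{LCA}_{\mathcal{O}_F}^{\text{cpt}} \hookrightarrow \mathsf{LCA}_F \twoheadrightarrow \mathsf{Mod}_F^{\text{disc}}$ is not available, and is not needed. The appendix convention in this paper gives $F$ its $\ell$-adic topology, so $\mathsf{LCA}_F$ consists of locally compact topological $F$-vector spaces — and every one of these is finite-dimensional. There are no nontrivial discrete $F$-modules in sight (a discrete quotient of a locally compact $\mathbf{Q}_\ell$-vector space by a compact open would have torsion, hence fail to be an $F$-module at all), so your extension picture does not describe the objects of $\mathsf{LCA}_F$. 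The paper's proof is much shorter: the exact equivalence $\mathsf{Proj}_{F,fg}\overset{\sim}{\rightarrow}\mathsf{LCA}_F$ already collapses the statement to Blumberg--Mandell's Anderson self-duality $I_{\mathbf{Z}_p}L_{K(1)}K(F)\cong L_{K(1)}K(F)$ for $p$-adic fields, which is cited rather than re-derived. Trying to build a descent/localization argument here is solving a problem that has already disappeared.

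For $F=\mathbf{F}_q$, the key ingredient you are missing is the shift. Over a finite field, $\mathsf{LCA}_F$ is \emph{not} equivalent to finite-dimensional vector spaces — the paper points to $\mathbf{F}_q((t))$ with its $t$-adic topology as an object far from finite generation — and the correct structure theory is the Tate-category equivalence $\mathsf{LCA}_{\mathbf{F}_q}\simeq\mathsf{Tate}(\mathsf{Mod}_{\mathbf{F}_q,fg})$, which via Saito's delooping theorem gives $K(\mathsf{LCA}_{\mathbf{F}_q})\cong\Sigma K(\mathbf{F}_q)$ \emph{before} any localization. This $\Sigma$ is exactly the shift Blumberg--Mandell--Yuan observe in $I_{\mathbf{Z}_p}L_{K(1)}K(\mathbf{F}_q)\cong\Sigma L_{K(1)}K(\mathbf{F}_q)$, and it is what makes the theorem come out with no extraneous suspension. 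Your sketch never produces this suspension, and the descent spectral sequence for $K(\mathbf{F}_q)$ alone (with $\operatorname{cd}_p = 1$) cannot by itself realize an Anderson dual without it; the "evaluation pairing on $\pi_0$" will be off by one degree. Likewise, the "inductive base" framing (finite field feeding into the local case) does not match the actual logic: the local and finite field cases are independent, and the local case is the \emph{easier} one because it reduces to a cited self-duality.
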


The statement for $F/\mathbf{Q}_{\ell}$ really is due to Blumberg--Mandell,
even though they phrase it differently. They prove in \cite[Thm.
1.4]{MR4121155} that $K(F)$ is Anderson self-dual, namely%
\[
I_{\mathbf{Z}_{p}}L_{K(1)}K(F)\cong L_{K(1)}K(F)\text{,}%
\]
but for local fields with finite residue field there is no difference between
finite generation and local compactness:\ Every finite-dimensional $F$-vector
space is canonically local compact and any locally compact $F$-module must be
finite-dimensional. So their self-duality statement is indeed an instance of
the \textit{finite generation }$\leftrightarrow$\textit{ local compactness}
dualism. Note that the situation is quite different for, say, finite fields,
where $\mathbf{F}_{q}((t))$ with the $t$-adic topology, is a locally compact
$\mathbf{F}_{q}$-module far from finite generation. Still the duality
phenomenon is valid. The finite field part of Theorem
\ref{thmann_ThmA_LocalVersion} is also discussed in Blumberg--Mandell--Yuan
\cite{bmychromaticconvergence} but without the local compactness
interpretation. Incidentally, they observe a shift $\Sigma$ in the Anderson
dual. As we show in the proof of Theorem \ref{thmann_ThmA_LocalVersion}, this
shift corresponds to a shift on the level of $\mathsf{LCA}_{\mathbf{F}_{q}}$
which is already visible \textit{before} $K(1)$-localization.

The lack of symmetry in Thm. \ref{thmann_ThmA} for infinite $S$ comes from the
general problem that Anderson duality is only reflexive for spectra whose
homotopy groups are finitely generated $\mathbf{Z}_{p}$-modules. I expect that
one can formulate the result in a setting of locally compact condensed spectra
and with genuine Pontryagin duality. Then Thm. \ref{thmann_ThmA} should extend
to hold for infinite $S$ in both directions.

We proceed as follows:

\begin{enumerate}
\item In \S \ref{sect_FibSequence} we compute $K(\mathsf{LCA}_{\mathcal{O}%
_{S}})$ in terms of a fiber sequence in the ordinary stable homotopy category.
This generalizes \cite{kthyartin} to arbitrary $S$. The $K(1)$-local version
of this fiber sequence, for $S$ being only the places above $p$, corresponds
to the abstract fiber $\left.  \mathit{Fib}(\kappa)\right.  $ in
\cite{MR4121155}. However, as we identify the fiber in terms of local
compactness, this is a very different fiber sequence than the one used by Blumberg--Mandell.

\item In \S \ref{sect_LocalDuality} we develop a restricted product version of
the Thomason descent spectral sequence for the ad\`{e}les. For finite $S$ this
is harmless, as the restricted product is both a product and coproduct.
However, for infinite $S$ we face the interplay of infinite limits and
colimits with $K(1)$-localization. For this, we rely on technology from
\cite{MR4296353} and \cite{MR4444265}.

\item The ring of $S$-integers $\mathcal{O}_{S}$ only depends on the finite
places in $S$, and does not change as we add or remove infinite places from
$S$. The same is true for $\mathsf{LCA}_{\mathcal{O}_{S}}$. In
\S \ref{sect_NALCAModules} we introduce a new category $\left.  \mathsf{LC}%
\mathcal{O}_{S}\right.  $, sensitive also to the infinite places in $S$, and
which agrees with $\mathsf{LCA}_{\mathcal{O}_{S}}$ if $S$ contains all
infinite places.

\item In \S \ref{sect_TraceMap} we set up the trace map for the duality.
Blumberg--Mandell do this using a compatible system of sections to a certain
inclusion $\mathbf{Q}_{p}/\mathbf{Z}_{p}\subseteq\pi_{0}$ defined for
$\mathbf{Z[}\frac{1}{p}]$. We favour an idea along Clausen's lines in
\cite{clausen} here: After the trace map has been defined for $\mathbf{Z[}%
\frac{1}{p}]$, one gets the one for general $\mathcal{O}_{S}$ just by
pre-composing with the forgetful functor $\mathsf{LCA}_{\mathcal{O}_{S}%
}\rightarrow\mathsf{LCA}_{\mathbf{Z}\left[  \frac{1}{p}\right]  }$, forgetting
the $\mathcal{O}_{S}$-module structure in favour of the $\mathbf{Z[}\frac
{1}{p}]$-module structure. This is just the pushforward $f_{\ast}$ on the
locally compact side of the duality. We take the time to prove that this ends
up being the same as one would get from the Blumberg--Mandell method (Prop.
\ref{prop_VIsASectionOfE}).

\item In \S \ref{sect_Duality} we set up the duality. This is mostly inspired
by Blumberg--Mandell, but now comes with the additional complication of
infinite $S$. While \cite{MR4121155} develops an \'{e}tale hypersheaf version
of Artin--Verdier duality, we work with the Poitou--Tate sequence. For finite
$S$, these are well-known to be equivalent. However, for infinite $S$ we do
not know any formulation in terms of \'{e}tale (hyper)sheaves because one
would need a recollement where the `open immersion' is an arbitrary
localization, not necessarily of finite type.

\item In the end, the proof can be summarized as follows: Use the fibers which
occur in \cite{kthyartin} and \cite{MR4121155} (as the result of two a priori
unrelated computations) and after generalizing both approaches to accomodate
arbitrary sets of places, prove that the respective fibers match.
\end{enumerate}

Along the way we encounter some potential pitfalls, which repeatedly cause
minor trouble. First, if $A$ is a profinite abelian group,
$\operatorname*{Hom}_{\mathsf{Ab}}(A,\mathbf{Q}/\mathbf{Z})$ only outputs the
correct Pontryagin dual if $A$ is topologically finitely generated. This
causes issues with Brown--Comenetz duals (and is a good reason to work with
condensed spectra instead). Second, $K(1)$-localization of the $K$-theory of
fields yields Galois cohomology, but not Tate cohomology. This causes problems
at infinite places. Third, infinite limits and colimits need to be moved
between ordinary spectra and $K(1)$-local ones. This turns out to be
well-behaved in the end, but not obviously so.

We stay away from $\operatorname*{char}p>0$ in this text, we always assume
$\frac{1}{p}\in\mathcal{O}_{S}$. Clausen's \cite{clausen} contains a wealth of
information what one could do to incorporate this. We stay away from $p=2$ not
just because of its eccentric Moore spectrum. We would also need to control
some $2$-torsion phenomena in Tate cohomology at the real places which seem
unclear how to implement.

\section{Philosophy}

In order to motivate \textit{why} $\mathsf{LCA}_{\mathcal{O}_{S}}$ might be a
good candidate for a dual, let us explain a simple thought experiment. We
shall only consider the $K$-theory of fields. Let $F^{\prime}/F$ be a field
extension. We prefer a geometric language, so we denote this as%
\[
f\colon\operatorname*{Spec}F^{\prime}\longrightarrow\operatorname*{Spec}%
F\text{.}%
\]
As everything is affine, the transfers $f^{\ast},f_{\ast}$ for quasi-coherent
sheaves correspond to exact functors $\mathsf{Mod}_{F^{\prime}}%
\leftrightarrows\mathsf{Mod}_{F}$, and are%
\[
M\mapsto M\otimes_{F}F^{\prime}\qquad\text{and}\qquad M\mapsto M\mid
_{F}\text{,}%
\]
where the latter forgets the $F^{\prime}$-vector space structure in favour of
the $F$-vector space structure. In order to induce non-trivial maps on
$K$-theory, one needs to impose a `concept of finiteness'. For if not, the
$K$-theory $K(\mathsf{Mod}_{F})$ of \textit{all} $F$-vector spaces is zero by
the Eilenberg swindle. Civilized people then chose to demand finite generation
(or siblings thereof, like coherent modules or perfect complexes). For
$F$-vector spaces, this just means that $\dim_{F}(M)<\infty$. This category
has non-zero $K$-theory, but now we need to make sure that this concept of
finiteness is compatible with our transfers. As%
\[
M\mapsto M\otimes_{F}F^{\prime}%
\]
sends any finitely generated $F$-module to a finitely generated $F^{\prime}%
$-module, we get \emph{flat pullbacks} for arbitrary field extensions, not
necessarily finite. Although we only talk about fields here, this is the same
reason why we have flat pullbacks for the $K$-theory of schemes without having
to require that $f$ is of finite type. For the pushforward, the story is
different: $\left.  M\mid_{F}\right.  $ will only be a finite-dimensional
$F$-vector space if $F^{\prime}/F$ is a finite extension. This is the toy
model for why we only get pushforwards for proper maps, and in the case of
affines, finite maps.

All this is of course trivial. But let us use these observations as guidance
to set up a dual theory. Having Thm. \ref{thmann_ThmA} in mind, $\mathsf{LCA}%
_{F}$ should admit transfers which behave exactly in the opposite way. For
$\mathsf{LCA}_{F}$, the `concept of finiteness' is that we consider $F$-vector
spaces equipped with a locally compact topology. This is no constraint per se,
as any $F$-vector space can be equipped with the discrete topology and then is
locally compact. But there are numerous types of topologies on $F$-vector
spaces which act remarkably similar to imposing finite-dimensionality. For
example, pick any prime $p$. Then the class of $F$-vector spaces which are
topologically $p$-torsion in $\mathsf{LCA}_{F}$ (i.e., $\lim_{n\rightarrow
\infty}p^{n}x=0$ holds for all $x$) turns out to agree with the class of those
which, as an LCA\ group\footnote{locally compact abelian}, are isomorphic to a
finite-dimensional $\mathbf{Q}_{p}$-vector space. So in some sense, demanding
local compactness is not entirely unrelated to finite generation.

However, and this is the key point in terms of duality, local compactness acts
oppositely to finite generation when it comes to transfers: If $M$ carries a
locally compact topology, $M\otimes_{F}F^{\prime}\simeq M^{[F^{\prime}:F]}$
can be given the product topology (no worries, there is a more functorial and
elegant way to define this topology). If $[F^{\prime}:F]<\infty$, this will
again be locally compact. This only works if $F^{\prime}/F$ is finite. On the
other hand, for pushforwards, it is easy to equip $\left.  M\mid_{F}\right.  $
with a locally compact topology: We just keep the topology which $M$ already
has. This always works, even if $F^{\prime}/F$ is an infinite field extension.
Thus, the roles which of the transfers requires $F^{\prime}/F$ to be finite,
are reversed. Similarly, there is a pushforward $\mathsf{LCA}_{\mathbf{Q}%
}\rightarrow\mathsf{LCA}_{\mathbf{Z}}$, but no pullback in the reverse direction.

\begin{theoremann}
\label{thmann_D}If $F^{\prime}/F$ is a finite extension, $S$ a (possibly
infinite) set of finite places of $F$, and $S^{\prime}$ the places of
$F^{\prime}$ lying above $S$, then the duality%
\[
I_{\mathbf{Z}_{p}}L_{K(1)}K(\mathcal{O}_{S})\cong L_{K(1)}K(\mathsf{LCA}%
_{\mathcal{O}_{S}}^{\circ})
\]
of Thm. \ref{thmann_ThmA} exchanges the roles of the transfers: The
pushforward $f_{\ast}$ on the left corresponds to the pullback $f^{\ast}$ on
the right, and reversely. The same is true for Thm.
\ref{thmann_ThmA_LocalVersion}.
\end{theoremann}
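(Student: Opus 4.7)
The plan is to derive Theorem \ref{thmann_D} from a projection formula at the level of exact categories, combined with the way the trace map (\ref{lwa2}) is constructed. Since the duality of Theorem \ref{thmann_ThmA} is induced by the module action (\ref{lwa1}) composed with the trace, the assertion that the duality exchanges $f_{\ast}$ and $f^{\ast}$ amounts to the assertion that the pairing
$$L_{K(1)} K(\mathsf{LCA}_{\mathcal{O}_{S}}^{\circ}) \otimes L_{K(1)} K(\mathcal{O}_{S}) \longrightarrow I_{\mathbf{Z}_{p}} \mathbb{S}_{\widehat{p}}$$
satisfies $\langle f_{\ast} M, P \rangle_{S} = \langle M, f^{\ast} P \rangle_{S'}$ for $M \in \mathsf{LCA}_{\mathcal{O}_{S'}}$, $P \in \mathsf{Proj}_{\mathcal{O}_{S}, fg}$, and symmetrically $\langle N, f_{\ast} Q \rangle_{S} = \langle f^{\ast} N, Q \rangle_{S'}$.

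My first step is to establish the categorical projection formula. For $M \in \mathsf{LCA}_{\mathcal{O}_{S'}}$ and $P \in \mathsf{Proj}_{\mathcal{O}_{S}, fg}$ there is a natural algebraic isomorphism $f_{\ast}(M \otimes_{\mathcal{O}_{S'}} f^{\ast} P) \cong f_{\ast} M \otimes_{\mathcal{O}_{S}} P$, and symmetrically $f^{\ast}(N \otimes_{\mathcal{O}_{S}} f_{\ast} Q) \cong f^{\ast} N \otimes_{\mathcal{O}_{S'}} Q$. Because $F'/F$ is finite, $\mathcal{O}_{S'}$ is finitely generated projective over $\mathcal{O}_{S}$, so after localizing and trivializing projectives as summands of free modules the comparison maps can be read off coordinate-wise. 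One then checks that they are homeomorphisms for the LCA topologies used to define $f^{\ast}$ (the product topology on a sum of copies of $M$ indexed by a basis of $\mathcal{O}_{S'}$) and the pairing (tensoring with a summand of a free module), which upgrades the classical projection formula to an isomorphism of exact functors between the relevant LCA categories.

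Second, I would use that the trace is by construction $f_{\ast}$-compatible. Following item (4) of the introduction, $\mathrm{tr}_{\mathcal{O}_{S}}$ is defined as $\mathrm{tr}_{\mathbf{Z}[1/p]}$ pre-composed with the forgetful pushforward $\mathsf{LCA}_{\mathcal{O}_{S}} \to \mathsf{LCA}_{\mathbf{Z}[1/p]}$. Transitivity of forgetful pushforwards then forces the identity $\mathrm{tr}_{\mathcal{O}_{S'}} = \mathrm{tr}_{\mathcal{O}_{S}} \circ f_{\ast}$ on the LCA side for every finite $f$. Combining this identity with the projection formula of the first step produces exactly the two pairing identities displayed above, yielding Theorem \ref{thmann_D}.

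The main obstacle is not conceptual but organizational: one must verify that the categorical projection formula and the trace compatibility pass cleanly through $K(1)$-localization and, for infinite $S$, through the limit and colimit constructions used to build $L_{K(1)} K(\mathsf{LCA}_{\mathcal{O}_{S}})$ in Section \ref{sect_LocalDuality}. Since every ingredient is natural in $\mathcal{O}_{S}$, this reduces to naturality of the individual building blocks. The local statement in Theorem \ref{thmann_ThmA_LocalVersion} is then handled by exactly the same argument, applied to the relevant finite extensions of local or finite fields.
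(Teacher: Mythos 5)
Your route is genuinely different from the paper's and, modulo one slip, would work. The paper argues on the $E_2$-page of the descent spectral sequence: Lemma \ref{lemma_TransferCompat} identifies the transfers there, and then the key inputs are the Galois-cohomological projection formula $x \smile \operatorname{cor}(y) = \operatorname{cor}(\operatorname{res}(x) \smile y)$ together with the class-formation identity $\operatorname{inv}_F \circ \operatorname{cor}_F^{F'} = \operatorname{inv}_{F'}$. You instead argue at the level of exact categories and $K$-theory spectra, deducing the pairing identities from a categorical projection formula and the tautological factorization $\operatorname{Tr}_{F',S'} = \operatorname{Tr}_{F,S} \circ f_*$ coming from Definition \ref{def_BigTrace}. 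Your approach is in one respect cleaner: the paper's invariant-compatibility of the class formation is precisely the cohomological shadow of your trace-factorization, and you get that factorization for free from the way the trace was built; the paper's spectral-sequence argument has the compensating advantage of working directly on homotopy groups, sidestepping any worry about coherence of natural isomorphisms, and of being phrased so that the same sentence carries over to Theorem \ref{thmann_ThmA_LocalVersion} via a general statement about class formations.

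One correction is needed. Your ``symmetric'' projection formula
\[
f^{\ast}\left(N \otimes_{\mathcal{O}_{S}} f_{\ast} Q\right) \cong f^{\ast} N \otimes_{\mathcal{O}_{S'}} Q
\]
is false: the left-hand side is $\left(N \otimes_{\mathcal{O}_S} Q\right) \otimes_{\mathcal{O}_S} \mathcal{O}_{S'}$, which has an extra factor of $[F':F]$ in its rank compared with the right-hand side $N \otimes_{\mathcal{O}_S} Q$. What the second pairing identity actually requires is the \emph{same} projection formula as the first one, with the roles of the two inputs interchanged: $f_{\ast}\left(f^{\ast} N \otimes_{\mathcal{O}_{S'}} Q\right) \cong N \otimes_{\mathcal{O}_S} f_{\ast} Q$, i.e.\ the standard $f_*\left(\mathcal{F} \otimes f^* \mathcal{G}\right) \cong f_* \mathcal{F} \otimes \mathcal{G}$ with $\mathcal{F} = Q$ on $\operatorname{Spec}\mathcal{O}_{S'}$ and $\mathcal{G} = N$ on $\operatorname{Spec}\mathcal{O}_S$. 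With this substitution your computation $\mathrm{tr}_{S'}\left(f^* N \otimes Q\right) = \mathrm{tr}_S\left(f_*\left(f^* N \otimes Q\right)\right) = \mathrm{tr}_S\left(N \otimes f_* Q\right)$ goes through, and the rest of your outline is sound. You should still supply the topological verification that the projection isomorphism is a natural isomorphism of (bi)exact functors $\mathsf{LCA}_{\mathcal{O}_{S'}}^{\circ} \times \mathsf{Proj}_{\mathcal{O}_S,fg} \to \mathsf{LCA}_{\mathcal{O}_S}^{\circ}$, so that it induces the required homotopies on $K$-theory, but as you observe this is routine once $\mathcal{O}_{S'}$ is realized as a direct summand of a finite free $\mathcal{O}_S$-module.
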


We note that the above discussion is unrelated to $K(1)$-local phenomena. The
assignment $X\mapsto K(\mathsf{LCA}_{X})$ is sensible already in the ordinary
homotopy category.

\section{Setting the stage\label{sect_SettingTheStage}}

\subsection{Conventions}

All rings are unital and commutative. Ring homomorphisms are required to
preserve the unit. Dedekind domains are required to have Krull dimension
$\leq1$, i.e., we also consider fields to be Dedekind domains. The formulation
\textquotedblleft almost all\textquotedblright\ means \textquotedblleft for
all but finitely many\textquotedblright. Cohomology groups $H^{i}%
(X,\mathcal{F})$ for $X$ a scheme or ring always refer to \'{e}tale
cohomology, frequently reformulated to the setting of Galois cohomology. We
write $\widehat{H}^{i}$ for Tate cohomology, but it rarely plays a role. We
use $\mathbf{Z}/p^{k}(j)$ to refer to the $j$-th Tate twist. This is the same
as what other people might denote by $\mathbf{\mu}_{p^{k}}^{\otimes j}$.
Whenever we write $\mathbf{Z}/p^{k}(\tfrac{j}{2})$ and $j$ is odd, we mean the
zero module. The ring of integers $\mathcal{O}_{v}$ in an archimedean local
field $F_{v}$ is agreed to refer to $F_{v}$ itself. Notation for class field
theory is usually in line with the notation in Milne's book \cite{MR2261462}.
For set-theoretic concerns when considering \textquotedblleft
all\textquotedblright\ $R$-modules over a ring $R$, we refer the reader to
\S \ref{sect_Appendix_SetTheory}.

We write $\mathsf{Sp}$ for the ordinary homotopy category, i.e., the $\infty
$-category of spectra, and $\mathsf{K}$ for the $K(1)$-local homotopy category
and occasionally add a superscript to (co)limits when we want to stress in
what category it is taken. When speaking of a spectrum, we usually have a
symmetric spectrum as in \cite{symspec} in mind, but one could use any
equivalent model. We write $I_{\mathbf{Z}_{p}}$, $I_{\mathbf{Q}_{p}%
/\mathbf{Z}_{p}}$ for the $p$-complete Anderson resp. Brown--Comenetz dual.
For more on $K(1)$-local duals, we summarize the key definitions and
properties in \S \ref{sect_Appendix_DualityInKOneLocalHptyCat}. When we talk
about $K$-theory, we always mean \textit{non-connective} $K$\textit{-theory},
\cite{MR2206639,MR3070515}. Up to taking the $1$-connective cover, this agrees
with Quillen $K$-theory.

We write $\mathsf{D}_{\infty}^{?}(\mathsf{C})$ for the derived $\infty
$-category of $\mathsf{C}$, and $\mathsf{D}^{?}(\mathsf{C}):=Ho\,\mathsf{D}%
_{\infty}^{?}(\mathsf{C})$ for its homotopy category, i.e., the classical
derived category as a triangulated category.

\subsection{Setup}

Suppose $F$ is a number field. We write $\mathcal{O}_{F}$ for the ring of
integers of $F$, and $S_{\infty}$ for its set of infinite places. If $v$ is
any place, $F_{v}$ denotes the metric completion of $F$ at $v$. If $v$ is a
finite place, $\mathcal{O}_{v}$ denotes the ring of integers in $F_{v}$, or
equivalently the metric completion of $\mathcal{O}_{F}$ at $v$. We will
tacitly identify finite places with points in $\operatorname*{Spec}%
\mathcal{O}_{F}$. Let $S\supseteq S_{\infty}$ be a set of places of $F$. The
set $S$ is allowed to be infinite, e.g., it could be all places. Write
$\mathcal{O}_{S}$ for the ring of $S$-integers: It can be defined as%
\begin{equation}
\mathcal{O}_{S}:=\left\{  x\in F\mid x\in\mathcal{O}_{v}\text{ for all
}v\notin S\right\}  \text{.} \label{l_Def_RingOS}%
\end{equation}
If $S$ is finite, this also arises as follows: If we define
$X:=\operatorname*{Spec}\mathcal{O}_{F}$ and $U:=X-(S\setminus S_{\infty})$,
then $\mathcal{O}_{S}\cong\Gamma(U,\underline{\mathcal{O}}_{X})$ (where
$\underline{\mathcal{O}}_{X}$ denotes the structure sheaf of $X$). All open
subschemes of $X$ are of this form. Note that they are all automatically
affine by the finiteness of the class group.

\section{\label{sect_FibSequence}Computations on the locally compact side}

Let $F$ be a number field. Let $S\supseteq S_{\infty}$ be a set of places of
$F$, possibly infinite. We shall write $\mathbb{T}:=\mathbf{R}/\mathbf{Z}$ for
the unit circle, regarded as an LCA\ group.

\begin{definition}
\label{def_1}Let $\mathsf{LCA}_{\mathcal{O}_{S}}$ be the category of locally
compact $\mathcal{O}_{S}$-modules with continuous $\mathcal{O}_{S}$-module
homomorphisms as morphisms.
\end{definition}

\begin{remark}
\label{rmk_WhatDoesLCAMeanForSomeRing}This means that objects are
$\mathcal{O}_{S}$-modules $X$, equipped with a topology such that $X$ is
locally compact as a topological space and $X$ is a topological $\mathcal{O}%
_{S}$-module when $\mathcal{O}_{S}$ is given the discrete topology. The latter
is equivalent to demanding that for every $\alpha\in\mathcal{O}_{S}$, the
multiplication $(\alpha\cdot-)\colon X\rightarrow X$ is continuous. The
category $\mathsf{LCA}_{\mathcal{O}_{S}}$ is equivalent to the ($1$%
-categorical) functor category $\mathsf{Fun}(\left\langle \mathcal{O}%
_{S}\right\rangle $,$\mathsf{LCA}_{\mathbf{Z}})$, where $\left\langle
\mathcal{O}_{S}\right\rangle $ is the category with $\mathcal{O}_{S}$ as its
single object and $\operatorname*{End}(\mathcal{O}_{S},\mathcal{O}%
_{S})=\mathcal{O}_{S}$ its endomorphism ring.
\end{remark}

If only for set-theoretic concerns, one might feel more comfortable to work
with cardinality bounds instead of truly allowing \textit{all} modules. This
can be done, but does not affect any of our results. We explain this in
Appendix \S \ref{sect_Appendix_SetTheory}. This discussion also fills in what
some people have felt as a matter of unease in our previous papers on locally
compact modules.

\begin{remark}
\label{rmk_AlgebraicModules}We may occasionally speak about \emph{algebraic
}$\mathcal{O}_{S}$\emph{-modules} when we want to stress that we work in the
category $\mathsf{Mod}_{\mathcal{O}_{S}}$ (this is particularly relevant when
modules do carry a topology, but we have no yet checked whether the scalar
action is continuous).
\end{remark}

\begin{example}
The special case $F:=\mathbf{Q}$ and $S=\{\infty\}$ yields $\mathcal{O}%
_{S}=\mathbf{Z}$. For example, $\mathbb{T}\in\mathsf{LCA}_{\mathbf{Z}}$. The
homological algebra in the category $\mathsf{LCA}_{\mathbf{Z}}$ was studied in
great detail by Moskowitz \cite{MR0215016} and Hoffmann and Spitzweck
\cite{MR2329311}, which was a point of origin for ensuing work on categories
of locally compact modules, as in \cite{clausen}.
\end{example}

The category $\mathsf{LCA}_{\mathcal{O}_{S}}$ is an $\mathcal{O}_{S}$-linear
quasi-abelian category.\ In particular, it is an $\mathcal{O}_{S}$-linear
exact category. Admissible monics (resp. epics) are closed injective (resp.
open surjective) $\mathcal{O}_{S}$-module homomorphisms. The Pontryagin
dual\footnote{this is $X\mapsto\operatorname*{Hom}(-,\mathbb{T})$, equipped
with the compact-open topology. Beware: Even though $\operatorname*{Hom}%
(A,B)$-groups can always be equipped with the compact-open topology, they will
in general fail to be locally compact even if $A,B$ both are. For
$B:=\mathbb{T}$, they are always locally compact.}%
\[
(-)^{\vee}\colon\mathsf{LCA}_{\mathcal{O}_{S}}\longrightarrow\mathsf{LCA}%
_{\mathcal{O}_{S}}^{op}%
\]
is an exact equivalence of exact categories and moreover renders
$\mathsf{LCA}_{\mathcal{O}_{S}}$ an exact category with duality. All objects
are reflexive. This can all be imported from the analogous properties for
$\mathsf{LCA}_{\mathcal{O}_{F}}$.

\begin{definition}
\label{def_MainCats}Suppose $X\in\mathsf{LCA}_{\mathcal{O}_{S}}$.

\begin{enumerate}
\item We call $X$ \emph{discrete} (resp. \emph{compact}) if it is discrete
(resp. compact) as a topological space.\footnote{We spelled this out to avoid
any confusion with the term of a `compact object' in a category, or the use of
`discrete' in an $\infty$-category.} Similarly for \emph{connected} or
\emph{totally disconnected}.

\item Call $X$ a \emph{vector module} if its underlying LCA group is
isomorphic to a finite-dimensional real vector space with the usual topology.

\item We call $X$ \emph{vector-free} if $X$ cannot be written as a direct sum
$X\simeq V\oplus X^{\prime}$, where $V$ is a non-zero vector module.

\item Call $X$ \emph{quasi-adelic} if $X\simeq V\oplus Q$ in $\mathsf{LCA}%
_{\mathcal{O}_{S}}$, where $V$ is a vector module and%
\begin{equation}
Q=\mathcal{O}_{S}\cdot C=\left\{  \alpha c\mid\alpha\in\mathcal{O}_{S},c\in
C\right\}  \text{,} \label{lm2}%
\end{equation}
where $C\subseteq Q$ is some compact clopen $\mathcal{O}_{F}$-submodule of
$Q$. The choice of $C$ is not part of the datum, it merely needs to exist.
Call $X$ \emph{adelic} if additionally we can arrange that $\bigcap_{\alpha
\in\mathcal{O}_{S}^{\times}}\alpha C=0$.
\end{enumerate}
\end{definition}

We write $\mathsf{LCA}_{\mathcal{O}_{S},qa}\subseteq\mathsf{LCA}%
_{\mathcal{O}_{S}}$ (resp. $\mathsf{LCA}_{\mathcal{O}_{S},ad}$) for the full
subcategory of quasi-adelic (resp. adelic) objects.

\begin{example}
Every compact $\mathcal{O}_{S}$-module is quasi-adelic, just take $C$ to be
itself and $V=0$.
\end{example}

\begin{definition}
\label{def_cg}An LCA group $G$ is called \emph{compactly generated}%
\footnote{Most unfortunately, this term carries at least two other quite
unrelated meanings. A (say Hausdorff) topological space $X$ is called
\emph{compactly generated} if for any subset $Y\subseteq X$ the property
\textquotedblleft$Y\cap C$ is closed for any compact subspace $C\subseteq
X$\textquotedblright\ implies that $Y$ is closed. Every LCA group is compactly
generated Hausdorff in this sense. In our text, we only use `compactly
generated' in the sense of topological group theory, i.e., as in Def.
\ref{def_cg}. There is a further meaning of compact generation, related to
category theory, which is yet something different, but also less likely to
lead to confusion.} (in brief: \emph{c.g.}) if there exists a compact subset
$K\subseteq G$ such that (1) $g\in K$ implies $-g\in K$, and (2) if we write
$K^{n}:=\operatorname*{im}_{\mathsf{Set}}(K\times K\times\cdots\times
K\overset{\cdot}{\rightarrow}G)$ for the set-theoretic image\footnote{We
stressed that we mean the \textit{set-theoretic image} because the
category-theoretic image in the category $\mathsf{LCA}_{\mathbf{Z}}$ is not
necessarily the same. The category-theoretic image is always the closure of
the set-theoretic one, so this distinction plays a role.} under the $n$-fold
addition map, then $G=\bigcup_{n\geq1}K^{n}$.
\end{definition}

\begin{definition}
\label{def_indcg}Suppose $X\in\mathsf{LCA}_{\mathcal{O}_{S}}$. Call $X$
\emph{ind-c.g.} (for \emph{inductively compactly generated}) if%
\[
X=\mathcal{O}_{S}\cdot H=\left\{  \alpha h\mid\alpha\in\mathcal{O}_{S},h\in
H\right\}  \text{,}%
\]
where $H\subseteq Q$ is some compactly generated clopen $\mathcal{O}_{F}%
$-submodule of $X$. Write $\mathsf{LCA}_{\mathcal{O}_{S},icg}\subseteq
\mathsf{LCA}_{\mathcal{O}_{S}}$ for the full subcategory of ind-c.g. objects.
The choice of $H$ is not part of the datum, it merely needs to exist.
\end{definition}

While most of the above concepts generalize their counterparts from
\cite{kthyartin}, the concept of ind-c.g. modules did not appear in our
earlier papers. It turns out to be a useful concept simplifying various arguments.

\subsection{Ad\`{e}les and adelic modules}

Instead of torsion submodules, the concept of \textit{topological torsion}
plays a crucial role in the theory of LCA groups. Unlike algebraic torsion, it
is preserved under duality: The group $\mathbf{Q}_{p}/\mathbf{Z}_{p}$ is a
discrete $p$-torsion group. Its Pontryagin dual $\mathbf{Z}_{p}$ is
algebraically torsion-free, but still topological $p$-torsion.

\begin{definition}
An LCA group $G$ is called a \emph{topological torsion} group if every open
neighbourhood of zero contains a compact clopen subgroup $U$ such that $G/U$
is discrete and torsion. We call $A\in\mathsf{LCA}_{\mathcal{O}_{S}}$ a
\emph{topological torsion} module if its underlying LCA group is topologically torsion.
\end{definition}

There is also a version of this concept specific to individual primes.

\begin{definition}
Suppose $A\in\mathsf{LCA}_{\mathcal{O}_{S}}$ and $v=P$ the place associated to
a maximal ideal $P\in\operatorname*{Spec}\mathcal{O}_{F}$. We call $a\in A$
\emph{topological }$P$\emph{-torsion} if for every neighbourhood of zero $0\in
U\subset A$ there exists some $N_{U}\geq1$ such that%
\[
P^{N_{U}}\cdot a\subseteq U\text{.}%
\]
The object $A$ is called \emph{topological }$P$\emph{-torsion} if all its
elements are topological $P$-torsion elements. We write%
\[
A^{P}:=\left\{  a\in A\mid a\text{ is topological }P\text{-torsion}\right\}
\]
for the algebraic $\mathcal{O}_{F}$-submodule of topological $P$-torsion elements.
\end{definition}

For $\mathcal{O}_{S}=\mathbf{Z}$, this reduces to the familiar concept of
topological $p$-torsion.

\begin{example}
In general, $A^{P}$ need not be a closed submodule of $A$, so this concept
does not necessarily lend itself to decompose $A$ further within the category
$\mathsf{LCA}_{\mathcal{O}_{S}}$. As a concrete example, for $F=\mathbf{Q}$,
$S=\{\infty\}$, we have $\mathcal{O}_{S}=\mathbf{Z}$, and for any prime $p$,
the group $\mathbb{T}^{(p)}$ is the subgroup of $p$-power torsion elements by
\cite[Prop. 2.6]{MR637201}, i.e., although $\mathbb{T}$ carries a non-discrete
topology, we only get the algebraic torsion elements. This is a dense proper
subgroup of $\mathbb{T}$.
\end{example}

The situation is better for topological torsion submodules inside adelic objects.

\begin{theorem}
\label{thm_BraconnierVilenkinTypeTheorem}Suppose $A\in\mathsf{LCA}%
_{\mathcal{O}_{S}}$ is vector-free adelic.

\begin{enumerate}
\item Then $A$ is a topological torsion module.

\item Both $A$ and $A^{\vee}$ are totally disconnected.

\item For every finite place $v=P$ in $S$, the submodule $A^{P}\subseteq A$ is
a closed $\mathcal{O}_{S}$-submodule.

\item There is an isomorphism%
\begin{equation}
A\simeq\underset{P\in S\,\,}{\left.  \prod\nolimits^{\prime}\right.  }%
(A^{P}:A^{P}\cap C)\text{,} \label{lm3}%
\end{equation}
where $C$ is any compact clopen $\mathcal{O}_{F}$-submodule such that%
\begin{equation}
\bigcap_{\alpha\in\mathcal{O}_{S}^{\times}}\alpha C=0\qquad\text{and}%
\qquad\bigcup_{\alpha\in\mathcal{O}_{S}^{\times}}\alpha C=A\text{.}
\label{lm3a}%
\end{equation}
Furthermore, $A^{P}\cap C$ is a compact clopen $\mathcal{O}_{F}$-submodule of
$A^{P}$ such that%
\begin{equation}
\bigcap_{\alpha\in\mathcal{O}_{S}^{\times}}\alpha(A^{P}\cap C)=0\qquad
\text{and}\qquad\bigcup_{\alpha\in\mathcal{O}_{S}^{\times}}\alpha(A^{P}\cap
C)=A^{P}\text{.} \label{lm3b}%
\end{equation}

\item Each $A^{P}$ is (algebraically and topologically) a finite-dimensional
$\mathbf{Q}_{p}$-vector space for $(p)=P\cap\mathbf{Z}$.
\end{enumerate}
\end{theorem}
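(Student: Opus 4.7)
The plan is to establish the five assertions in sequence, reducing as much as possible to classical structural results on topologically torsion LCA groups.

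\emph{First (part (1) and the total disconnectedness of $A$ in (2)).} Since $A$ is vector-free adelic, write $A = \mathcal{O}_S \cdot C$ for a compact clopen $\mathcal{O}_F$-submodule $C$ with $\bigcap_{\alpha \in \mathcal{O}_S^\times} \alpha C = 0$. The compact open subgroup $C$ rules out any real vector summand of $A$ (by the Pontryagin--van Kampen structure theorem: $\mathbf{R}^n$ has no nontrivial compact subgroup). The connected component $A^0$ must lie inside every compact open subgroup, since $A^0 \cap \alpha C$ is nonempty and clopen in the connected set $A^0$, hence equal to $A^0$; so $A^0 \subseteq \bigcap_{\alpha \in \mathcal{O}_S^\times} \alpha C = 0$, and $A$ is totally disconnected. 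Total disconnectedness then provides a neighborhood basis of compact open subgroups at $0$; for any such $V \subseteq C$, every $a = \alpha c \in A$ admits $n \geq 1$ with $n\alpha \in \mathcal{O}_F$ (clearing denominators), so $n\alpha c \in C$, and the finite quotient $C/V$ yields $|C/V|(n\alpha c) \in V$. Hence $A/V$ is discrete torsion.

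\emph{Second (total disconnectedness of $A^\vee$ in (2)).} For any compact open $V \subseteq A$ with $A/V$ torsion, Pontryagin duality gives $V^\perp \cong (A/V)^\vee$ a profinite compact open subgroup of $A^\vee$. As $V$ shrinks through a neighborhood basis at $0$, the subgroups $V^\perp$ cover $A^\vee$; the same clopen argument as above shows $(A^\vee)^0 \subseteq V^\perp$ for each such $V$, and since profinite groups are totally disconnected we conclude $(A^\vee)^0 = 0$.

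\emph{Third (parts (3)--(5)).} A Braconnier--Vilenkin type decomposition asserts that any topologically torsion LCA group splits canonically as a restricted product $A \cong \prod\nolimits'_p A_p$ over rational primes, with $A_p$ the topological $p$-power torsion and the restriction taken with respect to $A_p \cap C$. Since $\mathcal{O}_F$ is Dedekind with only finitely many primes $P$ above each rational $p$, the $\mathcal{O}_F$-action further splits each $A_p$ as an \emph{honest} finite direct product $A_p = \prod_{P \mid p} A^P$, making each $A^P$ a closed $\mathcal{O}_S$-submodule (part (3)) and yielding the restricted product decomposition of part (4); the displayed saturation conditions for $A^P \cap C$ inside $A^P$ follow from those for $(A,C)$ by restriction. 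For part (5), topological $P$-torsionness extends the $\mathcal{O}_F$-action on $A^P$ continuously to $\mathcal{O}_P$, and the compact open submodule $L := A^P \cap C$ has $L/\pi_P L$ finite (compact and discrete); by Nakayama's lemma $L$ is finitely generated over $\mathcal{O}_P$, so $A^P = L \otimes_{\mathcal{O}_P} F_P$ is a finite-dimensional $F_P$-vector space, a fortiori a finite-dimensional $\mathbf{Q}_p$-vector space.

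\emph{Main obstacle.} The hardest step is the third: upgrading the classical Braconnier--Vilenkin decomposition (stated at the level of LCA groups) to one that is compatible with the $\mathcal{O}_S$-module structure and our chosen $C$. One must check that each $A^P$ is $\mathcal{O}_S$-stable, that the subspace topology on $A^P \subseteq A$ coincides with that induced from the restricted product, and that the saturation conditions for $A^P \cap C$ inside $A^P$ really do descend from the original saturation of $C$ in $A$.
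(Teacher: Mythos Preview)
Your overall strategy matches the paper's: reduce to the classical Braconnier--Vilenkin decomposition of a topological torsion LCA group into its $p$-primary pieces (citing Armacost), refine each piece via the idempotents of $\mathcal{O}_F \otimes_{\mathbf{Z}} \mathbf{Z}_p$ into $P$-primary pieces, and invoke Nakayama for finite-dimensionality. Two genuine gaps remain.

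\textbf{The restriction to $P \in S$.} The Braconnier--Vilenkin decomposition you invoke naturally runs over \emph{all} maximal ideals $P$ of $\mathcal{O}_F$, not just those in $S$. To obtain the indexing set in (4) as stated you must show $A^P = 0$ whenever $P \notin S$. The paper does this explicitly: if $P \notin S$, the ring map $\mathcal{O}_F \to \widehat{\mathcal{O}_{F,P}}$ factors through $\mathcal{O}_S$, so $A^P \cap C$ is already an $\mathcal{O}_S$-module; hence $\alpha(A^P \cap C) = A^P \cap C$ for every $\alpha \in \mathcal{O}_S^\times$, and the adelic saturation conditions force $A^P \cap C = 0$, whence $A^P = 0$. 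This is exactly where the set $S$ enters the conclusion, and your proposal (including the ``Main obstacle'' paragraph) does not address it.

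\textbf{Circularity in part (5).} You assert that $L/\pi_P L$ is finite because it is ``compact and discrete'', i.e.\ that $\pi_P L$ is open in $L$. But at this stage you only know $L = A^P \cap C$ is a compact topological $\mathcal{O}_P$-module; multiplication by $\pi_P$ is continuous, so $\pi_P L$ is compact and therefore closed, yet openness is not automatic. The paper resolves this by \emph{first} proving that multiplication by $p$ is a homeomorphism of $A^P$: one checks that $A^P$ is $\sigma$-compact (via $A^P = \bigcup_{\alpha \in \mathcal{O}_S^\times} \alpha(A^P \cap C)$, a countable union of compacta), and then Pontryagin's Open Mapping Theorem forces the continuous bijection $p \cdot(-)$ to be open. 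Only after this does $pL$ become open in $L$, making $L/pL$ finite and the topological Nakayama lemma applicable. Your argument reverses the order and leaves this step unjustified; the same issue undercuts the claim that $A^P$ carries the \emph{topological} $F_P$-vector space structure, not merely the algebraic one.
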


\begin{proof}
(1) As $A$ is vector-free adelic, we may write%
\begin{equation}
A=\bigcup_{x\in\mathcal{O}_{S}^{\times}}xC\qquad\text{such that}\qquad
\bigcap_{x\in\mathcal{O}_{S}^{\times}}xC=0 \label{lm3c}%
\end{equation}
for $C\subseteq A$ some compact clopen $\mathcal{O}_{F}$-submodule of $A$. For
all $x\in\mathcal{O}_{S}^{\times}$ the multiplication with $x$ is a
homeomorphism, so all $\mathcal{O}_{F}$-submodules $xC$ are open. We deduce
that the intersection of all open sub\textit{groups} of $A$ is trivial (since
the $xC$ form a cofinal subfamily). By \cite[Theorem 7.8]{MR551496} the
connected component of the identity of $A$ is trivial. Then, by \cite[Theorem
7.3]{MR551496} it follows that $A$ is totally disconnected, and then by
\cite[Theorem 7.7]{MR551496} that every neighbourhood $0\in U$ of the identity
contains a compact open subgroup $W_{U}\subseteq U$. Being open, the quotient
$A/W_{U}$ is a discrete LCA group.\ In the category of LCA\ groups, we deduce%
\[
A/W_{U}=\bigcup_{x\in\mathcal{O}_{S}^{\times}}(xC/W_{U})
\]
and since the image of $xC$ is compact in $A/W_{U}$, it must be finite. We
deduce that $xC/W_{U}$ is a torsion group, and then $A/W_{U}$ is a discrete
torsion group. From \cite[Theorem 3.5 (e)]{MR637201} it follows that $A$ is
topologically torsion, and by (c) loc. cit. that $A^{\vee}$ is totally
disconnected. This shows (1) and (2). Write $A^{p}$ for the topological
$p$-torsion elements in the sense of $\mathsf{LCA}_{\mathbf{Z}}$ (i.e., in the
sense of the theory of LCA groups). Use \cite[Lemma 3.8, Armacost writes
subscript $p$ for our superscript $p$]{MR637201} to deduce (3): At first, the
topological $p$-torsion subgroup $A^{p}$ is shown to be a closed subgroup and
then proven to be a topological $\mathbf{Z}_{p}$-module. From this, we see
that the $\mathcal{O}_{F}$-module structure $\mathcal{O}_{F}\times
A\rightarrow A$ extends to a topological $\mathcal{O}_{F}\otimes_{\mathbf{Z}%
}\mathbf{Z}_{p}$-module structure with respect to the $p$-adic topology%
\begin{equation}
\left(  \underset{n}{\underleftarrow{\lim}}\mathcal{O}_{F}\otimes_{\mathbf{Z}%
}\mathbf{Z}/p^{n}\mathbf{Z}\right)  \times A^{P}\longrightarrow A^{P}\text{.}
\label{ld2}%
\end{equation}
As $\mathcal{O}_{F}$ is a finite $\mathbf{Z}$-algebra, $\mathcal{O}_{F}%
\otimes_{\mathbf{Z}}\mathbf{Z}_{p}\cong\prod_{P}\widehat{\mathcal{O}_{F,P}}$,
where $P$ runs through the primes in $\mathcal{O}_{F}$ lying over $p$. This
canonical splitting provides idempotents to canonically split $A^{p}$ into the
$A^{P}$ of the claim, $\varphi\colon A^{p}\cong\prod_{P}A^{P}$ in
$\mathsf{LCA}_{\mathcal{O}_{F}}$ (the same arguments show that $(A\cap
C)^{p}\cong\prod_{P}(A\cap C)^{P}$ in $\mathsf{LCA}_{\mathcal{O}_{F}}$). If
$x$ is topological $p$-torsion (or $P$-torsion), then so is $\alpha x$ for any
$\alpha\in\mathcal{O}_{S}$, so $A^{P}$ determines an object in $\mathsf{LCA}%
_{\mathcal{O}_{S}}$ and the isomorphism $\varphi$ also holds in $\mathsf{LCA}%
_{\mathcal{O}_{S}}$ (this part of the argument does not work for $A\cap C$,
since $C$ is not necessarily an $\mathcal{O}_{S}$-module). Moreover, as
$A^{p}$ is closed, this also shows that $A^{P}$ is closed inside $A$, proving
(3). A variant of \cite[Theorem 3.10]{MR637201} then shows that there exists
an isomorphism in $\mathsf{LCA}_{\mathcal{O}_{F}}$ such that%
\begin{equation}
A\cap C\cong\underset{P}{\left.  \prod\right.  }(A\cap C)^{P}\text{,}
\label{ld1}%
\end{equation}
where $P$ runs over all maximal ideals of $\mathcal{O}_{F}$. Then
\cite[Theorem 3.13]{MR637201} yields the isomorphism in Eq. \ref{lm3}, but
with two limitations: (L1) $P$ is running over \textit{all} maximal
ideals\footnote{as opposed to running only over those in $S$} and (L2) the
choice of $C$ is the one of Eq. \ref{lm3c}. We remove L2: Note that if
$C_{2}\subseteq C$ is a different choice in Eq. \ref{lm3c}, then $C/C_{2}$ is
discrete (since $C_{2}$ is open) and compact\ (as $C$ is compact), and
therefore finite. Thus, any two choices for $C$, call them $C_{1},C_{2}$, are
both finite-index in $C_{1}+C_{2}$ (which is also open and compact). While
changing $C$ affects Eq. \ref{ld1}, the restricted product $A\simeq
\underset{P}{\left.  \prod\nolimits^{\prime}\right.  }(A^{P},A^{P}\cap C)$ is
invariant both as a group and topologically under modifying $A^{P}\cap C$ by
chains of finite index replacements. This removes L2. We address L1 next. Eq.
\ref{lm3b} follows from $A$ being adelic. Next, we show that $P\notin S$
implies $A^{P}=0$. To see this, note that $(A\cap C)^{P}$ is both an
$\widehat{\mathcal{O}_{F,P}}$-module (by being topological $P$-torsion) and an
$\mathcal{O}_{F}$-module (as both $A$ and $C$ are), but not necessarily an
$\mathcal{O}_{S}$-module. However, in $\widehat{\mathcal{O}_{F,P}}$ all primes
except for $P$ got localized away, so if $P\notin S$, we have the
factorization of rings $\mathcal{O}_{F}\rightarrow\mathcal{O}_{S}%
\rightarrow\mathcal{O}_{F,P}$, so in this case $(A\cap C)^{P}$ \textit{is} an
$\mathcal{O}_{S}$-module. It follows that $\alpha(A^{P}\cap C)=A^{P}\cap C$
for all $\alpha\in\mathcal{O}_{S}^{\times}$ and therefore%
\[
\bigcap_{\alpha\in\mathcal{O}_{S}^{\times}}\alpha(A^{P}\cap C)=A^{P}\cap
C\qquad\text{and}\qquad\bigcup_{\alpha\in\mathcal{O}_{S}^{\times}}\alpha
(A^{P}\cap C)=A^{P}\cap C\text{.}%
\]
By Eq. \ref{lm3b} we deduce that $A^{P}=0$. This removes L1 and hence finally
settles Eq. \ref{lm3} in the format as stated in the theorem, thus (4) is
shown. For (5):\ Let $p$ be the prime lying below $P$. By construction%
\[
T:=A^{P}\cap C
\]
is a topological $\widehat{\mathcal{O}_{F,P}}$-module (see Eq. \ref{ld2} and
the remarks below it about adapting it to $A^{P}\cap C$). Similarly, as $P\in
S$, $A^{P}$ is an $\widehat{\mathcal{O}_{F,P}}\otimes_{\mathcal{O}_{F}%
}\mathcal{O}_{S}$-module. Note that $\widehat{\mathcal{O}_{F,P}}%
\otimes_{\mathcal{O}_{F}}\mathcal{O}_{S}=\operatorname*{Frac}\widehat
{\mathcal{O}_{F,P}}$, so $A^{P}$ is an algebraic\footnote{in the sense of
Remark \ref{rmk_AlgebraicModules}} vector space over (a finite extension of)
$\mathbf{Q}_{p}$. Note that at this point we do \textit{not yet know} that it
is a topological $\mathbf{Q}_{p}$-vector space: multiplication by $\frac{1}%
{p}$ could fail to be continuous\footnote{Depending on $S$, there can exist
$\mathbf{Q}$-vector spaces in $\mathsf{LCA}_{\mathcal{O}_{S}}$ such that
multiplication by a prime $p$ is continuous, but by $\frac{1}{p}$ is not. In
such cases, even though multiplication by $p$ is invertible, it is not a
homeomorphism. See \cite[Example 2.8]{kthyartin} for $F=\mathbf{Q}$ and
$S=\{\infty\}$, so $\mathcal{O}_{S}=\mathbf{Z}$.}. We show this now: First, we
claim that $A^{P}$ is $\sigma$-compact\footnote{a countable union of compact
sets}. This is clear because%
\begin{equation}
\bigcup_{\alpha\in\mathcal{O}_{S}^{\times}}\alpha(A^{P}\cap C)=\bigcup
_{\alpha\in\mathcal{O}_{S}^{\times}}\alpha T=A^{P} \label{lu1}%
\end{equation}
of Eq. \ref{lm3b} presents it as a countable union of the compact sets
$\alpha(A^{P}\cap C)$ (using that $\alpha\in\mathcal{O}_{S}^{\times}$ acts
through homeomorphisms and $A^{P}\cap C$ is compact). Since $A^{P}\subseteq A$
is closed, $A^{P}$ is therefore locally compact and $\sigma$%
-compact\footnote{Neither of these properties implies the other. The rationals
$\mathbf{Q}$ with the real metric topology fail to be locally compact, but are
countable, hence $\sigma$-compact. On the other hand, any uncountable discrete
group fails to be $\sigma$-compact.}. As $p$ is invertible, $(-\cdot p)\colon
A^{P}\twoheadrightarrow A^{P}$ is a continuous surjective map with $\sigma
$-compact source. By Pontryagin's Open Mapping Theorem \cite[p. 23, Theorem
3]{MR0442141}, it follows that this must be an open map and therefore a
homeomorphism. This settles that $A^{P}$ is a topological $\mathbf{Q}_{p}%
$-vector space. It remains to show that it is finite-dimensional. As $p$ acts
through a homeomorphism on $A^{P}$ and $T$ is open in $A^{P}$, $pT$ will be
open in $T$, too. Hence, $T/pT$ is discrete. However, $T$ is also compact, so
$T/pT$ is both compact and discrete and therefore finite. Thus, $T$ is a
compact $\widehat{\mathcal{O}_{F,P}}$-module, and $T/\mathfrak{m}%
_{\widehat{\mathcal{O}_{F,P}}}T$ (which is a quotient of $T/pT$) is finite.
Therefore, by the topological Nakayama Lemma (\cite[Ch. 5, \S 1]{MR1029028}),
$T$ is a finitely generated $\widehat{\mathcal{O}_{F,P}}$-module. By the right
side of Eq. \ref{lu1}, $A^{P}$ is merely the localization of $T$ by inverting
$p$, so $\dim_{\mathbf{Q}_{p}}A^{P}$ is at most the number of $\mathbf{Z}_{p}%
$-module generators of $T$, which was finite\footnote{In fact, as $T\subseteq
A^{P}$ cannot have $p$-torsion since $A^{P}$ is a $\mathbf{Q}_{p}$-vector
space, $\dim_{\mathbf{Q}_{p}}A^{P}=\operatorname*{rk}_{\mathbf{Z}_{p}}%
T\leq\dim_{\mathbf{F}_{p}}T/pT$, but we do not need this stronger fact.}.
\end{proof}

\begin{example}
\label{example_AdeleSequence}Define the $S$\emph{-integral ad\`{e}les} as the
restricted product%
\begin{equation}
\mathbb{A}_{S}:=\left.  \underset{v\in S\;}{\prod\nolimits^{\prime}}\right.
F_{v}\text{,} \label{lcvn1}%
\end{equation}
with the usual restricted product topology. Then $\mathbb{A}_{S}%
\in\mathsf{LCA}_{\mathcal{O}_{S},ad}$. As a fact from classical number theory,
there is an exact sequence%
\begin{equation}
\mathcal{O}_{S}\hookrightarrow\mathbb{A}_{S}\twoheadrightarrow\mathbb{A}%
_{S}/\mathcal{O}_{S} \label{l_adele_sequence}%
\end{equation}
in $\mathsf{LCA}_{\mathcal{O}_{S}}$, where the left arrow is the diagonal
embedding. Here $\mathcal{O}_{S}$ is discrete and torsion-free, while
$\mathbb{A}_{S}/\mathcal{O}_{S}$ is compact and connected. This is the
\emph{ad\`{e}le sequence}. Even though Eq. \ref{lcvn1} also makes sense if $S$
does not contain all infinite places, Sequence \ref{l_adele_sequence} really
rests on this running assumption.
\end{example}

\begin{remark}
\label{rmk_ClassicalNotationForRestrictedProducts}Eq. \ref{lcvn1} relies on a
common and useful shorthand. Nonetheless, it is a little imprecise. Really, we
should specify the ad\`{e}les as $\mathbb{A}_{S}:=\underset{v\in S_{\infty}%
}{\prod}F_{v}\times\underset{v\in S\setminus S_{\infty}\;}{\prod
\nolimits^{\prime}}(F_{v}:\mathcal{O}_{v})$, where the second factor is a
restricted product with respect to the compact clopen subgroups $\mathcal{O}%
_{v}$.
\end{remark}

\subsection{Restricted product of
categories\label{topsect_RestrictedProductsOfCategories}}

\subsubsection{Exact structure\label{subsect_RestrictedProductAsExactCategory}%
}

Suppose $(\mathsf{C}_{v})_{v\in I}$ are exact categories, indexed by some set
$I$. Then $\prod_{v\in I}\mathsf{C}_{v}$ is an exact category. Its objects are
arrays $X=(X_{v})_{v\in I}$ such that $X_{v}\in\mathsf{C}_{v}$, morphisms are
entrywise, and a kernel-cokernel sequence is called exact if it is exact
entrywise in each $\mathsf{C}_{v}$. Suppose there are exact functors $\xi
_{v}\colon\mathsf{C}_{v}\rightarrow\mathsf{D}_{v}$ with $(\mathsf{C}%
_{v})_{v\in I},(\mathsf{D}_{v})_{v\in I}$ having the same indexing set. For
any subset $I^{\prime}\subseteq I$, define $\mathsf{J}^{(I^{\prime})}%
:=\prod_{v\in I\setminus I^{\prime}}\mathsf{C}_{v}\times\prod_{v\in I^{\prime
}}\mathsf{D}_{v}$. Then whenever $I^{\prime}\subseteq I^{\prime\prime}$, there
is an exact functor $\xi_{I^{\prime},I^{\prime\prime}}\colon\mathsf{J}%
^{(I^{\prime})}\rightarrow\mathsf{J}^{(I^{\prime\prime})}$, defined by taking
$\xi_{v}$ entrywise for all $v\in I^{\prime\prime}\setminus I^{\prime}$, and
being the identity elsewhere. We define the \emph{restricted product} of exact
categories as%
\begin{equation}
\underset{v\in I\;}{\prod\nolimits^{\prime}}(\mathsf{D}_{v}:\mathsf{C}%
_{v}):=\left.  \underset{I^{\prime}\subseteq I\text{, }\#I^{\prime}<\infty
}{\operatorname*{colim}}\left.  \mathsf{J}^{(I^{\prime})}\right.  \right.
\text{,} \label{l_RestrictedProductAsColimitJFormula}%
\end{equation}
where the $\xi_{v}$ are implicit, and the notation is (although perhaps deemed
unnaturally ordered), inspired from the traditional notation in topological
group theory. See Remark \ref{rmk_ClassicalNotationForRestrictedProducts}.

We note that if $\mathsf{C}_{v},\mathsf{D}_{v}$ are exact categories with
duality (\cite[\S 2]{MR2600285}), and the $\xi_{v}$ are duality preserving
exact functors, then the restricted product is an exact category with duality.

\subsubsection{Monoidal
structure\label{subsect_MonoidalStructureOnRestrictedProductsOfCategories}}

Suppose the categories $(\mathsf{C}_{v})_{v\in I}$ of
\S \ref{subsect_RestrictedProductAsExactCategory} additionally carry a
bi-exact (symmetric) monoidal structure $\otimes_{v}\colon\mathsf{C}_{v}%
\times\mathsf{C}_{v}\rightarrow\mathsf{C}_{v}$. This means that for any fixed
object $C\in\mathsf{C}_{v}$ the functor $\mathsf{C}_{v}\rightarrow
\mathsf{C}_{v}$ given by $X\mapsto C\otimes_{v}X$ is exact, and analogously
for $X\mapsto X\otimes_{v}C$. Moreover, suppose the $(\mathsf{D}_{v})_{v\in
I}$ are also equipped with bi-exact (symmetric) monoidal structures. If we
choose the functors $\xi_{v}\colon\mathsf{C}_{v}\rightarrow\mathsf{D}_{v}$ of
\S \ref{subsect_RestrictedProductAsExactCategory} to be (symmetric) monoidal
functors, then this renders $\prod\nolimits^{\prime}(\mathsf{D}_{v}%
:\mathsf{C}_{v})$ a bi-exact (symmetric) monoidal category. The monoidal
structure is taken elementwise and the functors $\xi_{I^{\prime}%
,I^{\prime\prime}}$ of \S \ref{subsect_RestrictedProductAsExactCategory} can
(induced from the $\xi_{v}$) be given a natural (symmetric) monoidal structure.

\subsection{Structure of adelic modules}

For formulating the following theorem, we use the convention that
$\mathcal{O}_{v}:=F_{v}$ if $v$ is an infinite place.

\begin{proposition}
\label{prop_IdentifyAdelicBlocks}Suppose $S\supseteq S_{\infty}$ is a set of
places of $F$, possibly infinite. Then there is an exact and
duality-preserving equivalence of exact categories with duality%
\begin{equation}
\Psi\colon\underset{v\in S\;}{\prod\nolimits^{\prime}}(\mathsf{Proj}%
_{F_{v},fg}:\mathsf{Proj}_{\mathcal{O}_{v},fg})\overset{\sim}{\rightarrow
}\mathsf{LCA}_{\mathcal{O}_{S},ad}\text{.} \label{lmx1}%
\end{equation}
Here the duality on the right side is Pontryagin duality $(X\mapsto X^{\vee}%
)$, on the left it is induced from%
\[
M\mapsto\operatorname*{Hom}\nolimits_{\mathcal{O}_{v}}(M,\mathcal{O}%
_{v})\qquad\text{resp.}\qquad M\mapsto\operatorname*{Hom}\nolimits_{F_{v}%
}(M,F_{v})\text{,}%
\]
and $\xi_{v}(M):=F_{v}\otimes_{\mathcal{O}_{v}}M$. The functor $\Psi$ sends an
array of modules $(M_{v})_{v\in S}$ to itself, but equipped with the real
topology for $v\in S_{\infty}$, and with the adic topology for $v\in
S\setminus S_{\infty}$.
\end{proposition}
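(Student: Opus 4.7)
The plan is to construct a quasi-inverse $\Phi$ to $\Psi$ by decomposing each adelic module into its local pieces via Theorem~\ref{thm_BraconnierVilenkinTypeTheorem}, and then to verify that $\Psi$ and $\Phi$ are mutually quasi-inverse, exact, and duality-preserving.

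To define $\Phi$, take $A \in \mathsf{LCA}_{\mathcal{O}_S, ad}$ and write $A \simeq V \oplus A'$ with $V$ a vector module and $A'$ vector-free adelic (Definition~\ref{def_MainCats}). The $\mathcal{O}_S$-action on $V$ is automatically $\mathbf{R}$-linear (any continuous $\mathbf{Z}$-linear endomorphism of $\mathbf{R}^n$ is $\mathbf{R}$-linear), and hence extends to an action of $\mathcal{O}_S \otimes_{\mathbf{Z}} \mathbf{R} = \prod_{v \in S_\infty} F_v$; its idempotents split $V = \bigoplus_{v \in S_\infty} V_v$ into finite-dimensional $F_v$-vector spaces. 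For the vector-free part, Theorem~\ref{thm_BraconnierVilenkinTypeTheorem}(4)--(5) gives $A' \simeq \prod\nolimits^{\prime}_{v \in S \setminus S_\infty}(A^P : A^P \cap C)$ with each $A^P$ a finite-dimensional $F_v$-vector space; the topological Nakayama step in that proof shows $A^P \cap C$ is a finitely generated, torsion-free, hence free, $\mathcal{O}_v$-module. Setting $\Phi(A) := ((V_v)_{v \in S_\infty}, (A^P)_{v \in S \setminus S_\infty})$ together with the chosen lattices $A^P \cap C$ at the finite places defines an object of the restricted product.

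Verifying that $\Psi$ and $\Phi$ are mutually quasi-inverse reduces to intrinsic characterizations. The composite $\Phi \circ \Psi$ is the identity by construction, since the topological $P$-torsion submodule and the $F_v$-isotypic components of the assembled object recover the original data. For $\Psi \circ \Phi \simeq \mathrm{id}$, the topology on each $A^P$ is the unique Hausdorff $F_v$-module topology on a finite-dimensional $F_v$-vector space, and the restricted-product topology on $A'$ is forced by $C = \prod_v (C \cap A^P)$ being compact clopen. Full faithfulness is also local: for a continuous $\mathcal{O}_S$-linear $f : A \to B$, the identity component is preserved (hence $f(V_A) \subseteq V_B$, and the map splits along the $F_v$-idempotents), and topological $P$-torsion is preserved (hence $f(A^P) \subseteq B^P$); each local component $f_v$ is continuous by uniqueness of the Hausdorff $F_v$-topology; and since $f(C_A)$ is compact, $f(C_A \cap A^P) \subseteq C_B \cap B^P$ for cofinitely many $v$, placing $f$ in the restricted product. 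Exactness is entrywise in both categories, matching the entrywise structure of the restricted product. The duality follows from local self-duality of $F_v$: choose non-trivial continuous characters $\psi_v : F_v \to \mathbb{T}$, identify $M_v^\vee \cong \mathrm{Hom}_{F_v}(M_v, F_v)$, and match each $\mathcal{O}_v$-lattice with its Pontryagin orthogonal (up to a normalization by the different, absorbed globally).

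The main obstacle, I expect, is the cofiniteness assertion in full faithfulness when $S$ is infinite: showing that a continuous $\mathcal{O}_S$-linear morphism between adelic modules actually respects the restricted-product structure, i.e., that its image of the distinguished compact clopen $C_A$ lands in $C_B$ at cofinitely many places. For finite $S$ this is vacuous, but for infinite $S$ it relies on the description of compact subsets of restricted products together with the categorical setup of \S\ref{topsect_RestrictedProductsOfCategories}.
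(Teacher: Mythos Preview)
Your approach is correct and matches what the paper does: the paper's proof is simply a reference to \cite[\S 4, Prop.~4.8]{kthyartin} (the case where $S$ is all places) together with the instruction to replace the Braconnier--Vilenkin type theorem there by Theorem~\ref{thm_BraconnierVilenkinTypeTheorem}, and you have carried out exactly this program in detail. The decomposition via the vector module summand and the topological $P$-torsion pieces, the use of uniqueness of the Hausdorff $F_v$-topology, and the local self-duality argument are all the expected ingredients; the cofiniteness point you flag is indeed the one nontrivial verification for infinite $S$, and it is handled (as you anticipate) by the compactness of $f(C_A)$ inside the restricted product.
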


\begin{proof}
[Proof of Prop. \ref{prop_IdentifyAdelicBlocks}]We refer the reader to
\cite[\S 4, esp. Prop. 4.8]{kthyartin}. This discusses the proof for $S$ being
all places, but is easy to adapt to the above (more general) version. Just
replace the Braconnier--Vilenkin type theorem \cite[\S 3]{kthyartin} by Thm.
\ref{thm_BraconnierVilenkinTypeTheorem} above.
\end{proof}

\begin{corollary}
\label{cor_LCAAdQuasiAbelian}$\mathsf{LCA}_{\mathcal{O}_{S},ad}$ is a
split-exact quasi-abelian category. If $S$ is finite, there is an equivalence
$\mathsf{LCA}_{\mathcal{O}_{S},ad}\overset{\sim}{\rightarrow}\mathsf{Mod}%
_{R,fg}$, where $R:=\prod_{v\in S}F_{v}$ is the product ring.
\end{corollary}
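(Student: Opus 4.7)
The plan is to transport the entire statement across the exact and duality-preserving equivalence $\Psi$ of Proposition \ref{prop_IdentifyAdelicBlocks}, which identifies $\mathsf{LCA}_{\mathcal{O}_S,ad}$ with the restricted product $\prod\nolimits'_{v\in S}(\mathsf{Proj}_{F_v,fg}:\mathsf{Proj}_{\mathcal{O}_v,fg})$. Since $\Psi$ is an exact equivalence of exact categories, it suffices to verify split-exactness and quasi-abelianness on the restricted product side, and to collapse the colimit in the finite case.

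First I would check the two properties on each individual factor. For $v\in S_\infty$ one has $\mathcal{O}_v=F_v$ by convention, so $\mathsf{Proj}_{\mathcal{O}_v,fg}=\mathsf{Proj}_{F_v,fg}$ is the semisimple abelian category of finite-dimensional $F_v$-vector spaces, which is trivially split-exact quasi-abelian. For a finite place $v$, $\mathcal{O}_v$ is a complete DVR and $\mathsf{Proj}_{\mathcal{O}_v,fg}$ is the category of f.g.\ free $\mathcal{O}_v$-modules. A submodule inclusion $M\hookrightarrow N$ of f.g.\ free modules over a PID is a strict monomorphism iff $N/M$ is torsion-free iff $M$ is a direct summand, and dually for strict epimorphisms. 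Hence kernels and cokernels exist within the category, strict monos are split embeddings (manifestly stable under pushouts), strict epis are split projections (stable under pullbacks), and every admissible short exact sequence is split by its very form.

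Next I would show that the restricted product construction of \S \ref{topsect_RestrictedProductsOfCategories} preserves both properties. Finite products of split-exact quasi-abelian categories inherit each structural axiom entrywise. The filtered colimit along the transition functors $\xi_{I',I''}$ introduces no new admissible exact sequences beyond those already present at some finite stage $\mathsf{J}^{(I')}$: every morphism and every relation is detected at such a stage, so kernels, cokernels, and splittings of an admissible sequence can be computed at any stage at which the source and target are stabilized and are then transported forward by the exact functor $\xi_{I',I''}$. Stability of strict monos under pushouts and of strict epis under pullbacks propagates through the colimit by the same argument.

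For the finite-$S$ claim, the indexing poset $\{I'\subseteq S:\#I'<\infty\}$ has the terminal element $I'=S$, so the restricted product collapses to the honest product
\[
\prod\nolimits'_{v\in S}(\mathsf{Proj}_{F_v,fg}:\mathsf{Proj}_{\mathcal{O}_v,fg})=\prod_{v\in S}\mathsf{Proj}_{F_v,fg}.
\]
Writing $R=\prod_{v\in S}F_v$, the orthogonal idempotents $1=\sum_v e_v\in R$ produce the standard equivalence $\mathsf{Mod}_{R,fg}\simeq\prod_{v\in S}\mathsf{Proj}_{F_v,fg}$, finishing the argument. The main delicacy I anticipate is conceptual rather than computational: in the quasi-abelian category $\mathsf{LCA}_{\mathcal{O}_S,ad}$ one must compute categorical cokernels \emph{inside that category}, not inside the ambient $\mathsf{LCA}_{\mathcal{O}_S}$. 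For instance, multiplication by a uniformizer on $\mathcal{O}_v\in\mathsf{LCA}_{\mathcal{O}_S,ad}$ has cokernel $\mathcal{O}_v/\pi$ in $\mathsf{LCA}_{\mathcal{O}_S}$, which need not be adelic, whereas its categorical cokernel in the adelic subcategory is $0$, matching the passage to the torsion-free quotient in $\mathsf{Proj}_{\mathcal{O}_v,fg}$ under $\Psi$.
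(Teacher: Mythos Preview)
Your proposal is correct and follows essentially the same route as the paper: transport through $\Psi$ to the restricted product, verify split-exactness and quasi-abelianness on the factors, and collapse the colimit when $S$ is finite. The only notable difference is that the paper shortcuts the existence of cokernels by invoking the duality-preserving nature of $\Psi$ (kernels exist since each $\mathcal{O}_v$ is hereditary, and then self-duality under Pontryagin duality yields cokernels), whereas you argue the strict mono/epi structure directly; both are fine.
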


\begin{proof}
The category on the left side in Eq. \ref{lmx1} has kernels (since each
$\mathcal{O}_{v}$ is hereditary, i.e., any kernel in $\mathsf{Proj}%
_{\mathcal{O}_{v},fg}$ must again be projective). As the category is self-dual
to its opposite, also all cokernels exist. Hence, it is quasi-abelian. Next,
if each $\mathsf{C}_{v}$, $\mathsf{D}_{v}$ is split-exact, so is the
restricted product. If $S$ is finite, the restricted product simplifies to
$\prod_{v\in S}\mathsf{Mod}_{F_{v},fg}$, giving the second claim.
\end{proof}

If $S$ is infinite, $\mathsf{LCA}_{\mathcal{O}_{S},ad}$ will fail to agree
with the module category over $\prod_{v\in S}^{\prime}F_{v}$. See
\cite[Example 4.11]{kthyartin}. Even worse, for $S$ infinite, $\mathsf{LCA}%
_{\mathcal{O}_{S},ad}$ will not be an abelian category.

\begin{example}
If $S$ is infinite, there exists no object $X\in\mathsf{LCA}_{\mathcal{O}%
_{S},ad}$ such that any object is a quotient of $X^{\oplus n}$ for $n$ big enough.
\end{example}

\begin{remark}
\label{rmk_SymmetricMonoidalStructureOnAdelicModules}Prop.
\ref{prop_IdentifyAdelicBlocks} can be used to equip $\mathsf{LCA}%
_{\mathcal{O}_{S},ad}$ with a bi-exact symmetric monoidal structure. The
functors $\xi_{v}$ loc. cit. extend to symmetric monoidal functors $\xi
_{v}\colon(\mathsf{Proj}_{\mathcal{O}_{v},fg},\otimes_{\mathcal{O}_{v}%
})\rightarrow(\mathsf{Proj}_{F_{v},fg},\otimes_{F_{v}})$. As projective
modules are flat, the involved monoidal structures are all bi-exact. The
general construction indicated in
\S \ref{subsect_MonoidalStructureOnRestrictedProductsOfCategories} therefore
equips the left side in Eq. \ref{lmx1} with a bi-exact symmetric monoidal
structure. Now transport this to $\mathsf{LCA}_{\mathcal{O}_{S},ad}$ via the
equivalence of categories $\Psi$ of Prop. \ref{prop_IdentifyAdelicBlocks}. We
point out that this monoidal structure is compatible with the one in
\cite{MR2329311}.
\end{remark}

\subsection{Structure of locally compact modules}

\begin{theorem}
\label{thm_MainDecomp}Suppose $X\in\mathsf{LCA}_{\mathcal{O}_{S}}$ is arbitrary.

\begin{enumerate}
\item There exists a (non-canonical) exact sequence%
\[
Q\hookrightarrow X\twoheadrightarrow D\text{,}%
\]
in $\mathsf{LCA}_{\mathcal{O}_{S}}$, where $Q$ is quasi-adelic and $D$ discrete.

\item If $X$ is quasi-adelic, there exists a (canonical) exact sequence%
\[
K\hookrightarrow X\twoheadrightarrow A
\]
in $\mathsf{LCA}_{\mathcal{O}_{S}}$, where $K$ is compact and $A$ adelic. If
$X$ is vector-free, so are $K$ and $A$.

\item If $P$ is a discrete $\mathcal{O}_{S}$-module and projective in
$\mathsf{Mod}_{\mathcal{O}_{S}}$, then it is a projective object in
$\mathsf{LCA}_{\mathcal{O}_{S}}$.

\item If $I$ is a compact $\mathcal{O}_{S}$-module and its dual $I^{\vee}$ is
projective in $\mathsf{Mod}_{\mathcal{O}_{S}}$, then it is an injective object
in $\mathsf{LCA}_{\mathcal{O}_{S}}$.

\item Vector modules are injective and projective objects in $\mathsf{LCA}%
_{\mathcal{O}_{S}}$.
\end{enumerate}
\end{theorem}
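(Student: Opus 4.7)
The five items split into two groups: (1)--(2) are structural decompositions while (3)--(5) are formal homological consequences of the analogous facts in $\mathsf{LCA}_{\mathbf{Z}}$ (see \cite{MR0215016, MR2329311}) combined with Pontryagin duality. I would prove them in the order (5), (3), (4), (1), (2), so that the earlier statements are available when handling the structure theorems.

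For (5), vector modules are divisible, hence injective already as abstract abelian groups; any continuous $\mathbf{Z}$-linear section or extension is automatically $\mathcal{O}_S$-equivariant because the continuous $\mathcal{O}_S$-action on a vector module factors through $\mathcal{O}_S \otimes_{\mathbf{Z}} \mathbf{R}$ and is thus pinned down by the $\mathbf{R}$-linear structure. Projectivity in $\mathsf{LCA}_{\mathbf{Z}}$ can be imported from \cite{MR2329311} and upgraded to $\mathsf{LCA}_{\mathcal{O}_S}$ by the same equivariance. Statement (3) is then immediate: algebraic projectivity of $P$ over $\mathcal{O}_S$ yields an $\mathcal{O}_S$-linear lift of any admissible epic, and this lift is automatically continuous since $P$ is discrete. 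Statement (4) follows by applying Pontryagin duality, which is an exact anti-equivalence converting projectivity into injectivity and thus reducing the claim to (3) applied to $I^{\vee}$.

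For (1), the plan is to produce a quasi-adelic open $\mathcal{O}_S$-submodule $Q \subseteq X$, after which $D := X/Q$ is automatically discrete. The classical Hewitt--Ross structure theorem \cite{MR551496} provides an open subgroup of $X$ of the form $V \oplus K_0$ with $V$ a vector group and $K_0$ compact. The maximal vector subgroup is canonical, hence stable under the $\mathcal{O}_S$-action, so by (5) it splits off as a vector submodule. A standard iterative argument, intersecting $K_0$ with its preimages under multiplication by a finite $\mathbf{Z}$-generating set of $\mathcal{O}_F$, refines $K_0$ to a compact clopen $\mathcal{O}_F$-submodule $C$ of $X/V$; then $Q := V \oplus \mathcal{O}_S \cdot C$ is an open $\mathcal{O}_S$-submodule of $X$, quasi-adelic by definition.

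For (2), write $X = V \oplus \mathcal{O}_S \cdot C$ and set $K := \bigcap_{\alpha \in \mathcal{O}_S^{\times}} \alpha C$; this is a descending intersection of compacts, hence compact. The key verification is that $K$ is in fact an $\mathcal{O}_S$-submodule, which uses that $\{\alpha C\}_{\alpha \in \mathcal{O}_S^{\times}}$ is cofinal under rescaling by any element of $\mathcal{O}_S$. Setting $A := X/K$, the image of $C$ exhibits $A$ as adelic, since the $\mathcal{O}_S^{\times}$-translates now intersect trivially by construction. Vector-freeness is preserved: $K$ is compact and so has no non-zero vector summand, while any vector summand of $A$ would by (5) lift through $X \twoheadrightarrow A$, contradicting the vector-freeness of $X$. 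The main technical obstacle throughout is the interplay between $\mathcal{O}_F$- and $\mathcal{O}_S$-stable submodules, since classical LCA structure theory naturally produces only $\mathbf{Z}$-stable pieces, and enlarging them to $\mathcal{O}_S$-submodules must be done without sacrificing openness or local compactness.
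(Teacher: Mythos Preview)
Your plan is correct and follows essentially the same route as the paper: same order (5), (3), (4), (1), (2), same use of Pontryagin duality for (4), same definition $K := \bigcap_{\alpha \in \mathcal{O}_S^\times} \alpha C$ in (2), and the same idea of enlarging a compact clopen $\mathcal{O}_F$-submodule to $Q = \mathcal{O}_S \cdot C$ in (1). One small organizational difference: for (5) the paper reduces to the known $\mathcal{O}_F$-case via the fully faithful forgetful functor $\mathsf{LCA}_{\mathcal{O}_S} \to \mathsf{LCA}_{\mathcal{O}_F}$ (citing \cite{MR4028830}) rather than arguing directly, and correspondingly in (1) it splits off $V$ only at the $\mathcal{O}_F$-level and then separately checks the complement is $\mathcal{O}_S$-stable---your use of (5) to split directly in $\mathsf{LCA}_{\mathcal{O}_S}$ is arguably cleaner. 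Be aware that the verification in (2) that $\bigcap_\alpha \alpha\,\pi(C) = 0$ in $A$ is not quite ``by construction'': one must lift an element of this intersection back to $K$ through the quotient map, and the paper spells this out explicitly.
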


\begin{proof}
(5) The ring $\mathcal{O}_{S}$ is a localization of $\mathcal{O}_{F}$. The
forgetful functor $\mathsf{LCA}_{\mathcal{O}_{S}}\rightarrow\mathsf{LCA}%
_{\mathcal{O}_{F}}$ is fully faithful, exact and reflects exactness. If $X$ is
a vector module, it is injective and projective in $\mathsf{LCA}%
_{\mathcal{O}_{F}}$ by \cite[Theorem 5.13]{MR4028830}. By fullness, our claim
follows. (3) As $P$ is projective in $\mathsf{Mod}_{\mathcal{O}_{S}}$, there
exists an algebraic lift to an exact sequence in $\mathsf{LCA}_{\mathcal{O}%
_{S}}$. As $P$ is discrete, any preimage under this map is necessarily open,
so the lift is continuous and thus a map in $\mathsf{LCA}_{\mathcal{O}_{S}}$.
(4)\ Follows from (3) under Pontryagin duality, as it exchanges injective and
projective objects (any categorical equivalence to the opposite category does
this). (1) By \cite[Lemma 6.5]{MR4028830} there is an exact sequence%
\begin{equation}
V\oplus C\hookrightarrow X\overset{q}{\twoheadrightarrow}D \label{lm1}%
\end{equation}
in $\mathsf{LCA}_{\mathcal{O}_{F}}$, where $V$ is a vector module,
$C\in\mathsf{LCA}_{\mathcal{O}_{F}}$ is compact and $D\in\mathsf{LCA}%
_{\mathcal{O}_{F}}$ discrete. As $V$ is an injective object in $\mathsf{LCA}%
_{\mathcal{O}_{F}}$ (using \cite[Theorem 5.13]{MR4028830} once more), we may
write $X\simeq X^{\prime}\oplus V$ in $\mathsf{LCA}_{\mathcal{O}_{F}}$.
Naturally, $V$ is automatically an $\mathcal{O}_{S}$-module (even an
$F$-vector space). Even though $X^{\prime}$ is a priori only an $\mathcal{O}%
_{F}$-module, it must be an (algebraic) $\mathcal{O}_{S}$-module (suppose for
some $\alpha\in\mathcal{O}_{F}$ with $\alpha^{-1}\in\mathcal{O}_{S}$ and
$y^{\prime}\in X^{\prime}$ we have $\alpha^{-1}y^{\prime}=x^{\prime}+v$ with
respect to the direct sum. Then $y^{\prime}=\alpha x^{\prime}+\alpha v$ forces
$\alpha v=0$ in $V$. But as a vector module, $V$ is also an $F$-module, so no
$\mathcal{O}_{F}$-torsion is possible, and we must have had $v=0$). As $X$ was
a topological $\mathcal{O}_{S}$-module, the scalar action of $\mathcal{O}_{S}$
restricted to $X^{\prime}$ is also continuous. Having split off $V$ as a
direct summand, it suffices to prove our claim for vector-free $X$. We return
to Sequence \ref{lm1}, this time with $V=0$. As $D$ is discrete, $C=q^{-1}(D)$
is open. Define $Q=\mathcal{O}_{S}\cdot C$ inside $X$. As $C$ is an
$\mathcal{O}_{F}$-module and $\mathcal{O}_{S}$ a localization of
$\mathcal{O}_{F}$, we have%
\[
Q=\bigcup_{x\in\mathcal{O}_{S}}xC=\bigcup_{x\in\mathcal{O}_{S}^{\times}%
}xC\text{.}%
\]
As $X$ is a topological $\mathcal{O}_{S}$-module, the multiplication with
$x\in\mathcal{O}_{S}^{\times}$ is a homeomorphism, and since $C$ is open, so
is each $xC$. Thus, $Q$ is (cl)open in $X$, and an $\mathcal{O}_{S}$-module by
construction. Hence, $Q\in\mathsf{LCA}_{\mathcal{O}_{S}}$ and we may form an
exact sequence $Q\hookrightarrow X\twoheadrightarrow D^{\prime}$ in
$\mathsf{LCA}_{\mathcal{O}_{S}}$. As $Q$ is open, $D^{\prime}$ must be a
discrete object in $\mathsf{LCA}_{\mathcal{O}_{S}}$. This settles (1). (2) If
$Q$ is vector-free quasi-adelic, we may write%
\[
Q=\bigcup_{x\in\mathcal{O}_{S}^{\times}}xC
\]
for $C\subseteq Q$ a compact clopen $\mathcal{O}_{F}$-submodule. Define
$K:=\bigcap_{x\in\mathcal{O}_{S}^{\times}}xC$. As $C$ is compact, each $xC$ is
compact and thus closed in $Q$. Thus, as an intersection of closed sets, $K$
is a closed $\mathcal{O}_{F}$-submodule of $C$ (as $1\cdot C$ is in the
intersection). Since $C$ is compact, so is $K$. Next, note that $K$ is
actually an $\mathcal{O}_{S}$-module: As $\mathcal{O}_{F}\rightarrow
\mathcal{O}_{S}$ is a localization, for every $k\in K$ and $\alpha
\in\mathcal{O}_{S}$ we find $\alpha=\frac{f}{s}$ with $f\in\mathcal{O}_{F}$
and $s\in\mathcal{O}_{S}^{\times}$ and then $\alpha k=s^{-1}(fk)$ with $fk\in
K$ ($\subseteq C$), so $s^{-1}(fk)\in s^{-1}C\subseteq K$. Defining $A:=Q/K$,
we obtain an exact sequence $K\hookrightarrow Q\overset{\pi}%
{\twoheadrightarrow}A$ in $\mathsf{LCA}_{\mathcal{O}_{S}}$. The object $A$ is
vector-free, for otherwise a vector module summand would lift to a vector
module summand of $Q$ since vector modules are projective objects (as we had
shown above). The image of $C$ under $\pi$ is again compact and an
$\mathcal{O}_{F}$-submodule. As $\pi$ is an admissible epic, it is an open
map, so the open $C\subseteq Q$ gets mapped to the open $\pi(C)$ in $A$. Thus,
$\pi(C)$ is a compact clopen $\mathcal{O}_{F}$-submodule in $A$. As $\pi$ is
surjective,%
\[
A=\mathcal{O}_{S}\cdot\pi(C)=\left\{  \alpha c\mid\alpha\in\mathcal{O}%
_{S},c\in\pi(C)\right\}  \text{.}%
\]
We deduce that $A$ is vector-free quasi-adelic. Next, we show that $A$ is
adelic: Suppose $\overline{t}\in\bigcap_{x\in\mathcal{O}_{S}^{\times}}x\pi
(C)$. Then for all $x\in\mathcal{O}_{S}^{\times}$, there is $c_{x}\in C$ such
that $\overline{t}=x\pi(c_{x})$. Let $t\in Q$ be a preimage of $\overline{t}$.
Then for all $x\in\mathcal{O}_{S}^{\times}$, we get $t-xc_{x}\in K$. Write
$t=xc_{x}+k_{x}$ with $k_{x}\in K$. As $K$ is an $\mathcal{O}_{S}$-module and
$x^{-1}\in\mathcal{O}_{S}^{\times}$, we can write $k_{x}=x\tilde{c}_{x}$ for
$\tilde{c}_{x}:=x^{-1}k_{x}\in K\subseteq C$. We obtain%
\[
t=xc_{x}+x\tilde{c}_{x}=x(c_{x}+\tilde{c}_{x})\qquad\text{for all}\qquad
x\in\mathcal{O}_{S}^{\times}\text{.}%
\]
Thus, $t\in K$ and therefore $\overline{t}=0$. It follows that $A$ is adelic.
Hence, letting $X:=Q$, we have shown (2) if $Q$ is vector-free. If $Q$ has a
non-trivial vector module summand, apply (2) to the vector-free complementary
direct summand. This shows (2) in the general case.
\end{proof}

\begin{example}
\label{example_SumOfClosedSubgroups}If $G$ is a topological group and
$A,B\subseteq G$ closed subgroups, $A+B$ need not be closed. For example,
$\mathbf{Z}\mathbb{+}\sqrt{2}\mathbf{Z}$ inside $\mathbf{R}$ demonstrates
this. However, if one of the groups is additionally assumed to be compact,
$A+B$ is also closed.
\end{example}

\begin{proposition}
\label{prop_icg_and_qa}We record the following relations among these categories:

\begin{enumerate}
\item $\mathsf{LCA}_{\mathcal{O}_{S},qa}\subseteq\mathsf{LCA}_{\mathcal{O}%
_{S},icg}$.

\item Any morphism in $\mathsf{LCA}_{\mathcal{O}_{S}}$ from an ind-c.g. object
to a discrete $\mathcal{O}_{S}$-module has finitely generated image.

\item Any morphism in $\mathsf{LCA}_{\mathcal{O}_{S}}$ from a quasi-adelic
object to a discrete $\mathcal{O}_{S}$-module has finite image.

\item The full subcategory inclusion $\mathsf{LCA}_{\mathcal{O}_{S}%
,icg}\subseteq\mathsf{LCA}_{\mathcal{O}_{S}}$ is left $s$-filtering.

\item The full subcategory $\mathsf{LCA}_{\mathcal{O}_{S},icg}\subseteq
\mathsf{LCA}_{\mathcal{O}_{S}}$ is extension-closed.

\item The full subcategory $\mathsf{LCA}_{\mathcal{O}_{S},qa}\subseteq
\mathsf{LCA}_{\mathcal{O}_{S}}$ is extension-closed.

\item The full subcategory $\mathsf{LCA}_{\mathcal{O}_{S},ad}\subseteq
\mathsf{LCA}_{\mathcal{O}_{S}}$ is extension-closed.
\end{enumerate}
\end{proposition}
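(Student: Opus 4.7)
Parts (1)--(3) are direct from the definitions. For (1), given a quasi-adelic $X = V \oplus Q$ with $Q = \mathcal{O}_S \cdot C$, the set $H := V \oplus C$ is a compactly generated clopen $\mathcal{O}_F$-submodule (a compact symmetric neighborhood of $0$ generates $V$, and $C$ is compact) with $\mathcal{O}_S \cdot H = V + \mathcal{O}_S \cdot C = X$. For (2), writing $X = \mathcal{O}_S \cdot H$ with $H$ compactly generated by a compact $K$, any morphism $\phi\colon X \to D$ into a discrete $D$ sends $K$ to a compact discrete set, hence finite; so $\phi(H) \subseteq \mathbf{Z} \cdot \phi(K)$ is finitely generated as an abelian group and $\phi(X) = \mathcal{O}_S \cdot \phi(H)$ is finitely generated as an $\mathcal{O}_S$-module. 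Part (3) refines this: the connected summand $V$ lands in $\{0\} \subset D$, and the finite $\mathcal{O}_F$-submodule $\phi(C)$ has finite $\mathcal{O}_S$-localization (localizations of finite modules are quotients, hence finite), so $\phi(X) = \mathcal{O}_S \cdot \phi(C)$ is finite.

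For (4), the key step is the factorization: given an admissible epic $\phi\colon E \twoheadrightarrow A$ with $A = \mathcal{O}_S \cdot H$ ind-c.g.\ and $K$ a compact symmetric generator of $H$, I lift $K$ to a compact $\tilde K \subset E$ with $\phi(\tilde K) = K$, using that $\phi$ is an open surjection of LCA groups. Augmenting with a compact neighborhood of $0$ in $E$ and taking the $\mathcal{O}_F$-span gives a c.g.\ clopen $\mathcal{O}_F$-submodule $\tilde H \subset E$; then $A' := \mathcal{O}_S \cdot \tilde H$ is ind-c.g., open in $E$, and $\phi|_{A'}\colon A' \twoheadrightarrow A$ is an admissible epic factoring $\phi$. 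The remaining $s$-filtering axioms follow from (5) and the observation that closed $\mathcal{O}_S$-submodules of ind-c.g.\ modules remain ind-c.g.: if $B \subseteq A = \mathcal{O}_S \cdot H$ is closed, then writing $\alpha = t^{-1} f$ shows $tb \in H \cap B$ for every $b = \alpha h \in B$, so $B = \mathcal{O}_S \cdot (H \cap B)$.

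Extension-closure in (5)--(7) continues the lift-and-span strategy with increasing care. For (5), given $A \hookrightarrow E \twoheadrightarrow B$ with both ind-c.g.\ and respective compact generators $K_A, K_B$, I lift $K_B$ to a compact $\tilde K_B \subset E$ and form $\tilde H := \mathcal{O}_F \cdot (K_A \cup \tilde K_B \cup K_0)$, where $K_0$ is a compact neighborhood of $0$; this is a c.g.\ clopen $\mathcal{O}_F$-submodule, and every $e \in E$ decomposes as $\alpha \tilde h + a$ with $a \in A \subseteq \mathcal{O}_S \cdot \tilde H$, giving $E = \mathcal{O}_S \cdot \tilde H$. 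For (6), I first split off vector module summands via Theorem \ref{thm_MainDecomp}(5), reducing to the vector-free case, where the task is to produce a \emph{compact} clopen $\mathcal{O}_F$-submodule (not merely c.g.). For (7), adelicity of the extension follows from (6) together with preservation of vector-freeness (a vector summand would embed into the totally disconnected $B$ by Theorem \ref{thm_BraconnierVilenkinTypeTheorem}(2), hence vanish) and preservation of the adelic condition $\bigcap_\alpha \alpha C_E = 0$, which can be checked componentwise via Proposition \ref{prop_IdentifyAdelicBlocks}.

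The principal obstacle is (6). The lift-and-span strategy of (5) produces a c.g.\ $\mathcal{O}_F$-submodule, but (6) requires a \emph{compact} one. Concretely, given the extension $A \hookrightarrow E \twoheadrightarrow B$ with $A, B$ vector-free quasi-adelic and compact clopen generators $C_A, C_B$, the $\mathcal{O}_F$-span of $C_A \cup \tilde C_B$ (with $\tilde C_B$ a compact lift of $C_B$) is c.g.\ but not compact, since $\mathcal{O}_F$ itself is infinite. Overcoming this requires exploiting the structure of the extension $C_A \hookrightarrow \phi^{-1}(C_B) \twoheadrightarrow C_B$ inside $A \hookrightarrow \phi^{-1}(C_B) \twoheadrightarrow C_B$: since the quotient $\phi^{-1}(C_B)/C_A$ is an extension of the compact $C_B$ by the discrete $A/C_A$, one extracts a compact open $\mathcal{O}_F$-submodule $W \subseteq \phi^{-1}(C_B)$ with $W \supseteq C_A$ and $\phi(W) = C_B$; then $\mathcal{O}_S \cdot W \supseteq A$ and $\phi(\mathcal{O}_S \cdot W) = B$ force $\mathcal{O}_S \cdot W = E$, so that $W$ witnesses quasi-adelicity of $E$.
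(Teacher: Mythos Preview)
Your treatment of (1)--(3) and (5) is fine and close to the paper's (for (5) you give a direct lift-and-span argument, whereas the paper simply cites that left $s$-filtering implies extension-closedness; either works). The substantive divergences are in (4), (6), and (7), and each has a real gap.

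\textbf{(4):} You establish left specialness, but left filtering is missing. Left filtering asks: given $f\colon X\to Y$ with $X$ ind-c.g.\ and $Y$ \emph{arbitrary}, factor $f$ through an ind-c.g.\ admissible subobject of $Y$. Your observation that closed $\mathcal{O}_S$-submodules of ind-c.g.\ modules are ind-c.g.\ is correct and is what guarantees the kernel $K$ in your left-special factorization is ind-c.g., but it does not address left filtering, since there the ambient $Y$ is not assumed ind-c.g. The paper handles this by first decomposing $Y$ via Theorem~\ref{thm_MainDecomp}(1) as $Q\hookrightarrow Y\twoheadrightarrow D$ with $Q$ quasi-adelic and $D$ discrete, then using (2) to see that the image of $X$ in $D$ is finitely generated, and finally taking $Q'$ to be the preimage in $Y$ of this finitely generated piece. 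This produces the required ind-c.g.\ clopen subobject of $Y$.

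\textbf{(6):} Your direct approach---reduce to $A,B$ vector-free and extract a compact open $\mathcal{O}_F$-submodule $W\subseteq\phi^{-1}(C_B)$---hinges on $\phi^{-1}(C_B)$ admitting a compact open subgroup, equivalently on $E$ being vector-free. But reducing $A,B$ to vector-free does not force $E$ vector-free: vector-free quasi-adelic objects need not be totally disconnected (e.g.\ compact tori), so a putative vector summand of $E$ can map nontrivially into the connected component of $B$ without contradiction. The paper avoids this by a different route: it reruns the left-filtering construction from (4) with the refinement (via (3)) that the image in $D$ is now \emph{finite}, so the intermediate object $Q'$ can be arranged quasi-adelic rather than merely ind-c.g.; one then shows the residual discrete quotient $D$ is finite and absorbs it into the compact generator.

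\textbf{(7):} Once (6) gives $E$ quasi-adelic, checking the adelic condition $\bigcap_\alpha \alpha C_E=0$ ``componentwise via Proposition~\ref{prop_IdentifyAdelicBlocks}'' is circular: that proposition presupposes adelicity. The paper instead argues by duality: since adelic is self-dual under Pontryagin duality, both $E$ and $E^\vee$ are quasi-adelic; applying Theorem~\ref{thm_MainDecomp}(2) to each and using (3) forces the compact piece $K$ in $K\hookrightarrow E\twoheadrightarrow A$ to be finite, and then torsion-freeness of $E$ (as an extension of torsion-free groups) kills $K$.
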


\begin{proof}
(1) Suppose $X$ is quasi-adelic. Write $X\simeq V\oplus Q$ with $V$ a vector
module and $Q=\mathcal{O}_{S}\cdot C$ with $C$ a compact clopen $\mathcal{O}%
_{F}$-submodule of $Q$. Define $H:=V+C$. This is a closed $\mathcal{O}_{F}%
$-submodule in $X$ as $V$ is closed and $C$ compact in $X$ (see Ex.
\ref{example_SumOfClosedSubgroups}). We find $X/H\cong Q/C$, which is discrete
as $C$ was open in $Q$, and thus $H$ is (cl)open in $X$. The sum
$K:=\overline{B_{V,1}(0)}+C$ of a closed unit ball in $V$ (with respect to any
real vector space norm on $V$ to our liking) is a witness that $H$ is c.g. (2)
Suppose $X$ is ind-c.g. and we are given $g\colon X\rightarrow D$ with $D$
discrete. Pick $H\subseteq X$ as in Def. \ref{def_indcg}. Then as an LCA group
$H\simeq\mathbf{R}^{n}\oplus\mathbf{Z}^{m}\oplus C$ with $C$ a compact
LCA\ group \cite[Thm. 2.5]{MR0215016}. Clearly, $g(\mathbf{R}^{n})=0$ and
$g(C)$ must be finite in $D$ as $\mathbf{R}^{n}$ is connected and $C$ compact.
Thus, $g(H)$ is a finitely generated abelian group. Write $\mathcal{O}%
_{S}\left\langle h(H)\right\rangle $ for the $\mathcal{O}_{S}$-module span of
$g(H)$. Then $g(X)=$ $\mathcal{O}_{S}\left\langle g(H)\right\rangle $ and this
is finitely generated as an $\mathcal{O}_{S}$-module. (3) Exactly as the proof
for (2), but this time $m=0$. (4) We show left filtering: Suppose $f\colon
X\rightarrow Y$ is given with $X\in\mathsf{LCA}_{\mathcal{O}_{S},icg}$. Use
Theorem \ref{thm_MainDecomp} to get the left diagram in%
\begin{equation}%
\xymatrix{
& Q \ar@{^{(}->}[d] \\
X \ar[r]_{f} \ar[dr]_{h} & Y \ar@{->>}[d] \\
& D
}%
\qquad\qquad\qquad%
\xymatrix{
& Q^{\prime} \ar@{^{(}->}[d] \\
X \ar[r] \ar[dr]_{0} \ar@{-->}[ur] & Y \ar@{->>}[d] \\
& D/{\mathcal{O}_{S} \left\langle h(H)\right\rangle}
}
\label{lwca1}%
\end{equation}
with $Q$ quasi-adelic and $D$ discrete. Pick $H\subseteq X$ as in Def.
\ref{def_indcg}. By part (2) of the proposition, we deduce that $\mathcal{O}%
_{S}\left\langle h(H)\right\rangle $ is finitely generated. Defining
$Q^{\prime}:=\ker(Y\twoheadrightarrow D/\mathcal{O}_{S}\left\langle
h(H)\right\rangle )$, we get the right diagram in Eq. \ref{lwca1}. We claim
that $Q^{\prime}$ is ind-c.g. To see this, note that the filtration
$Q\hookrightarrow Q^{\prime}\hookrightarrow Y$ induces the exact sequence
$Q^{\prime}/Q\hookrightarrow Y/Q\overset{q}{\twoheadrightarrow}Y/Q^{\prime}$,
but in view of Diagram \ref{lwca1} the arrow $q$ agrees with
$D\twoheadrightarrow D/\mathcal{O}_{S}\left\langle h(H)\right\rangle $, so we
obtain $Q^{\prime}/Q\cong\mathcal{O}_{S}\left\langle h(H)\right\rangle $. We
obtain the exact sequence%
\begin{equation}
Q\overset{t}{\hookrightarrow}Q^{\prime}\twoheadrightarrow\mathcal{O}%
_{S}\left\langle h(H)\right\rangle \label{lwca2}%
\end{equation}
with discrete quotient on the right. As $Q$ is quasi-adelic, it is ind-c.g. by
part (1) of the proposition, and we pick $\widehat{H}\subseteq Q$ as in Def.
\ref{def_indcg}, $Q=\bigcup_{\alpha\in\mathcal{O}_{S}^{\times}}\alpha
\widehat{H}$. Pick elements $b_{1},\ldots,b_{r}\in Q^{\prime}$ mapping to a
set of $\mathcal{O}_{S}$-module generators of $\mathcal{O}_{S}\left\langle
h(H)\right\rangle $. Define $H^{\prime}:=\widehat{H}+\mathcal{O}%
_{F}\left\langle b_{1},\ldots,b_{r}\right\rangle \subseteq Q^{\prime}$. As
$\widehat{H}$ is open in $Q$, and $Q$ is open in $Q^{\prime}$ (as the cokernel
of $t$ is discrete in Eq. \ref{lwca2}), $\widehat{H}$ is open in $Q^{\prime}$
and therefore so is $H^{\prime}$. As $\widehat{H}$ is c.g., so is $H^{\prime}$
(\cite[Thm. 2.5]{MR0215016}). We find $Q^{\prime}=\bigcup_{\alpha
\in\mathcal{O}_{S}^{\times}}\alpha H^{\prime}$, so $Q^{\prime}$ turns out to
be ind-c.g. Hence, the right diagram in Eq. \ref{lwca1} proves the left
filtering property. Next, we show left specialness: Suppose we are given an
exact sequence $X^{\prime}\hookrightarrow X\overset{q}{\twoheadrightarrow
}X^{\prime\prime}$ with $X^{\prime\prime}$ ind-c.g. Use Theorem
\ref{thm_MainDecomp} to obtain the middle column in the left diagram in%
\begin{equation}%
\xymatrix{
& Q \ar@{^{(}->}[d]_{i} \ar[dr]^{h} \\
X^{\prime} \ar@{^{(}->}[r] & X \ar@{->>}[d] \ar@{->>}[r]_{q} & X^{\prime
\prime} \\
& D
}%
\qquad%
\xymatrix{
& Q \ar@{^{(}->}[d]_{i} \ar[drr]^{0} \\
X^{\prime} \ar@{^{(}->}[r] & X \ar@{->>}[d] \ar@{->>}[r]_{q} & X^{\prime
\prime} \ar@{->>}[r]_-{\ell} & X^{\prime\prime} / W \\
& D \ar[urr]_{c}
}
\label{lwca3}%
\end{equation}
with $h:=q\circ i$. Pick $H\subseteq Q$ as in Def. \ref{def_indcg}. As $H$ is
open in $Q$, and $i$ is open, and $q$ is open, the set $h(H)\subseteq
X^{\prime\prime}$ is (cl)open. Thus, $h(H)=\overline{h(H)}$ is c.g. open and
an $\mathcal{O}_{F}$-module. Define $W:=\bigcup_{\alpha\in\mathcal{O}%
_{S}^{\times}}\alpha h(H)$. As each $\alpha$ acts through a homeomorphism,
each $\alpha h(H)$ is also open, so their union $W$ is also (cl)open in
$X^{\prime\prime}$. By construction, $W$ is an open $\mathcal{O}_{S}%
$-submodule of $X^{\prime\prime}$. We obtain the right diagram in Eq.
\ref{lwca3}, the lift $c$ exists and must be an admissible epic, and therefore
$X^{\prime\prime}/W$ is discrete. As $X^{\prime\prime}$ is ind-c.g., the image
of $\ell$ is a finitely generated $\mathcal{O}_{S}$-module, but $\ell$ is
surjective, so $X^{\prime\prime}/W$ is finitely generated. Define%
\[
\tilde{Q}:=Q+\mathcal{O}_{S}\left\langle g_{1},\ldots,g_{r}\right\rangle
\subseteq X\text{,}%
\]
where $g_{1},\ldots,g_{r}\in X$ are elements mapping to a finite set of
$\mathcal{O}_{S}$-module generators of $X^{\prime\prime}/W$. As $Q$ is open in
$X$, so is $\tilde{Q}$. Now use $\tilde{H}:=H+\mathcal{O}_{F}\left\langle
g_{1},\ldots,g_{r}\right\rangle \subseteq\tilde{Q}$ as a witness that
$\tilde{Q}$ is ind-c.g. We arrive at%
\begin{equation}%
\xymatrix{
K \ar@{^{(}->}[r] & \tilde{Q} \ar[d] \ar@{->>}[r] & X^{\prime\prime} \ar@
{=}[d] \\
X^{\prime} \ar@{^{(}->}[r] & X  \ar@{->>}[r] & X^{\prime\prime}
}
\label{lwca3b}%
\end{equation}
where the middle downward arrow is the (cl)open inclusion $\tilde
{Q}\hookrightarrow X$. The map $\tilde{Q}\rightarrow X^{\prime\prime}$ is
surjective by construction, and open as the composition of the open maps
$\tilde{Q}\hookrightarrow X\underset{q}{\twoheadrightarrow}X^{\prime\prime}$.
The object $K$ is defined as the kernel of the top epic on the right. It is
also ind-c.g. One may use $H_{K}:=H\cap K\subseteq K$ as a witness. As $H$ is
open in $Q$, $H_{K}$ is open in $K$ (by the definition of the subspace
topology) and $\mathcal{O}_{S}\cdot H_{K}=\mathcal{O}_{S}H\cap\mathcal{O}%
_{S}K=\tilde{Q}\cap K=K$. (5) Use \cite[Lemma 2.14]{MR3510209}, relying on
part (4) of the proposition. (6) Suppose $X\overset{f}{\hookrightarrow
}Y\twoheadrightarrow X^{\prime}$ is an exact sequence with $X,X^{\prime}$
quasi-adelic. Now repeat the first steps of the proof of claim (4). It yields
the diagram%
\[%
\xymatrix{
& Q^{\prime} \ar@{^{(}->}[d]_{i}  \\
X \ar@{^{(}->}[r]_{f} \ar@{^{(}-->}[ur] \ar[dr]_{0} & Y \ar@{->>}[d] \ar@
{->>}[r]_{q} & X^{\prime} \\
& D
}%
\]
with $Q^{\prime}$ quasi-adelic and $D$ discrete, and such that the diagonal
dashed lift exists (namely: By literally repeating the argument, we arrive at
such a diagram, but with $Q^{\prime}$ ind-c.g., but as $X$ is quasi-adelic,
$\mathcal{O}_{S}\left\langle h(H)\right\rangle $ is a finite torsion
$\mathcal{O}_{S}$-module, and taking this into consideration, we can arrange
$Q^{\prime}$ to be quasi-adelic). Moreover, unlike for (3), the map $f$ is an
admissible monic. As $\mathsf{LCA}_{\mathcal{O}_{S}}$ is idempotent complete
(it is quasi-abelian), it follows that $X\hookrightarrow Q^{\prime}$ must be
an admissible monic. The filtration $X\hookrightarrow Q^{\prime}%
\hookrightarrow Y$ gives rise to the exact sequence $Q^{\prime}%
/X\hookrightarrow Y/X\twoheadrightarrow Y/Q^{\prime}$, but $Y/Q^{\prime}\cong
D$ is discrete and therefore get the exact sequence $Q^{\prime}%
/X\hookrightarrow X^{\prime}\twoheadrightarrow D$. As $X^{\prime}$ is
quasi-adelic and $D$ discrete, part (3) of the proposition implies that $D$ is
finite. However, as we know that $Q^{\prime}$ is quasi-adelic, the sequence
$Q^{\prime}\hookrightarrow Y\twoheadrightarrow D$ now implies that $Y$ is also
quasi-adelic (e.g., take $s_{1},\ldots,s_{l}\in Y$ preimages of the elements
in $D$, and if $Q^{\prime}=\bigcup_{\alpha\in\mathcal{O}_{S}^{\times}}%
\alpha\tilde{C}$ with $\tilde{C}$ a compact clopen $\mathcal{O}_{F}%
$-submodule, then for $\widehat{C}:=\sum s_{i}C$ we get $Y=\bigcup_{\alpha
\in\mathcal{O}_{S}^{\times}}\alpha\widehat{C}$). (7) Write $X^{\prime
}\hookrightarrow X\twoheadrightarrow X^{\prime\prime}$ with $X^{\prime
},X^{\prime\prime}\in\mathsf{LCA}_{\mathcal{O}_{S},ad}$. As adelic objects are
quasi-adelic, and these are closed under extensions by part (6), $X$ is
quasi-adelic. As duals of adelic objects are again adelic, the same argument
applied to $X^{\prime\prime\vee}\hookrightarrow X^{\vee}\twoheadrightarrow
X^{\prime\vee}$ shows that $X^{\vee}$ is quasi-adelic. By Thm.
\ref{thm_MainDecomp} write $K\hookrightarrow X\twoheadrightarrow A$ with $K$
compact and $A$ adelic. Analogously, $K^{\prime}\hookrightarrow X^{\vee
}\twoheadrightarrow A^{\prime}$ for the dual. Thus, $A^{\prime\vee
}\hookrightarrow X\twoheadrightarrow K^{\prime\vee}$ is an exact sequence with
$A^{\prime\vee}$ adelic and $K^{\prime\vee}$ discrete. Maps from a
quasi-adelic object to a discrete object must have finite image by part (3).
Thus, $K^{\prime\vee}$ is finite, and therefore $K$ is finite. Now recall that
$K\subseteq X$. Since $X^{\prime},X^{\prime\prime}$ are adelic (and therefore
torsion-free), the sequence $X^{\prime}\hookrightarrow X\twoheadrightarrow
X^{\prime\prime}$, after forgetting topology, shows that $X$ is an extension
of torsion-free abelian groups, and therefore itself torsion-free\footnote{It
is a very elementary classical exercise to show that torsion-free abelian
groups are extension-closed in all abelian groups.}. As $K\subseteq X$
consists exclusively of torsion elements, we must have $K=0$. But then $X\cong
A$, showing that $X$ is adelic.
\end{proof}

\subsection{Connected modules}

We collect a few observations regarding connectedness and injective objects in
$\mathsf{LCA}_{\mathcal{O}_{S}}$. These generalize the characterizations from
\cite{MR4028830,kthyartin}. All in all, the situation is a little convoluted.
Suppose $X\in\mathsf{LCA}_{\mathcal{O}_{S}}$. We call%
\begin{equation}
\operatorname*{aco}(X):=\bigcup_{f\in\operatorname*{Hom}_{\mathsf{LCA}%
_{\mathcal{O}_{S}}}(\mathbb{A}_{S},X)}\operatorname*{im}%
\nolimits_{\mathsf{Set}}(f\colon\mathbb{A}_{S}\longrightarrow X)\text{,}
\label{lv1}%
\end{equation}
i.e., the union of the set-theoretic images of all $\mathcal{O}_{S}$-module
homomorphisms from the $S$-ad\`{e}les to $X$, the \emph{adelic core} of $X$.

Note that $\mathbb{A}_{S}$ is, besides being an $\mathcal{O}_{S}$-module, also
a topological ring. Suppose $f(x),g(y)\in\operatorname*{aco}(X)$ and
$\alpha,\beta\in\mathcal{O}_{S}$. By the continuous $\mathcal{O}_{S}$-module
map
\[
\mathbb{A}_{S}\overset{\triangle}{\longrightarrow}\mathbb{A}_{S}%
\oplus\mathbb{A}_{S}\overset{(\cdot x,\cdot y)}{\longrightarrow}\mathbb{A}%
_{S}\oplus\mathbb{A}_{S}\overset{(\alpha f,\beta g)}{\longrightarrow}X\oplus
X\overset{+}{\longrightarrow}X
\]%
\[
s\longmapsto(s,s)\longmapsto(sx,sy)\longmapsto(\alpha f(sx),\beta
g(sy))\longmapsto\alpha f(sx)+\beta g(sy)\text{,}%
\]
any element $\alpha f(x)+\beta g(y)$ can be the image of $1_{\mathbb{A}_{S}}$,
so the set $\operatorname*{aco}(X)$ is an algebraic $\mathcal{O}_{S}%
$-submodule of $X$.

\begin{example}
In $\mathsf{LCA}_{\mathbf{Z}}$ the condition $\operatorname*{aco}(X)=X$ is
equivalent to being injective and the condition $\overline{\operatorname*{aco}%
(X)}=X$ is equivalent to being connected (both follows from $\mathbb{A}%
_{\{\infty\}}=\mathbf{R}$ and \cite[Thm. 3.2 and its Cor. 2]{MR0215016}).
\end{example}

\begin{example}
The importance of running over all $f$ in Eq. \ref{lv1} is seen from the
following fact: Let $C\in\mathsf{LCA}_{\mathbf{Z}}$ be compact. Then a
continuous group homomorphism $f\colon\mathbf{R}\rightarrow C$ with dense
image exists if and only if $C$ is connected \emph{and} admits an open
neighbourhood basis of cardinality $\leq\mathfrak{c}$, the cardinality of the
continuum \cite[Prop. 5.16 (a),(c)]{MR637201}. We expect that the condition
$\overline{\operatorname*{aco}(X)}=X$ scales better with size. See
\S \ref{sect_Appendix_SetTheory} for more on such matters.
\end{example}

\begin{lemma}
Every connected module in $\mathsf{LCA}_{\mathcal{O}_{S}}$ is isomorphic to
$V\oplus K$, where $V$ is a vector $\mathcal{O}_{S}$-module and $K$ a compact
connected $\mathcal{O}_{S}$-module.

\begin{enumerate}
\item Pontryagin duality sends compact connected modules to discrete
$\mathcal{O}_{S}$-flat modules and reversely.

\item If $S$ contains only the infinite places, every injective object in
$\mathsf{LCA}_{\mathcal{O}_{S}}$ is connected.

\item Any compact connected module $K$ such that $K^{\vee}$ is a finitely
generated $\mathcal{O}_{S}$-module is an injective object in $\mathsf{LCA}%
_{\mathcal{O}_{S}}$. Unless $S$ contains all places, there is always some
compact connected $K$ which is not an injective object.

\item If $S$ contains all places, every compact module is connected and injective.

\item If $I\in\mathsf{LCA}_{\mathcal{O}_{S}}$ is an injective object, then
$\operatorname*{aco}(X)=X$.
\end{enumerate}
\end{lemma}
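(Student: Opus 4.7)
The plan is to reduce to the classical structure theorem for connected LCA groups (e.g.\ \cite[Thm.\ 2.5]{MR0215016}), which gives $X \cong \mathbf{R}^{n} \oplus K$ with $K$ compact connected as topological abelian groups. The subgroup $V \cong \mathbf{R}^{n}$ is characterized intrinsically as the unique maximal vector subgroup of $X$, hence it is stable under every continuous group endomorphism---in particular under multiplication by any $\alpha \in \mathcal{O}_{S}$. Thus $V$ is an $\mathcal{O}_{S}$-submodule, the inclusion $V \hookrightarrow X$ is an admissible monic in $\mathsf{LCA}_{\mathcal{O}_{S}}$, and since vector modules are injective there by Thm.\ \ref{thm_MainDecomp}(5), the sequence $V \hookrightarrow X \twoheadrightarrow X/V$ splits. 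The cokernel is automatically compact and connected.

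\textbf{Parts (1), (2), (5).} For (1), classical Pontryagin duality sends compact connected LCA groups to discrete torsion-free abelian groups. For an $\mathcal{O}_{S}$-module, $\mathbf{Z}$-torsion-freeness promotes to $\mathcal{O}_{S}$-torsion-freeness by a norm argument ($N_{F/\mathbf{Q}}(\alpha)$ lies in $\alpha \mathcal{O}_{F} \subseteq \alpha \mathcal{O}_{S}$ for $\alpha \in \mathcal{O}_{F}$, and any nonzero element of $\mathcal{O}_{S}$ differs from one in $\mathcal{O}_{F}$ by an $\mathcal{O}_{S}$-unit), and over the Dedekind domain $\mathcal{O}_{S}$ torsion-freeness equals flatness. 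For (2), with $S = S_{\infty}$ we have $\mathcal{O}_{S} = \mathcal{O}_{F}$, and inspection of the characterization of injectives in $\mathsf{LCA}_{\mathcal{O}_{F}}$ from \cite[Thm.\ 5.13]{MR4028830} shows they are built out of vector modules and compact connected pieces, hence are connected. For (5), given $x \in I$, the $\mathcal{O}_{S}$-linear map $\mathcal{O}_{S} \to I$ sending $1 \mapsto x$ is continuous since the source is discrete; the diagonal $\mathcal{O}_{S} \hookrightarrow \mathbb{A}_{S}$ is an admissible monic (Ex.\ \ref{example_AdeleSequence}), so injectivity of $I$ provides an extension $\mathbb{A}_{S} \to I$ whose image contains $x$, placing $x$ in $\operatorname*{aco}(I)$.

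\textbf{Parts (3), (4).} For (3), the hypothesis says $K^{\vee}$ is a finitely generated torsion-free---equivalently projective---module over the Dedekind ring $\mathcal{O}_{S}$, so Thm.\ \ref{thm_MainDecomp}(4) immediately applies. For the counter-example one uses that $K$ is injective if and only if $K^{\vee}$ is projective in $\mathsf{Mod}_{\mathcal{O}_{S}}$: one direction is Thm.\ \ref{thm_MainDecomp}(4), the other follows from Thm.\ \ref{thm_MainDecomp}(3) together with Pontryagin duality (any algebraic surjection onto the discrete $K^{\vee}$ becomes an admissible epic when the source is given the discrete topology). Now when $\mathcal{O}_{S} \subsetneq F$, the module $F$ is flat but not projective over $\mathcal{O}_{S}$ (over a Dedekind domain projectives are direct sums of invertible ideals by Kaplansky, and the rank-$1$ but non-finitely-generated $F$ is neither), so $K := F^{\vee}$ is compact connected but not injective. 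For (4), when $S$ contains all places, $\mathcal{O}_{S} = F$; every $F$-module is divisible, a compact divisible abelian group is connected (its dual is discrete and torsion-free), and a discrete $F$-vector space is free hence projective in $\mathsf{Mod}_{F}$, so Thm.\ \ref{thm_MainDecomp}(4) again yields injectivity.

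\textbf{Main obstacle.} The technically trickiest point is the first claim: upgrading the classical direct-sum decomposition of connected LCA groups to a splitting in $\mathsf{LCA}_{\mathcal{O}_{S}}$. Once the vector summand is identified as intrinsic and hence $\mathcal{O}_{S}$-stable, everything else either reduces to Thm.\ \ref{thm_MainDecomp}(3)--(5) plus Pontryagin duality, or, in part (3), to the one explicit construction $K = F^{\vee}$.
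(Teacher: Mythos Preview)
Your decomposition argument has a genuine gap: there is no \emph{unique} maximal vector subgroup of a connected LCA group. In $\mathbf{R}\times\mathbb{T}$, both $\mathbf{R}\times\{0\}$ and the graph $\{(t,\,t\bmod 1):t\in\mathbf{R}\}$ are closed subgroups isomorphic to $\mathbf{R}$, and neither is contained in a larger vector subgroup. So the intrinsic characterization you invoke fails, and with it the claim that $V$ is automatically $\mathcal{O}_{S}$-stable. The fix is immediate: use instead the maximal \emph{compact} subgroup $K$, which \emph{is} unique (it is the union of all compact subgroups) and hence preserved by every continuous endomorphism, in particular by the $\mathcal{O}_{S}$-action. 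Then $X/K$ is a vector $\mathcal{O}_{S}$-module and the sequence $K\hookrightarrow X\twoheadrightarrow V$ splits because $V$ is projective (Thm.~\ref{thm_MainDecomp}(5)). This is in fact closer to what the paper does, though the paper derives the sequence $K\hookrightarrow X\twoheadrightarrow V$ from its own structure theory (Thm.~\ref{thm_MainDecomp}(1),(2) and Thm.~\ref{thm_BraconnierVilenkinTypeTheorem}) rather than from the classical decomposition of connected LCA groups.

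Your treatment of the counter-example in~(3) is different from the paper's and worth noting. You exhibit $K=F^{\vee}$ directly: $F$ is flat over $\mathcal{O}_{S}$ but not projective when $\mathcal{O}_{S}\subsetneq F$ (rank one, not finitely generated, so excluded by Kaplansky). The paper instead argues abstractly that if \emph{every} flat $\mathcal{O}_{S}$-module were projective, then $\mathcal{O}_{S}$ would be a perfect ring in the sense of Bass, forcing it to be a field. Your route is more elementary and more explicit; the paper's has the minor advantage of not needing the ``$K$ injective $\Rightarrow K^{\vee}$ projective in $\mathsf{Mod}_{\mathcal{O}_{S}}$'' step to be argued as carefully (though the paper is in fact rather brisk about it, and your justification via discrete surjections being admissible epics is a useful addition). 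Parts (1), (2), (4), (5) are handled essentially as in the paper.
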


One can directly prove (2) by using (5):\ If $S$ contains only the infinite
places, $\mathbb{A}_{S}$ is a real vector space, hence connected, so
$\operatorname*{aco}(X)$ must lie in the connected component of the identity
in $X$.

\begin{proof}
Suppose $X\in\mathsf{LCA}_{\mathcal{O}_{S}}$ is connected. By Thm.
\ref{thm_MainDecomp} (1) there is an exact sequence $Q\hookrightarrow
X\twoheadrightarrow D$ with $D$ discrete. Since $X$ is connected, so must be
$D$, hence $D=0$. By Thm. \ref{thm_MainDecomp} (2) there is an exact sequence
$K\hookrightarrow X\twoheadrightarrow A$ with $K$ a compact $\mathcal{O}_{S}%
$-module and $A$ adelic. Write $A\simeq V\oplus A^{\prime}$ with $V$ a vector
$\mathcal{O}_{S}$-module and $A^{\prime}$ vector-free adelic. By Thm.
\ref{thm_BraconnierVilenkinTypeTheorem} (2) $A^{\prime}$ is totally
disconnected. Since $X\twoheadrightarrow A\twoheadrightarrow A^{\prime}$ is
surjective, $A^{\prime}$ must also be connected, so $A^{\prime}=0$. We obtain
an exact sequence $K\hookrightarrow X\twoheadrightarrow V$. Since $V$ is a
projective object by Thm. \ref{thm_MainDecomp} (5), this sequence splits. The
splitting allows us to find an admissible epic $X\twoheadrightarrow K$,
proving that $K$ is connected. By \cite[Cor. 2 to Thm. 30]{MR0442141} the dual
$K^{\vee}$ is discrete and has no non-trivial compact subgroups, i.e.,
$K^{\vee}$ is torsion-free. (1) As $\mathcal{O}_{S}$ is a Dedekind domain, it
follows that $K^{\vee}$ is flat over $\mathcal{O}_{S}$. Conversely, the
(discrete) flat modules have no torsion and the cited result can be applied in
reverse to deduce that the Pontryagin dual is compact and connected. (2)
\cite[Prop. 8.1 (1)]{MR4028830} (3) As $K^{\vee}$ is torsion-free, use that
finitely generated flat modules over a Dedekind domain are projective. Then
Thm. \ref{thm_MainDecomp} (4) shows that $K$ is an injective object.
Conversely, assume that every compact connected module $K$ is injective in
$\mathsf{LCA}_{\mathcal{O}_{S}}$. By (1) this means that every algebraic flat
$\mathcal{O}_{S}$-module is projective. Thus, $\mathcal{O}_{S}$ is a
perfect\footnote{This term has no relation to the Frobenius.} ring (in the
sense of non-commutative ring theory/in the sense of Bass) by \cite[Thm
24.25]{MR1125071}. Since $\mathcal{O}_{S}$ is a domain, \cite[Thm
23.24]{MR1125071} implies that $\mathcal{O}_{S}$ is a local ring such that all
elements in its maximal ideal $\mathfrak{m}$ are nilpotent. It follows that
$\mathfrak{m}=(0)$ and $\mathcal{O}_{S}$ must be a field. As $S$ contains all
infinite places by assumption, Eq. \ref{l_Def_RingOS} implies that $S$ must
contain \textit{all} places. (4) If $S$ contains all places, $\mathcal{O}%
_{S}=F$ and thus all $\mathcal{O}_{S}$-modules are flat and the reverse
direction of (1) shows that any compact module is connected. Moreover,
$K^{\vee}$ is always a projective $F$-module, even if it is not finitely
generated. Again, Thm. \ref{thm_MainDecomp} (4) applies. (5) Suppose $y\in I$.
Consider the solid arrows in the diagram%
\[%
\xymatrix{
\mathcal{O}_{S} \ar[dr]_{\cdot y} \ar@{^{(}->}[r] & \mathbb{A}_{S} \ar@
{->>}[r] \ar@{-->}[d] & {\mathbb{A}_{S}}/{\mathcal{O}_{S}} \\
& I,
}%
\]
where the diagonal arrow sends $x\in\mathcal{O}_{S}$ to $xy\in I$ with respect
to its $\mathcal{O}_{S}$-module structure. The dashed arrow exists by the
injectivity of $I$. It directly follows from Eq. \ref{lv1} that $y\in
\operatorname*{aco}(X)$. As $y$ was arbitrary, $\operatorname*{aco}(X)=X$.
\end{proof}

\begin{problem}
What are the objects $X\in\mathsf{LCA}_{\mathcal{O}_{S}}$ such that
$\operatorname*{aco}(X)=X$?
\end{problem}

\subsection{Ind-c.g. Modules\label{subsect_IndCGModules}}

\begin{proposition}
\label{prop_icg_resolvable_in_qa}The inclusion $\mathsf{LCA}_{\mathcal{O}%
_{S},qa}\subseteq\mathsf{LCA}_{\mathcal{O}_{S},icg}$ induces an equivalence of
stable $\infty$-categories $\mathsf{D}_{\infty}^{b}\left(  \mathsf{LCA}%
_{\mathcal{O}_{S},qa}\right)  \overset{\sim}{\rightarrow}\mathsf{D}_{\infty
}^{b}\left(  \mathsf{LCA}_{\mathcal{O}_{S},icg}\right)  $.
\end{proposition}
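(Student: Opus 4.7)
The plan is to verify the hypotheses of the coresolution form of Keller's resolution theorem, extended to the stable $\infty$-categories associated to exact categories. Concretely, once we know that $\mathsf{LCA}_{\mathcal{O}_{S},qa}\subseteq\mathsf{LCA}_{\mathcal{O}_{S},icg}$ is a fully exact subcategory that (a) is closed under extensions, (b) is closed under cokernels of admissible monics, and (c) admits a finite qa-coresolution of every ind-c.g.\ object, the induced functor on $\mathsf{D}_{\infty}^{b}$ is an equivalence of stable $\infty$-categories. Property (a) is Prop.~\ref{prop_icg_and_qa}(6).

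For (c), given $X\in\mathsf{LCA}_{\mathcal{O}_{S},icg}$, I apply Thm.~\ref{thm_MainDecomp}(1) to obtain an admissible exact sequence $Q\hookrightarrow X\twoheadrightarrow D$ with $Q$ quasi-adelic and $D$ discrete. By Prop.~\ref{prop_icg_and_qa}(2) the module $D$ is finitely generated over $\mathcal{O}_{S}$, and since $\mathcal{O}_{S}$ is Dedekind we may write $D\cong D_{tors}\oplus P$ with $D_{tors}$ finite and $P$ finitely generated projective. Setting $Q':=\ker(X\twoheadrightarrow D\twoheadrightarrow P)$, the extension $Q\hookrightarrow Q'\twoheadrightarrow D_{tors}$ keeps $Q'$ quasi-adelic by (6). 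The key step is then to observe that the short exact sequence $Q'\hookrightarrow X\twoheadrightarrow P$ splits topologically as $X\cong Q'\oplus P$ in $\mathsf{LCA}_{\mathcal{O}_{S}}$: the algebraic projectivity of $P$ yields an $\mathcal{O}_{S}$-linear section, which is automatically continuous because $P$ is discrete, and the resulting map $Q'\oplus P\to X$ is a homeomorphism since $Q'\hookrightarrow X$ is clopen. Finally, tensoring the adèle sequence (Ex.~\ref{example_AdeleSequence}) with the flat module $P$ produces a length-$1$ qa-coresolution $0\to P\to P\otimes_{\mathcal{O}_{S}}\mathbb{A}_{S}\to P\otimes_{\mathcal{O}_{S}}(\mathbb{A}_{S}/\mathcal{O}_{S})\to 0$, whose middle term is adelic and whose right term is compact; direct-summing with the trivial coresolution of $Q'$ yields the required qa-coresolution of $X$.

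Property (b) follows from Thm.~\ref{thm_MainDecomp}(1) and Prop.~\ref{prop_icg_and_qa}(3),(6): given $A\hookrightarrow A'$ with both terms quasi-adelic and cokernel $B$, apply Thm.~\ref{thm_MainDecomp}(1) to $B$ to get $Q_{B}\hookrightarrow B\twoheadrightarrow D_{B}$ with $D_{B}$ discrete. The composition $A'\twoheadrightarrow B\twoheadrightarrow D_{B}$ is a surjection from a quasi-adelic object to a discrete $\mathcal{O}_{S}$-module, hence its image $D_{B}$ is finite by (3), in particular quasi-adelic, and extension-closedness (6) forces $B$ to be quasi-adelic.

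The main obstacle I anticipate is the splitting $X\cong Q'\oplus P$ in the construction of (c). A naïve attempt to directly splice a qa-coresolution of the entire discrete quotient $D$ onto $Q\hookrightarrow X\twoheadrightarrow D$ yields only a 4-term exact sequence, and the resulting $\operatorname{Ext}^{2}$-class is not obviously zero. The trick above is to first absorb the torsion part of $D$ back into an enlarged quasi-adelic kernel $Q'$, so that the remaining discrete quotient $P$ is \emph{projective}; then the algebraic section is forced to be continuous and the coresolution of $X$ reduces to the adèle sequence tensored with $P$.
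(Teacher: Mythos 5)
Your proof is correct, and it takes a genuinely different route from the paper for verifying Keller's condition C1 (your item (c)). Both you and the paper ultimately reduce to tensoring the ad\`ele sequence with a finitely generated projective $\mathcal{O}_{S}$-module, but the way you isolate that projective piece is different. The paper starts from the compactly generated clopen submodule $H\subseteq X$ of Def.~\ref{def_indcg}, invokes the structure theorem $H\simeq V\oplus G\oplus C$ for compactly generated objects of $\mathsf{LCA}_{\mathcal{O}_{F}}$, and then computes the filtered colimit over $\mathcal{O}_{S}^{\times}$ termwise to arrive at $X\cong \tilde{V}\oplus\tilde{G}\oplus\tilde{C}$ with $\tilde{G}$ finitely generated torsion-free (hence projective). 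You instead begin with the already-established two-step decomposition of Thm.~\ref{thm_MainDecomp}(1), $Q\hookrightarrow X\twoheadrightarrow D$, use Prop.~\ref{prop_icg_and_qa}(2) to see $D$ is finitely generated, split $D\cong D_{\mathrm{tors}}\oplus P$ over the Dedekind domain $\mathcal{O}_{S}$, absorb $D_{\mathrm{tors}}$ into an enlarged quasi-adelic kernel $Q'$ via extension-closure, and then split off $P$ using Thm.~\ref{thm_MainDecomp}(3). This avoids the structure theorem for compactly generated $\mathcal{O}_{F}$-modules and the colimit bookkeeping altogether, at the small cost of the torsion-absorption step; the trade-off you flag at the end (projectivity of the discrete quotient forces continuity of the section, hence a genuine splitting rather than a 4-term spliced sequence) is exactly the right observation and is what makes your route work. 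Your verification of C2 is also correct but more indirect than the paper's, which simply observes that $q(C)$ is compact and open in $X$ and generates it over $\mathcal{O}_{S}^{\times}$; your argument goes through Thm.~\ref{thm_MainDecomp}(1) and Prop.~\ref{prop_icg_and_qa}(3),(6) instead, which is fine but heavier than needed.
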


\begin{proof}
The category $\mathsf{LCA}_{\mathcal{O}_{S},qa}$ is a fully exact subcategory
of $\mathsf{LCA}_{\mathcal{O}_{S}}$ since it is extension-closed (Prop.
\ref{prop_icg_and_qa}). We check the conditions of \cite[Thm. 12.1]{MR1421815}
to obtain that the inclusion induces a triangulated equivalence on the
homotopy categories $Ho(\mathsf{D}_{\infty}^{b}\left(  \mathsf{LCA}%
_{\mathcal{O}_{S},qa}\right)  )\overset{\sim}{\rightarrow}Ho(\mathsf{D}%
_{\infty}^{b}\left(  \mathsf{LCA}_{\mathcal{O}_{S},icg}\right)  )$. Condition
\textit{C1} of loc. cit. amounts to showing that for each $X\in\mathsf{LCA}%
_{\mathcal{O}_{S},icg}$ there is an exact sequence $X\hookrightarrow
N\twoheadrightarrow N^{\prime}$ such that $N\in\mathsf{LCA}_{\mathcal{O}%
_{S},qa}$. Pick $H\subseteq X$ as in Def. \ref{def_indcg}. Then $H\in
\mathsf{LCA}_{\mathcal{O}_{F},cg}$ and%
\begin{equation}
X=\bigcup_{\alpha\in\mathcal{O}_{S}^{\times}}\alpha H=\underset{\alpha
\in\mathcal{O}_{S}^{\times}}{\underrightarrow{\operatorname*{colim}}%
}\,H\text{.} \label{lw1}%
\end{equation}
(i.e., all modules in the colimit are $H$, but the transition morphisms are
all of the shape $H\hookrightarrow\alpha H$). There exists an isomorphism
$H\simeq V\oplus G\oplus C$ with $V$ a vector $\mathcal{O}_{F}$-module, $G$ a
discrete $\mathcal{O}_{F}$-module with underlying group $\mathbf{Z}^{n}$ for
some $n<\infty$, and $C$ a compact $\mathcal{O}_{F}$-module (use \cite[Thm.
6.4 (2)]{MR4028830}, exploiting that $\mathcal{O}_{F}$ is a Dedekind domain).
First, define%
\[
\tilde{G}:=\underset{\alpha\in\mathcal{O}_{S}^{\times}}{\underrightarrow
{\operatorname*{colim}}}\,G\text{,}%
\]
where the transition maps are as in Eq. \ref{lw1}. As each $G$ is discrete,
$\tilde{G}$ is discrete and agrees with $\mathcal{O}_{S}\otimes_{\mathcal{O}%
_{F}}G$ algebraically. We note that $\tilde{G}\in\mathsf{LCA}_{\mathcal{O}%
_{S}}$ is discrete. Note that since $G$ is a finitely generated (torsion-free)
$\mathcal{O}_{F}$-module, $\tilde{G}$ is a finitely generated (torsion-free)
$\mathcal{O}_{S}$-module. Next, $V$ is necessarily already a locally compact
$\mathcal{O}_{S}$-module. To see this, use that all $\alpha\in\mathcal{O}_{S}$
necessarily act through bijectively on $V$ since it is a $F$-vector space.
Moreover, $V$ is $\sigma$-compact and therefore Pontryagin's Open Mapping
Theorem (\cite[p. 23, Theorem 3]{MR0442141}) forces multiplication by any
$\alpha$ to be a homeomorphism. In particular, the colimit $\tilde
{V}:=\underrightarrow{\operatorname*{colim}}_{\alpha\in\mathcal{O}_{S}%
^{\times}}\,G$ is itself $V$ as all transition maps are isomorphisms. We
conclude that $X\cong\tilde{V}\oplus\tilde{G}\oplus\tilde{C}$. As $\tilde
{V},\tilde{C}\in\mathsf{LCA}_{\mathcal{O}_{S},qa}$, it remains to find a
resolution for $\tilde{G}$. As $\tilde{G}$ is a finitely generated
torsion-free $\mathcal{O}_{S}$-module, it is actually projective (as
$\mathcal{O}_{S}$ is a Dedekind domain). Now (topologically)
tensor\footnote{Use Moskowitz's theory of tensor products to equip this with a
topology \cite{MR0215016}. The key point is that $\tilde{G}$ is finitely
generated as an $\mathcal{O}_{S}$-module.} the ad\`{e}le sequence
$\mathcal{O}_{S}\hookrightarrow\mathbb{A}_{S}\twoheadrightarrow\mathbb{A}%
_{S}/\mathcal{O}_{S}$ over $\mathcal{O}_{S}$ with $\tilde{G}$, see Eq.
\ref{l_adele_sequence}. As discussed loc. cit., $\mathbb{A}_{S}$ is adelic and
$\mathbb{A}_{S}/\mathcal{O}_{S}$ is quasi-adelic (since it is a compact
$\mathcal{O}_{S}$-module\footnote{Actually, a more na\"{\i}ve, even if less
conceptual, argument is to note that quotients of adelic objects must always
be quasi-adelic. This is exactly the argument we use to show condition
\textit{C2} in the same proof, so the reader may just copy this argument.}).
Thus, the sequence%
\[
X\hookrightarrow\tilde{V}\oplus(\tilde{G}\otimes_{\mathcal{O}_{S}}%
\mathbb{A}_{S})\oplus\tilde{C}\twoheadrightarrow\tilde{G}\otimes
_{\mathcal{O}_{S}}\mathbb{A}_{S}/\mathcal{O}_{S}%
\]
exhibits all properties we need for Condition \textit{C1}. For Condition
\textit{C2} we need to check that whenever $Q^{\prime}\hookrightarrow
Q\overset{q}{\twoheadrightarrow}X$ in $\mathsf{LCA}_{\mathcal{O}_{S},icg}$
with $Q^{\prime},Q\in\mathsf{LCA}_{\mathcal{O}_{S},qa}$, we must have
$X\in\mathsf{LCA}_{\mathcal{O}_{S},qa}$. This is immediate: Write
$Q=\bigcup_{\alpha\in\mathcal{O}_{S}^{\times}}\alpha C$ with $C$ a compact
$\mathcal{O}_{F}$-module. Then $X=\bigcup_{\alpha\in\mathcal{O}_{S}^{\times}%
}\alpha q(C)$, where $q(C)$ is also open since $q$ is an open map, and it is
compact since $C$ is compact.
\end{proof}

\begin{lemma}
\label{lemma_ad_same_motive_as_qa}The inclusion $\mathsf{LCA}_{\mathcal{O}%
_{S},ad}\subseteq\mathsf{LCA}_{\mathcal{O}_{S},qa}$ induces an equivalence of
localizing non-commutative motives $\mathcal{M}\mathsf{D}_{\infty}^{b}\left(
\mathsf{LCA}_{\mathcal{O}_{S},ad}\right)  \overset{\sim}{\longrightarrow
}\mathcal{M}\mathsf{D}_{\infty}^{b}\left(  \mathsf{LCA}_{\mathcal{O}_{S}%
,qa}\right)  $.
\end{lemma}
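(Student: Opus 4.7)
The plan is to realize $\mathsf{LCA}_{\mathcal{O}_S,ad}\subseteq \mathsf{LCA}_{\mathcal{O}_S,qa}$ as one half of a Schlichting-type short exact sequence of exact categories whose third term has vanishing motive by an Eilenberg swindle. Concretely, let $\mathsf{LCA}_{\mathcal{O}_S,c}\subseteq\mathsf{LCA}_{\mathcal{O}_S,qa}$ denote the full subcategory of compact $\mathcal{O}_S$-modules. This is extension-closed (a compact extension of a compact is compact), and the goal is a fiber sequence of localizing motives
\[
\mathcal{M}\mathsf{D}_{\infty}^{b}(\mathsf{LCA}_{\mathcal{O}_S,c})\longrightarrow \mathcal{M}\mathsf{D}_{\infty}^{b}(\mathsf{LCA}_{\mathcal{O}_S,qa})\longrightarrow \mathcal{M}\mathsf{D}_{\infty}^{b}(\mathsf{LCA}_{\mathcal{O}_S,ad})
\]
induced by the inclusion $\mathsf{LCA}_{\mathcal{O}_S,c}\hookrightarrow \mathsf{LCA}_{\mathcal{O}_S,qa}$ and a quotient functor which is left inverse to the inclusion $\mathsf{LCA}_{\mathcal{O}_S,ad}\hookrightarrow \mathsf{LCA}_{\mathcal{O}_S,qa}$.

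First I would verify that $\mathsf{LCA}_{\mathcal{O}_S,c}\subseteq\mathsf{LCA}_{\mathcal{O}_S,qa}$ is left $s$-filtering. Given any morphism $f\colon C\to X$ with $C$ compact and $X$ quasi-adelic, Thm.~\ref{thm_MainDecomp}(2) produces the canonical admissible monic $K\hookrightarrow X$ with $K$ compact and cokernel adelic; since adelic objects are vector-free-plus-vector-free-adelic and by Prop.~\ref{prop_icg_and_qa}(3) admit no nonzero maps from compacts (their only discrete quotients are finite, but the image of $C$ in the adelic quotient is itself adelic or zero, hence compact-adelic, hence zero), the map $f$ factors through $K$. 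This gives the factorization property. The left special property (admissible monics with compact quotient lift inside compact objects) follows from the fact that compact is closed under admissible submodules of compact objects in $\mathsf{LCA}_{\mathcal{O}_S,qa}$, together with Thm.~\ref{thm_MainDecomp}(2) once more.

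Second, the Schlichting quotient $\mathsf{LCA}_{\mathcal{O}_S,qa}/\mathsf{LCA}_{\mathcal{O}_S,c}$ should be identified (at the level of bounded derived $\infty$-categories, or at least of localizing motives) with $\mathsf{LCA}_{\mathcal{O}_S,ad}$. For this, note that the assignment $X\mapsto A$, where $K\hookrightarrow X\twoheadrightarrow A$ is the canonical sequence of Thm.~\ref{thm_MainDecomp}(2), is essentially surjective onto $\mathsf{LCA}_{\mathcal{O}_S,ad}$ and is exactly the functor that inverts morphisms with compact cone. The composition $\mathsf{LCA}_{\mathcal{O}_S,ad}\hookrightarrow\mathsf{LCA}_{\mathcal{O}_S,qa}\twoheadrightarrow\mathsf{LCA}_{\mathcal{O}_S,qa}/\mathsf{LCA}_{\mathcal{O}_S,c}$ is then an equivalence, and therefore so is the inclusion in the motive, as a retract of the quotient equivalence.

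Third, and this is really what makes the whole scheme work, I would show $\mathcal{M}\mathsf{D}_\infty^b(\mathsf{LCA}_{\mathcal{O}_S,c})\simeq 0$ by an Eilenberg swindle. Compact $\mathcal{O}_S$-modules are stable under countable direct products (Tychonoff, and the $\mathcal{O}_S$-action on a product is coordinatewise continuous), and such a countable product is again a compact object of $\mathsf{LCA}_{\mathcal{O}_S}$. For any $C\in\mathsf{LCA}_{\mathcal{O}_S,c}$ we then have
\[
C\oplus \prod_{n\geq 1}C\;\cong\;\prod_{n\geq 0}C
\]
in $\mathsf{LCA}_{\mathcal{O}_S,c}$, so $[C]=0$ in any localizing invariant. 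Set-theoretic issues are harmless via the appendix convention.

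The main obstacle I expect is the Schlichting-style identification in the second step, more specifically checking that the inclusion is genuinely left $s$-filtering rather than only `extension-closed', and that the resulting quotient exact category is actually equivalent to $\mathsf{LCA}_{\mathcal{O}_S,ad}$ and not merely a refinement of it. The delicate point is that $\mathsf{LCA}_{\mathcal{O}_S,qa}$ is only quasi-abelian (not abelian), so the quotient-by-compact must be verified to be exact and not to introduce spurious morphisms; the vector-module summand passes untouched, and for the vector-free part this is a direct consequence of the canonical nature of the $K\subset Q$ from the proof of Thm.~\ref{thm_MainDecomp}(2).
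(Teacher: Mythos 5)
Your proposal is correct and takes essentially the same approach as the paper: exhibit the full subcategory of compact $\mathcal{O}_{S}$-modules as a left $s$-filtering subcategory of $\mathsf{LCA}_{\mathcal{O}_{S},qa}$, kill its localizing motive by a countable-product Eilenberg swindle via Tychonov, and identify the resulting Schlichting quotient with $\mathsf{LCA}_{\mathcal{O}_{S},ad}$. One small remark: Prop.~\ref{prop_icg_and_qa}(3) concerns maps to \emph{discrete} targets and does not directly give the vanishing you invoke for the adelic quotient, but left filtering is more simply verified by factoring any $f\colon C\to X$ through its compact set-theoretic image $f(C)\subseteq X$, an admissible subobject whose cokernel remains quasi-adelic.
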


\begin{proof}
Let $\mathsf{C}\subseteq\mathsf{LCA}_{\mathcal{O}_{S},qa}$ denote the full
subcategory of compact $\mathcal{O}_{S}$-modules\footnote{As Pontryagin
duality provides an equivalence to $\mathsf{Mod}_{\mathcal{O}_{S}}^{op}$, we
see that $\mathsf{C}$ is an abelian category. This can be used to give
alternative proofs, however, we will not go into this any further.}. It is
easy to see that $\mathsf{C}\subseteq\mathsf{LCA}_{\mathcal{O}_{S},qa}$ is
left $s$-filtering, so there is a Verdier localization sequence $\mathsf{D}%
_{\infty}^{b}\left(  \mathsf{C}\right)  \rightarrow\mathsf{D}_{\infty}%
^{b}\left(  \mathsf{LCA}_{\mathcal{O}_{S},qa}\right)  \rightarrow
\mathsf{D}_{\infty}^{b}\left(  \mathsf{LCA}_{\mathcal{O}_{S},qa}%
/\mathsf{C}\right)  $. By the Eilenberg swindle for countable products,
$\mathcal{M}\mathsf{D}_{\infty}^{b}\left(  \mathsf{C}\right)  =0$ since
products of compact modules are compact (Tychonov's theorem). Finally, the
functor $\mathsf{LCA}_{\mathcal{O}_{S},ad}\rightarrow\mathsf{LCA}%
_{\mathcal{O}_{S},qa}/\mathsf{C}$ is exact, essentially surjective by Theorem
\ref{thm_MainDecomp}, and fully faithful. Hence, an exact equivalence.
\end{proof}

\subsection{The main fiber sequence\label{sect_TheMainFiberSequence}}

For the purposes of this text, we are mainly interested in non-connective
$K$-theory. However, some of the following results are valid for all
localizing invariants \cite[Def. 8.1]{MR3070515}. We nonetheless denote them
by $K$ to focus on our principal application.

\begin{theorem}
\label{thm_ComputeLCAAdelic}Let $F$ be a number field and $S$ be a set of
places of $F$, possibly infinite, which contains all infinite places. Suppose
$\mathsf{A}$ is a stable presentable $\infty$-category. Suppose $K\colon
\operatorname*{Cat}_{\infty}^{\operatorname*{ex}}\rightarrow\mathsf{A}$ is a
localizing invariant which commutes with filtering colimits. If $\#S=\infty$,
we also need to assume that $K$ preserves countable products. Then%
\begin{align}
K(\mathsf{LCA}_{\mathcal{O}_{S},ad})  &  \cong\underset{S^{\prime}\subseteq
S\text{, }\#S^{\prime}<\infty}{\operatorname*{colim}}K\left(
{\textstyle\prod\nolimits_{v\in S\setminus S^{\prime}}}
\mathsf{Proj}_{\mathcal{O}_{v},fg}\times%
{\textstyle\prod\nolimits_{v\in S^{\prime}}}
\mathsf{Mod}_{F_{v},fg}\right) \label{lmx3}\\
&  \cong\left.
{\textstyle\prod\nolimits^{\prime}}
\right.  _{v\in S}K(F_{v}):K(\mathcal{O}_{v})\text{,}\nonumber
\end{align}

\end{theorem}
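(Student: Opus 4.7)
The plan is to combine Prop.~\ref{prop_IdentifyAdelicBlocks} with the definition of the restricted product as a filtered colimit of exact categories from \S\ref{subsect_RestrictedProductAsExactCategory}, and then to apply $K$ termwise. By Prop.~\ref{prop_IdentifyAdelicBlocks} there is an exact equivalence
\[
\mathsf{LCA}_{\mathcal{O}_{S},ad}\simeq \underset{v\in S\;}{\prod\nolimits^{\prime}}(\mathsf{Proj}_{F_{v},fg}:\mathsf{Proj}_{\mathcal{O}_{v},fg}),
\]
and by Eq.~\ref{l_RestrictedProductAsColimitJFormula} the right side is the filtered colimit $\underset{S'\subseteq S,\;\#S'<\infty}{\operatorname*{colim}}\mathsf{J}^{(S')}$ with
\[
\mathsf{J}^{(S')}=\prod_{v\in S\setminus S'}\mathsf{Proj}_{\mathcal{O}_{v},fg}\times\prod_{v\in S'}\mathsf{Mod}_{F_{v},fg}.
\]
Since $K$ commutes with filtering colimits by hypothesis, applying $K$ produces the first isomorphism in Eq.~\ref{lmx3}.

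For the second isomorphism, I would distribute $K$ over the products in $\mathsf{J}^{(S')}$. The factor indexed by $S'$ is a \emph{finite} product of exact categories, which any localizing invariant automatically sends to the corresponding product of spectra (finite products are biproducts on both sides). For the other factor $\prod_{v\in S\setminus S'}\mathsf{Proj}_{\mathcal{O}_{v},fg}$, I use that $S$ is countable, so this is a countable product; here the extra hypothesis that $K$ preserves countable products is invoked, giving
\[
K(\mathsf{J}^{(S')})\simeq \prod_{v\in S\setminus S'}K(\mathcal{O}_{v})\times\prod_{v\in S'}K(F_{v}).
\]
Taking the filtered colimit in $S'$ and reinterpreting the resulting expression using the same combinatorial template as Eq.~\ref{l_RestrictedProductAsColimitJFormula}, but now in spectra, yields $\underset{v\in S\;}{\prod\nolimits^{\prime}}(K(F_{v}):K(\mathcal{O}_{v}))$. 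When $\#S<\infty$, the colimit reduces to $S'=S$ and every product is finite, so the countable-product hypothesis is not needed.

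The main obstacle is the handling of the countably infinite product of categories in the $\#S=\infty$ case. Filtered colimits and finite products commute with $K$ formally, but exchanging $K$ with a countably infinite product of exact categories is a substantive property; that is why it appears as an explicit hypothesis on the localizing invariant. For non-connective $K$-theory or its $K(1)$-localization, verification is non-trivial, and the paper indicates that the tools of \cite{MR4296353} and \cite{MR4444265} will be deployed for this in later sections. Once this is granted, the proof is a formal rearrangement of defining filtered colimits and products, combined with the exact equivalence of Prop.~\ref{prop_IdentifyAdelicBlocks}.
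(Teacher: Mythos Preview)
Your proof is correct and follows exactly the paper's approach: identify $\mathsf{LCA}_{\mathcal{O}_S,ad}$ with the restricted product via Prop.~\ref{prop_IdentifyAdelicBlocks}, unfold the restricted product as the filtered colimit of the $\mathsf{J}^{(S')}$ via Eq.~\ref{l_RestrictedProductAsColimitJFormula}, and push $K$ through using the assumed commutation with filtered colimits and (when $S$ is infinite) countable products. The paper's proof is a single sentence to this effect; you have simply spelled out the steps.
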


\begin{proof}
This follows from Prop. \ref{prop_IdentifyAdelicBlocks} and Eq.
\ref{l_RestrictedProductAsColimitJFormula} since an exact equivalence of exact
categories induces an equivalence of its derived stable $\infty$-categories in
$\operatorname*{Cat}_{\infty}^{\operatorname*{ex}}$, and therefore the claim
follows immediately from the assumptions made for the invariant $K$.
\end{proof}

\begin{theorem}
\label{thm_main_weak}Let $F$ be a number field and $S$ be a set of places of
$F$, possibly infinite, which contains all infinite places. Suppose
$\mathsf{A}$ is a stable presentable $\infty$-category. Suppose $K\colon
\operatorname*{Cat}_{\infty}^{\operatorname*{ex}}\rightarrow\mathsf{A}$ is a
localizing invariant (which need not commute with filtering colimits). Then
the sequence%
\[
K(\mathcal{O}_{S})\longrightarrow K\left(  \mathsf{LCA}_{\mathcal{O}_{S}%
,ad}\right)  \longrightarrow K(\mathsf{LCA}_{\mathcal{O}_{S}})
\]
induced by the exact functor $(-)\mapsto(-)\otimes_{\mathcal{O}_{S}}%
\mathbb{A}_{S}$ as the first arrow, is a fiber sequence in $\mathsf{A}$.
\end{theorem}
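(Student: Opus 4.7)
The plan is to derive the fiber sequence from the left $s$-filtering inclusion $\mathsf{LCA}_{\mathcal{O}_S, icg} \subseteq \mathsf{LCA}_{\mathcal{O}_S}$ established in Prop.~\ref{prop_icg_and_qa}(4). The standard localization theorem for such inclusions gives a Verdier sequence of bounded derived $\infty$-categories, and hence for any localizing invariant $K$ a fiber sequence
\[K(\mathsf{LCA}_{\mathcal{O}_S, icg}) \longrightarrow K(\mathsf{LCA}_{\mathcal{O}_S}) \longrightarrow K(Q),\]
where $Q$ denotes the Verdier quotient. Prop.~\ref{prop_icg_resolvable_in_qa} and Lem.~\ref{lemma_ad_same_motive_as_qa} then allow me to replace the first term by $K(\mathsf{LCA}_{\mathcal{O}_S, ad})$.

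The next step is to identify $K(Q) \simeq \Sigma K(\mathcal{O}_S)$. By Thm.~\ref{thm_MainDecomp}(1) every $X \in \mathsf{LCA}_{\mathcal{O}_S}$ is an extension with ind-c.g.\ sub and discrete quotient; moreover a discrete $\mathcal{O}_S$-module is ind-c.g.\ precisely when finitely generated (immediate from Def.~\ref{def_indcg}). I therefore expect the natural comparison functor
\[\mathsf{Mod}_{\mathcal{O}_S}/\mathsf{Mod}_{\mathcal{O}_S, fg} \longrightarrow Q\]
to be an equivalence, essential surjectivity being clear. On the source the localization sequence for the abelian category $\mathsf{Mod}_{\mathcal{O}_S}$ applies: its middle term vanishes by the Eilenberg swindle (arbitrary direct sums exist in $\mathsf{Mod}_{\mathcal{O}_S}$, and additivity of the localizing invariant provides the swindle without any filtered colimit hypothesis on $K$), while $K(\mathsf{Mod}_{\mathcal{O}_S, fg}) \simeq K(\mathcal{O}_S)$ because $\mathcal{O}_S$ is regular Noetherian of Krull dimension $\leq 1$, so $\mathsf{Proj}_{\mathcal{O}_S, fg} \hookrightarrow \mathsf{Mod}_{\mathcal{O}_S, fg}$ induces an equivalence of bounded derived $\infty$-categories. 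Rotating the resulting fiber sequence produces the desired
\[K(\mathcal{O}_S) \longrightarrow K(\mathsf{LCA}_{\mathcal{O}_S, ad}) \longrightarrow K(\mathsf{LCA}_{\mathcal{O}_S}).\]

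Finally, the connecting map must be identified with $P \mapsto P \otimes_{\mathcal{O}_S} \mathbb{A}_S$. For $P \in \mathsf{Proj}_{\mathcal{O}_S, fg}$, tensoring the adèle sequence \eqref{l_adele_sequence} with $P$ yields the exact sequence
\[P \hookrightarrow P \otimes_{\mathcal{O}_S} \mathbb{A}_S \twoheadrightarrow P \otimes_{\mathcal{O}_S} \mathbb{A}_S/\mathcal{O}_S\]
in $\mathsf{LCA}_{\mathcal{O}_S}$, whose middle term is adelic and whose right-hand term is compact (so in $\mathsf{LCA}_{\mathcal{O}_S, icg}$). This exhibits a canonical null-homotopy of the composition $K(\mathcal{O}_S) \to K(\mathsf{LCA}_{\mathcal{O}_S, ad}) \to K(\mathsf{LCA}_{\mathcal{O}_S})$, and tracing the boundary construction of the Verdier localization through this witness identifies the connecting map with $\otimes_{\mathcal{O}_S} \mathbb{A}_S$. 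The main obstacle I anticipate is the fully-faithfulness of the comparison $\mathsf{Mod}_{\mathcal{O}_S}/\mathsf{Mod}_{\mathcal{O}_S, fg} \to Q$: one has to show that every roof representing a morphism between discrete objects, taken in the ambient category $\mathsf{LCA}_{\mathcal{O}_S}$ modulo $\mathsf{LCA}_{\mathcal{O}_S, icg}$, can be refined to a roof through a discrete intermediary modulo $\mathsf{Mod}_{\mathcal{O}_S, fg}$, which requires applying Thm.~\ref{thm_MainDecomp}(1) at the level of complexes.
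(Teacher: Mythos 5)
Your Steps~1--4 line up with the paper's argument: the left $s$-filtering inclusion $\mathsf{LCA}_{\mathcal{O}_S,icg}\subseteq\mathsf{LCA}_{\mathcal{O}_S}$ gives the Verdier sequence, Prop.~\ref{prop_icg_resolvable_in_qa} and Lemma~\ref{lemma_ad_same_motive_as_qa} replace the fiber by $K(\mathsf{LCA}_{\mathcal{O}_S,ad})$, and the identification of the quotient with $\mathsf{Mod}_{\mathcal{O}_S}/\mathsf{Mod}_{\mathcal{O}_S,fg}$ together with the Eilenberg swindle and regularity of $\mathcal{O}_S$ are exactly as in the paper. The paper organises this as a map of Verdier localization sequences (Diag.~\ref{lxga1}) and a bi-Cartesian square, rather than first computing $K(Q)$ and rotating, but that is a cosmetic difference.

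The genuine gap is in your last step. Tensoring the ad\`ele sequence with $P$ does \emph{not} exhibit a null-homotopy of the composite $K(\mathcal{O}_S)\xrightarrow{\otimes\mathbb{A}_S}K(\mathsf{LCA}_{\mathcal{O}_S,ad})\to K(\mathsf{LCA}_{\mathcal{O}_S})$; by additivity it only yields the relation $(\otimes\mathbb{A}_S)_{\ast}=\eta_{\ast}+f_{3\ast}$, where $\eta$ is the ``equip with the discrete topology'' functor and $f_3$ is $P\mapsto P\otimes\mathbb{A}_S/P$. Neither summand is individually null into $K(\mathsf{LCA}_{\mathcal{O}_S})$, so you have an equation of maps, not a null-homotopy. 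Moreover, even granting a null-homotopy, a null-homotopy of $i\circ f$ only produces \emph{some} factorization of $f$ through the fiber $K(\mathcal{O}_S)$; it does not by itself show that the induced self-map of $K(\mathcal{O}_S)$ is an equivalence, which is what you need in order to conclude that the connecting map \emph{is} $\otimes\mathbb{A}_S$. The paper avoids this ambiguity entirely: the bi-Cartesian square produces the fiber sequence with $\eta$ as the boundary map, and then additivity together with $K(\mathsf{LCA}_{\mathcal{O}_S,\operatorname{compact}})=0$ (Eilenberg swindle via Tychonov) shows $f_{3\ast}=0$ and hence $\eta_{\ast}=(\otimes\mathbb{A}_S)_{\ast}$ directly as maps $K(\mathcal{O}_S)\to K(\mathsf{LCA}_{\mathcal{O}_S,icg})$. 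You should replace the ``tracing the boundary through a witness'' step by this direct comparison; the ad\`ele sequence and the additivity relation you already wrote down are the right ingredients, but they must be combined with the compact-module swindle and applied to $\eta$, not used to claim a null-homotopy of the composite.
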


This generalizes the main fiber sequence of \cite{obloc} (corresponding the
the special case where $S=S_{\infty}$ are just the infinite places) and
\cite{kthyartin} (where $S$ are all places).

We introduce the shorthand%
\begin{equation}
\underset{v\in S\;}{\prod\nolimits^{\prime}}K(F_{v}):K(\mathcal{O}%
_{v}):=\underset{S^{\prime}\subseteq S\text{, }\#S^{\prime}<\infty
}{\operatorname*{colim}}%
{\textstyle\prod\nolimits_{v\in S\setminus S^{\prime}}}
K(\mathcal{O}_{v})\times%
{\textstyle\prod\nolimits_{v\in S^{\prime}}}
K(F_{v})\text{,} \label{lct4a}%
\end{equation}
where the transition maps $K(\mathcal{O}_{v})\rightarrow K(F_{v})$ in the
colimit are induced from the exact functors $(-)\mapsto(-)\otimes
_{\mathcal{O}_{v}}F_{v}$. The following version of the above theorem requires
stronger assumptions, but leads to a more explicit shape of the middle term.

\begin{theorem}
\label{thm_main_strong}Let $F$ be a number field and $S$ be a set of places of
$F$, possibly infinite, which contains all infinite places. Suppose
$\mathsf{A}$ is a stable presentable $\infty$-category. Suppose $K\colon
\operatorname*{Cat}_{\infty}^{\operatorname*{ex}}\rightarrow\mathsf{A}$ is a
localizing invariant which commutes with filtering colimits. If $\#S=\infty$,
we also need to assume that $K$ preserves countable products. Then the
sequence%
\begin{equation}
K(\mathcal{O}_{S})\longrightarrow\left.  \underset{v\in S\;}{\prod
\nolimits^{\prime}}\right.  \left(  K(F_{v}):K(\mathcal{O}_{v})\right)
\longrightarrow K(\mathsf{LCA}_{\mathcal{O}_{S}}) \label{lct4}%
\end{equation}
induced by the exact functor $(-)\mapsto(-)\otimes_{\mathcal{O}_{S}}%
\mathbb{A}_{S}$ as the first arrow, is a fiber sequence in $\mathsf{A}$.
\end{theorem}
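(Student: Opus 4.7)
The plan is to derive Theorem \ref{thm_main_strong} by combining Theorems \ref{thm_main_weak} and \ref{thm_ComputeLCAAdelic}. The first of these already supplies a fiber sequence
\[
K(\mathcal{O}_S) \longrightarrow K(\mathsf{LCA}_{\mathcal{O}_S, ad}) \longrightarrow K(\mathsf{LCA}_{\mathcal{O}_S})
\]
whose initial arrow is induced by the exact functor $(-)\otimes_{\mathcal{O}_S}\mathbb{A}_S$, and which holds for \emph{any} localizing invariant. The second theorem identifies the middle term with $\prod\nolimits^{\prime}_{v \in S}(K(F_v) : K(\mathcal{O}_v))$, and its proof consumes exactly the additional hypotheses imposed here, namely commuting with filtered colimits, and with countable products when $\#S = \infty$. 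Substituting the identification of the middle term into the fiber sequence should produce Eq. \ref{lct4}.

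The one compatibility I will need to verify is that, under the equivalence of Theorem \ref{thm_ComputeLCAAdelic}, the arrow induced by $(-)\otimes_{\mathcal{O}_S}\mathbb{A}_S$ really does coincide with the map written in Eq. \ref{lct4}. Under the equivalence $\Psi$ of Prop. \ref{prop_IdentifyAdelicBlocks}, the ad\`ele ring $\mathbb{A}_S$ corresponds to the array whose component at $v \in S_\infty$ is $F_v$ and at $v \notin S_\infty$ is $\mathcal{O}_v$; this array sits in the $S' = S_\infty$ stratum of the colimit defining the restricted product (cf. Eq. \ref{l_RestrictedProductAsColimitJFormula}). Thus for $M \in \mathsf{Proj}_{\mathcal{O}_S, fg}$ the object $M \otimes_{\mathcal{O}_S} \mathbb{A}_S$ corresponds to the array $(M \otimes_{\mathcal{O}_S} F_v)_{v \in S_\infty}$ paired with $(M \otimes_{\mathcal{O}_S} \mathcal{O}_v)_{v \notin S_\infty}$, again at the $S_\infty$ level. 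On $K$-theory this factors as the evident map into the $S_\infty$-term followed by the coprojection into the colimit, which is precisely the map appearing in Eq. \ref{lct4}.

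The substantive work has therefore already been expended in the two earlier theorems: Theorem \ref{thm_main_weak} rests on the main decomposition (Thm. \ref{thm_MainDecomp}) together with the ad\`ele sequence, while Theorem \ref{thm_ComputeLCAAdelic} rests on the restricted-product decomposition of Prop. \ref{prop_IdentifyAdelicBlocks} together with the behaviour of $K$ under the relevant filtered colimits and countable products. I do not expect any genuine obstacle beyond the routine compatibility check sketched above; the present statement is essentially a formal assembly of the two prior results, with the hypotheses on $K$ being precisely what is needed to activate the restricted-product identification of the middle term.
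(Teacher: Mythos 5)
Your proposal matches the paper's own argument: the paper proves Theorems \ref{thm_main_weak} and \ref{thm_main_strong} in a single proof, concluding verbatim with "This proves Thm. \ref{thm_main_weak} and by combining it with Thm. \ref{thm_ComputeLCAAdelic}, we also get Thm. \ref{thm_main_strong}." Your explicit compatibility check — that under $\Psi$ the functor $(-)\otimes_{\mathcal{O}_S}\mathbb{A}_S$ lands in the $S'=S_\infty$ stratum of the colimit and hence induces the map in Eq.~\ref{lct4} — is a welcome precision that the paper leaves tacit, but the route is the same.
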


\begin{proof}
[Proof of Thm. \ref{thm_main_weak} and Thm. \ref{thm_main_strong}](Step 1) We
obtain the diagram%
\begin{equation}%
\xymatrix{
K(\mathsf{Mod}_{\mathcal{O}_{S},fg}) \ar[r] \ar[d]_{\eta} & K(\mathsf
{Mod}_{\mathcal{O}_{S}}) \ar[r] \ar[d] & K(\mathsf{Mod}_{\mathcal{O}_{S}%
}/\mathsf{Mod}_{\mathcal{O}_{S},fg}) \ar[d]^{\tau} \\
K(\mathsf{LCA}_{\mathcal{O}_{S},icg}) \ar[r] & K(\mathsf{LCA}_{\mathcal{O}%
_{S}}) \ar[r] & K(\mathsf{LCA}_{\mathcal{O}_{S}}/\mathsf{LCA}_{\mathcal{O}%
_{S},icg})
}
\label{lxga1}%
\end{equation}
as follows: (1) The inclusions $\mathsf{LCA}_{\mathcal{O}_{S},icg}%
\subseteq\mathsf{LCA}_{\mathcal{O}_{S}}$ (resp. $\mathsf{Mod}_{\mathcal{O}%
_{S},fg}\subseteq\mathsf{Mod}_{\mathcal{O}_{S}}$) induce Verdier localization
sequences for the respective derived $\infty$-categories because they are left
$s$-filtering by Prop. \ref{prop_icg_and_qa} (resp. a Serre subcategory in an
abelian category). See \cite[\S 8.3]{hr,hr2}\footnote{For the top row the
Localization Theorem of Schlichting in \cite{MR2079996} also shows this. For
the bottom row the original Quillen\ Localization Theorem \cite{MR0338129} is
enough. However, in order to get the commutativity of the diagram, it is
better to use the same Localization Theorem (for otherwise one would have to
prove that they give compatible fiber sequences). The big advantage of the
Henrard--van Roosmalen Localization Theorem is that it is the most general
known localization theorem for (even just one-sided) exact categories.}. (2)
The downward arrows just refer to regarding the top row modules as equipped
with the discrete topology. For a finitely generated module, picking $K:=\{\pm
b_{1},\ldots,\pm b_{r}\}$ for $b_{1},\ldots,b_{r}$ a set of generators, shows
that it is ind-c.g. (Step 2) It is easy to check that the functor
$\mathsf{Mod}_{\mathcal{O}_{S}}/\mathsf{Mod}_{\mathcal{O}_{S},fg}%
\rightarrow\mathsf{LCA}_{\mathcal{O}_{S}}/\mathsf{LCA}_{\mathcal{O}_{S},icg}$
is an exact equivalence of exact categories:\ It is exact. By Thm.
\ref{thm_MainDecomp} any $X\in\mathsf{LCA}_{\mathcal{O}_{S}}$ has an exact
sequence $Q\hookrightarrow X\twoheadrightarrow D$ with $Q$ quasi-adelic (hence
ind-c.g. by Prop. \ref{prop_icg_and_qa}, (1)) and $D$ discrete. Thus, it is
essentially surjective. Next, note that a discrete $\mathcal{O}_{S}$-module is
ind-c.g. if and only if it is finitely generated\footnote{Use that a discrete
LCA group is c.g. if and only if it is isomorphic to $\mathbf{Z}^{n}$ for some
$n$ \cite[Thm. 2.5]{MR0215016}. This forces $H$ in Def. \ref{def_indcg} to be
a finitely generated $\mathcal{O}_{F}$-module.}. Thus, the arrow $\tau$ is an
equivalence in $\mathsf{A}$. It follows that the left square in Diagram
\ref{lxga1} is bi-Cartesian. However, $K(\mathsf{Mod}_{\mathcal{O}_{S}})=0$ by
the Eilenberg swindle as $\mathsf{Mod}_{\mathcal{O}_{S}}$ posesses countable
coproducts. Moreover, the arrow $K(\mathcal{O}_{S})\rightarrow K(\mathsf{Mod}%
_{\mathcal{O}_{S},fg})$ induced from the inclusion of the full subcategory of
projective objects is an equivalence since $\mathcal{O}_{S}$ is regular and
therefore $\mathsf{D}_{\infty}^{b}(\mathsf{Proj}_{\mathcal{O}_{S}%
,fg})\rightarrow\mathsf{D}_{\infty}^{b}(\mathsf{Mod}_{\mathcal{O}_{S},fg})$
induces an equivalence. We arrive at the fiber sequence $K(\mathcal{O}%
_{S})\rightarrow K(\mathsf{LCA}_{\mathcal{O}_{S},icg})\rightarrow
K(\mathsf{LCA}_{\mathcal{O}_{S}})$. (Step 3) Following this construction, the
arrow $\eta$ is induced from the functor sending a finitely generated
projective $\mathcal{O}_{S}$-module $X$ to itself, with the discrete topology.
Write $\mathcal{E(-)}$ for the exact category of exact sequences
(\cite[Exercise 3.9]{MR2606234}). We claim that the arrow $\eta$ is homotopic
to the one induced from the functor $(-)\mapsto(-)\otimes_{\mathcal{O}_{S}%
}\mathbb{A}_{S}$. To see this, we note that the functor%
\[
\mathsf{Proj}_{\mathcal{O}_{S},fg}\longrightarrow\mathcal{E}\mathsf{LCA}%
_{\mathcal{O}_{S},icg}\text{,}\qquad X\mapsto\left(  X\hookrightarrow
X\otimes_{\mathcal{O}_{S}}\mathbb{A}_{S}\twoheadrightarrow X\otimes
_{\mathcal{O}_{S}}\mathbb{A}_{S}/X\right)
\]
is exact (where $X\otimes_{\mathcal{O}_{S}}\mathbb{A}_{S}$ is given the
topology of the ad\`{e}les. It is then adelic, whereas the quotient
$X\otimes_{\mathcal{O}_{S}}\mathbb{A}_{S}/X$ by the discrete $X$ is compact).
Hence, as $K$ is also an additive invariant, it follows that if we write
$f_{i\ast}$ ($i=1,2,3$) for the induced projections for the three entries
\textit{left, middle, right} of the exact sequences that $f_{2\ast}=f_{1\ast
}+f_{3\ast}\colon K(\mathcal{O}_{S})\rightarrow K(\mathsf{LCA}_{\mathcal{O}%
_{S},icg})$. Now note that $f_{3\ast}$ factors as $K(\mathcal{O}%
_{S})\rightarrow K(\mathsf{LCA}_{\mathcal{O}_{S},\operatorname*{compact}%
})\rightarrow K(\mathsf{LCA}_{\mathcal{O}_{S},icg})$, noting that the compact
modules, referred to by $\mathsf{LCA}_{\mathcal{O}_{S},\operatorname*{compact}%
}$ here, are a full subcategory of the ind-c.g. modules. As arbitrary products
of compact modules are again compact by Tychonov's theorem, $K(\mathsf{LCA}%
_{\mathcal{O}_{S},\operatorname*{compact}})=0$ by the Eilenberg swindle.
Therefore, $f_{3\ast}=0$. It follows that $f_{1\ast}=f_{2\ast}$.\footnote{Let
us stress that there is no corresponding \textquotedblleft
dual\textquotedblright\ factorization $K(\mathcal{O}_{S})\rightarrow
K(\mathsf{LCA}_{\mathcal{O}_{S},\operatorname*{discrete}})\overset
{a}{\rightarrow}K(\mathsf{LCA}_{\mathcal{O}_{S},icg})$ since discrete modules
are ind-c.g. only if they are finitely generated. Thus, such a putative arrow
$a$ does not exist and thus, there is no Eilenberg swindle here which could
show that $f_{1\ast}=0$.} (Step 4) By Prop. \ref{prop_icg_resolvable_in_qa}
and Lemma \ref{lemma_ad_same_motive_as_qa} we get $K(\mathsf{LCA}%
_{\mathcal{O}_{S},icg})\cong K(\mathsf{LCA}_{\mathcal{O}_{S},qa})\cong
K(\mathsf{LCA}_{\mathcal{O}_{S},ad})$. Now, using Prop.
\ref{prop_IdentifyAdelicBlocks} there is an exact equivalence of categories
$\mathsf{LCA}_{\mathcal{O}_{S},ad}\cong\left.  \prod_{v\in S}^{\prime}\right.
\mathsf{Mod}_{F_{v},fg}$. This proves Thm. \ref{thm_main_weak} and by
combining it with Thm. \ref{thm_ComputeLCAAdelic}, we also get Thm.
\ref{thm_main_strong}.
\end{proof}

\begin{example}
In \cite{kthyartin} it was proven that $K_{1}(\mathsf{LCA}_{F})$ agrees with
the id\`{e}le class group. In the present setting, this corresponds to letting
$S$ be the set of all places. The above computations show that this
observation does \emph{not} generalize to smaller $S$. From Thm.
\ref{thm_main_strong} we get the long exact sequence%
\[
K_{1}(\mathcal{O}_{S})\rightarrow\left.  \underset{v\in S\;}{%
{\textstyle\prod\nolimits^{\prime}}
}\left(  F_{v}^{\times}:\mathcal{O}_{v}^{\times}\right)  \right.  \rightarrow
K_{1}(\mathsf{LCA}_{\mathcal{O}_{S}})\rightarrow K_{0}(\mathcal{O}%
_{S})\overset{\beta}{\rightarrow}\left.  \underset{v\in S}{%
{\textstyle\prod}
}\right.  \mathbf{Z}\text{.}%
\]
Since $\mathcal{O}_{S}$ is a Dedekind domain\footnote{In this text we use the
convention that the Krull dimension of Dedekind domains has to be $\leq1$, so
that fields are included in the concept.} $K_{0}(\mathcal{O}_{S}%
)\cong\mathbf{Z}\oplus\operatorname*{Cl}(\mathcal{O}_{S})$ and it is easy to
see that $\beta$ is injective on the rank summand\footnote{under the running
assumptions of this section, $S$ contains all infinite places, so $S$ is
non-empty.} and sends $\operatorname*{Cl}(\mathcal{O}_{S})$ to zero
(necessarily so because the class group is torsion, but the target of the map
is torsion-free). By Bass--Milnor--Serre \cite{MR244257} and localization, we
have $K_{1}(\mathcal{O}_{S})\cong\mathcal{O}_{S}^{\times}$. Thus, we get an
exact sequence%
\[
C_{F,S}\hookrightarrow K_{1}(\mathsf{LCA}_{\mathcal{O}_{S}})\twoheadrightarrow
\operatorname*{Cl}(\mathcal{O}_{S})\text{,}%
\]
where $C_{F,S}$ is the $S$-id\`{e}le class group of $F$. Thus, $K_{1}%
(\mathsf{LCA}_{\mathcal{O}_{S}})$ agrees with the id\`{e}le class group as
soon as the ideal class group of $\mathcal{O}_{S}$ is trivial.
\end{example}

\subsection{Ring structures\label{subsect_RingStructures}}

In this section $K$ stands for non-connective $K$-theory. But more generally,
any lax monoidal localizing invariant between symmetric monoidal $\infty
$-categories would admit the following constructions as well.

The bi-exact symmetric monoidal structure%
\[
\mathsf{Proj}_{\mathcal{O}_{S},fg}\times\mathsf{Proj}_{\mathcal{O}_{S}%
,fg}\longrightarrow\mathsf{Proj}_{\mathcal{O}_{S},fg}\text{,}%
\]
sending $(X,Y)\mapsto X\otimes_{\mathcal{O}_{S}}Y$, induces an $E_{\infty}%
$-ring structure on $K$-theory%
\[
K(\mathcal{O}_{S})\otimes_{\mathsf{Sp}}K(\mathcal{O}_{S})\longrightarrow
K(\mathcal{O}_{S})\text{.}%
\]
There is a similar bi-exact pairing%
\begin{equation}
\mathsf{LCA}_{\mathcal{O}_{S}}\times\mathsf{Proj}_{\mathcal{O}_{S}%
,fg}\longrightarrow\mathsf{LCA}_{\mathcal{O}_{S}}\text{.} \label{lmx4}%
\end{equation}
It can be constructed as follows: Let $\mathsf{Free}_{\mathcal{O}_{S},fg}$
denote the exact category of finitely generated free $\mathcal{O}_{S}%
$-modules. It is additively generated by $\mathcal{O}_{X}$. It follows that
there is a unique pairing%
\[
\mathsf{LCA}_{\mathcal{O}_{S}}\times\mathsf{Free}_{\mathcal{O}_{S}%
,fg}\longrightarrow\mathsf{LCA}_{\mathcal{O}_{S}}%
\]
such that $(-)\otimes\mathcal{O}_{S}\colon X\otimes\mathcal{O}_{S}:=X$ is the
identity functor. This extends uniquely to the idempotent completion
$(-)^{\operatorname*{ic}}$. As $\mathsf{LCA}_{\mathcal{O}_{S}}$ is already
idempotent complete (it has all kernels), and $\mathsf{Free}_{\mathcal{O}%
_{S},fg}^{\operatorname*{ic}}\cong\mathsf{Proj}_{\mathcal{O}_{S},fg}$, we
arrive at Eq. \ref{lmx4}. We observe that adelic modules get sent to adelic
modules:%
\[
\mathsf{LCA}_{\mathcal{O}_{S},ad}\times\mathsf{Proj}_{\mathcal{O}_{S}%
,fg}\longrightarrow\mathsf{LCA}_{\mathcal{O}_{S},ad}\text{.}%
\]
In Rmk. \ref{rmk_SymmetricMonoidalStructureOnAdelicModules} we have set up a
bi-exact symmetric monoidal structure%
\[
\otimes\colon\mathsf{LCA}_{\mathcal{O}_{S},ad}\times\mathsf{LCA}%
_{\mathcal{O}_{S},ad}\longrightarrow\mathsf{LCA}_{\mathcal{O}_{S},ad}%
\]
on $\mathsf{LCA}_{\mathcal{O}_{S},ad}$. This renders the $K$-theory
$K(\mathsf{LCA}_{\mathcal{O}_{S},ad})$ an $E_{\infty}$-ring spectrum,
suggestively written as%
\begin{equation}
K(\mathsf{LCA}_{\mathcal{O}_{S},ad})\otimes_{\mathsf{Sp}}K(\mathsf{LCA}%
_{\mathcal{O}_{S},ad})\longrightarrow K(\mathsf{LCA}_{\mathcal{O}_{S}%
,ad})\text{.} \label{lmx2}%
\end{equation}
We can summarize this discussion as follows.

\begin{proposition}
\label{prop_RingStructuresOnSpectra}Suppose $K$ refers to non-connective
$K$-theory. Using the above setup of \S \ref{subsect_RingStructures} as
constructions, $K(\mathcal{O}_{S})$ is an $E_{\infty}$-ring spectrum,
$K(\mathsf{LCA}_{\mathcal{O}_{S},ad})$ is an $E_{\infty}$-ring spectrum,
$K(\mathsf{LCA}_{\mathcal{O}_{S}})$ is a $K(\mathcal{O}_{S})$-module, and
$K(\mathcal{O}_{S})\rightarrow K(\mathsf{LCA}_{\mathcal{O}_{S},ad})$, induced
from $X\mapsto\mathbb{A}_{S}\otimes_{\mathcal{O}_{S}}X$, makes $K(\mathsf{LCA}%
_{\mathcal{O}_{S},ad})$ a $K(\mathcal{O}_{S})$-algebra.
\end{proposition}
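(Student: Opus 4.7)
The claim is essentially a formal consequence of the bi-exact pairings already constructed in \S\ref{subsect_RingStructures} together with the well-known fact that non-connective $K$-theory is a lax symmetric monoidal functor from the $\infty$-category of (small) exact $\infty$-categories to spectra. The plan is to package the pairings and feed them through this lax monoidal structure; there is no novel mathematical obstacle, only a bookkeeping task.

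First, I would recall the lax symmetric monoidality of $K$. A bi-exact symmetric monoidal structure on an exact category $\mathsf{C}$ upgrades $\mathsf{D}_\infty^b(\mathsf{C}) \in \operatorname{Cat}_\infty^{\operatorname{ex}}$ to a commutative algebra in $\operatorname{Cat}_\infty^{\operatorname{ex}}$ (with its tensor product). Applying $K$ then yields an $E_\infty$-ring spectrum, and this is functorial in (symmetric) monoidal exact functors. Analogously, a bi-exact pairing $\mathsf{C} \times \mathsf{M} \to \mathsf{M}$ of a symmetric monoidal exact category on a plain exact category $\mathsf{M}$ makes $\mathsf{D}_\infty^b(\mathsf{M})$ a module in $\operatorname{Cat}_\infty^{\operatorname{ex}}$ over $\mathsf{D}_\infty^b(\mathsf{C})$, and applying $K$ produces a $K(\mathsf{C})$-module spectrum.

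Next, I would feed the three pairings from the preceding paragraphs into this machine. The symmetric monoidal structure $(\mathsf{Proj}_{\mathcal{O}_S,fg},\otimes_{\mathcal{O}_S})$ produces the $E_\infty$-ring spectrum $K(\mathcal{O}_S)$. The bi-exact symmetric monoidal structure on $\mathsf{LCA}_{\mathcal{O}_S,ad}$ constructed in Remark~\ref{rmk_SymmetricMonoidalStructureOnAdelicModules} produces the $E_\infty$-ring spectrum $K(\mathsf{LCA}_{\mathcal{O}_S,ad})$, realising the pairing of Eq.~\ref{lmx2}. The bi-exact pairing in Eq.~\ref{lmx4} then produces the $K(\mathcal{O}_S)$-module structure on $K(\mathsf{LCA}_{\mathcal{O}_S})$; unitality and associativity come for free from the fact that $\mathcal{O}_S$ is the monoidal unit of $\mathsf{Proj}_{\mathcal{O}_S,fg}$ and from the associativity of tensoring an $\mathcal{O}_S$-module with a locally compact $\mathcal{O}_S$-module.

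Finally, I would verify the algebra statement. The functor $X \mapsto \mathbb{A}_S \otimes_{\mathcal{O}_S} X$ factors as $\mathsf{Proj}_{\mathcal{O}_S,fg} \to \mathsf{LCA}_{\mathcal{O}_S,ad}$, and using Prop.~\ref{prop_IdentifyAdelicBlocks} this is checked to be a symmetric monoidal exact functor (since under the equivalence $\Psi$ it sends a projective to the constant array $(\xi_v(P \otimes_{\mathcal{O}_S} \mathcal{O}_v))_{v \in S}$, and each $\xi_v$ together with base change to $\mathcal{O}_v$ is symmetric monoidal). Applying $K$ gives an $E_\infty$-ring map $K(\mathcal{O}_S) \to K(\mathsf{LCA}_{\mathcal{O}_S,ad})$. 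Compatibility of Eq.~\ref{lmx4} restricted to adelic modules with the monoidal structure of Remark~\ref{rmk_SymmetricMonoidalStructureOnAdelicModules} (after adelization of the projective factor) is what identifies the resulting $K(\mathcal{O}_S)$-module structure on $K(\mathsf{LCA}_{\mathcal{O}_S,ad})$ with the one underlying its $E_\infty$-ring structure along this map, yielding the algebra statement.

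The main (mild) obstacle is not any single step but rather checking the compatibilities of the various monoidal structures at the level of exact categories so that the lax monoidality of $K$ can be applied uniformly; in particular one must be careful that the pairing in Eq.~\ref{lmx4}, restricted to $\mathsf{LCA}_{\mathcal{O}_S,ad} \times \mathsf{Proj}_{\mathcal{O}_S,fg}$, coincides up to natural isomorphism with the tensor product of Remark~\ref{rmk_SymmetricMonoidalStructureOnAdelicModules} composed with the adelization functor in the second variable. This is a direct unwinding of the definitions via the equivalence $\Psi$ and poses no real difficulty.
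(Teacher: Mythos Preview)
Your proposal is correct and matches the paper's approach: the paper does not supply a separate proof for this proposition, treating it as a summary of the bi-exact pairings already set up in \S\ref{subsect_RingStructures} (together with Remark~\ref{rmk_SymmetricMonoidalStructureOnAdelicModules}), and you have correctly unwound why those constructions, fed through the lax symmetric monoidality of non-connective $K$-theory, produce the stated ring, module, and algebra structures. Your additional verification that $X\mapsto\mathbb{A}_S\otimes_{\mathcal{O}_S}X$ is symmetric monoidal via $\Psi$ is exactly the content the paper records separately in Proposition~\ref{prop_MapOfEInftyRingSpectra}.
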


\begin{proposition}
\label{prop_MapOfEInftyRingSpectra}If we take the localizing invariant
$K\colon\operatorname*{Cat}\nolimits_{\infty}^{\operatorname*{ex}}%
\rightarrow\mathsf{Sp}$ to have values in spectra, commute with filtering
colimits and be lax symmetric monoidal once the input is a symmetric monoidal
stable $\infty$-category, then the above results can be augmented by the following:

\begin{enumerate}
\item The equivalence of spectra in Eq. \ref{lmx3} can be promoted to an
equivalence of $E_{\infty}$-ring spectra.

\item The first arrow in Eq. \ref{lct4} is a map of $E_{\infty}$-ring spectra.
\end{enumerate}
\end{proposition}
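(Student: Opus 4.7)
The plan is to trace all the relevant categorical equivalences through their symmetric monoidal enhancements, and then apply lax monoidality of $K$ together with its compatibility with filtering colimits.

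For part (1), recall from Remark \ref{rmk_SymmetricMonoidalStructureOnAdelicModules} that the bi-exact symmetric monoidal structure on $\mathsf{LCA}_{\mathcal{O}_{S},ad}$ was defined by \emph{transport} along the equivalence $\Psi$ of Prop. \ref{prop_IdentifyAdelicBlocks}, starting from the evident componentwise symmetric monoidal structure on the restricted product on the left side of Eq. \ref{lmx1}. Thus, tautologically, $\Psi$ is a symmetric monoidal exact equivalence. The restricted product itself is, by Eq. \ref{l_RestrictedProductAsColimitJFormula}, a filtering colimit in $\operatorname*{Cat}_{\infty}^{\operatorname*{ex}}$ of finite products $\mathsf{J}^{(I')}$, and the transition functors $\xi_{I',I''}$ are symmetric monoidal since each $\xi_{v}$ is (the functor $M \mapsto F_v \otimes_{\mathcal{O}_v} M$ is a base-change functor and hence monoidal). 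Because $K$ is lax symmetric monoidal, it sends finite products to $E_{\infty}$-ring spectra and sends symmetric monoidal functors to $E_{\infty}$-ring maps. Because $K$ commutes with filtering colimits, the resulting colimit of $E_{\infty}$-ring spectra computes the middle term of Thm. \ref{thm_ComputeLCAAdelic} as an $E_{\infty}$-ring spectrum, and $K(\Psi)$ becomes an equivalence of $E_{\infty}$-ring spectra.

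For part (2), the first arrow in Eq. \ref{lct4} is $K(\mathcal{O}_{S}) \to K(\mathsf{LCA}_{\mathcal{O}_{S},ad})$, induced from the exact functor $X \mapsto X \otimes_{\mathcal{O}_{S}} \mathbb{A}_{S}$. I would first observe that under $\Psi$, this functor is identified with the array-valued functor $X \mapsto (F_v \otimes_{\mathcal{O}_{S}} X)_{v \in S}$, which lives in the restricted product because for each $x \in X$ all but finitely many components land in $\mathcal{O}_v \otimes_{\mathcal{O}_{S}} X$. At the level of the finite stages $\mathsf{J}^{(I')}$ this factors as the diagonal symmetric monoidal functor $\mathsf{Proj}_{\mathcal{O}_{S},fg} \to \prod_{v \notin I'} \mathsf{Proj}_{\mathcal{O}_v,fg} \times \prod_{v \in I'} \mathsf{Mod}_{F_v,fg}$ given by componentwise base change. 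Each component is a symmetric monoidal base-change functor, so the diagonal is symmetric monoidal. Passing to the filtering colimit over $I'$ and applying the lax monoidal functor $K$ (which commutes with filtering colimits by hypothesis), one obtains a map of $E_{\infty}$-ring spectra, and by part (1) this agrees with the first arrow in Eq. \ref{lct4}.

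The only genuine subtlety is the bookkeeping to verify that the induced monoidal structure on $\mathsf{LCA}_{\mathcal{O}_{S},ad}$, transported from the restricted product via $\Psi$, is indeed \emph{compatible} with the naive tensor product $(-) \otimes_{\mathcal{O}_{S}} \mathbb{A}_{S}$ used to define the first arrow in Eq. \ref{lct4} (so that the factorization through $\prod^{\prime}$ in the previous paragraph is literally correct, and not merely equivalent up to a monoidal natural transformation). This is essentially the content of Moskowitz's topological tensor product agreeing with the componentwise one on adelic modules (as already invoked in the proof of Prop. \ref{prop_icg_resolvable_in_qa}). I expect this matching to be the main, if modest, obstacle; once it is in hand, both assertions follow from the formal principle that a lax symmetric monoidal functor commuting with filtering colimits sends symmetric monoidal equivalences (resp. symmetric monoidal functors) to equivalences (resp. maps) of $E_{\infty}$-ring spectra.
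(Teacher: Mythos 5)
Your argument follows the paper's own proof almost exactly: both parts (1) and (2) reduce to the observation that the symmetric monoidal structure on $\mathsf{LCA}_{\mathcal{O}_{S},ad}$ is defined by transport along $\Psi$ (so that $\Psi$ is tautologically monoidal), combined with lax symmetric monoidality and filtered-colimit preservation of $K$. The "bookkeeping" concern you flag at the end — that the transported componentwise tensor must agree with the Moskowitz tensor $(-)\otimes_{\mathcal{O}_S}\mathbb{A}_S$ on adelic modules — is indeed the only substantive check, and the paper quietly absorbs it into the one-line assertion that "the underlying exact functor is lax symmetric monoidal"; spelling it out as you do is harmless and arguably clearer.
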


\begin{proof}
(1) The equivalence is induced by the exact functor $\Psi$ provided by Prop.
\ref{prop_IdentifyAdelicBlocks}. By Rmk.
\ref{rmk_SymmetricMonoidalStructureOnAdelicModules} the monoidal structure on
$\mathsf{LCA}_{\mathcal{O}_{S},ad}$ is constructed so that $\Psi$ is a
symmetric monoidal equivalence. Thus, by functoriality, the equivalence is an
equivalence of ring spectra. (2) This is true because the underlying exact
functor is lax symmetric monoidal.
\end{proof}

\section{\label{sect_LocalDuality}Restricted product of descent spectral
sequences}

In this section $K$ stands for non-connective $K$-theory. Let $F$ be a number
field. Let $S$ be a (possibly infinite) set of places of $F$ containing all
the infinite places and such that $\frac{1}{p}\in\mathcal{O}_{S}$. Recall from
Prop. \ref{prop_IdentifyAdelicBlocks} that%
\begin{equation}
K(\mathsf{LCA}_{\mathcal{O}_{S},ad})\cong\underset{S^{\prime}\subseteq
S\text{, }\#S^{\prime}<\infty}{\operatorname*{colim}}%
{\textstyle\prod\nolimits_{v\in S\setminus S^{\prime}}}
K(\mathcal{O}_{v})\times%
{\textstyle\prod\nolimits_{v\in S^{\prime}}}
K\left(  F_{v}\right)  \text{.} \label{l_m0}%
\end{equation}
In this section we want to prove a $K(1)$-local analogue of this result, i.e.,
we would like to replace each letter \textquotedblleft$K$\textquotedblright%
\ by \textquotedblleft$L_{K(1)}K$\textquotedblright.

We write $\mathsf{Sp}$ for the ordinary stable homotopy category and
$\mathsf{K}$ for the $K(1)$-local homotopy category. We shall use an
additional superscript to clarify where we take the (co)limit. For example,
$\left.  \operatorname*{colim}\nolimits^{\mathsf{K}}\right.  $ or $\left.
\bigoplus\nolimits^{\mathsf{K}}\right.  $ refer to the corresponding colimits
taken in $\mathsf{K}$, and then represented as a $K(1)$-local object in
$\mathsf{Sp}$. That is: We never work truly intrinsically to the category
$\mathsf{K}$, we always just consider its local object representatives in
$\mathsf{Sp}$.

This being said, let us return to finding a $K(1)$-local analogue of Eq.
\ref{l_m0}. First, observe that the universal properties of limits and
colimits provide natural maps%
\begin{align}
L_{K(1)}%
{\textstyle\prod\nolimits^{\mathsf{Sp}}}
\left(  \ldots\right)   &  \longrightarrow%
{\textstyle\prod\nolimits^{\mathsf{K}}}
L_{K(1)}(\ldots)\label{l_m1}\\
\left.  \operatorname*{colim}\nolimits^{\mathsf{K}}\right.  L_{K(1)}(\ldots)
&  \longrightarrow L_{K(1)}\left(  \left.  \operatorname*{colim}%
\nolimits^{\mathsf{Sp}}\right.  (\ldots)\right)  \text{.} \label{l_m2}%
\end{align}
A priori, neither of them has a reason to be an equivalence.

\begin{theorem}
\label{thm_compar1}Let $F$ be a number field and $S$ a (possibly infinite) set
of places of $F$ containing the infinite places and such that $\frac{1}{p}%
\in\mathcal{O}_{S}$. Then, starting from the left side of Eq. \ref{l_m0}, the
zig-zag formed from the morphisms in Eqs. \ref{l_m1} and \ref{l_m2}, induces
an equivalence of spectra%
\begin{equation}
L_{K(1)}K(\mathsf{LCA}_{\mathcal{O}_{S},ad})\cong\left.  \underset{S^{\prime
}\subseteq S\text{, }\#S^{\prime}<\infty}{\operatorname*{colim}%
\nolimits^{\mathsf{K}}}\right.
{\textstyle\prod\nolimits_{v\in S\setminus S^{\prime}}^{\mathsf{K}}}
L_{K(1)}K(\mathcal{O}_{v})\times%
{\textstyle\prod\nolimits_{v\in S^{\prime}}^{\mathsf{K}}}
L_{K(1)}K\left(  F_{v}\right)  \text{.} \label{l_m4}%
\end{equation}
Here $K$ denotes non-connective $K$-theory.
\end{theorem}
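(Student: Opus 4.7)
The plan is to start from the unlocalized equivalence supplied by Thm. \ref{thm_ComputeLCAAdelic}, namely
\[
K(\mathsf{LCA}_{\mathcal{O}_S,ad})\;\cong\;\underset{\#S'<\infty}{\operatorname*{colim}\nolimits^{\mathsf{Sp}}}\;\prod\nolimits^{\mathsf{Sp}}_{v\notin S'}K(\mathcal{O}_v)\times\prod\nolimits^{\mathsf{Sp}}_{v\in S'}K(F_v),
\]
apply $L_{K(1)}$ to both sides, and then commute $L_{K(1)}$ past the colimit and past the two product factors. The left-hand side is precisely the target of the theorem's statement, so the task reduces to justifying those two commutations. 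The composite of the unit of Eq. \ref{l_m2} with the unit of Eq. \ref{l_m1} (applied levelwise) is exactly the zig-zag named in the statement, so the output will be the asserted equivalence.

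The colimit step is formal. Since $L_{K(1)}\colon\mathsf{Sp}\rightarrow\mathsf{K}$ is left adjoint to the inclusion $\iota\colon\mathsf{K}\rightarrow\mathsf{Sp}$, it preserves all small colimits; in particular the map in Eq. \ref{l_m2} is an equivalence for any filtered diagram. Thus I may pull $L_{K(1)}$ past the colimit over the poset of finite subsets $S'\subseteq S$ without loss. For each individual finite $S'$, the second factor $\prod_{v\in S'}^{\mathsf{Sp}}K(F_v)$ is a \emph{finite} product of spectra, hence a finite coproduct, and therefore commutes with $L_{K(1)}$. So the only genuine content left is the infinite product $\prod^{\mathsf{Sp}}_{v\notin S'}K(\mathcal{O}_v)$.

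This infinite product is the main obstacle. The map of Eq. \ref{l_m1} amounts to the question of whether
\[
\prod\nolimits^{\mathsf{Sp}}_{v\notin S'}K(\mathcal{O}_v)\;\longrightarrow\;\prod\nolimits^{\mathsf{Sp}}_{v\notin S'}L_{K(1)}K(\mathcal{O}_v)
\]
is a $K(1)$-equivalence, equivalently whether the product of the fibers of the $K(1)$-localization unit is $K(1)$-acyclic. In general $L_{K(1)}$ is not smashing and products of $K(1)$-acyclics need not be $K(1)$-acyclic, so this is precisely where one needs the technology of \cite{MR4296353} and \cite{MR4444265}. The input is that because $\frac{1}{p}\in\mathcal{O}_S$, for every $v\in S$ the localization $L_{K(1)}K(\mathcal{O}_v)$ is étale $K$-theory of $\mathcal{O}_v$ (Thomason descent with hyperdescent), and these étale $K$-theory spectra are uniformly bounded $v_1$-periodic $KU_p$-module-like objects whose $K(1)$-local homotopy is finite in each degree in a controlled way. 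Under such uniform boundedness hypotheses, $K(1)$-local $K$-theory is shown in the cited works to commute with arbitrary products of local rings of the form appearing here; more precisely one checks that the Milnor $\lim^{1}$ contributions from the product are $K(1)$-acyclic and that the residue of the unit map is killed after smashing with a Moore spectrum and inverting $v_1$.

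The main obstacle is therefore not the colimit step but this infinite-product step, for which one must verify that the $K(1)$-local $K$-theory of the family $\{\mathcal{O}_v\}_{v\notin S'}$ meets the uniform finiteness and boundedness hypotheses that force $L_{K(1)}$ to commute with the product; once that is in place, assembling the finite product with $\prod_{v\in S'}L_{K(1)}K(F_v)$ and then taking the filtered colimit over $S'$ in $\mathsf{K}$ yields the equivalence of Eq. \ref{l_m4}.
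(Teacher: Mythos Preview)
Your structural decomposition is correct and in one respect cleaner than the paper's. You are right that Eq.~\ref{l_m2} is an equivalence for purely formal reasons: $L_{K(1)}$ is left adjoint to the inclusion $\mathsf{K}\hookrightarrow\mathsf{Sp}$, so $L_{K(1)}(\operatorname*{colim}^{\mathsf{Sp}}X_i)\simeq\operatorname*{colim}^{\mathsf{K}}L_{K(1)}X_i$ always. The paper instead reduces mod $p^{k}$, argues that on $p$-torsion spectra $L_{K(1)}$ commutes with filtered colimits \emph{computed in} $\mathsf{Sp}$, and then separately (Lemma~\ref{lemma_specseq_for_restricted_product_inside}(3)) shows that the resulting $\operatorname*{colim}^{\mathsf{Sp}}$ agrees with $\operatorname*{colim}^{\mathsf{K}}$. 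Your route bypasses that detour entirely. You are also right that the finite factor $\prod_{v\in S'}K(F_v)$ is harmless and that the only content is the infinite product $\prod_{v\notin S'}K(\mathcal{O}_v)$.

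Where your proposal has a genuine gap is in that infinite-product step. Your description of the mechanism --- ``uniform boundedness hypotheses'', ``Milnor $\lim^{1}$ contributions are $K(1)$-acyclic'', ``inverting $v_1$'' --- is not what the cited references provide, and is not how the paper proceeds. The paper's Lemma~\ref{lemma_crit} is stated only for $(K/p^{k})(\mathcal{O}_v)$, not for $K(\mathcal{O}_v)$ integrally; one first reduces your integral claim mod $p$ (both sides are $p$-complete, so this suffices --- a step you omit), and then the proof in the appendix realizes $\prod_{v}(K/p)(\mathcal{O}_v)$ as the global sections of a motivic module sheaf $i_{S\ast}i_S^{\ast}\mathsf{KGL}/p$ over a Dedekind base, and applies the Bott-inversion/\'etale-hyperdescent result of \cite{MR4444265} to identify its $K(1)$-localization with the \'etale version, whose descent spectral sequence has the product of Galois cohomology groups on its $E_1$-page. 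There is no $\lim^{1}$ argument and no abstract ``products of local rings'' statement being invoked; the input is specifically that Bott-inverted mod-$p$ $K$-theory satisfies \'etale hyperdescent over the chosen base. You have correctly located where the difficulty lies, but the sketch you give does not constitute a proof of it.
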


For the products, the superscript $%
{\textstyle\prod\nolimits^{\mathsf{K}}}
$ is not needed, because products are the same in $\mathsf{Sp}$ and
$\mathsf{K}$, but we added them to stress the perfect analogy to the
formulation for $\mathsf{Sp}$ in Eq. \ref{l_m0}.

\begin{proof}
[Proof of Thm. \ref{thm_compar1} for $S$ finite]If $S$ is finite, the colimit
in Eq. \ref{l_m0} runs over a finite directed system, and since $L_{K(1)}$
commutes with finite colimits, we may pick $S^{\prime}=S$ and get%
\[
L_{K(1)}K(\mathsf{LCA}_{\mathcal{O}_{S},ad})\cong L_{K(1)}\left(
{\textstyle\prod\nolimits_{v\in S^{\prime}}}
K\left(  F_{v}\right)  \right)  \cong%
{\textstyle\prod\nolimits_{v\in S^{\prime}}}
L_{K(1)}\left(  K\left(  F_{v}\right)  \right)
\]
since $L_{K(1)}$ commutes with finite products.
\end{proof}

The rest of the section is devoted to proving Thm. \ref{thm_compar1} in the
case of possibly infinite $S$.

\subsection{Individual descent spectral
sequences\label{sect_IndividualThomasonDescentSpectralSequences}}

We first recall the shape of the individual spectral sequences which shall
enter our restricted product construction. If only to set up notation, let us
quickly go through this, but only in the special case of those rings we will
later actually care about. Suppose $\frac{1}{p}\in\mathcal{O}_{S}$. Then each
of the rings $R=\mathcal{O}_{S},\mathcal{O}_{v},F_{v}$ contains $\frac{1}{p}$
and has \'{e}tale $p$-cohomological dimension $\leq2$. Indeed, precisely $2$
in all cases except for $\mathcal{O}_{v}=F_{v}$ for real and complex places.
Hence, for any $k\geq1$ the\ Thomason descent spectral sequence is convergent
and has the $E_{2}$-page%
\begin{equation}
E_{2}^{i,j}:=H^{i}(R,\mathbf{Z}/p^{k}(-\tfrac{j}{2}))\Rightarrow\pi
_{-i-j}L_{K(1)}K(R)\otimes_{\mathsf{Sp}}\mathbb{S}/p^{k}\text{,}
\label{lDescentSpectralSequence}%
\end{equation}
but at worst three columns may have non-zero terms. As a result, the $E_{2}%
$-page looks like%
\begin{equation}%
\begin{tabular}
[c]{rccccc}%
$\vdots\,\,$ & \multicolumn{1}{|c}{} & $\vdots$ & $\vdots$ & $\vdots$ & \\
$\mathsf{2}$ & \multicolumn{1}{|c}{$\quad0\quad$} & $H^{0}(R,\mathbf{Z}%
/p^{k}(-1))$ & $H^{1}(R,\mathbf{Z}/p^{k}(-1))$ & $H^{2}(R,\mathbf{Z}%
/p^{k}(-1))$ & $\quad0\quad$\\
$\mathsf{1}$ & \multicolumn{1}{|c}{$\quad0\quad$} & $0$ & $0$ & $0$ &
$\quad0\quad$\\
$\mathsf{0}$ & \multicolumn{1}{|c}{$\quad0\quad$} & $H^{0}(R,\mathbf{Z}%
/p^{k}(0))%
{\cellcolor{lg}}%
$ & $H^{1}(R,\mathbf{Z}/p^{k}(0))$ & $H^{2}(R,\mathbf{Z}/p^{k}(0))$ &
$\quad0\quad$\\
$\mathsf{-1}$ & \multicolumn{1}{|c}{$\quad0\quad$} & $0$ & $0%
{\cellcolor{lg}}%
$ & $0$ & $\quad0\quad$\\
$\mathsf{-2}$ & \multicolumn{1}{|c}{$\quad0\quad$} & $H^{0}(R,\mathbf{Z}%
/p^{k}(1))$ & $H^{1}(R,\mathbf{Z}/p^{k}(1))$ & $H^{2}(R,\mathbf{Z}/p^{k}(1))%
{\cellcolor{lg}}%
$ & $\quad0\quad$\\
$\vdots\,\,$ & \multicolumn{1}{|c}{} & $\vdots$ & $\vdots$ & $\vdots$ &
\\\cline{2-6}
& $\mathsf{-1}$ & $\mathsf{0}$ & $\mathsf{1}$ & $\mathsf{2}$ & $\mathsf{\cdots
}$%
\end{tabular}
\ \ \ \ \label{lwui1}%
\end{equation}
with differential $d_{2}\colon E_{2}^{i,j}\rightarrow E_{2}^{i+2,j-1}$ of
cohomological type. In any non-zero row, the differential necessarily points
to a zero entry and consequently the spectral sequence must collapse already
on the $E_{2}$-page. Thus, we obtain canonical isomorphisms $\pi
_{2j-1}L_{K(1)}K(R)\otimes\mathbb{S}/p^{k}\cong H^{1}(R,\mathbf{Z}%
/p^{k}\mathbf{Z}(j))$ and exact sequences%
\begin{equation}
H^{2}(R,\mathbf{Z}/p^{k}\mathbf{Z}(j+1))\hookrightarrow\pi_{2j}L_{K(1)}%
K(R)\otimes_{\mathsf{Sp}}\mathbb{S}/p^{k}\twoheadrightarrow H^{0}%
(R,\mathbf{Z}/p^{k}\mathbf{Z}(j)) \label{lct5b}%
\end{equation}
for all $j\in\mathbf{Z}$, respectively. The shaded regions highlight the
$\pi_{0}$-diagonal.

\subsection{$K(1)$ on the inside}

We first compute the right side in Eq. \ref{l_m4}. This is a computation
mostly intrinsic to the $K(1)$-local homotopy category.

\subsubsection{The $P$-groups}

\begin{definition}
\label{def_PGroups}Let $p$ be an odd prime. Let $F$ be a number field and $S$
a (possibly infinite) set of places of $F$ containing the infinite places and
such that $\frac{1}{p}\in\mathcal{O}_{S}$. Write $G_{S}:=\operatorname*{Gal}%
(F_{S}/F)$, where $F_{S}$ is the maximal Galois extension of $F$ which is
unramified outside $S$. If $M$ is a finite $p$-primary $G_{S}$-module, define%
\[
P_{S}^{i}(F,M):=\underset{v\in S\setminus S_{\infty}}{\left.  \prod
\nolimits^{\prime}\right.  }H^{i}(F_{v},M):H_{\operatorname*{ur}}^{i}%
(F_{v},M)
\]
as a restricted product of abelian groups. Note that $v$ runs only through the
finite places.\footnote{From the viewpoint of this text, it must appear rather
ad hoc and unnatural to skip the infinite places in this definition. We chose
to do this so that the definition is compatible with the literature. There
\textit{are} reasons to define the $P$-groups this way. The issue is that the
local duality at infinite places must be formulated with Tate cohomology
groups, and if one uses the usual Galois cohomology groups, this duality
collapses at all real and complex places. For more on this, see Pitfall
\ref{pitfall_l1}.}
\end{definition}

\begin{remark}
\label{rmk_TheStoryAboutTateCohomology1}This definition agrees with \cite[Ch.
17, Def. 17.5]{MR4174395}. Harari's $v$ runs through all places in $S$, but he
uses the convention (\cite[Def. 17.4]{MR4174395}) that for infinite places
$H^{0}$ tacitly is meant to refer to Tate cohomology $\widehat{H}^{0}$
instead. But this is just zero in our situation:\ For complex places,
$\widehat{H}^{0}(F_{v},M)$ is always zero and for real places, $2\widehat
{H}(F_{v},M)=0$ since $\operatorname*{Gal}(\mathbf{C}/\mathbf{R})$ has order
$2$. As $M$ has odd order, this group also vanishes. The definition is also
compatible with Milne \cite[Ch. I, \S 4, before Lemma 4.8]{MR2261462} or
\cite[Ch. VIII, \S 5]{MR2392026}.
\end{remark}

\subsubsection{Derived completion}

We first recall some foundations necessary for computing $K(1)$-local
colimits. Let $p$ be a prime. Write $\mathsf{Ab}$ for the abelian category of
abelian groups. A group $A\in\mathsf{Ab}$ is called $p$\emph{-complete} if the
natural map $A\rightarrow\widehat{A}:=\lim_{n\rightarrow\infty}A/p^{n}A$ is an
isomorphism. We write $\widehat{\mathsf{Ab}}_{p}$ for the category of derived
$p$-complete abelian groups\footnote{Other names are in active use.
Bousfield--Kan \cite{MR0365573}, Mitchell, Hahn \cite{MR2327028}, among
others, refer to them as $\operatorname*{Ext}$-$p$\emph{-complete groups}.
Barthel, Frankland, Hovey, Strickland, e.g. \cite[Appendix A]{MR1601906},
\cite[Appendix A]{MR3402336} refer to $L$\emph{-complete} groups. We favour
\textquotedblleft derived complete\textquotedblright\ because of its
descriptive nature.}, i.e., the smallest abelian full subcategory of
$\mathsf{Ab}$ which contains all $p$-complete groups. A great reference is
\cite[Appendix A]{MR1601906}. We only need a few properties: Define%
\[
L_{i}(A):=\operatorname*{Ext}\nolimits_{\mathsf{Ab}}^{1-i}(\mathbf{Q}%
_{p}/\mathbf{Z}_{p},A)\text{.}%
\]
These are functors $L_{i}\colon\mathsf{Ab}\rightarrow\widehat{\mathsf{Ab}}%
_{p}\subset\mathsf{Ab}$, and $L_{0}$ is right-exact and $L_{1}$ is its single
higher left-derived functor. That is, for any exact sequence $A^{\prime
}\hookrightarrow A\twoheadrightarrow A^{\prime\prime}$, the sequence%
\begin{equation}
0\rightarrow L_{1}A^{\prime}\rightarrow L_{1}A\rightarrow L_{1}A^{\prime
\prime}\rightarrow L_{0}A^{\prime}\rightarrow L_{0}A\rightarrow L_{0}%
A^{\prime\prime}\rightarrow0 \label{l_ExactSequenceOfLFunctors}%
\end{equation}
is exact. Recall that $\mathbf{Z}$ has global dimension one (a hereditary
ring), so $\operatorname*{Ext}\nolimits_{\mathsf{Ab}}^{i}=\operatorname*{Ext}%
\nolimits_{\mathsf{Mod}_{\mathbf{Z}}}^{i}$ is zero for $i\neq0,1$. The
principal tool is the following easy fact.

\begin{lemma}
\label{lemma_ComputeL0ViaClassicalCompletion}(\cite[Thm. A.2 (a)]{MR1601906})
For any abelian group $A$,%
\[
A_{\operatorname*{div}}\hookrightarrow L_{0}A\twoheadrightarrow\widehat{A}%
\]
is exact, where $A_{\operatorname*{div}}$ denotes the subgroup of
$p$-divisible elements and $\widehat{A}:=\lim_{n\rightarrow\infty}A/p^{n}A$ is
the classical $p$-completion.
\end{lemma}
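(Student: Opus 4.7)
The plan is to compute $L_{0}A = \operatorname{Ext}_{\mathsf{Ab}}^{1}(\mathbf{Q}_{p}/\mathbf{Z}_{p}, A)$ using the presentation $\mathbf{Q}_{p}/\mathbf{Z}_{p} = \operatorname*{colim}_{n}\mathbf{Z}/p^{n}$ whose transition maps are multiplication by $p$. Since $R\operatorname{Hom}(-,A)$ sends colimits to derived limits, one has $R\operatorname{Hom}_{\mathbf{Z}}(\mathbf{Q}_{p}/\mathbf{Z}_{p}, A) \simeq R\lim_{n} R\operatorname{Hom}_{\mathbf{Z}}(\mathbf{Z}/p^{n}, A)$. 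The two-term free resolution $\mathbf{Z} \xrightarrow{p^{n}} \mathbf{Z}$ of $\mathbf{Z}/p^{n}$ presents each factor on the right as the complex $[A \xrightarrow{p^{n}} A]$ with $H^{0} = A[p^{n}]$ and $H^{1} = A/p^{n}A$.

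Next I run the spectral sequence $E_{2}^{s,t} = R^{s}\lim_{n} H^{t} \Rightarrow H^{s+t}(R\lim)$. Since $R^{s}\lim$ vanishes on towers of abelian groups for $s \geq 2$, only two groups contribute to total degree $1$: the edge term $E_{2}^{0,1} = \lim_{n} A/p^{n}A = \widehat{A}$ and $E_{2}^{1,0} = \lim\nolimits^{1}_{n} A[p^{n}]$, and there is no room for a nonzero differential. This immediately yields the Milnor-type short exact sequence
\[
0 \to \lim\nolimits^{1}_{n} A[p^{n}] \to L_{0}A \to \widehat{A} \to 0.
\]

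It remains to identify $\lim\nolimits^{1}_{n} A[p^{n}]$ with $A_{\operatorname{div}}$. Writing this $\lim^{1}$ as the cokernel of $(a_{n}) \mapsto (a_{n} - p\,a_{n+1})$ on $\prod_{n} A[p^{n}]$, a class in the cokernel records precisely the obstruction to coherently lifting an element through the tower of $p$-divisions inside $A$, and such obstructions are classified by $\bigcap_{n} p^{n} A = A_{\operatorname{div}}$ once the non-canonical $A[p]$-ambiguity in successive preimages is quotiented out. The main obstacle is exactly this last identification: while the Milnor sequence itself is a routine $R\lim$ computation, matching $\lim^{1}$ with the $p$-divisible subgroup requires careful bookkeeping of the preimage choices, which is precisely the content of the cited Hovey--Strickland reference and where I would concentrate the remaining effort.
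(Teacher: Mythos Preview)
The paper gives no proof of this lemma; it simply cites Hovey--Strickland. Your derivation of the Milnor-type sequence
\[
0 \longrightarrow \lim\nolimits^{1}_{n} A[p^{n}] \longrightarrow L_{0}A \longrightarrow \widehat{A} \longrightarrow 0
\]
is correct and is exactly what Hovey--Strickland prove in Theorem~A.2(a).

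The gap is in your final step: the identification $\lim^{1}_{n} A[p^{n}] \cong A_{\operatorname{div}}$ is \emph{false} in general. Take $A = \mathbf{Q}_{p}/\mathbf{Z}_{p}$. Then $A$ is injective, so $L_{0}A = \operatorname{Ext}^{1}(\mathbf{Q}_{p}/\mathbf{Z}_{p},A) = 0$, and $\widehat{A} = 0$ since $A$ is $p$-divisible; hence $\lim^{1}_{n} A[p^{n}] = 0$. But $A_{\operatorname{div}} = \bigcap_{n} p^{n}A = A \neq 0$. So the two groups disagree, and your heuristic about ``obstructions to coherent $p$-divisions being classified by $\bigcap_{n} p^{n}A$'' breaks down: an element of $A_{\operatorname{div}}$ need not admit a \emph{coherent} system of $p$-power roots, and even when it does, the resulting $\lim^{1}$ class can vanish.

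In fact, the lemma as stated in the paper appears to be a slight misquotation of Hovey--Strickland; the kernel is genuinely $\lim^{1}_{n} A[p^{n}]$, not $A_{\operatorname{div}}$. This does not affect the paper's only use of the lemma (Lemma~\ref{lemma_DerivedCompletionOfPGroups}): there $A$ is bounded $p$-torsion, so the tower $A[p^{n}]$ is eventually constant with transition maps that are eventually nilpotent, forcing $\lim^{1} = 0$, while $A_{\operatorname{div}} = \bigcap p^{n}A = 0$ as well. Both versions of the kernel vanish in that application, so the conclusion $L_{0}A \cong \widehat{A}$ survives.
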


\subsubsection{The main computation}

\begin{lemma}
\label{lemma_DerivedCompletionOfPGroups}Let $F$ be a number field and suppose
$S_{\infty}\subseteq S$. Let $p$ be an odd prime. Suppose $M$ is a finite
$p$-primary Galois module. Then

\begin{enumerate}
\item $L_{0}P_{S}^{i}(F,M)\cong P_{S}^{i}(F,M)$ for all $i\in\mathbf{Z}$.

\item $L_{1}P_{S}^{i}(F,M)=0$ for all $i\in\mathbf{Z}$.
\end{enumerate}
\end{lemma}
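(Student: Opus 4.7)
The plan is to reduce everything to a uniform bound on $p$-torsion. Since $M$ is a finite $p$-primary abelian group, there exists some $N \geq 1$ (at most $\log_p |M|$) with $p^N M = 0$. By functoriality of Galois cohomology, multiplication by $p^N$ on $M$ induces multiplication by $p^N$ on $H^i(F_v, M)$ and on $H_{\operatorname*{ur}}^i(F_v, M)$ for every finite place $v$. Hence all of these groups are annihilated by $p^N$, and consequently so is the restricted product $P_S^i(F,M)$, since it is a subgroup of $\prod_{v \in S \setminus S_\infty} H^i(F_v,M)$ and scalar multiplication is pointwise. The key observation is that the bound $N$ is uniform in $v$.

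Writing $A := P_S^i(F,M)$, it thus suffices to prove the general fact that any abelian group $A$ with $p^N A = 0$ satisfies $L_0 A \cong A$ and $L_1 A = 0$. For (2), we use $L_1 A = \operatorname{Hom}_{\mathsf{Ab}}(\mathbf{Q}_p/\mathbf{Z}_p, A)$. The image of any such homomorphism is a $p$-divisible subgroup of $A$; but $p^N A = 0$ forces every $p$-divisible subgroup of $A$ to vanish, so $L_1 A = 0$.

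For (1), Lemma \ref{lemma_ComputeL0ViaClassicalCompletion} gives the exact sequence $A_{\operatorname*{div}} \hookrightarrow L_0 A \twoheadrightarrow \widehat{A}$. The previous paragraph's argument also shows $A_{\operatorname*{div}} = 0$. Moreover, for $n \geq N$ we have $A/p^n A = A$, so the inverse system defining $\widehat{A}$ is eventually constant and $\widehat{A} \cong A$. Thus $L_0 A \overset{\sim}{\rightarrow} A$, and the natural map $A \rightarrow L_0 A$ is an isomorphism as claimed.

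There is essentially no serious obstacle here. One might initially worry that the restricted product structure — being neither a plain product nor a plain direct sum, and running over an a priori infinite index set — could introduce pathologies in derived completion (as happens, e.g., for $\prod_{n}\mathbf{Z}_p$, whose $L_1$ can be nonzero when no uniform torsion bound is present). The point is precisely that the fixed coefficient module $M$ forces a uniform exponent on the coordinates, bypassing all such subtleties.
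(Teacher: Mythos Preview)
Your proof is correct and follows essentially the same approach as the paper: both arguments hinge on the fact that $p^{N}$ annihilates $P_{S}^{i}(F,M)$ for some uniform $N$ (coming from $p^{N}M=0$), which forces $A_{\operatorname{div}}=0$, $L_{1}A=\operatorname{Hom}(\mathbf{Q}_{p}/\mathbf{Z}_{p},A)=0$, and makes the classical $p$-completion eventually constant. Your version is marginally more direct in that you derive the absence of $p$-divisible elements straight from the exponent bound rather than, as the paper does, via projection to the finite groups $H^{i}(F_{v},M)$.
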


\begin{proof}
For each $i$ there exists an inclusion%
\begin{equation}
P_{S}^{i}(F,M)\subseteq\prod_{v\in S}H^{i}(F_{v},M)\text{.} \label{lhupp2}%
\end{equation}
More precisely, for $i=0$ this is indeed an equality in all cases. For $i=1,2$
it is an equality if $S$ is finite, and typically an inclusion of a strictly
smaller subgroup when $S$ is infinite. The product comes with projectors to
each $H^{i}(F_{v},M)$ and the latter groups are always finite (by local class
field theory). As the image of a $p$-divisible element must again be
$p$-divisible and finite groups have no non-zero $p$-divisible elements, we
conclude that $P_{S}^{i}(F,M)_{\operatorname*{div}}=0$. Thus,%
\[
L_{1}P_{S}^{i}(F,M)=\operatorname*{Hom}\nolimits_{\mathsf{Ab}}(\mathbf{Q}%
_{p}/\mathbf{Z}_{p},P_{S}^{i}(F,M))=0
\]
since every element in $\mathbf{Q}_{p}/\mathbf{Z}_{p}$ is $p$-divisible. This
proves (2). For (1), Lemma \ref{lemma_ComputeL0ViaClassicalCompletion} shows
that we only need to compute the classical $p$-completion. Since $M$ is
finite, $p^{k}\cdot M=0$ for $k$ sufficiently large. Thus, $p^{k}\cdot
P_{S}^{i}(F,M)=0$ and therefore the inverse limit is eventually the identity.
\end{proof}

\begin{lemma}
\label{lemma_specseq_for_restricted_product_inside}Let $F$ be a number field
and $S$ a (possibly infinite) set of places of $F$ containing the infinite
places and such that $\frac{1}{p}\in\mathcal{O}_{S}$. Suppose $k\geq1$. Define%
\[
\mathfrak{X}_{\infty}:=\left.  \underset{S^{\prime}\subseteq S\text{,
}\#S^{\prime}<\infty}{\operatorname*{colim}\nolimits^{\mathsf{K}}}\right.
\left(
{\textstyle\prod\nolimits_{v\in S\setminus S^{\prime}}}
L_{K(1)}(K/p^{k})(\mathcal{O}_{v})\times%
{\textstyle\prod\nolimits_{v\in S^{\prime}}}
L_{K(1)}(K/p^{k})\left(  F_{v}\right)  \right)
\]
This is the right side of Eq. \ref{l_m4}, but tensored with the Moore spectrum
$\mathbb{S}/p^{k}$. Then the restricted product of the Thomason descent
spectral sequences of Eq. \ref{lDescentSpectralSequence} (for $R=\mathcal{O}%
_{v}$ resp. $F_{v}$) yields a convergent spectral sequence%
\[
E_{2}^{i,j}:=\left.  \underset{v\in S\;}{%
{\displaystyle\prod\nolimits^{\prime}}
}\right.  H^{i}(F_{v},\mathbf{Z}/p^{k}\mathbf{Z}(-\tfrac{j}{2})):H^{i}%
(\mathcal{O}_{v},\mathbf{Z}/p^{k}\mathbf{Z}(-\tfrac{j}{2}))\Rightarrow
\pi_{-i-j}\mathfrak{X}_{\infty}\text{,}%
\]
and (just as for each Eq. \ref{lDescentSpectralSequence} individually) it
collapses to two columns. Notably,

\begin{enumerate}
\item $\pi_{2j-1}\mathfrak{X}_{\infty}\cong P_{S}^{1}(F,\mathbf{Z}%
/p^{k}\mathbf{Z}(j))$, and

\item $P_{S}^{2}(F,\mathbf{Z}/p^{k}\mathbf{Z}(j+1))\hookrightarrow\pi
_{2j}\mathfrak{X}_{\infty}\twoheadrightarrow P_{S}^{0}(F,\mathbf{Z}%
/p^{k}\mathbf{Z}(j))\oplus\bigoplus_{v\in S}H^{0}(F_{v},\mathbf{Z}%
/p^{k}\mathbf{Z}(j))$,

\item and the natural morphism $(-)\rightarrow L_{K(1)}(-)$ induces an
equivalence from
\[
\left.  \underset{S^{\prime}\subseteq S\text{, }\#S^{\prime}<\infty
}{\operatorname*{colim}\nolimits^{\mathsf{Sp}}}\right.  \left(
{\textstyle\prod\nolimits_{v\in S\setminus S^{\prime}}}
L_{K(1)}(K/p^{k})(\mathcal{O}_{v})\times%
{\textstyle\prod\nolimits_{v\in S^{\prime}}}
L_{K(1)}(K/p^{k})\left(  F_{v}\right)  \right)
\]
to $\mathfrak{X}_{\infty}$. In other words: There is no difference between the
colimit taken intrinsically to the $K(1)$-local homotopy category or not.
\end{enumerate}
\end{lemma}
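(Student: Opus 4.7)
My plan is to compute $\pi_{*}\mathfrak{X}_{\infty}$ by first forming the analogous filtered colimit $\mathfrak{X}_{\infty}^{\mathsf{Sp}}:=\operatorname{colim}^{\mathsf{Sp}}_{S'}\mathfrak{Y}_{S'}$ in the ordinary stable category (where $\mathfrak{Y}_{S'}$ denotes the finite product of $K(1)$-local $K$-theories inside the colimit, tensored with $\mathbb{S}/p^{k}$), identifying its homotopy using the individual descent spectral sequences, and then upgrading to the $K(1)$-local colimit by showing $\mathfrak{X}_{\infty}^{\mathsf{Sp}}$ is already $K(1)$-local so that part (3) holds.

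First I would observe that for each finite $S'\subseteq S$, $\mathfrak{Y}_{S'}$ is a finite product of $K(1)$-local spectra (so products in $\mathsf{Sp}$ and $\mathsf{K}$ agree) and carries the product of the individual Thomason descent spectral sequences of Eq.~\ref{lDescentSpectralSequence}. Since $\pi_{*}$ commutes with products, this product spectral sequence has $E_{2}$-page equal to the product of the individual $E_{2}$-pages and differentials the product of the individual (vanishing) $d_{2}$'s. It therefore collapses, with exactly the degeneration pattern depicted in Eq.~\ref{lwui1}.

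Next, since $\pi_{*}$ commutes with filtered colimits in $\mathsf{Sp}$ and filtered colimits are exact, the individual spectral sequences assemble into a convergent, still-collapsed spectral sequence on $\mathfrak{X}_{\infty}^{\mathsf{Sp}}$. Its $E_{2}$-page is a filtered colimit of finite products and hence, by the definition in Eq.~\ref{l_RestrictedProductAsColimitJFormula}, is the restricted product $\prod^{\prime}_{v\in S}H^{i}(F_{v},\mathbf{Z}/p^{k}(-j/2)):H^{i}(\mathcal{O}_{v},\mathbf{Z}/p^{k}(-j/2))$. To match it with the $P$-groups, I would use that for every finite place $v$ with $\tfrac{1}{p}\in\mathcal{O}_{v}$ the Galois module $\mathbf{Z}/p^{k}(j)$ is unramified, so $H^{i}(\mathcal{O}_{v},\mathbf{Z}/p^{k}(j))=H^{i}_{\operatorname{ur}}(F_{v},\mathbf{Z}/p^{k}(j))$; this identifies the restricted-product contribution over finite places with $P_{S}^{i}(F,\mathbf{Z}/p^{k}(j))$. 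At infinite places (included in $S$ but not in the $P$-groups), $F_{v}$ has $p$-cohomological dimension $0$ for $p$ odd, so these places contribute only at $i=0$, producing the extra direct-sum factor in formula (2).

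The principal obstacle is part (3): showing $\mathfrak{X}_{\infty}^{\mathsf{Sp}}\simeq\mathfrak{X}_{\infty}$, equivalently that $\mathfrak{X}_{\infty}^{\mathsf{Sp}}$ is already $K(1)$-local. The plan is to exploit that every $\mathfrak{Y}_{S'}$, and hence $\mathfrak{X}_{\infty}^{\mathsf{Sp}}$, is $p^{k}$-torsion and in particular $p$-complete. By the smashing theorem at chromatic height one, $L_{K(1)}$ restricted to $p$-complete spectra is smashing and so commutes with filtered colimits; therefore
\[
L_{K(1)}\mathfrak{X}_{\infty}^{\mathsf{Sp}}=\operatorname{colim}^{\mathsf{Sp}}_{S'}L_{K(1)}\mathfrak{Y}_{S'}=\operatorname{colim}^{\mathsf{Sp}}_{S'}\mathfrak{Y}_{S'}=\mathfrak{X}_{\infty}^{\mathsf{Sp}}.
\]
As a sanity check on this conclusion, Lemma~\ref{lemma_DerivedCompletionOfPGroups} tells us that the homotopy groups of $\mathfrak{X}_{\infty}^{\mathsf{Sp}}$, being extensions of $P$-groups, are derived $p$-complete with vanishing $L_{1}$: precisely the homotopy behaviour of a $K(1)$-local $p^{k}$-torsion spectrum expected from the Bousfield--Kan spectral sequence, confirming that the $\mathsf{Sp}$-colimit could not acquire extra structure under $L_{K(1)}$.
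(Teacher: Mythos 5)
Your route to part (3) is genuinely different from the paper's. The paper computes $\pi_*\mathfrak{X}_\infty$ (the $\mathsf{K}$-colimit) directly via Hovey's exact sequence involving the derived completion functors $L_0,L_1$ applied to the naive colimit of homotopy groups, invokes Lemma \ref{lemma_DerivedCompletionOfPGroups} to show $L_0=\mathrm{id}$ and $L_1=0$ on the $P$-groups, and only \emph{then} observes that the $\mathsf{Sp}$-colimit has the same homotopy, deducing (3) last. You instead compute the $\mathsf{Sp}$-colimit first and argue it is already $K(1)$-local, so that (3) is immediate and no derived-completion bookkeeping is needed. Your approach is cleaner if it goes through, and it bypasses Hovey's theorem entirely.

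However, the crux of your argument — ``by the smashing theorem at chromatic height one, $L_{K(1)}$ restricted to $p$-complete spectra is smashing and so commutes with filtered colimits'' — is false as stated. There is no such theorem: $L_{K(1)}$ is \emph{not} smashing, and a filtered colimit of $p$-complete spectra taken in $\mathsf{Sp}$ is not generally $p$-complete (take $\bigoplus_n \mathbb{S}_{\widehat{p}}$). The functor that \emph{is} smashing at height one is $L_1 = L_{\operatorname{KU}_{(p)}}$, and $L_{K(1)}$ is $L_{\operatorname{KU}}$ followed by $p$-completion; the $p$-completion is precisely what breaks commutation with filtered colimits. The fact you actually need — and which you have already noticed but then state incorrectly — is that each $\mathfrak{Y}_{S'}$ is \emph{bounded $p$-torsion} (annihilated by a fixed power of $p$). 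For bounded $p$-torsion spectra, $p$-completion is the identity, so $L_{K(1)}$ agrees with the smashing localization $L_{\operatorname{KU}}$. A filtered colimit of spectra killed by $p^{2k}$ is again killed by $p^{2k}$, so $L_{K(1)}\mathfrak{X}_\infty^{\mathsf{Sp}} = L_{\operatorname{KU}}\mathfrak{X}_\infty^{\mathsf{Sp}} = \operatorname{colim} L_{\operatorname{KU}}\mathfrak{Y}_{S'} = \operatorname{colim}\mathfrak{Y}_{S'} = \mathfrak{X}_\infty^{\mathsf{Sp}}$, where the penultimate equality uses that each $\mathfrak{Y}_{S'}$, being $K(1)$-local, is already $\operatorname{KU}$-local. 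This is exactly the observation the paper itself makes in the proof of Thm. \ref{thm_compar1} for general $S$; replacing ``$p$-complete'' by ``bounded $p$-torsion'' in your argument makes it correct. Your closing ``sanity check'' paragraph is heuristic and does not carry logical weight, but the rest of the plan (restricted product of descent spectral sequences, unramified cohomology identified with $H^i(\mathcal{O}_v,-)$, vanishing of higher cohomology at infinite places for odd $p$) is accurate.
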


\begin{remark}
\label{rmk_TheStoryAboutTateCohomology2}Note that the even-indexed homotopy
groups have an additional summand corresponding to the infinite places which
does not come from the $P^{0}$-group. This summand occurs for the following
reason: In the usual literature, the $H^{0}$-summands in the $P^{0}$-group for
infinite places use Tate cohomology. As we had recalled in Rmk.
\ref{rmk_TheStoryAboutTateCohomology1}, these only contribute $2$-torsion
phenomena and therefore all vanish in the setup of this text. However, the
Thomason descent spectral sequences at the infinite places contribute ordinary
(as in: not Tate-modified) Galois cohomology groups $H^{0}$ at these
places.\ They can be non-zero even for complex places, and contribute
$p$-primary torsion in the $H^{0}$-groups. It is \emph{these} summands which
we additionally see next to the $P^{0}$-group in Lemma
\ref{lemma_specseq_for_restricted_product_inside}. In a nutshell: The descent
spectral sequence does not know that civilized people have chosen to use Tate
cohomology at the infinite places and therefore it outputs, what it always
outputs:\ Plain Galois cohomology. Since both $\mathbf{R}$ and $\mathbf{C}$
have $p$-cohomological dimension zero (as $p$ is odd), this whole discussion
only affects the $H^{0}$-groups. For more on this, see Pitfall
\ref{pitfall_l1}.
\end{remark}

\begin{proof}
There are only finitely many infinite places, so $L_{K(1)}$ commutes with
their product. Hence, for the rest of the proof, we focus on the finite places
and just drag the infinite ones along tacitly. We compute the homotopy groups
of each $L_{K(1)}(K/p^{k})(\mathcal{-})$ for $\mathcal{O}_{v}$ resp. $F_{v}$,
using the Thomason descent spectral sequence. For brevity, we only spell this
out for $\mathcal{O}_{v}$. We obtain%
\[
H^{2}(\mathcal{O}_{v},\mathbf{Z}/p^{k}\mathbf{Z}(j+1))\hookrightarrow\pi
_{2j}L_{K(1)}(K/p^{k})(\mathcal{O}_{v})\twoheadrightarrow H^{0}(\mathcal{O}%
_{v},\mathbf{Z}/p^{k}\mathbf{Z}(j))
\]
and%
\[
\pi_{2j-1}L_{K(1)}(K/p^{k})(\mathcal{O}_{v})\cong H^{1}(\mathcal{O}%
_{v},\mathbf{Z}/p^{k}\mathbf{Z}(j))
\]
for arbitrary $j\in\mathbf{Z}$ and $k\geq1$. Thus, we get for the even-indexed
groups%
\begin{align}
&  \prod_{v\in S\setminus S^{\prime}}H^{2}(\mathcal{O}_{v},\mathbf{Z}%
/p^{k}\mathbf{Z}(j+1))\label{lhupp0}\\
&  \qquad\qquad\hookrightarrow\pi_{2j}\left(  \prod_{v\in S\setminus
S^{\prime}}L_{K(1)}(K/p^{k})(\mathcal{O}_{v})\right)  \twoheadrightarrow
\prod_{v\in S\setminus S^{\prime}}H^{0}(\mathcal{O}_{v},\mathbf{Z}%
/p^{k}\mathbf{Z}(j))\text{,}%
\end{align}
Similarly, for the odd-indexed groups,%
\begin{equation}
\pi_{2j-1}L_{K(1)}\prod_{v\in S\setminus S^{\prime}}(K/p^{k})(\mathcal{O}%
_{v})\cong\prod_{v\in S\setminus S^{\prime}}H^{1}(\mathcal{O}_{v}%
,\mathbf{Z}/p^{k}\mathbf{Z}(j))\text{.} \label{lhupp1}%
\end{equation}
For $L_{K(1)}(K/p^{k})(F_{v})$ the computation is analogous. Define%
\[
\mathfrak{X}_{S^{\prime}}:=\prod_{v\in S\setminus S^{\prime}}L_{K(1)}%
(K/p^{k})(\mathcal{O}_{v})\times\prod_{v\in S^{\prime}}L_{K(1)}(K/p^{k}%
)(F_{v})\text{,}\qquad\text{and}\qquad\mathfrak{X}_{\infty}:=\left.
\underset{S^{\prime}\subseteq S\text{, }\#S^{\prime}<\infty}%
{\operatorname*{colim}\nolimits^{\mathsf{K}}}\right.  \mathfrak{X}_{S^{\prime
}}\text{,}%
\]
i.e.,
\begin{equation}
\mathfrak{X}_{\infty}=L_{K(1)}\left(  \left.  \underset{S^{\prime}\subseteq
S\text{, }\#S^{\prime}<\infty}{\operatorname*{colim}\nolimits^{\mathsf{Sp}}%
}\right.  \mathfrak{X}_{S^{\prime}}\right)  \text{,} \label{l_lim}%
\end{equation}
By Hovey's computation \cite[Cor. 3.5]{MR2342007}, there is a natural exact
sequence%
\begin{equation}
L_{0}(\left.  \operatorname*{colim}\nolimits_{S^{\prime}}^{\mathsf{Ab}%
}\right.  \pi_{2j-1}\mathfrak{X}_{S^{\prime}})\hookrightarrow\pi
_{2j-1}\mathfrak{X}_{\infty}\twoheadrightarrow L_{1}(\left.
\operatorname*{colim}\nolimits_{S^{\prime}}^{\mathsf{Ab}}\right.  \pi
_{2j-2}\mathfrak{X}_{S^{\prime}}) \label{lhupp3}%
\end{equation}
in $\mathsf{Ab}$ (or equivalently, in $\widehat{\mathsf{Ab}}_{p}$). As we have
stressed in the notation, the inner colimits are taken in the category of
abelian groups\footnote{We stress this because the colimits in $\widehat
{\mathsf{Ab}}_{p}$ are different. In fact, they amount to computing $L_{0}$.}.
By Eq. \ref{lhupp1} we have%
\begin{align*}
\left.  \operatorname*{colim}\nolimits_{S^{\prime}}^{\mathsf{Ab}}\right.
\pi_{2j-1}(\mathfrak{X}_{S^{\prime}})  &  =\operatorname*{colim}_{S^{\prime}%
}\left(  \prod_{v\in S\setminus S^{\prime}}H^{1}(\mathcal{O}_{v}%
,\mathbf{Z}/p^{k}\mathbf{Z}(j))\times\prod_{v\in S^{\prime}}H^{1}%
(F_{v},\mathbf{Z}/p^{k}\mathbf{Z}(j))\right) \\
&  =P_{S}^{1}(F,\mathbf{Z}/p^{k}\mathbf{Z}(j))\text{.}%
\end{align*}
From Eq. \ref{lhupp0} we get by an analogous computation an exact sequence of
abelian groups%
\begin{equation}
P_{S}^{2}(F,\mathbf{Z}/p^{k}\mathbf{Z}(j))\hookrightarrow\left.
\operatorname*{colim}\nolimits_{S^{\prime}}^{\mathsf{Ab}}\right.  \pi
_{2j-2}\mathfrak{X}_{S^{\prime}}\twoheadrightarrow P_{S}^{0}(F,\mathbf{Z}%
/p^{k}\mathbf{Z}(j-1))\oplus\bigoplus_{v\in S_{\infty}}H^{0}(F_{v}%
,\mathbf{Z}/p^{k}\mathbf{Z}(j-1))\text{.} \label{lw1x}%
\end{equation}
Now we are ready to compute the terms in Eq. \ref{lhupp3}. Taking Eq.
\ref{lw1x} as the input for the long exact sequence of derived completion,
i.e., Eq. \ref{l_ExactSequenceOfLFunctors}, we may compute its terms by Lemma
\ref{lemma_DerivedCompletionOfPGroups} and obtain (by the vanishing of both
$L_{1}$-terms on the $P_{S}^{\ast}$-groups), $L_{1}\left(  \left.
\operatorname*{colim}\nolimits_{S^{\prime}}^{\mathsf{Ab}}\right.  \pi
_{2j-2}\mathfrak{X}_{S^{\prime}}\right)  =0$ and we deduce that Eq.
\ref{lhupp3} simplifies to%
\[
P_{S}^{2}(F,\mathbf{Z}/p^{k}\mathbf{Z}(j))\hookrightarrow\pi_{2j-1}%
\mathfrak{X}_{\infty}\twoheadrightarrow P_{S}^{0}(F,\mathbf{Z}/p^{k}%
\mathbf{Z}(j-1))\oplus\bigoplus_{v\in S_{\infty}}H^{0}(F_{v},\mathbf{Z}%
/p^{k}\mathbf{Z}(j-1))\text{,}%
\]
and under this identification the two arrows in this sequence are just those
in Eq. \ref{lw1x}. For the even-indexed homotopy groups, the computation is
analogous, just with some roles of terms exchanged. This proves (1) and (2).
Specialized to Eq. \ref{l_lim}, the natural morphism $(-)\rightarrow
L_{K(1)}(-)$ becomes a morphism%
\begin{equation}
\phi\colon\left.  \underset{S^{\prime}\subseteq S\text{, }\#S^{\prime}<\infty
}{\operatorname*{colim}\nolimits^{\mathsf{Sp}}}\right.  \mathfrak{X}%
_{S^{\prime}}\longrightarrow\mathfrak{X}_{\infty}=L_{K(1)}\left(  \left.
\underset{S^{\prime}\subseteq S\text{, }\#S^{\prime}<\infty}%
{\operatorname*{colim}\nolimits^{\mathsf{Sp}}}\right.  \mathfrak{X}%
_{S^{\prime}}\right)  \text{.} \label{lw8a}%
\end{equation}
Now we may run the same argument as above for the left side. All of the above
goes through, but is simpler. We obtain a descent spectral sequence of exactly
the same shape. As the map $\phi$ in Eq. \ref{lw8a} induces an isomorphism
between these spectral sequences, we deduce that $\phi$ induces an equivalence
of spectra. This proves (3).
\end{proof}

The proof also shows that the $E_{2}$-page of the spectral sequence reads%
\begin{equation}%
\begin{tabular}
[c]{rccc}%
$\vdots\,\,$ & \multicolumn{1}{|c}{$\vdots$} & $\vdots$ & $\vdots$\\
$\mathsf{2}$ & \multicolumn{1}{|c}{$P_{S}^{0}(F,\mathbf{Z}/p^{k}%
(-1))\oplus\bigoplus_{v\in S_{\infty}}H^{0}(F_{v},\mathbf{Z}/p^{k}%
\mathbf{Z}(-1))$} & $P_{S}^{1}(F,\mathbf{Z}/p^{k}(-1))$ & $P_{S}%
^{2}(F,\mathbf{Z}/p^{k}(-1))$\\
$\mathsf{1}$ & \multicolumn{1}{|c}{$0$} & $0$ & $0$\\
$\mathsf{0}$ & \multicolumn{1}{|c}{$P_{S}^{0}(F,\mathbf{Z}/p^{k}%
(0))\oplus\bigoplus_{v\in S_{\infty}}H^{0}(F_{v},\mathbf{Z}/p^{k}%
\mathbf{Z}(0))%
{\cellcolor{lg}}%
$} & $P_{S}^{1}(F,\mathbf{Z}/p^{k}(0))$ & $P_{S}^{2}(F,\mathbf{Z}/p^{k}(0))$\\
$\mathsf{-1}$ & \multicolumn{1}{|c}{$0$} & $0%
{\cellcolor{lg}}%
$ & $0$\\
$\mathsf{-2}$ & \multicolumn{1}{|c}{$P_{S}^{0}(F,\mathbf{Z}/p^{k}%
(1))\oplus\bigoplus_{v\in S_{\infty}}H^{0}(F_{v},\mathbf{Z}/p^{k}%
\mathbf{Z}(1))$} & $P_{S}^{1}(F,\mathbf{Z}/p^{k}(1))$ & $P_{S}^{2}%
(F,\mathbf{Z}/p^{k}(1))%
{\cellcolor{lg}}%
$\\
$\vdots\,\,$ & \multicolumn{1}{|c}{$\vdots$} & $\vdots$ & $\vdots
$\\\cline{2-4}
& $\mathsf{0}$ & $\mathsf{1}$ & $\mathsf{2}$%
\end{tabular}
\ \ \label{lwui2}%
\end{equation}
with only three non-zero columns, which is certainly what one would expect
from a restricted product version of the descent spectral sequences. Again,
the shaded regions indicate the location of the $\pi_{0}$-diagonal.

\subsection{$K(1)$ on the outside}

Next, we compute the left side of Eq. \ref{l_m4}. This is a computation mostly
in the ordinary stable homotopy category, with a detour to the motivic one.

\subsubsection{The $K(1)$-local integral ad\`{e}les}

\begin{lemma}
\label{lemma_crit}Suppose $k\geq1$. Let $S^{\prime}$ be a finite set of finite
places such that $S^{\prime}\subseteq S$. Assume that $S^{\prime}$ contains
all the places of $S$ which lie over the prime $p$. Then the natural morphism%
\begin{equation}
L_{K(1)}\left(
{\textstyle\prod\nolimits_{v\in S\setminus S^{\prime}}}
(K/p^{k})(\mathcal{O}_{v})\right)  \longrightarrow%
{\textstyle\prod\nolimits_{v\in S\setminus S^{\prime}}}
L_{K(1)}(K/p^{k})(\mathcal{O}_{v}) \label{lwui3}%
\end{equation}
is an equivalence. As the intrinsic product in the $K(1)$-local homotopy
category can be computed in $\mathsf{Sp}$, the meaning of the product on the
right side is unambiguous.
\end{lemma}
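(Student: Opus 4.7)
Both sides of Eq.~\ref{lwui3} are $K(1)$-local: the right-hand side as a product of $K(1)$-local spectra (being $K(1)$-local is a right-orthogonality condition, hence closed under products), and the left-hand side by construction. The map being an equivalence is therefore equivalent to showing $L_{K(1)}\prod_v F_v = 0$, where $F_v := \operatorname{fib}\bigl((K/p^{k})(\mathcal{O}_v) \to L_{K(1)}(K/p^{k})(\mathcal{O}_v)\bigr)$; each $F_v$ is $K(1)$-acyclic by construction of the $K(1)$-localization, but products of acyclics need not be acyclic in general.

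The decisive use of the hypothesis on $S'$ is that every $v \in S \setminus S'$ has residue characteristic $\ell_v \neq p$. For such $v$, Gabber--Suslin rigidity identifies $(K/p^{k})(\mathcal{O}_v) \simeq (K/p^{k})(\kappa(v))$ with $\kappa(v) = \mathbf{F}_{q_v}$, and comparing Quillen's direct computation of $K_*(\mathbf{F}_{q_v})/p^{k}$ with the descent spectral sequence of Eq.~\ref{lDescentSpectralSequence} (which collapses at $E_2$ to two columns since the $p$-cohomological dimension of $\mathcal{O}_v$ equals one) shows that the natural comparison $(K/p^{k})(\mathcal{O}_v) \to L_{K(1)}(K/p^{k})(\mathcal{O}_v)$ is an isomorphism on $\pi_n$ for $n \geq 0$. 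Hence each $F_v$ is concentrated in strictly negative degrees, with $\pi_n F_v$ a finite $p^{k}$-torsion abelian group of bounded order.

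Consequently $\prod_v F_v$ is bounded above with homotopy in each degree a product of uniformly bounded $p^{k}$-torsion groups. Eilenberg--MacLane spectra on $p^{k}$-torsion abelian groups are $K(1)$-acyclic (a standard consequence of $K(1)_{*}H\mathbf{F}_p = 0$ at height one), and combining this with the derived-completion analysis of Lemma~\ref{lemma_DerivedCompletionOfPGroups} --- which gives $L_0 = \operatorname{id}$ and $L_1 = 0$ on bounded $p^{k}$-torsion abelian groups, including the infinite products $\prod_v \pi_n F_v$ --- together with Hovey's formula \cite{MR2342007} should yield $\pi_{*}L_{K(1)}\prod_v F_v = 0$. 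The technology of \cite{MR4296353, MR4444265} handles the requisite interplay between infinite products, Postnikov limits, and $K(1)$-localization.

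The main obstacle is precisely this last $K(1)$-acyclicity of the infinite product. The naive hope that each $(K/p^{k})(\mathcal{O}_v)$ is itself $K(1)$-local fails --- the $K(1)$-localization introduces Bott-periodic homotopy in negative degrees absent from the unlocalized $K$-theory --- so one must genuinely exploit both the uniform $p^{k}$-torsion bound on $\pi_* F_v$ and the boundedness of $F_v$ from above, invoking the cited machinery for the interaction of infinite products with $K(1)$-localization.
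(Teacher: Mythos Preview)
Your strategy is sound and, once the last step is executed correctly, yields a proof considerably more elementary than the paper's. The reduction is right: the map is an equivalence iff $L_{K(1)}\bigl(\prod_v F_v\bigr)\simeq 0$, and Gabber--Suslin rigidity together with Quillen's computation for finite fields does show that each $F_v$ is concentrated in degrees $\leq -2$, so that $\prod_v F_v$ is bounded above. But your closing paragraph overcomplicates what remains. You do not need Hovey's formula, derived completion, or the machinery of \cite{MR4296353,MR4444265}: \emph{every bounded-above spectrum is $K(1)$-acyclic}. Indeed, for any spectrum $X$ one has $X\simeq\operatorname*{colim}_{n}\tau_{\geq -n}X$; when $X$ is bounded above each $\tau_{\geq -n}X$ is bounded, hence a finite iterated extension of Eilenberg--MacLane spectra, and $K(1)\wedge H\mathbf{Z}\simeq 0$ forces $K(1)\wedge HA\simeq 0$ for every abelian group $A$. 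Since smashing with $K(1)$ commutes with filtered colimits, $K(1)\wedge X\simeq 0$. This finishes the argument; neither the uniform $p^{k}$-torsion bound on $\pi_{\ast}F_v$ nor any delicate product analysis is required beyond boundedness above.

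The paper's proof (deferred to Appendix~\S\ref{appendix_ProofOfLemma_Crit}) takes an entirely different route. It passes to the motivic stable homotopy category $\operatorname*{SH}_{\operatorname*{Nis}}(\mathcal{O}_T)$ over an auxiliary Dedekind base, realises the infinite product as the sheaf $\mathcal{F}:=i_{S\ast}i_S^{\ast}\mathsf{KGL}_{\mathcal{O}_T}/p$, inverts a Bott element $\beta$, and invokes the \'etale hyperdescent theorem of \cite{MR4444265} to identify $\mathcal{F}[\beta^{-1}]$ with its \'etale hypersheafification; a separate argument then identifies Bott inversion with $K(1)$-localization on $p$-torsion spectra, and the equivalence is read off from the resulting descent spectral sequence. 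Your approach bypasses the motivic setting and the Bott-inversion mechanism entirely, trading them for the single observation that coconnective spectra are $K(1)$-acyclic; the price is a direct appeal to Gabber rigidity and Quillen's calculation, which the paper absorbs into the hyperdescent machinery.
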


For this, we need to dig a little into the details of computing $L_{K(1)}$ in
terms of inverting Bott elements. The proof is a bit lengthy and deferred to
Appendix \S \ref{appendix_ProofOfLemma_Crit}.

\subsubsection{Conclusion of the proof}

Now we have assembled all the ingredients for our proof.

\begin{proof}
[Proof of Thm. \ref{thm_compar1} for general $S$]Pick $k\geq1$. Following
Prop. \ref{prop_IdentifyAdelicBlocks} and unravelling the definition of a
restricted product (see \S \ref{subsect_RestrictedProductAsExactCategory}), we
find%
\begin{align*}
&  L_{K(1)}(K/p^{k})(\mathsf{LCA}_{\mathcal{O}_{S,ad}})\cong L_{K(1)}\left(
K(\mathsf{LCA}_{\mathcal{O}_{S,ad}})\otimes_{\mathsf{Sp}}\mathbb{S}%
/p^{k}\right) \\
&  \qquad\cong L_{K(1)}\left(  K\left(  \underset{S^{\prime}\subseteq S\text{,
}\#S^{\prime}<\infty}{\operatorname*{colim}}%
{\textstyle\prod\nolimits_{v\in S\setminus S^{\prime}}}
\mathsf{Proj}_{\mathcal{O}_{v},fg}\times%
{\textstyle\prod\nolimits_{v\in S^{\prime}}}
\mathsf{Mod}_{F_{v},fg}\right)  \otimes_{\mathsf{Sp}}\mathbb{S}/p^{k}\right)
\\
&  \qquad\cong L_{K(1)}\left(  \left.  \underset{S^{\prime}\subseteq S\text{,
}\#S^{\prime}<\infty}{\operatorname*{colim}\nolimits^{\mathsf{Sp}}}\right.
(K/p^{k})\left(
{\textstyle\prod\nolimits_{v\in S\setminus S^{\prime}}}
\mathsf{Proj}_{\mathcal{O}_{v},fg}\times%
{\textstyle\prod\nolimits_{v\in S^{\prime}}}
\mathsf{Mod}_{F_{v},fg}\right)  \right)
\end{align*}
since $K$-theory and smashing commutes with filtering colimits. The colimit is
taken in spectra. In general, the functor $L_{K(1)}$ does \textit{not} commute
with filtering colimits. It is built from the $\operatorname*{KU}%
$-localization (which \textit{is} smashing), followed by $p$-completion
\cite[Prop. 3.2, $L_{1}$ loc. cit. agrees with $L_{\operatorname*{KU}}$%
]{MR1069739}. The $p$-completion is the sole reason for the failure to be
smashing. However, as $K/p^{k}$ is a $p$-torsion spectrum, the $p$-completion
is eventually constant. Thus, restricted to the subcategory of $p$-torsion
spectra, we actually can move $L_{K(1)}$ past the colimit. We obtain%
\[
\cong\left.  \underset{S^{\prime}\subseteq S\text{, }\#S^{\prime}<\infty
}{\operatorname*{colim}\nolimits^{\mathsf{Sp}}}\right.  \left(  L_{K(1)}%
\left(
{\textstyle\prod\nolimits_{v\in S\setminus S^{\prime}}}
(K/p^{k})(\mathcal{O}_{v})\right)  \times%
{\textstyle\prod\nolimits_{v\in S^{\prime}}}
L_{K(1)}(K/p^{k})(F_{v})\right)
\]
as $K$-theory commutes with arbitrary products of exact categories (also
infinite ones) and $L_{K(1)}$ commutes with finite products.\ Combining this
with Lemma \ref{lemma_crit} (\footnote{The cited lemma demands that
$S^{\prime}$ contains all primes which lie over the prime $p$. But those
$S^{\prime}$ are cofinal by $\frac{1}{p}\in\mathcal{O}_{S}$.}), we get%
\begin{equation}
\cong\left.  \underset{S^{\prime}\subseteq S\text{, }\#S^{\prime}<\infty
}{\operatorname*{colim}\nolimits^{\mathsf{Sp}}}\right.  \left(
{\textstyle\prod\nolimits_{v\in S\setminus S^{\prime}}}
L_{K(1)}(K/p^{k})(\mathcal{O}_{v})\times%
{\textstyle\prod\nolimits_{v\in S^{\prime}}}
L_{K(1)}(K/p^{k})(F_{v})\right)  \text{.} \label{l_wcio1}%
\end{equation}
Finally, use the equivalence of Lemma
\ref{lemma_specseq_for_restricted_product_inside} (3) to see that we may
replace the colimit on the left in Eq. \ref{l_wcio1} by $\left.
\underset{S^{\prime}\subseteq S\text{, }\#S^{\prime}<\infty}%
{\operatorname*{colim}\nolimits^{\mathsf{K}}}\right.  $. Summarizing the above
discussion, we have provided a levelwise equivalence for all $k\geq1$. It is
easy to see that this equivalence is compatible under the transition maps in
the respective $k\mapsto(-)/p^{k}$ inverse systems. As both sides in our claim
are $p$-complete spectra, and thus can be written as the homotopy inverse
limits of their $(-)/p^{k}$-terms, the levelwise equivalence implies that we
have an equivalence on the nose.
\end{proof}

\begin{corollary}
The descent spectral sequence of Lemma
\ref{lemma_specseq_for_restricted_product_inside} is valid verbatim for the
limit term replaced by%
\[
\mathfrak{X}_{\infty}:=L_{K(1)}(K/p^{k})(\mathsf{LCA}_{\mathcal{O}_{S}%
,ad})\text{.}%
\]

\end{corollary}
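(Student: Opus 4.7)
The plan is simply to identify the two candidate spectra and transport the spectral sequence along the equivalence. By Theorem \ref{thm_compar1}, there is a natural equivalence
\[
L_{K(1)}K(\mathsf{LCA}_{\mathcal{O}_{S},ad})\;\simeq\;\underset{S'\subseteq S,\,\#S'<\infty}{\operatorname*{colim}\nolimits^{\mathsf{K}}}\Bigl(\prod_{v\in S\setminus S'}^{\mathsf{K}}L_{K(1)}K(\mathcal{O}_{v})\times\prod_{v\in S'}^{\mathsf{K}}L_{K(1)}K(F_{v})\Bigr).
\]
Smashing both sides with the Moore spectrum $\mathbb{S}/p^{k}$ and using that $K(1)$-localization commutes with smashing against $\mathbb{S}/p^{k}$ (since $\mathbb{S}/p^{k}$ is a finite type $1$ spectrum, hence tensoring with it preserves $K(1)$-locality), we obtain an equivalence
\[
L_{K(1)}(K/p^{k})(\mathsf{LCA}_{\mathcal{O}_{S},ad})\;\simeq\;\underset{S'\subseteq S,\,\#S'<\infty}{\operatorname*{colim}\nolimits^{\mathsf{K}}}\Bigl(\prod_{v\in S\setminus S'}^{\mathsf{K}}L_{K(1)}(K/p^{k})(\mathcal{O}_{v})\times\prod_{v\in S'}^{\mathsf{K}}L_{K(1)}(K/p^{k})(F_{v})\Bigr).
\]
The right-hand side is precisely the spectrum denoted $\mathfrak{X}_{\infty}$ in Lemma \ref{lemma_specseq_for_restricted_product_inside}, so the spectral sequence constructed there with $E_{2}$-page as in diagram (\ref{lwui2}) converges to the homotopy groups of the left-hand side.

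The only subtlety worth spelling out is the passage from the mod $p^{k}$ statement to the integral one, but this is not actually needed: the corollary is already phrased at the level of $K/p^{k}$, and the spectral sequence of Lemma \ref{lemma_specseq_for_restricted_product_inside} is itself the mod $p^{k}$ descent spectral sequence. Hence no integral $p$-completion limit argument is required here; only the identification of $\mathfrak{X}_{\infty}$ via Thm. \ref{thm_compar1}.

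The main (and really only) obstacle is confirming that the identification is compatible with the descent spectral sequences assembled in Lemma \ref{lemma_specseq_for_restricted_product_inside}, i.e.\ that the equivalence of Thm. \ref{thm_compar1} is induced by the canonical maps comparing $L_{K(1)}\prod$ with $\prod L_{K(1)}$ and $L_{K(1)}\operatorname*{colim}$ with $\operatorname*{colim}^{\mathsf{K}}L_{K(1)}$ — but this is exactly what the proof of Thm. \ref{thm_compar1} established (via Lemma \ref{lemma_crit} and part (3) of Lemma \ref{lemma_specseq_for_restricted_product_inside}). Thus the descent spectral sequence transfers verbatim and the corollary follows.
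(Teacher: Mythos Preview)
Your proposal is correct and follows essentially the same approach as the paper: the paper's proof consists of the single sentence ``By Thm.~\ref{thm_compar1} these spectra are equivalent,'' and your argument simply spells this out in more detail (including the harmless passage to mod $p^{k}$, which is in fact already established levelwise in the proof of Thm.~\ref{thm_compar1}).
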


\begin{proof}
By Thm. \ref{thm_compar1} these spectra are equivalent.
\end{proof}

\section{Non-archimedean locally compact modules\label{sect_NALCAModules}}

In this section, we discuss a slight relaxation of our assumptions. Let $F$ be
a number field and we denote by $S_{\infty}$ the set of its infinite places.
Let $S$ be \textit{any} set of places of $F$ (possibly infinite, possibly
empty, and $S_{\infty}$ need not be contained in $S$).

Define%
\begin{equation}
F_{\left\langle S\right\rangle }:=\bigoplus_{v\in S_{\infty}\setminus(S\cap
S_{\infty})}F_{v}\text{,} \label{lmips9}%
\end{equation}
i.e., $F_{\left\langle S\right\rangle }$ is the ideal in $F_{\infty}%
:=F\otimes_{\mathbf{Q}}\mathbf{R}$ corresponding to the infinite places of $F$
which are missing in $S$. Regard $F_{\left\langle S\right\rangle }$ as a
ring\footnote{It is usually not a subring of $F_{\infty}$ since in the running
conventions of this text all rings are unital and ring homomorphisms need to
preserve the unit. Proper factors in product rings therefore fail to be
subrings.}.

\begin{example}
\label{example_AllInfinitePlaces}If $S$ is a set of places containing all the
infinite places, then $F_{\left\langle S\right\rangle }=0$ is the zero ring.
\end{example}

\begin{example}
If $S$ only contains finite places, $F_{\left\langle S\right\rangle
}=F_{\infty}$.
\end{example}

Consider the functor%
\[
\rho\colon\mathsf{Proj}_{F_{\left\langle S\right\rangle },fg}\longrightarrow
\mathsf{LCA}_{\mathcal{O}_{S}}%
\]
which sends a finitely generated projective $F_{\left\langle S\right\rangle }%
$-module to itself, equipped with the real vector space topology.

\begin{lemma}
The functor $\rho$ is exact and fully faithful, and%
\[
\mathsf{D}^{b}\left(  F_{\left\langle S\right\rangle }\right)  \rightarrow
\mathsf{D}^{b}\left(  \mathsf{LCA}_{\mathcal{O}_{S}}\right)
\]
is fully faithful. It realizes $\mathsf{D}^{b}\left(  F_{\left\langle
S\right\rangle }\right)  $ as a thick triangulated full subcategory. If $S$
contains all infinite places, this is just the embedding of the zero
triangulated category.
\end{lemma}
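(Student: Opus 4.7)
The plan is to verify the four assertions of the lemma in order, relying on two main inputs: $F_{\langle S\rangle}$ is a finite product of copies of $\mathbf{R}$ and $\mathbf{C}$, hence semisimple; and vector modules in $\mathsf{LCA}_{\mathcal{O}_{S}}$ are simultaneously injective and projective by Theorem \ref{thm_MainDecomp}(5). Exactness of $\rho$ is almost immediate: by semisimplicity every short exact sequence in $\mathsf{Proj}_{F_{\langle S\rangle},fg}$ splits, and $\rho$ carries a split sum of finite-dimensional $F_{\langle S\rangle}$-modules to a split sum of vector modules in $\mathsf{LCA}_{\mathcal{O}_{S}}$, which is manifestly an admissible exact sequence.

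The $1$-categorical full faithfulness is the step I expect to demand the most care. A morphism $\rho(M)\to\rho(N)$ is a continuous $\mathcal{O}_{S}$-linear map between finite-dimensional real vector spaces. First I would observe that every continuous group homomorphism between finite-dimensional real vector spaces is automatically $\mathbf{R}$-linear: it is $\mathbf{Q}$-linear by additivity and division by nonzero integers, then $\mathbf{R}$-linear by continuity and density of $\mathbf{Q}$ in $\mathbf{R}$. The arithmetic input I would then invoke is that $\mathcal{O}_{F}\subseteq \mathcal{O}_{S}$ together with $\operatorname{Frac}(\mathcal{O}_{S})=F$ forces $\mathcal{O}_{S}\otimes_{\mathbf{Z}}\mathbf{R}\cong F_{\infty}$, so the image of $\mathcal{O}_{S}$ in $F_{\langle S\rangle}$ spans the latter over $\mathbf{R}$. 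A continuous map that is both $\mathbf{R}$-linear and $\mathcal{O}_{S}$-equivariant must then commute with every $\mathbf{R}$-linear combination of elements from the image of $\mathcal{O}_{S}$, which is all of $F_{\langle S\rangle}$; hence it is $F_{\langle S\rangle}$-linear, as required for fullness. Faithfulness is obvious.

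The derived full faithfulness follows by combining the previous step with Ext-vanishing: on the left, semisimplicity splits every bounded complex into a finite sum of shifted modules, so Hom-groups concentrate in degree zero; on the right, Theorem \ref{thm_MainDecomp}(5) gives $\operatorname{Ext}^{\geq 1}_{\mathsf{LCA}_{\mathcal{O}_{S}}}(\rho(M),\rho(N))=0$, matching the left term by term. For thickness, closure under shifts, cones and biproducts is automatic from exactness; for closure under direct summands I would argue that if $V$ is a vector module with a splitting $V\cong V'\oplus V''$ in $\mathsf{LCA}_{\mathcal{O}_{S}}$, a continuous retraction $V\to V'$ is again $\mathbf{R}$-linear by the preceding observation, so $V'$ is a real linear subspace of $V$, and the spanning argument of the previous paragraph uniquely extends its $\mathcal{O}_{S}$-action to an $F_{\langle S\rangle}$-action, placing $V'$ in the essential image of $\rho$. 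Applying this in each cohomological degree shows every direct summand in $\mathsf{D}^{b}(\mathsf{LCA}_{\mathcal{O}_{S}})$ of an object in the image lies in the image. The final clause is immediate from Example \ref{example_AllInfinitePlaces}, which gives $F_{\langle S\rangle}=0$ whenever $S\supseteq S_{\infty}$, so that both source and target of the embedding are trivial.
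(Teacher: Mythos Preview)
Your proof is correct and follows essentially the same approach as the paper: exactness is immediate, full faithfulness comes from showing that a continuous $\mathcal{O}_{S}$-linear map between finite-dimensional real vector spaces is automatically $F_{\langle S\rangle}$-linear, derived full faithfulness follows from Ext-vanishing on both sides via projectivity/injectivity of vector modules (Theorem \ref{thm_MainDecomp}(5)), and the final clause reduces to Example \ref{example_AllInfinitePlaces}. The only cosmetic difference is that the paper reaches $F_{\langle S\rangle}$-linearity in one step (deduce $F$-linearity from $\mathcal{O}_{F}$-linearity, then use density of $F$ in $F_{\infty}$), whereas you first get $\mathbf{R}$-linearity from $\mathbf{Q}$-density and then combine with the $\mathcal{O}_{S}$-action spanning $F_{\langle S\rangle}$ over $\mathbf{R}$; your thickness argument is also spelled out more explicitly than the paper's one-line assertion.
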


\begin{proof}
The exactness is clear. Suppose $M,N\in\mathsf{Proj}_{F_{\left\langle
S\right\rangle },fg}$. We compute%
\begin{equation}
\rho(M,N)\colon\operatorname*{RHom}\nolimits_{F_{\left\langle S\right\rangle
}}(M,N)\longrightarrow\operatorname*{RHom}\nolimits_{\mathsf{LCA}%
_{\mathcal{O}_{S}}}(\rho M,\rho N)\text{.} \label{lmips3}%
\end{equation}
For the $\operatorname*{Ext}^{0}$-group we note that all $F_{\left\langle
S\right\rangle }$-module homomorphisms are $\mathbf{R}$-linear and that
\textit{all} $\mathbf{R}$-linear endomorphisms of a finite-dimensional real
vector space are continuous. Conversely, any continuous $\mathcal{O}_{F}%
$-module morphism between finite-dimensional real vector spaces is
automatically $\mathbf{R}$-linear (from $\mathcal{O}_{F}$-linearity, deduce
$F$-linearity and then argue by density). Thus, $\rho(M,N)$ is an isomorphism
on $\operatorname*{Hom}$-groups. Since all modules on the left side of Eq.
\ref{lmips3} are projective as algebraic $F_{\left\langle S\right\rangle }%
$-modules, all $\operatorname*{Ext}_{F_{\left\langle S\right\rangle }}^{r}%
$-groups for $r\geq1$ vanish. However, by Thm. \ref{thm_MainDecomp} (5) all
vector $\mathcal{O}_{S}$-modules are projective in $\mathsf{LCA}%
_{\mathcal{O}_{S}}$, so for $r\geq1$ all $\operatorname*{Ext}^{r}$-groups also
vanish in $\mathsf{D}^{b}\left(  \mathsf{LCA}_{\mathcal{O}_{S}}\right)  $. The
resulting embedding as a triangulated subcategory is thick. The last part of
the claim reduces to Example \ref{example_AllInfinitePlaces}.
\end{proof}

\begin{definition}
\label{def_NonArchLCAOS}Let $F$ be a number field and $S$ any set of places
(possibly infinite, possibly empty, and not necessarily containing the
infinite places). We call%
\[
\mathsf{D\,}\left.  \mathsf{LC}\mathcal{O}_{S}\right.  :=\mathsf{D}_{\infty
}^{b}\left(  \mathsf{LCA}_{\mathcal{O}_{S}}\right)  /\mathsf{D}_{\infty}%
^{b}\left(  F_{\left\langle S\right\rangle }\right)
\]
the $\infty$-category of \emph{locally compact }$\mathcal{O}_{S}%
$\emph{-modules}. Note that the definition of $\mathsf{D\,}\left.
\mathsf{LC}\mathcal{O}_{S}\right.  $ really depends on the set $S$ and not, as
one could mistakenly infer from the notation, just on the ring $\mathcal{O}%
_{S}$. We hope that this will not lead to any confusion.
\end{definition}

If $S$ contains all infinite places, we just recover $\mathsf{D}_{\infty}%
^{b}\left(  \mathsf{LCA}_{\mathcal{O}_{S}}\right)  $ as in Def. \ref{def_1}.
In most situations of interest, the quotient in Def. \ref{def_NonArchLCAOS}
can already be formed on the level of exact categories. We discuss this now.

\begin{lemma}
\label{lemma_SHasAtLeastOneFinitePlace}Suppose $S$ is any set of places of
$F$, but containing at least one finite place. Then $\mathsf{Proj}%
_{F_{\left\langle S\right\rangle },fg}$ is a Serre subcategory of
$\mathsf{LCA}_{\mathcal{O}_{S}}$, the quotient $\mathsf{LCA}_{\mathcal{O}_{S}%
}/\mathsf{Proj}_{F_{\left\langle S\right\rangle },fg}$ exists in the
$2$-category of exact categories, and%
\[
\mathsf{D\,}\left.  \mathsf{LC}\mathcal{O}_{S}\right.  \cong\mathsf{D}%
_{\infty}^{b}\left(  \mathsf{LCA}_{\mathcal{O}_{S}}/\mathsf{Proj}%
_{F_{\left\langle S\right\rangle },fg}\right)  \text{.}%
\]
Analogously, the quotient $\mathsf{LCA}_{\mathcal{O}_{S},ad}/\mathsf{Proj}%
_{F_{\left\langle S\right\rangle },fg}$ exists in the $2$-category of exact
categories and agrees with the essential image of $\mathsf{LCA}_{\mathcal{O}%
_{S},ad}$ in $\mathsf{LCA}_{\mathcal{O}_{S}}/\mathsf{Proj}_{F_{\left\langle
S\right\rangle },fg}$.
\end{lemma}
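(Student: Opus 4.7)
The strategy is to show that the fully faithful embedding $\rho\colon \mathsf{Proj}_{F_{\langle S\rangle},fg} \hookrightarrow \mathsf{LCA}_{\mathcal{O}_S}$ realizes its essential image as a two-sided $s$-filtering Serre subcategory, so that the Henrard--van Roosmalen localization theorem (see \cite[\S 8]{hr, hr2}) produces the exact-category quotient and identifies its bounded derived $\infty$-category with the Verdier quotient $\mathsf{D}\,\mathsf{LC}\mathcal{O}_S$. Two ingredients do the work: by Thm.~\ref{thm_MainDecomp}(5) every vector module is simultaneously a projective and an injective object in $\mathsf{LCA}_{\mathcal{O}_S}$; and the hypothesis that $S$ contains a finite place $v_0$ forces $\mathcal{O}_S \supseteq \mathcal{O}_F[\tfrac{1}{p}]$ for $(p) = \mathfrak{p}_{v_0}\cap\mathbf{Z}$, which is a dense subring of $F_\infty$ (and hence of the quotient ring $F_{\langle S\rangle}$), because $\mathbf{Z}[\tfrac{1}{p}]$ is dense in $\mathbf{R}$ and $\mathcal{O}_F$ is a $\mathbf{Z}$-lattice spanning $F_\infty$ over $\mathbf{R}$.

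For the Serre property, closure under extensions is immediate: an admissible sequence $V' \hookrightarrow X \twoheadrightarrow V''$ with $V',V''$ vector modules splits by projectivity of $V''$. For admissible subobjects $X \hookrightarrow \rho(V)$, note that $X$ is a closed subgroup of the real vector space $\rho(V)$, hence isomorphic as an LCA group to $\mathbf{R}^a \oplus \mathbf{Z}^b$; I need to show $b=0$. The $\mathcal{O}_S$-action on $\rho(V)$ factors through $F_{\langle S\rangle}$, and by the density statement above, $\mathcal{O}_S$ contains elements of arbitrarily small nonzero absolute value at every archimedean place $w \in S_\infty \setminus S$. Multiplication by such an $\alpha$ is a topological contraction preserving $X$, which is incompatible with a nonzero discrete summand; hence $b=0$ and $X$ is a vector module, with its $F_{\langle S\rangle}$-structure furnished by continuity and density. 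For an admissible quotient $\rho(V) \twoheadrightarrow Y$, the kernel $K$ lies in the subcategory by what was just shown; injectivity of $K$ splits the admissible monic $K \hookrightarrow \rho(V)$, giving $\rho(V) \simeq K \oplus Y$ and displaying $Y$ as a direct summand of a vector module, hence itself a vector module.

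The two-sided $s$-filtering property is now transparent: any admissible monic $V \hookrightarrow Y$ with $V$ a vector module splits by lifting $\mathrm{id}_V$ along $V \hookrightarrow Y$ (injectivity), and dually every admissible epic $Y \twoheadrightarrow V$ splits by lifting $\mathrm{id}_V$ (projectivity). Combined with the Serre closure just established, this yields the hypotheses of the Henrard--van Roosmalen localization theorem, producing the exact-category quotient $\mathsf{LCA}_{\mathcal{O}_S} / \mathsf{Proj}_{F_{\langle S\rangle},fg}$ together with a Verdier localization sequence on bounded derived $\infty$-categories. By the preceding lemma the fiber term is $\mathsf{D}_\infty^b(F_{\langle S\rangle})$, giving the first displayed equivalence.

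For the adelic version, a vector module is adelic (take $Q=0$ in Def.~\ref{def_MainCats}(4)), so $\mathsf{Proj}_{F_{\langle S\rangle},fg} \subseteq \mathsf{LCA}_{\mathcal{O}_S,ad}$, and the Serre and filtering verifications above restrict word-for-word. The identification of the adelic quotient with the essential image in the ambient quotient uses that $\mathsf{LCA}_{\mathcal{O}_S,ad} \subseteq \mathsf{LCA}_{\mathcal{O}_S}$ is extension-closed (Prop.~\ref{prop_icg_and_qa}(7)), which renders the natural functor between the two quotients fully faithful. The main obstacle is the middle paragraph: the finite-place hypothesis is genuinely used there, since without it $\mathcal{O}_S = \mathcal{O}_F$ is a discrete $\mathbf{Z}$-lattice in $F_\infty$, and for example $\mathcal{O}_F \hookrightarrow F_\infty$ is a closed $\mathcal{O}_F$-submodule which is not a vector module, violating Serre closure under admissible subobjects.
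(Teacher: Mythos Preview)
Your overall architecture matches the paper's: both arguments use that vector modules are injective and projective in $\mathsf{LCA}_{\mathcal{O}_S}$ (Thm.~\ref{thm_MainDecomp}(5)) to reduce the Serre property to closure under admissible subobjects, and then feed this into the Henrard--van~Roosmalen machinery (the paper invokes deflation-percolating subcategories plus injectivity via \cite[Prop.~1.4]{hr} and \cite[Thm.~1.5]{hr2}, whereas you phrase it as two-sided $s$-filtering, but the outcome is the same localization sequence).

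There is, however, a genuine gap in your subobject step. The assertion that $\mathcal{O}_S \supseteq \mathcal{O}_F[\tfrac1p]$ for $(p)=\mathfrak{p}_{v_0}\cap\mathbf{Z}$ is \emph{false} in general: if $p$ has several primes above it in $\mathcal{O}_F$ and only $\mathfrak{p}_{v_0}$ lies in $S$, then $1/p$ has a pole at the other primes and does not lie in $\mathcal{O}_S$. Hence your density claim, and with it the existence of a simultaneous contraction $\alpha\in\mathcal{O}_S$ with $|\alpha|_w<1$ for every $w\in S_\infty\setminus S$, is not established. (Such an $\alpha$ does exist, but proving it requires the $S$-unit theorem to locate a lattice point in the appropriate open cone; it is not a consequence of the $\mathbf{Z}[\tfrac1p]$ heuristic.)

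The paper sidesteps this entirely with an algebraic argument: pick any $f\in(\mathcal{O}_S^\times\cap\mathcal{O}_F)\setminus\mathcal{O}_F^\times$ (a generator of a power of $\mathfrak{p}_{v_0}$ works). If $U\hookrightarrow V$ had a nonzero discrete quotient $U/U_{\mathbf{R}}\simeq\mathbf{Z}^m$, then $f^{\pm1}$ act as $\mathbf{Z}$-automorphisms of $\mathbf{Z}^m$, forcing both $f$ and $f^{-1}$ to be integral over $\mathbf{Z}$, hence $f\in\mathcal{O}_F^\times$ --- a contradiction. This is shorter and avoids any appeal to density or the unit theorem; I recommend replacing your contraction paragraph with it.
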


\begin{definition}
\label{def_2}Suppose $S$ is any set of places of $F$, but containing at least
one finite place. Define%
\begin{equation}
\left.  \mathsf{LC}\mathcal{O}_{S}\right.  :=\mathsf{LCA}_{\mathcal{O}_{S}%
}/\mathsf{Proj}_{F_{\left\langle S\right\rangle },fg}\text{.} \label{lmips1}%
\end{equation}
Whenever the previous definition was available, i.e., if $S$ contains all
infinite places, this definition is compatible with the former: $\left.
\mathsf{LC}\mathcal{O}_{S}\right.  \cong\mathsf{LCA}_{\mathcal{O}_{S}}$. As
with Def. \ref{def_NonArchLCAOS}, this new category $\left.  \mathsf{LC}%
\mathcal{O}_{S}\right.  $ depends on the set $S$ and not just on the ring
$\mathcal{O}_{S}$ itself. We also define $\left.  \mathsf{LC}\mathcal{O}%
_{S,ad}\right.  $ as the essential image of $\mathsf{LCA}_{\mathcal{O}_{S}%
,ad}$ in the quotient in Eq. \ref{lmips1}.
\end{definition}

\begin{proof}
[Proof of Lemma \ref{lemma_SHasAtLeastOneFinitePlace}]Since vector
$\mathcal{O}_{S}$-modules are injective\footnote{and as far as this argument
is concerned, using projectivity would work just as well} by Thm.
\ref{thm_MainDecomp} (5), all extensions of vector modules are vector modules.
From this one easily deduces that $\mathsf{Proj}_{F_{\left\langle
S\right\rangle },fg}$ is always extension-closed in $\mathsf{LCA}%
_{\mathcal{O}_{S}}$. Thus, we only need to show that subobjects and quotients
of vector $F_{\left\langle S\right\rangle }$-modules in $\mathsf{LCA}%
_{\mathcal{O}_{S}}$ are again vector $F_{\left\langle S\right\rangle }%
$-modules. To this end, let $V$ be any vector $\mathcal{O}_{S}$-module. Since
$S$ contains a finite place, we may pick some $f\in(\mathcal{O}_{S}^{\times
}\cap\mathcal{O}_{F})\setminus\mathcal{O}_{F}^{\times}$. If $U\hookrightarrow
V$ is any admissible subobject, then $U$ is a closed subgroup in a real vector
space. According to \cite[Thm. 6]{MR0442141}, this implies that $U\simeq
\mathbf{Z}^{m}\oplus\mathbf{R}^{n}$ for suitable $0\leq m,n<\infty$ in the
category of LCA groups. Define $U_{\mathbf{R}}$ as the further subgroup
corresponding to the $\mathbf{R}^{n}$-summand. For any $\alpha\in
\mathcal{O}_{S}$, $\alpha U_{\mathbf{R}}$ must be a connected subgroup of the
LCA group $U$, and therefore lie inside $U_{\mathbf{R}}$. We conclude that
$U_{\mathbf{R}}$ is an algebraic $\mathcal{O}_{S}$-submodule of $U$. It
follows that $U/U_{\mathbf{R}}$, whose underlying subgroup is $\mathbf{Z}^{m}%
$, is an algebraic $\mathcal{O}_{S}$-module. Assume $U/U_{\mathbf{R}}\neq0$.
Since $f^{\pm1}(U/U_{\mathbf{R}})=U/U_{\mathbf{R}}$, $f$ must be integral over
$\mathbf{Z}$, and therefore $f^{\pm1}\in\mathcal{O}_{F}$, i.e., $f\in
\mathcal{O}_{F}^{\times}$, which is a contradiction. Hence, $U/U_{\mathbf{R}%
}=0$ and it follows that $U$ is a vector module. Analogously, all quotients of
$V$ must be vector modules. Now, if $V$ is not just any vector $\mathcal{O}%
_{S}$-module, but specifically an $F_{v}$-vector space, it is easy to see that
also all its vector submodules and quotients will be $F_{v}$-vector spaces.
This shows that all subobjects and quotients of $F_{\left\langle
S\right\rangle }$-vector modules again are such, proving that $\mathsf{Proj}%
_{F_{\left\langle S\right\rangle },fg}$ is a Serre subcategory. Next, by
\cite[Prop. 1.4]{hr} (which is applicable since all subobjects in
$\mathsf{Proj}_{F_{\left\langle S\right\rangle }}$ are admissible subobjects)
$\mathsf{Proj}_{F_{\left\langle S\right\rangle },fg}$ is deflation-percolating

\begin{itemize}
\item in the quasi-abelian category $\mathsf{LCA}_{\mathcal{O}_{S}}$, as well as

\item in the quasi-abelian category $\mathsf{LCA}_{\mathcal{O}_{S},ad}$ (Cor.
\ref{cor_LCAAdQuasiAbelian}).
\end{itemize}

Since the essential image of $\mathsf{Proj}_{F_{\left\langle S\right\rangle
},fg}$ consists only of injective objects in $\mathsf{LCA}_{\mathcal{O}_{S}}$,
we may next invoke \cite[Thm. 1.5]{hr2} to see that the $2$-categorical
quotient%
\begin{equation}
\mathsf{LCA}_{\mathcal{O}_{S}}/\mathsf{Proj}_{F_{\left\langle S\right\rangle
},fg} \label{lmips4}%
\end{equation}
as an exact category exists, and analogously for $\mathsf{LCA}_{\mathcal{O}%
_{S},ad}/\mathsf{Proj}_{F_{\left\langle S\right\rangle },fg}$. From the
localization theorems loc. cit., we get a Verdier fiber sequence%
\[
\mathsf{D}_{(F_{\left\langle S\right\rangle },fg),\infty}^{b}(\mathsf{LCA}%
_{\mathcal{O}_{S}})\longrightarrow\mathsf{D}_{\infty}^{b}(\mathsf{LCA}%
_{\mathcal{O}_{S}})\longrightarrow\mathsf{D}_{\infty}^{b}(\mathsf{LCA}%
_{\mathcal{O}_{S}}/\mathsf{Proj}_{F_{\left\langle S\right\rangle }%
,fg})\text{.}%
\]
Here $\mathsf{D}_{(F_{\left\langle S\right\rangle },fg),\infty}^{b}%
(\mathsf{LCA}_{\mathcal{O}_{S}})$ can be defined as coming from the dg
category of complexes with homology in $\mathsf{Proj}_{F_{\left\langle
S\right\rangle },fg}$. By projectivity, the inclusion $\mathsf{D}_{\infty}%
^{b}(\mathsf{Proj}_{F_{\left\langle S\right\rangle },fg})\overset{\sim
}{\rightarrow}\mathsf{D}_{(F_{\left\langle S\right\rangle },fg),\infty}%
^{b}(\mathsf{LCA}_{\mathcal{O}_{S}})$ is an equivalence. Thus, the derived
$\infty$-category of the exact category in Eq. \ref{lmips4} agrees with the
quotient taken in Def. \ref{def_NonArchLCAOS}.
\end{proof}

\begin{example}
If $S$ is empty, $\mathsf{Proj}_{F_{\left\langle S\right\rangle },fg}$ is not
a Serre subcategory of $\mathsf{LCA}_{\mathcal{O}_{F}}$. For any finitely
generated flat $\mathcal{O}_{F}$-module $M$, equip $M_{\mathbf{R}}%
:=M\otimes_{\mathbf{Z}}\mathbf{R}$ with the real vector space topology. Then
the sequence%
\begin{equation}
M\hookrightarrow M_{\mathbf{R}}\twoheadrightarrow M_{\mathbf{R}}/M
\label{lwui7a}%
\end{equation}
is exact in $\mathsf{LCA}_{\mathcal{O}_{F}}$. It shows that vector modules can
have subobjects and quotients which fail to be vector modules. Thus,
$\mathsf{Proj}_{F_{\left\langle S\right\rangle },fg}$ is not a Serre
subcategory. Moreover, assume a quotient $Q:=\mathsf{LCA}_{\mathcal{O}_{F}%
}/\mathsf{Proj}_{F_{\left\langle S\right\rangle },fg}$ exists in the
$2$-category of exact categories. Then the quotient functor $q\colon
\mathsf{LCA}_{\mathcal{O}_{F}}\rightarrow Q$ is an exact functor, and
therefore along with $M_{\mathbf{R}}$, also $M$ and $M_{\mathbf{R}}/M$ would
have to be mapped to zero objects. From this it follows that any vector
module, any finitely generated $\mathcal{O}_{F}$-module (also the non-flat
ones, as follows from resolving them by free ones), and also all modules whose
underlying topological space is a finite-dimensional torus, must be zero in
$Q$. We see that $\ker(q)$ is a lot larger than $\mathsf{Proj}%
_{F_{\left\langle S\right\rangle },fg}$.
\end{example}

We define $\mathbb{A}_{S}:=\left.  \underset{v\in S\;}{\prod\nolimits^{\prime
}}\right.  (F_{v}:\mathcal{O}_{v})$ as in Eq. \ref{lcvn1}, but this time we do
not longer require that $S$ contains all infinite places. Then $\mathbb{A}%
_{S}$ (just as well as $\mathbb{A}_{S\cup S_{\infty}}$) is a locally compact
topological $\mathcal{O}_{S}$-module in $\mathsf{LCA}_{\mathcal{O}_{S}}$, and
pins down an object in the quotient category $\left.  \mathsf{LC}%
\mathcal{O}_{S}\right.  $.

\begin{proposition}
\label{prop_IdentifyAdelicBlocks_2}Suppose $\mathsf{A}$ is a stable
presentable $\infty$-category. Suppose $K\colon\operatorname*{Cat}_{\infty
}^{\operatorname*{ex}}\rightarrow\mathsf{A}$ is a localizing invariant which
need not commute with filtering colimits. Let $S$ be any set of places of $F$ containing

\begin{itemize}
\item at least one finite place or

\item at least all infinite places.
\end{itemize}

Then

\begin{enumerate}
\item there is an exact and duality-preserving equivalence of exact categories
with duality%
\[
\Psi\colon\left.  \underset{v\in S\;}{\prod\nolimits^{\prime}}\right.
(\mathsf{Proj}_{F_{v},fg}:\mathsf{Proj}_{\mathcal{O}_{v},fg})\overset{\sim
}{\rightarrow}\left.  \mathsf{LC}\mathcal{O}_{S,ad}\right.  \text{;}%
\]

\item the sequence%
\begin{equation}
K(\mathcal{O}_{S})\longrightarrow K\left(  \left.  \mathsf{LC}\mathcal{O}%
_{S,ad}\right.  \right)  \longrightarrow K(\left.  \mathsf{LC}\mathcal{O}%
_{S}\right.  ) \label{lmips2}%
\end{equation}
induced by the exact functor $(-)\mapsto(-)\otimes_{\mathcal{O}_{S}}%
\mathbb{A}_{S}$ as the first arrow, is a fiber sequence in $\mathsf{A}$;

\item and if we assume that $K$ is a localizing invariant commuting with
filtering colimits and (only if $\#S=\infty$) that $K$ commutes with infinite
products, then this fiber sequence can be rewritten as%
\[
K(\mathcal{O}_{S})\longrightarrow\left.  \underset{v\in S\;}{\prod
\nolimits^{\prime}}\right.  \left(  K(F_{v}):K(\mathcal{O}_{v})\right)
\longrightarrow K(\left.  \mathsf{LC}\mathcal{O}_{S}\right.  )\text{;}%
\]

\item and if $K$ refers to non-connective $K$-theory,%
\[
L_{K(1)}K(\left.  \mathsf{LC}\mathcal{O}_{S,ad}\right.  )\cong\left.
\underset{S^{\prime}\subseteq S\text{, }\#S^{\prime}<\infty}%
{\operatorname*{colim}\nolimits^{\mathsf{K}}}\right.
{\textstyle\prod\nolimits_{v\in S\setminus S^{\prime}}^{\mathsf{K}}}
L_{K(1)}K(\mathcal{O}_{v})\times%
{\textstyle\prod\nolimits_{v\in S^{\prime}}^{\mathsf{K}}}
L_{K(1)}K\left(  F_{v}\right)  \text{.}%
\]

\end{enumerate}
\end{proposition}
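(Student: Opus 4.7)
The plan is to reduce everything to previously established results by setting $S' := S \cup S_{\infty}$ and exploiting two facts: $\mathcal{O}_{S} = \mathcal{O}_{S'}$ (the ring of $S$-integers only depends on the finite places of $S$), and the only discrepancy between $\mathsf{LCA}_{\mathcal{O}_{S'}}$ and $\mathsf{LC}\mathcal{O}_{S}$ is the Serre subcategory $\mathsf{Proj}_{F_{\left\langle S\right\rangle},fg}$ corresponding to the missing infinite places $S'\setminus S$. If $S \supseteq S_{\infty}$, then $F_{\left\langle S\right\rangle}=0$ by Example \ref{example_AllInfinitePlaces} and $\mathsf{LC}\mathcal{O}_{S} = \mathsf{LCA}_{\mathcal{O}_{S}}$ by Def. \ref{def_2}, so parts (1)--(4) reduce respectively to Prop. \ref{prop_IdentifyAdelicBlocks}, Thm. \ref{thm_main_weak}, Thm. \ref{thm_main_strong}, and Thm. \ref{thm_compar1}. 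Hence I may assume $S$ contains at least one finite place but misses some infinite one; Lemma \ref{lemma_SHasAtLeastOneFinitePlace} then ensures $\mathsf{LC}\mathcal{O}_{S,ad}$ and $\mathsf{LC}\mathcal{O}_{S}$ exist as exact categories, and that their bounded derived $\infty$-categories agree with Def. \ref{def_NonArchLCAOS}.

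For (1), apply Prop. \ref{prop_IdentifyAdelicBlocks} to $S'$, obtaining an exact duality-preserving equivalence $\Psi' \colon \prod\nolimits'_{v \in S'}(\mathsf{Proj}_{F_{v},fg}:\mathsf{Proj}_{\mathcal{O}_{v},fg}) \overset{\sim}{\rightarrow} \mathsf{LCA}_{\mathcal{O}_{S'},ad}$. Since $S' \setminus S \subseteq S_{\infty}$ is finite and sits entirely among infinite places, the source splits as $\prod_{v \in S_{\infty}\setminus S}\mathsf{Proj}_{F_{v},fg} \times \prod\nolimits'_{v \in S}(\mathsf{Proj}_{F_{v},fg}:\mathsf{Proj}_{\mathcal{O}_{v},fg})$, and under $\Psi'$ the first factor maps isomorphically onto the essential image of $\mathsf{Proj}_{F_{\left\langle S\right\rangle},fg}$ inside $\mathsf{LCA}_{\mathcal{O}_{S'},ad}$. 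Passing to the Verdier quotient by this Serre subcategory on both sides yields the desired $\Psi$. For (2), I assemble the $3\times 3$-diagram whose rows are the two Verdier fiber sequences $K(\mathsf{Proj}_{F_{\left\langle S\right\rangle},fg}) \to K(\mathsf{LCA}_{\mathcal{O}_{S'},?}) \to K(\mathsf{LC}\mathcal{O}_{S,?})$ (for $?=ad$ and $?=\emptyset$), supplied by Lemma \ref{lemma_SHasAtLeastOneFinitePlace}, together with the fiber sequence of Thm. \ref{thm_main_weak} for $S'$. The octahedral axiom then produces a third fiber sequence whose middle and right terms are $K(\mathsf{LC}\mathcal{O}_{S,ad})$ and $K(\mathsf{LC}\mathcal{O}_{S})$ respectively, and whose left term is the cofiber of the canonical inclusion $K(\mathsf{Proj}_{F_{\left\langle S\right\rangle},fg}) \to K(\mathcal{O}_{S'}) \oplus K(\mathsf{Proj}_{F_{\left\langle S\right\rangle},fg})$ arising from the decomposition $\mathbb{A}_{S'} \simeq \mathbb{A}_{S} \oplus F_{\left\langle S\right\rangle}$ in $\mathsf{LCA}_{\mathcal{O}_{S'},ad}$. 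This cofiber is $K(\mathcal{O}_{S})=K(\mathcal{O}_{S'})$, and under it the first arrow of the induced fiber sequence is exactly $X \mapsto X\otimes_{\mathcal{O}_{S}}\mathbb{A}_{S}$, as required.

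Part (3) then follows from (1) by unrolling the restricted product using formula \eqref{l_RestrictedProductAsColimitJFormula}, exactly as Thm. \ref{thm_main_strong} is deduced from Thm. \ref{thm_main_weak} via Thm. \ref{thm_ComputeLCAAdelic}; the commutation of $K$ with filtering colimits (and with countable products if $\#S=\infty$) is what is needed. For (4), I rerun the proof of Thm. \ref{thm_compar1} on the restricted product from (1): Lemma \ref{lemma_crit} permits commuting $L_{K(1)}$ past the infinite product of the $K(\mathcal{O}_{v})$-factors, and Lemma \ref{lemma_specseq_for_restricted_product_inside}(3) shows that the ordinary filtered colimit agrees with the $K(1)$-local one, giving the claimed formula. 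The main obstacle is part (2): specifically, verifying carefully that the leftmost column in the $3\times 3$-diagram identifies the inclusion $\mathsf{Proj}_{F_{\left\langle S\right\rangle},fg} \hookrightarrow \mathsf{LCA}_{\mathcal{O}_{S'},ad}$ with the one into $\mathsf{LCA}_{\mathcal{O}_{S'}}$ compatibly with the splitting of $\mathbb{A}_{S'}$, so that taking vertical cofibers recovers the map $X \mapsto X\otimes_{\mathcal{O}_{S}}\mathbb{A}_{S}$ rather than $X\mapsto X\otimes_{\mathcal{O}_{S}}\mathbb{A}_{S'}$.
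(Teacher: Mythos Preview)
Your proof is correct and follows essentially the same route as the paper: set $S' = S \cup S_\infty$, invoke Prop.~\ref{prop_IdentifyAdelicBlocks} and Thm.~\ref{thm_main_weak} for $S'$, then quotient out $\mathsf{Proj}_{F_{\langle S\rangle},fg}$ using the Verdier localization sequences from Lemma~\ref{lemma_SHasAtLeastOneFinitePlace} in a $3\times 3$-diagram. Your flagged ``main obstacle'' is not an obstacle: since $\mathbb{A}_{S'} \cong \mathbb{A}_{S} \oplus F_{\langle S\rangle}$ and the $F_{\langle S\rangle}$-summand becomes zero in $\mathsf{LC}\mathcal{O}_{S,ad}$, the composite $K(\mathcal{O}_S) \to K(\mathsf{LCA}_{\mathcal{O}_{S'},ad}) \to K(\mathsf{LC}\mathcal{O}_{S,ad})$ is automatically induced by $X \mapsto X\otimes_{\mathcal{O}_S}\mathbb{A}_S$; the paper phrases the same point by observing that the top row $K(F_{\langle S\rangle}) \overset{\sim}{\to} K(F_{\langle S\rangle})$ makes the relevant square bi-Cartesian, forcing $\operatorname{fib}(\alpha) \simeq K(\mathcal{O}_S)$.
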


This reduces to Prop. \ref{prop_IdentifyAdelicBlocks}, Thm.
\ref{thm_main_weak}, Thm. \ref{thm_main_strong} if $S$ contains all infinite
places, and otherwise is more general.

\begin{proof}
(1) Exclusively relying on Prop. \ref{prop_IdentifyAdelicBlocks}, we get a
duality-preserving exact equivalence
\[
\Psi\colon\left.  \underset{v\in S\cup S_{\infty}\;}{\prod\nolimits^{\prime}%
}\right.  (\mathsf{Proj}_{F_{v},fg}:\mathsf{Proj}_{\mathcal{O}_{v}%
,fg})\overset{\sim}{\longrightarrow}\mathsf{LCA}_{\mathcal{O}_{S},ad}\text{.}%
\]
Now factor out $\mathsf{Proj}_{F_{\left\langle S\right\rangle },fg}\cong%
\prod_{S_{\infty}\setminus(S\cap S_{\infty})}\mathsf{Proj}_{F_{v},fg}$
compatibly from both exact categories. (2) Analogously, consider%
\begin{equation}%
\xymatrix{
& K(F_{\left\langle S\right\rangle}) \ar[d] \ar[r]^{\simeq} &  K(F_{\left
\langle S\right\rangle}) \ar[d] \\
K(\mathcal{O}_{S}) \ar[r] \ar@{=}[d] & K\left(  \mathsf{LCA}_{\mathcal{O}%
_{S},ad}\right) \ar[r] \ar[d] \ar@{}[dr]|{\Diamond}
& K(\mathsf{LCA}_{\mathcal{O}_{S}}) \ar[d] \\
\operatorname{fib}(\alpha) \ar[r] & K\left(  \left.\mathsf{LC}\mathcal
{O}_{S,ad}\right.\right) \ar[r]_{\alpha} & K(\left.\mathsf{LC}\mathcal{O}%
_{S}\right.),
}
\label{l_Diag_RemoveArchimedeanPlacesCategorically}%
\end{equation}
where the middle row is the fiber sequence of Thm. \ref{thm_main_weak}. The
middle and right downward sequence are fiber sequences, and the commutativity
of the square ($\Diamond$), come from Lemma
\ref{lemma_SHasAtLeastOneFinitePlace} and the corresponding Verdier
localization sequences for the attached derived $\infty$-categories. In
particular, $\alpha$ denotes the induced map on the cofibers. The equivalence
$K(F_{\left\langle S\right\rangle })\overset{\sim}{\rightarrow}%
K(F_{\left\langle S\right\rangle })$ settles that ($\Diamond$) is
bi-Cartesian. This in turn implies that the natural map $K(\mathcal{O}%
_{S})\rightarrow\operatorname*{fib}(\alpha)$ is an equivalence. This
establishes Eq. \ref{lmips2}. (3) Proceed as in the proof of Thm.
\ref{thm_main_strong}. (4) Follows directly from Thm. \ref{thm_compar1} and
(1) as we only quotient out the contribution of at most finitely many factors.
\end{proof}

\begin{example}
Let $S\ $be some set of places which does not contain all infinite places.
Then $S_{\infty}\setminus(S\cap S_{\infty})\neq\varnothing$. One might wonder
whether $\left.  \mathsf{LC}\mathcal{O}_{S}\right.  $ can also be defined as
the full subcategory $\mathcal{U}\subset\mathsf{LCA}_{\mathcal{O}_{S}}$ such
that adelic module summands isomorphic to $F_{v}$ with $v\in S_{\infty
}\setminus(S\cap S_{\infty})$ are not permitted. This seems unlikely. First,
note that%
\[
\mathbb{A}_{(S\cup S_{\infty})}/\mathcal{O}_{S}%
\]
is a compact connected $\mathcal{O}_{S}$-module with no vector module
summands. Therefore, the exact ad\`{e}le sequence%
\begin{equation}
\mathcal{O}_{S}\hookrightarrow\mathbb{A}_{(S\cup S_{\infty})}%
\twoheadrightarrow\mathbb{A}_{(S\cup S_{\infty})}/\mathcal{O}_{S}
\label{lmips5}%
\end{equation}
of Example \ref{example_AdeleSequence} shows that $\mathcal{U}$ is not
extension-closed in $\mathsf{LCA}_{\mathcal{O}_{S}}$ and thus fails to be a
fully exact subcategory. Hence, it is not clear how to choose an exact
structure on $\mathcal{U}$. Second, the image of Eq. \ref{lmips5} in
$\mathsf{LCA}_{\mathcal{O}_{S}}/\mathsf{Proj}_{F_{\left\langle S\right\rangle
},fg}$, i.e.,
\[
\mathcal{O}_{S}\hookrightarrow\mathbb{A}_{S}\twoheadrightarrow\mathbb{A}%
_{(S\cup S_{\infty})}/\mathcal{O}_{S}%
\]
is exact in $\left.  \mathsf{LC}\mathcal{O}_{S}\right.  $. Nonetheless, while
all objects in this sequence indeed lie in $\mathcal{U}$, this sequence is not
exact on the underlying abelian groups. This means that, even if $\mathcal{U}$
can be equipped with an exact structure making it equivalent to $\left.
\mathsf{LC}\mathcal{O}_{S}\right.  $, the exact structure will have to be
rather exotic.
\end{example}

\section{\label{sect_TraceMap}The trace map}

In this section $K$ stands for non-connective $K$-theory, and we focus on the
category $\left.  \mathsf{LC}\mathcal{O}_{S}\right.  $ in the situation where
$S$ contains \textit{no infinite places}. We fix an odd prime number $p$. We
also make some further choices which we will allow to vary later: Namely, $F$
will denote a number field, $S\supseteq S_{\infty}$ is a set of finite places
of $F$ such that $\frac{1}{p}\in\mathcal{O}_{S}$ and $k\geq1$ is any natural
number. Such choices being made, Prop. \ref{prop_IdentifyAdelicBlocks_2} gives
us a fiber sequence of $K(\mathcal{O}_{S})$-modules. We tensor it with
$\mathbb{S}/p^{k}$, so that%
\[
(K/p^{k})(\mathcal{O}_{S})\longrightarrow(K/p^{k})(\left.  \mathsf{LC}%
\mathcal{O}_{S,ad}\right.  )\longrightarrow(K/p^{k})(\left.  \mathsf{LC}%
\mathcal{O}_{S}\right.  )
\]
is a fiber sequence of $(K/p^{k})(\mathcal{O}_{S})$-modules. We apply the
$K(1)$-localization functor for the prime $p$. We get a fiber sequence of
$L_{K(1)}(K/p^{k})(\mathcal{O}_{S})$-modules%
\begin{equation}
L_{K(1)}(K/p^{k})(\mathcal{O}_{S})\overset{(\star)}{\rightarrow}%
L_{K(1)}(K/p^{k})(\left.  \mathsf{LC}\mathcal{O}_{S,ad}\right.  )\rightarrow
L_{K(1)}(K/p^{k})(\left.  \mathsf{LC}\mathcal{O}_{S}\right.  )\text{.}
\label{lmz1}%
\end{equation}
We added the symbol $(\star)$ to facilitate referring to this map below. The
fiber sequence induces a long exact sequence of homotopy groups. We will only
study it around $\pi_{0}$ in this section. Note that the analogue of the
restricted product descent spectral sequence of Lemma
\ref{lemma_specseq_for_restricted_product_inside} for $L_{K(1)}(K/p^{k}%
)(\left.  \mathsf{LC}\mathcal{O}_{S,ad}\right.  )$ instead of $\mathsf{LCA}%
_{\mathcal{O}_{S}}$ now has the $E_{2}$-page%
\begin{equation}%
\begin{tabular}
[c]{rccccc}%
$\vdots\,\,$ & \multicolumn{1}{|c}{} & $\vdots$ & $\vdots$ & $\vdots$ & \\
$\mathsf{2}$ & \multicolumn{1}{|c}{$\quad0\quad$} & $P_{S}^{0}(F,\mathbf{Z}%
/p^{k}(-1))$ & $P_{S}^{1}(F,\mathbf{Z}/p^{k}(-1))$ & $P_{S}^{2}(F,\mathbf{Z}%
/p^{k}(-1))$ & $\quad0\quad$\\
$\mathsf{1}$ & \multicolumn{1}{|c}{$\quad0\quad$} & $0$ & $0$ & $0$ &
$\quad0\quad$\\
$\mathsf{0}$ & \multicolumn{1}{|c}{$\quad0\quad$} & $P_{S}^{0}(F,\mathbf{Z}%
/p^{k}(0))%
{\cellcolor{lg}}%
$ & $P_{S}^{1}(F,\mathbf{Z}/p^{k}(0))$ & $P_{S}^{2}(F,\mathbf{Z}/p^{k}(0))$ &
$\quad0\quad$\\
$\mathsf{-1}$ & \multicolumn{1}{|c}{$\quad0\quad$} & $0$ & $0%
{\cellcolor{lg}}%
$ & $0$ & $\quad0\quad$\\
$\mathsf{-2}$ & \multicolumn{1}{|c}{$\quad0\quad$} & $P_{S}^{0}(F,\mathbf{Z}%
/p^{k}(1))$ & $P_{S}^{1}(F,\mathbf{Z}/p^{k}(1))$ & $P_{S}^{2}(F,\mathbf{Z}%
/p^{k}(1))%
{\cellcolor{lg}}%
$ & $\quad0\quad$\\
$\vdots\,\,$ & \multicolumn{1}{|c}{} & $\vdots$ & $\vdots$ & $\vdots$ &
\\\cline{2-6}
& $\mathsf{-1}$ & $\mathsf{0}$ & $\mathsf{1}$ & $\mathsf{2}$ & $\mathsf{\cdots
}$%
\end{tabular}
\ \ \ \ \label{lwui2_For_LCO}%
\end{equation}
which differs from Diag. \ref{lwui2} by having lost the contribution some
$H^{0}$-groups attached to the infinite places in the $0$-th column.

We obtain the two middle rows of the following diagram (we shall explain the
origin of the terms below)%
\begin{equation}%
\Scale[0.92]{\xymatrix{
& & 0 \ar[d] \\
H^{2}(\mathcal{O}_{S},\tfrac{1}{p^{k}}\mathbf{Z/Z}(1)) \ar[d]_{\beta^{2} }
\ar@{^{(}->}[r] &
\pi_{0}L_{K(1)}(K/p^{k})(\mathcal{O}_{S}) \ar[d]^{g_S} \ar@{->>}[r] &
H^{0}(\mathcal{O}_{S},\tfrac{1}{p^{k}}\mathbf{Z}/\mathbf{Z}(0))
\ar@{^{(}->}[d]^{\beta^{0} } \\
P_{S}^{2}(F,\tfrac{1}{p^{k}}\mathbf{Z}/\mathbf{Z}(1)) \ar@{->>}[d] \ar@
{^{(}->}[r] &
\pi_{0}L_{K(1)}(K/p^{k})({{\mathsf{LC}\mathcal{O}_{S,ad}}}) \ar@{->>}%
[d] \ar@{->>}[r] & P_{S}^{0}(F,\tfrac{1}{p^{k}}\mathbf{Z}/\mathbf{Z}(0)) \\
\tfrac{1}{p^{k}}\mathbf{Z}/\mathbf{Z} \ar[r] & \operatorname*{coker}%
\left(  g_{S}\right)  \text{,}
}}
\label{lBigDiagram}%
\end{equation}
where the middle downward column is induced from $\pi_{0}$ applied to
$(\star)$ in Eq. \ref{lmz1}, i.e., $g_{S}:=\pi_{0}(\star)$. The top row stems
from the Thomason descent spectral sequence applied to $\operatorname*{Spec}%
\mathcal{O}_{S}$, and bottom row comes from our restricted product version of
the descent spectral sequence, Diag. \ref{lwui2_For_LCO}. The downward arrows,
aside from $(\star)$ itself, come from the compatibility of the descent
spectral sequences along the ring homomorphisms $\mathcal{O}_{S}\rightarrow
F_{v}$ for the completions.

\begin{remark}
To elaborate a little: the arrow $(\star)$ is induced from $\mathsf{Proj}%
_{\mathcal{O}_{S},fg}\rightarrow\left.  \mathsf{LC}\mathcal{O}_{S,ad}\right.
$ by Thm. \ref{thm_main_strong}. Then Eq. \ref{lct4} relates it to the ring
map tensoring with\footnote{Unravelling the definition of $\left.
\mathsf{LC}\mathcal{O}_{S}\right.  $ in terms of $\mathsf{LCA}_{\mathcal{O}%
_{S}}$, we have to add the infinite places $S_{\infty}$ to this adelic product
as long as working in $\mathsf{LCA}_{\mathcal{O}_{S}}$. However, once we then
go to the quotient category $\left.  \mathsf{LC}\mathcal{O}_{S}\right.  $,
these summands become zero.} $\mathbb{A}_{S}=\left.  \prod\nolimits_{v\in
S\cup S_{\infty}}^{\prime}\right.  (F_{v}:\mathcal{O}_{v})$, giving
compatibility of \textit{all} the descent spectral sequences which appear in
the restricted product. For each factor of the restricted product the map is
induced from the exact functors%
\[
M\mapsto M\otimes_{\mathcal{O}_{S}}\mathcal{O}_{v}\qquad\text{(resp. }M\mapsto
M\otimes_{\mathcal{O}_{S}}F_{v}\text{)}%
\]
and the descent spectral sequences are compatible under ring maps.
\end{remark}

Write $G_{S}:=\operatorname*{Gal}(F_{S}/F)$, where $F_{S}$ is the maximal
Galois extension of $F$ which is unramified outside $S$. By the Poitou--Tate
exact sequence (\cite[Thm. 4.10]{MR2261462}) the map $\beta^{0}$ is injective
and $\beta^{2}$ has cokernel%
\begin{align}
H^{0}\left(  \mathcal{O}_{S},\left(  \tfrac{1}{p^{k}}\mathbf{Z}/\mathbf{Z}%
(1)\right)  ^{D}\right)  ^{\ast}  &  =H^{0}\left(  \mathcal{O}_{S}%
,\operatorname*{Hom}\left(  \tfrac{1}{p^{k}}\mathbf{Z}/\mathbf{Z}%
(1),F^{\operatorname*{sep}\times}\right)  \right)  ^{\ast}\label{lwxa1}\\
&  =H^{0}(\mathcal{O}_{S},\mathbf{Z}/p^{k}\mathbf{Z})^{\ast}=\left.  \tfrac
{1}{p^{k}}\mathbf{Z}\right/  \mathbf{Z}\subseteq\mathbf{Q}_{p}/\mathbf{Z}%
_{p}\text{.}\nonumber
\end{align}
We freely have used the notation loc. cit. Here $(-)^{D}$ denotes the dual (it
would correspond to $\mathcal{H}om(-,\mathbb{G}_{m})$ when written as
\'{e}tale sheaves), and $(-)^{\ast}:=\operatorname*{Hom}(-,\mathbf{Q}%
_{p}/\mathbf{Z}_{p})$ is the discrete variant of the $p$-typical Pontryagin
dual. This explains the two further terms spelled out in Diagram
\ref{lBigDiagram}.

Apply the snake lemma to Diagram \ref{lBigDiagram}. This yields the first of
the injections%
\begin{equation}
E_{F,S,k}\colon\tfrac{1}{p^{k}}\mathbf{Z}/\mathbf{Z}\hookrightarrow
\operatorname*{coker}\left(  g_{S}\right)  \hookrightarrow\pi_{0}%
L_{K(1)}(K/p^{k})(\left.  \mathsf{LC}\mathcal{O}_{S}\right.  ) \label{l_EMap}%
\end{equation}
and the second comes from the exactness of the sequence of homotopy groups
induced from Eq. \ref{lmz1}. Note that if $S\subseteq S^{\prime}$ are two
choices of places, there is an exact functor%
\begin{equation}
\eta_{S^{\prime}/S}\colon\left.  \mathsf{LC}\mathcal{O}_{S^{\prime}}\right.
\longrightarrow\left.  \mathsf{LC}\mathcal{O}_{S}\right.  \text{,}
\label{lwb2}%
\end{equation}
which simply forgets the $\mathcal{O}_{S^{\prime}}$-module structure in favour
of the $\mathcal{O}_{S}$-module structure. The topology remains unchanged.
This is obviously an exact functor.

\begin{lemma}
\label{lemma_KeyCompatibilityWithS}Suppose $S^{\prime}$ is a further set of
finite places of $F$ such that $S\subseteq S^{\prime}$. Then the diagram%
\begin{equation}%
\xymatrix{
\tfrac{1}{p^{k}}\mathbf{Z}/\mathbf{Z} \ar@{^{(}->}[r]^-{E_{F,S^{\prime},k}}
\ar@{=}[d] & \pi_{0}L_{K(1)}(K/p^k)({\mathsf{LC}\mathcal{O}_{S^{\prime}}}%
) \ar[d]^{\eta_{S^{\prime}/S}} \\
\tfrac{1}{p^{k}}\mathbf{Z}/\mathbf{Z} \ar@{^{(}->}[r]_-{E_{F,S,k}} &
\pi_{0}L_{K(1)}(K/p^k)({\mathsf{LC}\mathcal{O}_{S}})
}
\label{lmx5}%
\end{equation}
is commutative.
\end{lemma}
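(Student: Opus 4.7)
The plan is to reduce the commutativity of Diagram~\ref{lmx5} to a simple functorial identity, by constructing a section of $\eta_{S'/S}$ on the adelic subcategories and tracing the snake-lemma construction of $E_{F,S,k}$ through it.

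First, I would define an exact, fully faithful functor
\[
\iota\colon \mathsf{LC}\mathcal{O}_{S,ad} \longrightarrow \mathsf{LC}\mathcal{O}_{S',ad}
\]
using the restricted-product identification of Prop.~\ref{prop_IdentifyAdelicBlocks_2}(1): the tuple $(M_v)_{v\in S}$ is extended to $(M_v)_{v\in S'}$ by setting $M_v := 0$ for all $v \in S' \setminus S$. This is well-defined because, for any $v \in S' \setminus S$, the uniformizer $\pi_v \in \mathcal{O}_F$ is a $w$-adic unit at every $w \in S$, so any $\mathcal{O}_S$-adelic module supported at $S$-places automatically admits a continuous $\mathcal{O}_{S'}$-action, obtained canonically from the localization $\mathcal{O}_S \to \mathcal{O}_{S'}$ by noting that $\pi_v^{\pm 1}$ act as homeomorphisms. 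The composite
\[
\mathsf{LC}\mathcal{O}_{S,ad} \xrightarrow{\iota} \mathsf{LC}\mathcal{O}_{S',ad} \hookrightarrow \mathsf{LC}\mathcal{O}_{S'} \xrightarrow{\eta_{S'/S}} \mathsf{LC}\mathcal{O}_{S}
\]
then coincides with the canonical inclusion $\mathsf{LC}\mathcal{O}_{S,ad} \hookrightarrow \mathsf{LC}\mathcal{O}_{S}$, since the underlying topological module (and its $\mathcal{O}_S$-action) is unchanged throughout.

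Next I would verify the naturality of the restricted-product descent spectral sequence of Lemma~\ref{lemma_specseq_for_restricted_product_inside} under $\iota$. Unwinding the colimit in Eq.~\ref{l_RestrictedProductAsColimitJFormula}, $\iota_*$ on the $E_{2}$-page is the extension-by-zero inclusion $P^{i}_{S}(F, \mathbf{Z}/p^{k}(j)) \hookrightarrow P^{i}_{S'}(F, \mathbf{Z}/p^{k}(j))$; on the Poitou--Tate cokernels $\tfrac{1}{p^{k}}\mathbf{Z}/\mathbf{Z}$ arising from Eq.~\ref{lwxa1} it acts by the identity, since the cokernel identification comes from the trace $H^{0}(\mathcal{O}_{S}, \mathbf{Z}/p^{k}\mathbf{Z}) = \mathbf{Z}/p^{k}\mathbf{Z}$ on the trivial Galois module, which is intrinsic and independent of $S$.

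For $x \in \tfrac{1}{p^{k}}\mathbf{Z}/\mathbf{Z}$, fix a Poitou--Tate lift $\tilde{x} \in P^{2}_{S}(F, \tfrac{1}{p^{k}}\mathbf{Z}/\mathbf{Z}(1))$; by the previous paragraph, $\iota_*\tilde{x}$ is the extension-by-zero lift $\tilde{x}' \in P^{2}_{S'}$ of $x$. By the composite identity established in the first step, the image of $\tilde{x}'$ under $\pi_{0}L_{K(1)}(K/p^{k})(\mathsf{LC}\mathcal{O}_{S',ad}) \to \pi_{0}L_{K(1)}(K/p^{k})(\mathsf{LC}\mathcal{O}_{S'}) \xrightarrow{\eta_{S'/S}} \pi_{0}L_{K(1)}(K/p^{k})(\mathsf{LC}\mathcal{O}_{S})$ agrees with the image of $\tilde{x}$ under $\pi_{0}L_{K(1)}(K/p^{k})(\mathsf{LC}\mathcal{O}_{S,ad}) \to \pi_{0}L_{K(1)}(K/p^{k})(\mathsf{LC}\mathcal{O}_{S})$, which is precisely the statement $\eta_{S'/S}(E_{F,S',k}(x)) = E_{F,S,k}(x)$. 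The main technical obstacle I expect is the verification of the naturality claim on $E_{2}$-pages: confirming that $\iota_*$ realizes extension by zero on the $P$-groups. This requires unpacking the construction of the restricted-product descent spectral sequence from the individual Thomason descent spectral sequences attached to each $\mathcal{O}_v$ and $F_v$, and noting that $\iota$ is precisely the inclusion of the cofinal subfamily of finite subsets of $S$ into those of $S'$.
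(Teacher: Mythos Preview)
Your proposal is correct and follows essentially the same route as the paper. Both arguments rest on the extension-by-zero functor $\iota$ (called $i$ in the paper) from $\mathsf{LC}\mathcal{O}_{S,ad}$ to $\mathsf{LC}\mathcal{O}_{S',ad}$ and the identity $\eta_{S'/S}\circ(\text{inclusion})\circ\iota = (\text{inclusion})$, then observe that a Poitou--Tate lift $\tilde{x}\in P^2_S$ of $x$ yields, via $\iota_*$, a lift $\tilde{x}'\in P^2_{S'}$ of the same $x$, so that $E_{F,S',k}(x)$ has a representative in $\operatorname{im}(\iota_*)$ and the composite identity forces $\eta_{S'/S}(E_{F,S',k}(x))=E_{F,S,k}(x)$.

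The only presentational difference is that the paper additionally introduces the projection $p$ (forgetting the $S'\setminus S$ factors) and phrases the argument as ``Diagram~\ref{lmf1} commutes with respect to $p_*$ once restricted to $\operatorname{im}(i_*)$''; you bypass $p$ entirely by working directly with the composite identity, which is slightly cleaner. One caution: your justification for why $\iota_*$ induces the identity on the cokernel $\tfrac{1}{p^k}\mathbf{Z}/\mathbf{Z}$ (``comes from the trace $H^0$'') is imprecise. What is actually needed---and what the paper checks in Diagram~\ref{lnd2}---is that the sum-of-local-invariants map $\operatorname{inv}\colon P^2\to\tfrac{1}{p^k}\mathbf{Z}/\mathbf{Z}$ satisfies $\operatorname{inv}\circ i_*=\operatorname{inv}$, which is immediate since extension by zero adds only zero local invariants; the paper also notes that the commutativity of the left square of Diagram~\ref{lnd2} (with $\beta^2$) is \emph{not} formal and must be checked by hand. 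You flagged exactly this as the expected technical obstacle, so you are aware of where the work lies.
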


\begin{proof}
(Step 1) If we consider the categories of adelic modules as restricted
products of exact categories as in Eq. \ref{lmx1}, there is an inclusion
functor $i$, sending a smaller restricted product into a bigger and placing
zero objects in the novel factors, and a projection functor $p$, forgetting
factors.
\begin{equation}
\left.  \underset{v\in S^{\prime}\;}{\prod\nolimits^{\prime}}\right.
(\mathsf{Proj}_{F_{v},fg}:\mathsf{Proj}_{\mathcal{O}_{v},fg})\underset
{i}{\overset{p}{\rightleftarrows}}\left.  \underset{v\in S\;}{\prod
\nolimits^{\prime}}\right.  (\mathsf{Proj}_{F_{v},fg}:\mathsf{Proj}%
_{\mathcal{O}_{v},fg}) \label{lmx6}%
\end{equation}
The functors $p$ and $i$ are clearly $\mathcal{O}_{S}$-linear and exact.
Moreover, we have%
\begin{equation}
p\circ i=\operatorname*{id}\text{.} \label{l_PI_Equation}%
\end{equation}
Under the exact equivalence of Prop. \ref{prop_IdentifyAdelicBlocks}, we may
also regard them as exact functors%
\[
\left.  \mathsf{LC}\mathcal{O}_{S^{\prime},ad}\right.  \underset{i}%
{\overset{p}{\rightleftarrows}}\left.  \mathsf{LC}\mathcal{O}_{S,ad}\right.
\text{.}%
\]
The subdiagram of%
\[%
\xymatrix{
{\mathsf{LC}\mathcal{O}_{S^{\prime},ad}} \ar@{..>}@<2ex>[d]^{p} \ar@{^{(}%
->}[r] &
{\mathsf{LC}\mathcal{O}_{S^{\prime}}} \ar[d]^{\eta_{S^{\prime}/S}}  \\
{\mathsf{LC}\mathcal{O}_{S,ad}} \ar@<2ex>[u]^{i} \ar@{^{(}->}[r] &
{\mathsf{LC}\mathcal{O}_{S}}}%
\]
consisting solely of the \textit{solid} arrows, commutes. The horizontal
arrows stem from the inclusion of the fully exact subcategories of adelic
modules. The dotted arrow $p$ would \textit{not} commute with the other
arrows. As all functors in this diagram, even $p$, are exact, we get an
induced diagram of $K$-theory spectra, we may $K(1)$-localize the latter and
take $\pi_{0}$. We arrive at the diagram%
\begin{equation}%
\xymatrix{
\pi_{0}L_{K(1)}(K/p^k)({\mathsf{LC}\mathcal{O}_{S^{\prime},ad}}) \ar@
{..>}@<2ex>[d]^{p_{\ast}} \ar[r]^{\lambda} &
\pi_{0}L_{K(1)}(K/p^k)({\mathsf{LC}\mathcal{O}_{S^{\prime}}}) \ar
[d]^{\eta_{S^{\prime}/S,\ast}}  \\
\pi_{0}L_{K(1)}(K/p^k)({\mathsf{LC}\mathcal{O}_{S,ad}}) \ar@<2ex>[u]^{i_{\ast
}} \ar[r]_{\gamma} &
\pi_{0}L_{K(1)}(K/p^k)({\mathsf{LC}\mathcal{O}_{S}})}.
\label{lmf1}%
\end{equation}
Again, still only the solid arrows necessarily commute, i.e., $\eta
_{S^{\prime}/S,\ast}\circ\lambda\circ i_{\ast}=\gamma$. However, we can
conclude that the diagram also commutes with respect to the dotted map
$p_{\ast}$ if we restrict to the subgroup $\operatorname*{im}(i_{\ast
})\subseteq\pi_{0}L_{K(1)}(K/p^{k})(\left.  \mathsf{LC}\mathcal{O}_{S^{\prime
},ad}\right.  )$ because for $x:=i_{\ast}y$ we find%
\[
(\eta_{S^{\prime}/S,\ast}\circ\lambda)(i_{\ast}y)=\gamma y=(\gamma\circ
p_{\ast})(i_{\ast}y)
\]
thanks to Eq. \ref{l_PI_Equation}. This is the stategy of the proof:
\textit{At least restricted to }$\operatorname*{im}(i_{\ast})$\textit{,
Diagram \ref{lmf1} commutes (also) with respect to} $p_{\ast}$.\newline(Step
2)\ Now we check the commutativity of Diagram \ref{lmx5}. Suppose $x$ is given
in the upper left corner of said diagram. It gets mapped under the top
horizontal arrow%
\begin{equation}
x\in\left.  \tfrac{1}{p^{k}}\mathbf{Z}\right/  \mathbf{Z\hookrightarrow
}\operatorname*{coker}\left(  g_{S^{\prime}}\right)  \hookrightarrow\pi
_{0}L_{K(1)}(K/p^{k})(\left.  \mathsf{LC}\mathcal{O}_{S^{\prime}}\right.  )
\label{lnd1}%
\end{equation}
to the upper right corner of the diagram. As%
\[
\operatorname*{coker}(g_{S^{\prime}})=\frac{\pi_{0}L_{K(1)}(K/p^{k})(\left.
\mathsf{LC}\mathcal{O}_{S^{\prime},ad}\right.  )}{\operatorname*{im}%
(g_{S^{\prime}})}%
\]
(see Diagram \ref{lBigDiagram}), we may pick some representative $\tilde{x}%
\in\pi_{0}L_{K(1)}(K/p^{k})(\left.  \mathsf{LC}\mathcal{O}_{S^{\prime}%
,ad}\right.  )$. Now consider the diagram%
\begin{equation}%
\xymatrix{
P_{S^{\prime}}^{2}(F,\tfrac{1}{p^{k}}\mathbf{Z}/\mathbf{Z}(1))
\ar@{^{(}->}[r] &
\overset{\tilde{x}\in}{\pi_{0}L_{K(1)}(K/p^{k})({\mathsf{LC}\mathcal
{O}_{S^{\prime},ad}})} \\
P_{S}^{2}(F,\tfrac{1}{p^{k}}\mathbf{Z}/\mathbf{Z}(1)) \ar[u]^{i_{\ast}}
\ar@{^{(}->}[r] &
\pi_{0}L_{K(1)}(K/p^{k})({\mathsf{LC}\mathcal{O}_{S,ad}}).
\ar[u]_{i_{\ast}}
}
\label{lmx7}%
\end{equation}
Note that when we unravel the definition of $P_{(-)}^{2}$, the map $i_{\ast}$
on the left is again the one induced from the inclusion of the additional
factors corresponding to the places $S^{\prime}\setminus S$ as in Eq.
\ref{lmx6}. We observe that we may assume without loss of generality that the
representative $\tilde{x}$ was chosen as an element of the subgroup
$P_{S^{\prime}}^{2}(F,\tfrac{1}{p^{k}}\mathbf{Z}/\mathbf{Z}(1))$ on the upper
left in Diagram \ref{lmx7}. This follows from the construction of the map
$E_{F,S^{\prime},k}$, and notably from Diagram \ref{lBigDiagram} (all elements
in the subgroup $\tfrac{1}{p^{k}}\mathbf{Z}/\mathbf{Z}$ inside
$\operatorname*{coker}\left(  g_{S^{\prime}}\right)  $ admit representatives
coming from the $P_{S^{\prime}}^{2}$-group). Finally, we inspect what
$i_{\ast}$ does on the left in Diagram \ref{lmx7}: Unravelling the leftmost
column in Diagram \ref{lBigDiagram}, we get%
\begin{equation}%
\xymatrix{
H^{2}(\mathcal{O}_{S^{\prime}},\tfrac{1}{p^{k}}\mathbf{Z}/\mathbf{Z}%
(1)) \ar[r]^{\beta^{2} } &
\overset{\tilde{x}\in}{P_{S^{\prime}}^{2}(F,\tfrac{1}{p^{k}}\mathbf{Z}%
/\mathbf{Z}(1))}
\ar@{->>}[r]^-{\operatorname{inv}} &
\tfrac{1}{p^{k}}\mathbf{Z}/\mathbf{Z} \ar@{=}[d] \\
H^{2}(\mathcal{O}_{S},\tfrac{1}{p^{k}}\mathbf{Z}/\mathbf{Z}(1))
\ar[u] \ar[r]_{\beta^{2} } &
\underset{\tilde{x}_{\operatorname*{new}}\in}{P_{S}^{2}(F,\tfrac{1}{p^{k}%
}\mathbf{Z}/\mathbf{Z}(1))} \ar[u]^{i_{\ast}}
\ar@{->>}[r]_-{\operatorname{inv}} &
\tfrac{1}{p^{k}}\mathbf{Z}/\mathbf{Z}.
}
\label{lnd2}%
\end{equation}
Here the left upward arrow is induced from the ring inclusion $\mathcal{O}%
_{S}\subseteq\mathcal{O}_{S^{\prime}}$. It is easy to see that this indeed
commutes with the arrow $i_{\ast}$ in the middle column\footnote{The middle
upward arrow in Diagram \ref{lnd2} agrees with the leftmost upward arrow in
Diagram \ref{lmx7}.} (but note that this does \textit{not} follow by any sort
of functoriality in $i_{\ast}$. Instead, one rather starts with the ring map
$\mathcal{O}_{S}\subseteq\mathcal{O}_{S^{\prime}}$ on \'{e}tale cohomology and
needs to check that this indeed \textit{also} induces $i_{\ast}$ on the middle
terms $P^{2}$). The key point is that under the map `$\operatorname*{inv}$'
the representative $\tilde{x}$ is sent to the $x$ we had started with in Eq.
\ref{lnd1}. As the induced upward map on the quotients in Diagram \ref{lnd2},
i.e., the upward arrow all on the right, is the identity, it follows that we
can also find a class $\tilde{x}_{\operatorname*{new}}\in P_{S}^{2}%
(F,\tfrac{1}{p^{k}}\mathbf{Z}/\mathbf{Z}(1))$ in the lower row of Diagram
\ref{lnd2}, mapping to the same $x$ in $\tfrac{1}{p^{k}}\mathbf{Z}/\mathbf{Z}%
$. Under the lower horizontal arrow in Diagram \ref{lmx7}, this novel choice
of representative maps to a class in $\pi_{0}L_{K(1)}(K/p^{k})(\left.
\mathsf{LC}\mathcal{O}_{S,ad}\right.  )$ which, under $i_{\ast}$, maps to
another possible choice of representative of our element $x$. This shows that
in\ Diagram \ref{lmf1} of Step 1 we can find a representative living in the
subgroup $\operatorname*{im}(i_{\ast})$. As we had explained, restricted to
this subgroup, Diagram \ref{lmf1} also commutes with respect to the map
$p_{\ast}$. Hence, applying $p_{\ast}$ to the representative $i_{\ast}%
\tilde{x}_{\operatorname*{new}}$ on the right in%
\[
x\in\left.  \tfrac{1}{p^{k}}\mathbf{Z}\right/  \mathbf{Z\hookrightarrow
}\operatorname*{coker}\left(  g_{S^{\prime}}\right)  =\frac{\pi_{0}%
L_{K(1)}(K/p^{k})(\left.  \mathsf{LC}\mathcal{O}_{S^{\prime},ad}\right.
)}{\operatorname*{im}(g_{S^{\prime}})}%
\]
corresponds (by Step 1) to the map induced from $\eta_{S^{\prime}/S}$, and is
the identity on $\left.  \tfrac{1}{p^{k}}\mathbf{Z}\right/  \mathbf{Z}$. This
proves our claim.
\end{proof}

Whenever we change $k$, say so that $k^{\prime}\leq k$, multiplication by
$p^{k-k^{\prime}}$ induces a natural map%
\begin{equation}
K/p^{k^{\prime}}\longrightarrow K/p^{k}\text{.} \label{lwb1}%
\end{equation}
For an extension $F^{\prime}/F$ of number fields, write $S(F^{\prime})$ for
the set of places of $F^{\prime}$ which lie over the places $S$ of $F$. Then
the extension induces a finite morphism of schemes
\[
\pi\colon\operatorname*{Spec}\mathcal{O}_{F^{\prime},S(F^{\prime}%
)}\longrightarrow\operatorname*{Spec}\mathcal{O}_{F,S}\text{.}%
\]
We can construct a corresponding pushforward. On the level of exact
categories, we define%
\[
\pi_{\ast}\colon\left.  \mathsf{LC}\mathcal{O}_{F^{\prime},S(F^{\prime}%
)}\right.  \longrightarrow\left.  \mathsf{LC}\mathcal{O}_{F,S}\right.
\]
by simply forgetting the $\mathcal{O}_{F^{\prime},S(F^{\prime})}$-module
structure in favour of the $\mathcal{O}_{F,S}$-module structure. This is
clearly an exact functor.

\begin{proposition}
\label{prop_TraceBigCompat}Fix an odd prime $p$. Among all triples $(F,S,k)$
of number fields $F$, finite places $S$ such that $\frac{1}{p}\in
\mathcal{O}_{S}$, and $k\geq1$, the diagram%
\begin{equation}%
\xymatrix{
\tfrac{1}{p^{k^{\prime}}}\mathbf{Z}/\mathbf{Z} \ar@{^{(}->}[d]
\ar@{^{(}->}[r]^-{E_{F^{\prime},S^{\prime},k^{\prime}}} &
\pi_{0}L_{K(1)}(K/p^{k^{\prime}})({\mathsf{LC}\mathcal{O}_{F^{\prime
},S^{\prime}}}) \ar[d] \\
\tfrac{1}{p^{k}}\mathbf{Z}/\mathbf{Z}
\ar@{^{(}->}[r]_-{E_{F,S,k}} &
\pi_{0}L_{K(1)}(K/p^{k})({\mathsf{LC}\mathcal{O}_{F,S}})
}
\label{lwxa2}%
\end{equation}
commutes whenever $F^{\prime}/F$ is an extension (\textquotedblleft whenever
going to a smaller field\textquotedblright), $S^{\prime}\supseteq S(F^{\prime
})$, and $k^{\prime}\leq k$ (\textquotedblleft whenever increasing
$k$\textquotedblright). The right downward arrow is the induced map on
$K$-theory induced from the exact functor%
\begin{equation}
\pi_{\ast}\circ\eta_{S^{\prime}/S(F^{\prime})}\text{.} \label{lwxa3}%
\end{equation}
This functor is literally the same as the one forgetting the $\mathcal{O}%
_{F^{\prime},S^{\prime}}$-module structure in favour of the $\mathcal{O}%
_{F,S}$-module structure, keeping the same locally compact topology.
\end{proposition}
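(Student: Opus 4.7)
The plan is to decompose the compatibility into three independent cases, each corresponding to one type of variation, and to reduce two of them to naturality, with the $S$-variation already handled by Lemma \ref{lemma_KeyCompatibilityWithS}. Concretely, given a triple of changes $(F,S,k) \to (F',S',k')$ as in the statement, I would factor the big commutative square through the two intermediate pairs $(F,S,k) \to (F,S',k)$ and $(F,S',k) \to (F',S(F'),k') \to (F',S',k')$, producing three small squares whose commutativity I verify one by one. Since the functor $\pi_\ast \circ \eta_{S'/S(F')}$ in Eq.\ \ref{lwxa3} factors as a composition of forgetful functors (first forgetting scalars along $\mathcal{O}_{F',S(F')} \to \mathcal{O}_{F',S'}$ and then along $\mathcal{O}_{F,S} \to \mathcal{O}_{F',S(F')}$), this factorization matches up cleanly with the factorization of the right-hand downward arrow.

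The $S$-variation square (fixing $F$ and $k$) is precisely Lemma \ref{lemma_KeyCompatibilityWithS}. The $k$-variation square (fixing $F$ and $S$) I would treat by revisiting the construction of $E_{F,S,k}$ in Diagram \ref{lBigDiagram}: the map $K/p^{k'} \to K/p^k$ of Eq.\ \ref{lwb1} is compatible with the Thomason descent spectral sequences on both rows of that diagram, because the cohomological input $\tfrac{1}{p^{k'}}\mathbf{Z}/\mathbf{Z}(j) \hookrightarrow \tfrac{1}{p^k}\mathbf{Z}/\mathbf{Z}(j)$ is just the inclusion of $p^{k-k'}$-torsion, and the Poitou--Tate invariant map $P_{S}^{2}(F,\tfrac{1}{p^{k}}\mathbf{Z}/\mathbf{Z}(1)) \twoheadrightarrow \tfrac{1}{p^{k}}\mathbf{Z}/\mathbf{Z}$ commutes with this inclusion. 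Chasing the snake-lemma construction of $E_{F,S,k}$ then gives commutativity.

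The genuinely new case is the field-extension square (fixing $k$ and taking $S' = S(F')$): I would show that the diagram
\begin{equation*}
\xymatrix{
\tfrac{1}{p^{k}}\mathbf{Z}/\mathbf{Z} \ar@{=}[d] \ar@{^{(}->}[r]^-{E_{F',S(F'),k}} &
\pi_{0}L_{K(1)}(K/p^{k})(\mathsf{LC}\mathcal{O}_{F',S(F')}) \ar[d]^{\pi_\ast} \\
\tfrac{1}{p^{k}}\mathbf{Z}/\mathbf{Z} \ar@{^{(}->}[r]_-{E_{F,S,k}} &
\pi_{0}L_{K(1)}(K/p^{k})(\mathsf{LC}\mathcal{O}_{F,S})
}
\end{equation*}
commutes. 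By construction (Diagram \ref{lBigDiagram}) the element in $\tfrac{1}{p^{k}}\mathbf{Z}/\mathbf{Z}$ is detected via the Poitou--Tate invariant map $\mathrm{inv}\colon P_{S(F')}^2(F',\tfrac{1}{p^k}\mathbf{Z}/\mathbf{Z}(1)) \twoheadrightarrow \tfrac{1}{p^{k}}\mathbf{Z}/\mathbf{Z}$, so the question becomes whether $\pi_\ast$ on the $K$-theory side corresponds to the norm/corestriction maps on Galois cohomology at each place. This is compatibility of the Thomason descent spectral sequence with finite pushforwards, together with the classical fact that, when one sums local invariants over all places above a fixed $v$, the norm map $\operatorname{Cor}\colon H^2(F'_w,\mu_{p^k}) \to H^2(F_v,\mu_{p^k})$ is compatible with the local invariants (the sum of the $F'_w$-invariants equals the $F_v$-invariant of the corestriction). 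The hard part is therefore the identification of the map $\pi_\ast$ induced on $\pi_0 L_{K(1)}(K/p^k)$ with this corestriction, for which I would use Proposition \ref{prop_IdentifyAdelicBlocks_2} to reduce $\pi_\ast$ on $\mathsf{LC}\mathcal{O}_{F',S(F')}$ restricted to the adelic subcategory to the evident block-diagonal pushforward $\prod'_w \mathsf{Proj}_{F'_w,fg} \to \prod'_v \mathsf{Proj}_{F_v,fg}$ given by sending an $F'_w$-module to the same abelian group viewed as an $F_v$-module, and then invoke the classical compatibility of the local Galois-cohomological transfer with the Thomason descent spectral sequence for each place.

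Assembling the three small squares yields the full compatibility of Diagram \ref{lwxa2}. The main obstacle, as flagged above, is the field-extension square, specifically checking that $\pi_\ast$ on $K(1)$-local $K$-theory really induces corestriction on the Poitou--Tate $P^2$-groups that govern the construction of $E_{F,S,k}$; once this identification is in place, everything else is a direct unwinding of definitions and naturality of the snake lemma used in Diagram \ref{lBigDiagram}.
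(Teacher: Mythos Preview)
Your proposal is correct and follows essentially the same three-case decomposition as the paper: the $k$-variation via naturality of the descent spectral sequence under multiplication by $p^{k-k'}$, the $S$-variation via Lemma \ref{lemma_KeyCompatibilityWithS}, and the field-extension case via Proposition \ref{prop_IdentifyAdelicBlocks_2} to reduce $\pi_\ast$ on the adelic side to the blockwise finite pushforward, followed by the classical compatibility of corestriction with the local Hasse invariants (the paper cites Blumberg--Mandell \cite[Thm.~1.5]{MR4121155} at this last step). Your factorization notation in the first paragraph is slightly garbled (you write $(F,S',k)$ where $S'$ is a set of places of $F'$), but the three small squares you actually verify are exactly the ones the paper treats.
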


\begin{proof}
We subdivide this into various steps. (1) First, assume $F^{\prime}=F$ and
$S^{\prime}=S$, so only $k$ changes. Then we only need to follow the map in
Eq. \ref{lwb1}. On $K$-theory, this stems from multiplication by a power of
$p$, and therefore, passing through the descent spectral sequence, corresponds
to multiplication by $p$ also on the \'{e}tale sheaves on the $E_{2}$-page.
Under Eq. \ref{lwxa1} both Cartier duality $(-)^{D}$ and then $(-)^{\ast}$
reverse and reverse back this map, finally leading to the injection of the
left in Diagram \ref{lwxa2}. (2)\ Hence, for the rest of the proof we may
assume that $k$ is fixed. Next, assume $F^{\prime}=F$ and $k^{\prime}=k$, so
only $S$ changes. Then our claim reduces to Lemma
\ref{lemma_KeyCompatibilityWithS}. (3)\ Thus, we are left with the situation
that $F^{\prime}/F$ is some finite extension, $k^{\prime}=k$ and $S^{\prime
}\supseteq S(F^{\prime})$. Then by Step 2 it is sufficient to prove the
compatibility for $S^{\prime}=S(F^{\prime})$ itself.\ First, we note that
under the inclusion of the fully exact subcategory of adelic modules, the
diagram%
\[%
\xymatrix{
{\mathsf{LC}\mathcal{O}_{F^{\prime},S(F^{\prime}),ad}}
\ar@{^{(}->}[r] \ar[d]_{\pi_{\ast}} &
{\mathsf{LC}\mathcal{O}_{F^{\prime},S(F^{\prime})}}
\ar[d]^{\pi_{\ast}} \\
{\mathsf{LC}\mathcal{O}_{F,S,ad}} \ar@{^{(}->}[r] &
{\mathsf{LC}\mathcal{O}_{F,S}}\text{,}
}%
\]
commutes, i.e., if we forget the $\mathcal{O}_{F^{\prime},S(F^{\prime})}%
$-module structure in favour of the $\mathcal{O}_{F,S}$-module structure, an
adelic module in the former sense indeed will be an adelic module in the
latter sense. Then under Prop. \ref{prop_IdentifyAdelicBlocks}, i.e.,%
\[
\Psi\colon\underset{v^{\prime}\in S^{\prime}\;}{\prod\nolimits^{\prime}%
}(\mathsf{Proj}_{F_{v^{\prime}}^{\prime},fg}:\mathsf{Proj}_{\mathcal{O}%
_{F^{\prime},v^{\prime}},fg})\overset{\sim}{\rightarrow}\left.  \mathsf{LC}%
\mathcal{O}_{F^{\prime},S(F^{\prime}),ad}\right.
\]
the functor $\pi_{\ast}$ gets identified with the exact functors on
$\mathsf{Proj}_{F_{v^{\prime}}^{\prime},fg}$ (resp. $\mathsf{Proj}%
_{\mathcal{O}_{F^{\prime},v^{\prime}},fg}$) which also forget the
$F_{v^{\prime}}^{\prime}$-vector space structure in favour of the $F_{v}%
$-vector space structure (for $v^{\prime}$ running through the finitely many
places over a place $v$ of $F$), and analogously for $\mathcal{O}_{F^{\prime
},v^{\prime}}$, which are finite $\mathcal{O}_{F,v}$-algebras. This is the
same exact functor underlying finite pushforwards on
ordinary\footnote{ordinary, as opposed to locally compact $K$-theory}
$K$-theory. The situation now is essentially reduced to the one in
Blumberg--Mandell \cite[Thm. 1.5]{MR4121155}. By the functoriality of Brauer
groups under finite field extensions, the corestriction on Galois cohomology
induces the identity on the Hasse invariants. This settles the remaining
compatibility and concludes the proof.
\end{proof}

By our assumption that $\frac{1}{p}\in\mathcal{O}_{S}$, there is an exact
functor%
\begin{equation}
\mathsf{LCA}_{\mathcal{O}_{S}}\longrightarrow\mathsf{LCA}_{\mathbf{Z}\left[
\frac{1}{p}\right]  } \label{lgg1}%
\end{equation}
which forgets the $\mathcal{O}_{S}$-module structure in favour of the
$\mathbf{Z[}\frac{1}{p}]$-module structure coming from $\mathbf{Z[}\frac{1}%
{p}]\subseteq\mathcal{O}_{S}$. It induces an exact functor%
\[
\left.  \mathsf{LC}\mathcal{O}_{S}\right.  \longrightarrow\left.
\mathsf{LC}\mathcal{O}_{\mathbf{Q},\{p\}}\right.  \text{.}%
\]
Following the idea of Blumberg--Mandell's \cite[Thm. 1.5]{MR4121155}, the
category $\left.  \mathsf{LC}\mathcal{O}_{\mathbf{Q},\{p\}}\right.  $ comes
with a very natural construction of a trace map.

\begin{theorem}
[{Blumberg--Mandell, \cite[Thm. 1.5]{MR4121155}}]\label{thm_BMTraceForQ}Let
$K$ be non-connective $K$-theory. There is a canonical isomorphism%
\begin{equation}
\pi_{0}L_{K(1)}(K/p^{k})(\left.  \mathsf{LC}\mathcal{O}_{\mathbf{Q}%
,\{p\}}\right.  )\cong\tfrac{1}{p^{k}}\mathbf{Z}/\mathbf{Z}\subseteq
\mathbf{Q}_{p}/\mathbf{Z}_{p}\text{.} \label{lgg2}%
\end{equation}

\end{theorem}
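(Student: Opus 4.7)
The plan is to apply the fiber sequence of Proposition \ref{prop_IdentifyAdelicBlocks_2}(2) to $F = \mathbf{Q}$ and $S = \{p\}$ (so $\mathcal{O}_S = \mathbf{Z}[\tfrac{1}{p}]$), smash with $\mathbb{S}/p^k$, apply $L_{K(1)}$, and extract $\pi_0$ via the long exact sequence of homotopy groups. For each term I would invoke the appropriate descent spectral sequence: the ordinary Thomason sequence of \S\ref{sect_IndividualThomasonDescentSpectralSequences} for $\mathbf{Z}[\tfrac{1}{p}]$, and the restricted-product version of Lemma \ref{lemma_specseq_for_restricted_product_inside} in the $\left.\mathsf{LC}\right.$-variant depicted in Diagram \ref{lwui2_For_LCO} for $\left.\mathsf{LC}\mathcal{O}_{\{p\},ad}\right.$. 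Because $S$ consists of a single finite place, the restricted products collapse to one factor and $P^i_{\{p\}}(\mathbf{Q}, -) = H^i(\mathbf{Q}_p, -)$ for all $i$.

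First I would unwind the middle column of Diagram \ref{lBigDiagram} in this setting. The localization map $\beta^0 \colon H^0(\mathbf{Z}[\tfrac{1}{p}], \mathbf{Z}/p^k) \to H^0(\mathbf{Q}_p, \mathbf{Z}/p^k)$ is the identity on $\mathbf{Z}/p^k$ (both groups carry trivial Galois action), hence an isomorphism. The map $\beta^2 \colon H^2(\mathbf{Z}[\tfrac{1}{p}], \mu_{p^k}) \to H^2(\mathbf{Q}_p, \mu_{p^k}) = \mathbf{Z}/p^k$ has cokernel exactly $\tfrac{1}{p^k}\mathbf{Z}/\mathbf{Z}$ by the Hasse invariant computation of Eq.~\ref{lwxa1}. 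Applying the snake lemma to Diagram \ref{lBigDiagram} then gives $\operatorname{coker}(g_{\{p\}}) \cong \operatorname{coker}(\beta^2) = \tfrac{1}{p^k}\mathbf{Z}/\mathbf{Z}$, and by construction (cf.\ Eq.~\ref{l_EMap}) the map $E_{\mathbf{Q},\{p\},k}$ embeds this group into $\pi_0 L_{K(1)}(K/p^k)(\left.\mathsf{LC}\mathcal{O}_{\mathbf{Q},\{p\}}\right.)$.

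To promote this injection to an isomorphism I would examine the connecting part of the long exact sequence of homotopy groups attached to the fiber sequence in Eq.~\ref{lmz1}:
\[
\pi_0 L_{K(1)}(K/p^k)(\left.\mathsf{LC}\mathcal{O}_{\mathbf{Q},\{p\}}\right.) \longrightarrow \pi_{-1} L_{K(1)}(K/p^k)(\mathbf{Z}[\tfrac{1}{p}]) \longrightarrow \pi_{-1} L_{K(1)}(K/p^k)(\left.\mathsf{LC}\mathcal{O}_{\{p\},ad}\right.).
\]
By the two descent spectral sequences these $\pi_{-1}$-groups are $H^1(\mathbf{Z}[\tfrac{1}{p}], \mathbf{Z}/p^k)$ and $H^1(\mathbf{Q}_p, \mathbf{Z}/p^k)$ respectively, and the claim reduces to injectivity of the localization map $H^1(\mathbf{Z}[\tfrac{1}{p}], \mathbf{Z}/p^k) \to H^1(\mathbf{Q}_p, \mathbf{Z}/p^k)$.

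The hard part will be this injectivity, which is the arithmetic heart of the argument. It follows from the Poitou--Tate nine-term exact sequence with $M = \mathbf{Z}/p^k$ and trivial Galois action: the kernel of $H^1 \to P^1$ is identified with $\operatorname{coker}\bigl(P^0 \to H^2(\mathbf{Z}[\tfrac{1}{p}], \mu_{p^k})^{\ast}\bigr)$. For odd $p$ we have $H^2(\mathbf{Z}[\tfrac{1}{p}], \mu_{p^k}) = \mathrm{Br}(\mathbf{Z}[\tfrac{1}{p}])[p^k] = 0$: the Brauer--Hasse--Noether sequence forces the local invariants at all finite $v \neq p$ to vanish, while the real place contributes only in $\tfrac{1}{2}\mathbf{Z}/\mathbf{Z}$ and so carries no $p$-torsion, whence $\sum_v \mathrm{inv}_v = 0$ forces the $p$-component to vanish as well. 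Hence the Poitou--Tate cokernel is zero, the connecting map is the zero map, and $\operatorname{coker}(g_{\{p\}}) \overset{\sim}{\to} \pi_0 L_{K(1)}(K/p^k)(\left.\mathsf{LC}\mathcal{O}_{\mathbf{Q},\{p\}}\right.)$. Combined with the snake lemma computation, $E_{\mathbf{Q},\{p\},k}$ is an isomorphism, and naturality in $k$ supplied by Proposition \ref{prop_TraceBigCompat} identifies $\pi_0 L_{K(1)}(K/p^k)(\left.\mathsf{LC}\mathcal{O}_{\mathbf{Q},\{p\}}\right.)$ canonically with $\tfrac{1}{p^k}\mathbf{Z}/\mathbf{Z} \subseteq \mathbf{Q}_p/\mathbf{Z}_p$.
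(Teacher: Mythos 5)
Your proposal is correct, and it follows the same line of argument the paper sketches (inside the proof of Prop.~\ref{prop_VIsASectionOfE}), but you make explicit a step the paper elides. The paper's own argument specializes Diagram~\ref{lBigDiagram} to $(\mathbf{Q},\{p\},k)$, observes $\beta^0$ is an isomorphism, and concludes via the snake lemma that $\operatorname{coker}(g_{\{p,\infty\}})\cong\tfrac{1}{p^k}\mathbf{Z}/\mathbf{Z}$; it then simply asserts $E_{\mathbf{Q},\{p\},k}$ is invertible. But by the construction in Eq.~\ref{l_EMap} the map $E$ is a \emph{composite} of two injections, and the snake lemma only identifies the first one as an isomorphism. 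What remains, as you correctly isolate, is surjectivity of $\operatorname{coker}(g)\hookrightarrow\pi_0 L_{K(1)}(K/p^k)(\left.\mathsf{LC}\mathcal{O}_{\mathbf{Q},\{p\}}\right.)$, which by the long exact homotopy sequence of Eq.~\ref{lmz1} is equivalent to injectivity of $\pi_{-1}L_{K(1)}(K/p^k)(\mathbf{Z}[\tfrac{1}{p}])\to\pi_{-1}L_{K(1)}(K/p^k)(\left.\mathsf{LC}\mathcal{O}_{\{p\},ad}\right.)$, i.e.\ of $H^1(\mathbf{Z}[\tfrac{1}{p}],\mathbf{Z}/p^k)\to H^1(\mathbf{Q}_p,\mathbf{Z}/p^k)$. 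Your Poitou--Tate computation of this kernel as $\operatorname{coker}(P^0\to H^2(\mathbf{Z}[\tfrac{1}{p}],\mu_{p^k})^\ast)$, and the vanishing $H^2(\mathbf{Z}[\tfrac{1}{p}],\mu_{p^k})=0$ via $\operatorname{Pic}(\mathbf{Z}[\tfrac{1}{p}])=0$ and $\mathrm{Br}(\mathbf{Z}[\tfrac{1}{p}])\cong\mathbf{Z}/2$ being prime to $p$, are both correct. So your argument is a faithful completion of the paper's sketch rather than a different route; the only cosmetic point is that once \S\ref{sect_Duality} is available, the dualized spectral sequence of Prop.~\ref{prop_duality2} would give the same conclusion in one line (the $\pi_0$-filtration has graded pieces $H^0(\mathbf{Z}[\tfrac{1}{p}],\mathbf{Z}/p^k)^\ast$ and $H^2(\mathbf{Z}[\tfrac{1}{p}],\mu_{p^k})^\ast=0$), but that material appears later and cannot be invoked here.
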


This now can be linked up to the Brown--Comenetz dual. We recall this type of
duality in \S \ref{sect_Appendix_DualityInKOneLocalHptyCat}. By Eq.
\ref{lqwxt3} there is a canonical isomorphism%
\begin{equation}
\operatorname*{Hom}\nolimits_{Ho(\mathsf{Sp})}(Z,I_{\mathbf{Q}_{p}%
/\mathbf{Z}_{p}}\mathbb{S}_{\widehat{p}})\cong\operatorname*{Hom}%
\nolimits_{\mathsf{Ab}}\left(  \pi_{0}(Z\otimes_{\mathsf{Sp}}\mathcal{N}%
),\mathbf{Q}_{p}/\mathbf{Z}_{p}\right)  \label{lqtxa1}%
\end{equation}
for all $K(1)$-local spectra $Z$, where $\mathcal{N}:=\Sigma^{-1}%
\mathbb{S}\mathbf{Q}_{p}/\mathbf{Z}_{p}$ and $\mathbb{S}_{\widehat{p}}$
denotes the $K(1)$-local sphere\footnote{this is just the tensor unit for the
intrinsic symmetric monoidal structure of the $K(1)$-local homotopy category}.
By the compatibility under increasing $k$, the maps%
\begin{equation}
\pi_{0}L_{K(1)}(K/p^{k})(\left.  \mathsf{LC}\mathcal{O}_{\mathbf{Q}%
,\{p\}}\right.  )\longrightarrow\tfrac{1}{p^{k}}\mathbf{Z}/\mathbf{Z}
\label{lqtxb1}%
\end{equation}
induce a canonical map on the colimit, which by the following little
computation which is valid for any spectrum $X$,%
\begin{align}
\operatorname*{colim}X/p^{k}  &  \cong\operatorname*{colim}\left(
X\otimes_{\mathsf{Sp}}\mathbb{S}\mathbf{Z}/p^{k}\mathbf{Z}\right)
\label{lwcv4a}\\
&  \cong X\otimes_{\mathsf{Sp}}\mathbb{S}\mathbf{Q}_{p}/\mathbf{Z}_{p}\cong
X\otimes_{\mathsf{Sp}}\Sigma\mathcal{N}\cong\Sigma X\otimes_{\mathsf{Sp}%
}\mathcal{N}\text{,}\nonumber
\end{align}
defines a map%
\begin{equation}
\pi_{0}(\Sigma L_{K(1)}K(\left.  \mathsf{LC}\mathcal{O}_{\mathbf{Q}%
,\{p\}}\right.  )\otimes_{\mathsf{Sp}}\mathcal{N})\longrightarrow
\mathbf{Q}_{p}/\mathbf{Z}_{p}\text{.} \label{lwcv4}%
\end{equation}
By Eq. \ref{lqtxa1} this map uniquely determines a map
\[
\Sigma L_{K(1)}K(\left.  \mathsf{LC}\mathcal{O}_{\mathbf{Q},\{p\}}\right.
)\longrightarrow I_{\mathbf{Q}_{p}/\mathbf{Z}_{p}}\mathbb{S}_{\widehat{p}}%
\]
and therefore a map $L_{K(1)}K(\left.  \mathsf{LC}\mathcal{O}_{\mathbf{Q}%
,\{p\}}\right.  )\rightarrow\Sigma^{-1}I_{\mathbf{Q}_{p}/\mathbf{Z}_{p}%
}\mathbb{S}_{\widehat{p}}$, which by $p$-completeness (and more concretely by
Eq. \ref{lrrb1}) is equivalent to specifying a map to the Anderson dual of the
sphere%
\begin{equation}
L_{K(1)}K(\left.  \mathsf{LC}\mathcal{O}_{\mathbf{Q},\{p\}}\right.
)\longrightarrow I_{\mathbf{Z}_{p}}\mathbb{S}_{\widehat{p}}\text{.}
\label{lgg4}%
\end{equation}
This is the key input for the trace map.

\begin{definition}
\label{def_BigTrace}Let $K$ be non-connective $K$-theory. We define a trace
map%
\[
\operatorname*{Tr}\nolimits_{F,S}\colon L_{K(1)}K(\left.  \mathsf{LC}%
\mathcal{O}_{F,S}\right.  )\longrightarrow I_{\mathbf{Z}_{p}}\mathbb{S}%
_{\widehat{p}}%
\]
by composing the map induced from the exact functor in Eq. \ref{lgg1} and the
map in Eq. \ref{lgg4}.
\end{definition}

Usually, we work with its finite level version. From $X\overset{\cdot
p}{\longrightarrow}X\longrightarrow X/p$, where $X$ is arbitrary, dualization
yields $I_{\mathbf{Z}_{p}}X\overset{\cdot p}{\longleftarrow}I_{\mathbf{Z}_{p}%
}X\longleftarrow I_{\mathbf{Z}_{p}}(X/p)$ and therefore%
\begin{equation}
(I_{\mathbf{Z}_{p}}X)/p\cong\Sigma I_{\mathbf{Z}_{p}}(X/p)\text{.}
\label{lmips27}%
\end{equation}
Reducing the trace map of Def. \ref{def_BigTrace} modulo $p^{k}$, we find%
\begin{equation}
L_{K(1)}(K/p^{k})(\left.  \mathsf{LC}\mathcal{O}_{S}\right.  )\overset
{\operatorname*{Tr}\nolimits_{F,S}/p^{k}}{\longrightarrow}(I_{\mathbf{Z}_{p}%
}\mathbb{S}_{\widehat{p}})/p^{k}\underset{\text{Eq. \ref{lmips27}}}%
{\overset{\sim}{\longrightarrow}}\Sigma I_{\mathbf{Z}_{p}}(\mathbb{S}%
/p^{k})\underset{\text{Eq. \ref{lrrb1}}}{\overset{\sim}{\longrightarrow}%
}I_{\mathbf{Q}_{p}/\mathbf{Z}_{p}}\mathbb{S}/p^{k}\text{.}
\label{l_FiniteLevelVersionOfTraceMap}%
\end{equation}
We call this the \emph{finite level version} of the trace map. For $\left.
\mathsf{LC}\mathcal{O}_{\mathbf{Q},\{p\}}\right.  $ this recovers Eq.
\ref{lqtxb1}.

The construction of Def. \ref{def_BigTrace} generalizes \cite[Thm.
1.5]{MR4121155} to more general and possibly infinite choices of places $S$.
Unlike loc. cit. we reduce to the case of $\left.  \mathsf{LC}\mathcal{O}%
_{\mathbf{Q},\{p\}}\right.  $ by the forgetful functor $\left.  \mathsf{LC}%
\mathcal{O}_{F,S}\right.  \rightarrow\left.  \mathsf{LC}\mathcal{O}%
_{\mathbf{Q},\{p\}}\right.  $ instead of proving the existence of a unique
system of sections. However, a formulation in terms of such a unique system of
sections is also possible:

\begin{proposition}
[Compatibility with method of \cite{MR4121155}]\label{prop_VIsASectionOfE}Fix
an odd prime $p$. Let $K$ be non-connective $K$-theory.

\begin{enumerate}
\item Among all triples $(F,S,k)$ of number fields $F$, sets of finite places
$S$ such that $\frac{1}{p}\in\mathcal{O}_{S}$, and $k\geq1$, there is a unique
system of abelian group homomorphisms%
\[
V_{F,S,k}\colon\pi_{0}L_{K(1)}(K/p^{k})(\left.  \mathsf{LC}\mathcal{O}%
_{F,S}\right.  )\longrightarrow\tfrac{1}{p^{k}}\mathbf{Z}/\mathbf{Z}%
\]
with $V_{F,S,k}\circ E_{F,S,k}=\operatorname*{id}_{\tfrac{1}{p^{k}}%
\mathbf{Z}_{p}/\mathbf{Z}_{p}}$, such that the following holds: Suppose
$(F,S,k)$ and $(F^{\prime},S^{\prime},k^{\prime})$ are such that $F^{\prime
}/F$ is an extension, $S^{\prime}$ contains $S(F^{\prime})$, and $k^{\prime
}\leq k$. Then%
\begin{equation}%
\xymatrix{
\pi_{0}L_{K(1)}(K/p^{k^{\prime}})({\mathsf{LC}\mathcal{O}_{F^{\prime
},S^{\prime}}}) \ar[r] \ar[d] &
\tfrac{1}{p^{k^{\prime}}}\mathbf{Z}/\mathbf{Z} \ar@{^{(}->}[d] \\
\pi_{0}L_{K(1)}(K/p^{k})({\mathsf{LC}\mathcal{O}_{F,S}}) \ar[r] &
\tfrac{1}{p^{k}}\mathbf{Z}/\mathbf{Z}
}
\label{lcz3}%
\end{equation}
commutes, where the left downward arrow is induced from the same exact functor
$\left.  \mathsf{LC}\mathcal{O}_{F^{\prime},S^{\prime}}\right.  \rightarrow
\left.  \mathsf{LC}\mathcal{O}_{F,S}\right.  $ as in Eq. \ref{lwxa3}, i.e.,
forgetting the $\mathcal{O}_{F^{\prime},S^{\prime}}$-module structure in
favour of the $\mathcal{O}_{F,S}$-module structure.

\item Assembling the maps in (1) as a colimit and using Eq. \ref{lwcv4a}, the
maps%
\[
V_{F,S}\colon\pi_{0}(\Sigma L_{K(1)}K(\left.  \mathsf{LC}\mathcal{O}%
_{F,S}\right.  )\otimes_{\mathsf{Sp}}\mathcal{N})\longrightarrow\mathbf{Q}%
_{p}/\mathbf{Z}_{p}%
\]
define maps%
\[
\Sigma L_{K(1)}K(\left.  \mathsf{LC}\mathcal{O}_{F,S}\right.  )\longrightarrow
I_{\mathbf{Q}_{p}/\mathbf{Z}_{p}}\mathbb{S}_{\widehat{p}}%
\]
which after applying $\Sigma^{-1}$ and identifying the shifted Brown--Comenetz
dual as an Anderson dual, agree with $\operatorname*{Tr}\nolimits_{F,S}$.
\end{enumerate}
\end{proposition}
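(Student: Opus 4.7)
Everything reduces to the Blumberg--Mandell base case in Theorem~\ref{thm_BMTraceForQ} together with the functoriality already packaged in Proposition~\ref{prop_TraceBigCompat}. Since $E_{\mathbf{Q},\{p\},k}$ is an isomorphism, we are forced to set $V_{\mathbf{Q},\{p\},k}$ to be its inverse. For a general triple $(F,S,k)$, the assumption $\tfrac{1}{p}\in\mathcal{O}_S$ implies that every place of $F$ above $p$ lies in $S$, so the forgetful functor of Eq.~\ref{lgg1} descends to an exact functor $\phi_{F,S}\colon\mathsf{LC}\mathcal{O}_{F,S}\to\mathsf{LC}\mathcal{O}_{\mathbf{Q},\{p\}}$. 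Define
\[
V_{F,S,k} := V_{\mathbf{Q},\{p\},k}\circ\phi_{F,S,\ast},
\]
where $\phi_{F,S,\ast}$ is the induced map on $\pi_0 L_{K(1)}(K/p^k)$.

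The identity $V_{F,S,k}\circ E_{F,S,k}=\operatorname{id}$ follows from Proposition~\ref{prop_TraceBigCompat} applied with the smaller triple $(\mathbf{Q},\{p\},k)$ and the larger triple $(F,S,k)$ (and the same $k$): that proposition gives $\phi_{F,S,\ast}\circ E_{F,S,k}=E_{\mathbf{Q},\{p\},k}$, and post-composing with $V_{\mathbf{Q},\{p\},k}=E_{\mathbf{Q},\{p\},k}^{-1}$ yields $\operatorname{id}$. For the compatibility square~\ref{lcz3}, use that forgetful functors compose, $\phi_{F,S}\circ\psi=\phi_{F^{\prime},S^{\prime}}$ for $\psi$ the forgetful $\mathsf{LC}\mathcal{O}_{F^{\prime},S^{\prime}}\to\mathsf{LC}\mathcal{O}_{F,S}$. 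This reduces the general commutativity to the corresponding statement for the single system $V_{\mathbf{Q},\{p\},k}$ under variation of $k$ alone, which is exactly the content of Step~(1) of the proof of Proposition~\ref{prop_TraceBigCompat}. Uniqueness is then immediate: the base case forces $V_{\mathbf{Q},\{p\},k}$, and invoking the required compatibility with target $(\mathbf{Q},\{p\},k)$ forces the defining formula for $V_{F,S,k}$.

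For part~(2), unwind Definition~\ref{def_BigTrace}: the trace $\operatorname{Tr}_{F,S}$ is constructed as the composition of the map induced by $\phi_{F,S}$ with the Anderson-dual map~\ref{lgg4}. The latter was itself produced as the adjoint, under~\ref{lqtxa1}, of the maps~\ref{lqtxb1} assembled via the colimit~\ref{lwcv4a}; and at finite level these maps are precisely $V_{\mathbf{Q},\{p\},k}=E_{\mathbf{Q},\{p\},k}^{-1}$. Since the colimit assembly commutes with the (fixed) pre-composition by $\phi_{F,S,\ast}$, and since the shift/dualization identifications~\ref{lmips27} and~\ref{lrrb1} match on both sides, the assembled $V_{F,S}$ agrees with $\Sigma\operatorname{Tr}_{F,S}$ as a map to $I_{\mathbf{Q}_p/\mathbf{Z}_p}\mathbb{S}_{\widehat p}$. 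The main obstacle is purely bookkeeping: carefully tracking the Brown--Comenetz-to-Anderson shift and the $\operatorname{colim} X/p^k \simeq \Sigma X\otimes\mathcal{N}$ identification so that $V_{F,S}$ and $\operatorname{Tr}_{F,S}$ are compared in the same category of dual spectra. No further geometric or categorical input beyond Proposition~\ref{prop_TraceBigCompat} and Theorem~\ref{thm_BMTraceForQ} should be required.
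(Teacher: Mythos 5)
Your proposal is correct and matches the paper's own argument: both reduce to the base case $(\mathbf{Q},\{p\},k)$ where $E_{\mathbf{Q},\{p\},k}$ is an isomorphism (hence $V_{\mathbf{Q},\{p\},k}$ is forced to be its inverse), and both use Prop.~\ref{prop_TraceBigCompat} to transport this back along the forgetful functor to $\mathsf{LC}\mathcal{O}_{\mathbf{Q},\{p\}}$. You merely make explicit the formula $V_{F,S,k}=V_{\mathbf{Q},\{p\},k}\circ\phi_{F,S,\ast}$ where the paper phrases it as ``moving downward to the final object of the poset,'' and part (2) is by construction the same identification as in Def.~\ref{def_BigTrace}.
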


\begin{proof}
(1) First, we restrict our attention to triples $(F,S,k)$ of the following
shape: $F$ is an arbitrary number field, $k\geq1$ is arbitrary, but%
\begin{equation}
S(F):=\{\text{places above }p\}\cup S_{\infty}\text{.} \label{lcz4}%
\end{equation}
Restricted to these triples, which can be indexed merely by pairs $(F,k)$, a
unique system of $V_{F,S,k}$ exists by the construction of Blumberg--Mandell
\cite[Thm. 1.5]{MR4121155}. For the sake of completeness, we recall this: For
$(\mathbf{Q},k)$ Diagram \ref{lBigDiagram} specializes to%
\[%
\Scale[0.92]{\xymatrix{
& & 0 \ar[d] \\
H^{2}({\mathbf{Z}\left[  \frac{1}{p}\right]},\tfrac{1}{p^{k}}\mathbf
{Z/Z}(1)) \ar[d]_{\beta^{2} }
\ar@{^{(}->}[r] &
\pi_{0}L_{K(1)}(K/p^{k})({\mathbf{Z}\left[  \frac{1}{p}\right]}) \ar
[d]^{g_{\{p,\infty\}}} \ar@{->>}[r] &
H^{0}({\mathbf{Z}\left[  \frac{1}{p}\right]},\tfrac{1}{p^{k}}\mathbf
{Z}/\mathbf{Z}(0))
\ar@{=}[d]^{\beta^{0} } \\
P_{\{p,\infty\}}^{2}({\mathbf{Q}},\tfrac{1}{p^{k}}\mathbf{Z}/\mathbf
{Z}(1)) \ar@{->>}[d] \ar@{^{(}->}[r] &
\pi_{0}L_{K(1)}(K/p^{k})({\mathsf{LC}\mathcal{O}_{\mathbf{Q},\{p\}}}%
) \ar@{->>}[d] \ar@{->>}[r] & H^{0}({\mathbf{Q}_{p}},\tfrac{1}{p^{k}}%
\mathbf{Z}/\mathbf{Z}(0)) \ar[d] \\
\tfrac{1}{p^{k}}\mathbf{Z}/\mathbf{Z} \ar[r]_{\cong} & \operatorname
*{coker}\left(  g_{\{p,\infty\}}\right) \ar[r] & 0 \text{,}
}}%
\]
where $\beta^{0}$ is an isomorphism. This means that $V_{\mathbf{Q}%
,\{p\},k}:=E_{\mathbf{Q},\{p\},k}^{-1}$ is the unique possible choice (this
provides the proof of Theorem \ref{thm_BMTraceForQ} as well). Now by Prop.
\ref{prop_TraceBigCompat} all triples $(F,S,k)$ and $(F^{\prime},S^{\prime
},k^{\prime})$ for which we have to check the commutativity of Diagram
\ref{lcz3}, the maps $E_{F,S,k}$ form a compatible system of injections. This
means that we (leaving $k$ fixed), we can always move downward to the triple
$(\mathbf{Q},\{p\},k)$, which is the (unique) final object in the poset of all
triples with fixed $k$, and use the uniquely determined section $V_{\mathbf{Q}%
,\{p\},k}$ there. (2) By Prop. \ref{prop_TraceBigCompat} this system is
exactly the one used in Def. \ref{def_BigTrace} to represent the maps
$\operatorname*{Tr}\nolimits_{F,S}$. This settles the claim.
\end{proof}

\section{\label{sect_Duality}Duality}

In this section $K$ denotes non-connective $K$-theory. Let $F$ be a number
field and let $S$ be a (possibly infinite) set of places of $F$ such that
$\frac{1}{p}\in\mathcal{O}_{S}$.$\ $In this section, we shall prove two global
duality results:

\begin{itemize}
\item ($S$ contains all infinite places) The first duality concerns
$\mathsf{LCA}_{\mathcal{O}_{S}}$, i.e., the category of all locally compact
$\mathcal{O}_{S}$-modules.

\item ($S$ arbitrary) The second duality concerns $\left.  \mathsf{LC}%
\mathcal{O}_{S}\right.  $, i.e., the modified category of
\S \ref{sect_NALCAModules}, where we can remove unwanted infinite places from
$S$.
\end{itemize}

We feel that the category $\mathsf{LCA}_{\mathcal{O}_{S}}$ is a bit more
interesting due to the simplicity of its definition. The category $\left.
\mathsf{LC}\mathcal{O}_{S}\right.  $ is a little less natural, yet it produces
the cleaner duality statement.

\subsection{$S$ contains all infinite places\label{subsect_SInfPlaces}}

In this subsection we assume that $S$ contains all infinite places and that
$\frac{1}{p}\in\mathcal{O}_{S}$. We shall set up a descent spectral sequence
for $L_{K(1)}K(\mathsf{LCA}_{\mathcal{O}_{S}})$. We shall rely on the
presentation of Milne \cite[Ch. 1, \S 4]{MR2261462}. We abbreviate by
$G_{S}:=\operatorname*{Gal}(F_{S}/F)$ the Galois group of the maximal Galois
extension $F_{S}/F$ which is unramified outside $S$. For $L/F$ any finite
extension, we write

$S^{\prime}$ for the places of $L$ which lie above places of $S$,

$J_{L,S}:=\left.  \underset{v\in S^{\prime}\;}{\prod\nolimits^{\prime}%
}\right.  \left(  L_{v}^{\times}:\mathcal{O}_{L,v}^{\times}\right)  $ for the
$S^{\prime}$-id\`{e}les of $L$,

$E_{L,S}:=\mathcal{O}_{L,S^{\prime}}^{\times}$ for the $S^{\prime}$-unit group
of $L$,

$C_{L,S}:=J_{L,S}/E_{L,S}$ for the $S^{\prime}$-id\`{e}le class group,

Moreover, $E_{S}:=\operatorname*{colim}_{L}E_{L,S}$, $J_{S}%
:=\operatorname*{colim}_{L}J_{L,S}$, and $C_{S}:=\operatorname*{colim}%
_{L}C_{L,S}$ denote the associated Galois modules over $G_{S}$.

\begin{proposition}
[Undualized spectral sequence]\label{prop_duality1}Let $K$ denote
non-connective $K$-theory. Let $p$ be an odd prime, $F$ a number field, $S$ a
(possibly infinite) set of places of $F$ containing the infinite places and
such that $\frac{1}{p}\in\mathcal{O}_{S}$. Suppose $k\geq1$. Then there is a
convergent spectral sequence%
\begin{align*}
&  E_{2}^{i,j}:=\operatorname*{Ext}\nolimits_{G_{S}}^{i}\left(  \mathbf{Z}%
/p^{k}(1+\tfrac{j}{2}),C_{S}\right) \\
&  \qquad\qquad\Rightarrow\pi_{-i-j}L_{K(1)}(K/p^{k})K(\mathsf{LCA}%
_{\mathcal{O}_{S}})\text{,}%
\end{align*}
where $C_{S}$ denotes the $S$-id\`{e}le class group of the number field.
\end{proposition}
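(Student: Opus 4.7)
The proof proceeds by comparing descent spectral sequences across the fiber sequence of Thm. \ref{thm_main_weak}. After smashing with $\mathbb{S}/p^{k}$ and $K(1)$-localizing, we obtain the fiber sequence
\[
L_{K(1)}(K/p^{k})(\mathcal{O}_{S}) \longrightarrow L_{K(1)}(K/p^{k})(\mathsf{LCA}_{\mathcal{O}_{S},ad}) \longrightarrow L_{K(1)}(K/p^{k})(\mathsf{LCA}_{\mathcal{O}_{S}}).
\]
The left spectrum carries the Thomason descent spectral sequence of Eq. \ref{lDescentSpectralSequence} with $E_{2}^{i,j} = H^{i}(\mathcal{O}_{S}, \mathbf{Z}/p^{k}(-j/2))$, and the middle spectrum carries the restricted-product descent spectral sequence of Lemma \ref{lemma_specseq_for_restricted_product_inside} with $E_{2}^{i,j} = P_{S}^{i}(F, \mathbf{Z}/p^{k}(-j/2))$ (plus the extra $H^{0}$-contributions at infinite places recorded there). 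Both spectral sequences degenerate at $E_{2}$, being concentrated in three non-zero columns, and the induced map on $E_{2}$-pages is the localization map $H^{i}(\mathcal{O}_{S}, M) \to P_{S}^{i}(F, M)$, i.e., exactly the morphism appearing in the Tate--Poitou exact sequence.

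Both descent spectral sequences are functorial in the ring map $\mathcal{O}_{S} \to \prod'_{v \in S}(F_{v} : \mathcal{O}_{v})$, so the fiber sequence induces a compatible filtration on $L_{K(1)}(K/p^{k})(\mathsf{LCA}_{\mathcal{O}_{S}})$, whose associated-graded spectral sequence has $E_{2}$-page computed as the derived cofiber of the map of $E_{2}$'s in the derived category of $G_{S}$-modules. Applying the nine-term Tate--Poitou exact sequence (Milne \cite{MR2261462}, Ch.~I \S4, or Harari \cite{MR4174395}, Ch.~17) to $M = \mathbf{Z}/p^{k}(-j/2)$ with Cartier dual $M^{D} = \mathbf{Z}/p^{k}(1 + j/2)$, this cofiber's cohomology is precisely $H^{2-i}(G_{S}, \mathbf{Z}/p^{k}(1+j/2))^{\vee}$, where $(-)^{\vee}$ denotes the $p$-primary Pontryagin dual.

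Finally, I invoke global Tate duality, namely the perfect pairing
\[
\operatorname{Ext}^{i}_{G_{S}}(N, C_{S}) \times H^{2-i}(G_{S}, N) \longrightarrow \mathbf{Q}/\mathbf{Z}
\]
valid for any finite $G_{S}$-module $N$ of order invertible in $\mathcal{O}_{S}$, which is a consequence of the short exact sequence $E_{S} \hookrightarrow J_{S} \twoheadrightarrow C_{S}$ together with local and global class field theory. Specialized to $N = \mathbf{Z}/p^{k}(1+j/2)$, this yields the identification $H^{2-i}(G_{S}, \mathbf{Z}/p^{k}(1+j/2))^{\vee} \cong \operatorname{Ext}^{i}_{G_{S}}(\mathbf{Z}/p^{k}(1+j/2), C_{S})$ that is exhibited in the proposition.

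The main obstacle is the case of infinite $S$: the classical Poitou--Tate sequence is stated for finite $S$, so I must reinterpret it as the derived cofiber of $R\Gamma(\mathcal{O}_{S}, M) \to R\Gamma_{\mathrm{loc}}(F, M)$ and verify this identification is compatible with the restricted-product formalism of \S\ref{sect_LocalDuality}, and in particular with the interplay of infinite colimits and $K(1)$-localization handled there. A secondary technicality is matching the extra $H^{0}$-contributions at infinite places flagged in Remark \ref{rmk_TheStoryAboutTateCohomology2}, which originate from the descent spectral sequence seeing ordinary rather than Tate Galois cohomology at archimedean places; one has to check they get absorbed correctly on the $\operatorname{Ext}^{*}_{G_{S}}(-, C_{S})$ side. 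For $p$ odd this is clean because Tate cohomology at archimedean places vanishes $p$-adically.
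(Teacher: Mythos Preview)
Your approach is correct and follows the same overall strategy as the paper: build a spectral sequence for the cofiber $L_{K(1)}(K/p^{k})(\mathsf{LCA}_{\mathcal{O}_{S}})$ from the compatible descent spectral sequences on $\mathcal{O}_{S}$ and on the adelic category, and then identify the $E_{2}$-page arithmetically. The difference lies in how that final identification is carried out.

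You compute the cofiber of $H^{i}(\mathcal{O}_{S},M)\to P_{S}^{i}(F,M)$ via the nine-term Poitou--Tate sequence as $H^{2-i}(G_{S},M^{D})^{\ast}$, and then invoke the global duality pairing $\operatorname{Ext}^{i}_{G_{S}}(N,C_{S})\times H^{2-i}(G_{S},N)\to\mathbf{Q}/\mathbf{Z}$ to convert back to the undualized form. The paper instead goes directly: it rewrites the two input spectral sequences as $\operatorname{Ext}^{i}_{G_{S}}(\mathbf{Z}/p^{k}(1+\tfrac{j}{2}),E_{S})$ and $\operatorname{Ext}^{i}_{G_{S}}(\mathbf{Z}/p^{k}(1+\tfrac{j}{2}),J_{S})$ using Milne's Lemmas~4.12 and~4.13, so that the cofiber is immediately $\operatorname{RHom}_{G_{S}}(\mathbf{Z}/p^{k}(1+\tfrac{j}{2}),C_{S})$ by the short exact sequence $E_{S}\hookrightarrow J_{S}\twoheadrightarrow C_{S}$. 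No Poitou--Tate and no second dualization are needed.

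The payoff of the paper's route is precisely at the point you flag as a secondary technicality. The isomorphism $\operatorname{Ext}^{i}_{G_{S}}(M,C_{S})\cong H^{2-i}(\mathcal{O}_{S},M)^{\ast}$ that you rely on is only valid for $i\geq 1$ (Milne, Thm.~4.6(a)); for $i=0$ there is a discrepancy, and separately the Poitou--Tate $P^{0}$ uses Tate cohomology at archimedean places while the adelic descent spectral sequence produces ordinary $H^{0}$. In your approach these two corrections must be shown to cancel. In the paper's approach this is automatic: the $r=0$ formula for $\operatorname{Ext}^{0}_{G_{S}}(M,J_{S})$ (Milne, Lemma~4.13) already contains the extra archimedean $H^{0}$-summands, so they flow into $\operatorname{Ext}^{0}_{G_{S}}(M,C_{S})$ with no further bookkeeping. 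Your detour is not wrong, but it trades a single clean identification for two steps, each of which has an $i=0$ caveat that you would still need to reconcile explicitly.
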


The $E_{2}$-page has the shape%

\[%
\begin{tabular}
[c]{rccccc}%
$\vdots\,\,$ & \multicolumn{1}{|c}{} & $\vdots$ & $\vdots$ & $\vdots$ & \\
$\mathsf{2}$ & \multicolumn{1}{|c}{$\quad0\quad$} & $\operatorname*{Ext}%
\nolimits_{G_{S}}^{0}\left(  \mathbf{Z}/p^{k}(2),C_{S}\right)  $ &
$\operatorname*{Ext}\nolimits_{G_{S}}^{1}\left(  \mathbf{Z}/p^{k}%
(2),C_{S}\right)  $ & $\operatorname*{Ext}\nolimits_{G_{S}}^{2}\left(
\mathbf{Z}/p^{k}(2),C_{S}\right)  $ & $\quad0\quad$\\
$\mathsf{1}$ & \multicolumn{1}{|c}{$\quad0\quad$} & $0$ & $0$ & $0$ &
$\quad0\quad$\\
$\mathsf{0}$ & \multicolumn{1}{|c}{$\quad0\quad$} & $\operatorname*{Ext}%
\nolimits_{G_{S}}^{0}\left(  \mathbf{Z}/p^{k}(1),C_{S}\right)
{\cellcolor{lg}}%
$ & $\operatorname*{Ext}\nolimits_{G_{S}}^{1}\left(  \mathbf{Z}/p^{k}%
(1),C_{S}\right)  $ & $\operatorname*{Ext}\nolimits_{G_{S}}^{2}\left(
\mathbf{Z}/p^{k}(1),C_{S}\right)  $ & $\quad0\quad$\\
$\mathsf{-1}$ & \multicolumn{1}{|c}{$\quad0\quad$} & $0$ & $0%
{\cellcolor{lg}}%
$ & $0$ & $\quad0\quad$\\
$\mathsf{-2}$ & \multicolumn{1}{|c}{$\quad0\quad$} & $\operatorname*{Ext}%
\nolimits_{G_{S}}^{0}\left(  \mathbf{Z}/p^{k}(0),C_{S}\right)  $ &
$\operatorname*{Ext}\nolimits_{G_{S}}^{1}\left(  \mathbf{Z}/p^{k}%
(0),C_{S}\right)  $ & $\operatorname*{Ext}\nolimits_{G_{S}}^{2}\left(
\mathbf{Z}/p^{k}(0),C_{S}\right)
{\cellcolor{lg}}%
$ & $\quad0\quad$\\
$\vdots\,\,$ & \multicolumn{1}{|c}{} & $\vdots$ & $\vdots$ & $\vdots$ &
\\\cline{2-6}
& $\mathsf{-1}$ & $\mathsf{0}$ & $\mathsf{1}$ & $\mathsf{2}$ & $\mathsf{\cdots
}$%
\end{tabular}
\ \ \
\]
with differential $d_{2}\colon E_{2}^{i,j}\rightarrow E_{2}^{i+2,j-1}$. The
shaded regions mark the $\pi_{0}$-diagonal.

\begin{proof}
(Step 1) Consider the exact functor%
\[
\mathsf{Proj}_{\mathcal{O}_{S},fg}\longrightarrow\mathsf{LCA}_{\mathcal{O}%
_{S}}\text{,}\qquad M\longmapsto M\otimes_{\mathcal{O}_{S}}\mathbb{A}%
_{S}\text{.}%
\]
Thanks to Prop. \ref{prop_IdentifyAdelicBlocks}, we may identify the induced
map on $K$-theory, which we shall call $\delta$ in this proof, with%
\begin{equation}
\delta\colon K(\mathcal{O}_{S})\longrightarrow\left.  \underset{v\in
S\;}{\prod\nolimits^{\prime}}\right.  \left(  K(F_{v}):K(\mathcal{O}%
_{v})\right)  \text{,} \label{lwui4}%
\end{equation}
where the induced maps on the factors of the restricted products are $M\mapsto
M\otimes_{\mathcal{O}_{S}}F_{v}$ (resp. $M\mapsto M\otimes_{\mathcal{O}_{S}%
}\mathcal{O}_{v}$) and therefore correspond to scheme morphisms. Now apply
$L_{K(1)}$ to Eq. \ref{lwui4}. Thanks to Theorem \ref{thm_compar1} and Lemma
\ref{lemma_specseq_for_restricted_product_inside} (3), $L_{K(1)}(\delta)$
unravels to%
\begin{equation}
L_{K(1)}K(\mathcal{O}_{S})\longrightarrow\left.  \underset{S^{\prime}\subseteq
S\text{, }\#S^{\prime}<\infty}{\operatorname*{colim}\nolimits^{\mathsf{Sp}}%
}\right.  \left(
{\textstyle\prod\nolimits_{v\in S\setminus S^{\prime}}}
L_{K(1)}K(\mathcal{O}_{v})\times%
{\textstyle\prod\nolimits_{v\in S^{\prime}}}
L_{K(1)}K\left(  F_{v}\right)  \right)  \text{.} \label{lwui5}%
\end{equation}
Thanks to the levelwise compatibility of all transition maps in the colimit
with the respective Thomason descent spectral sequences, we get a descent
spectral sequence for the homotopy cofiber of the map in Eq. \ref{lwui5}. From
now on, we switch to working mod $p^{k}$. We will spell out the spectral
sequences in terms of Galois cohomology. As in Def. \ref{def_PGroups}, write
$G_{S}:=\operatorname*{Gal}(F_{S}/F)$, where $F_{S}$ is the maximal Galois
extension of $F$ which is unramified outside $S$. Then \'{e}tale sheaves on
$\operatorname*{Spec}\mathcal{O}_{S}$ or $\operatorname*{Spec}F_{v}$ can both
be regarded as $G_{S}$-modules, using that the local Galois group
$G_{v}:=\operatorname*{Gal}(F_{v}^{\operatorname*{sep}}/F_{v})$ identifies
with some decomposition subgroup in $G_{S}$. We write $\operatorname*{I}%
\nolimits_{G_{v}}^{G_{S}}$ for coinduction of Galois modules. The spectral
sequence of the homotopy cofiber takes the shape
\begin{equation}
E_{2}^{i,j}:=H^{i}\left(  G_{S},\operatorname*{cofib}\left(  \mathbf{Z}%
/p^{k}(-\tfrac{j}{2})\overset{\delta}{\rightarrow}\left.  \underset{v\in S\;}{%
{\textstyle\prod\nolimits^{\prime}}
}\right.  \left(  \operatorname*{I}\nolimits_{G_{v}}^{G_{S}}\mathbf{Z}%
/p^{k}(-\tfrac{j}{2}):\operatorname*{I}\nolimits_{G_{v},\operatorname*{ur}%
}^{G_{S}}\mathbf{Z}/p^{k}(-\tfrac{j}{2})\right)  \right)  \right)
\label{ljbj1}%
\end{equation}
with limit term%
\[
\Rightarrow\pi_{-i-j}\left(  \operatorname*{cofib}\left(  L_{K(1)}%
(K/p^{k})(\mathcal{O}_{S})\overset{\delta}{\rightarrow}\left.  \underset{v\in
S\;}{%
{\textstyle\prod\nolimits^{\prime}}
}\right.  \left(  L_{K(1)}(K/p^{k})(F_{v}):L_{K(1)}(K/p^{k})(\mathcal{O}%
_{v})\right)  \right)  \right)  \text{.}%
\]
Thanks to our fiber sequence from \S \ref{sect_TheMainFiberSequence}, the
limit term simplifies as follows.%
\begin{align}
&  \cdots\Rightarrow\pi_{-i-j}\operatorname*{cofib}\left(  L_{K(1)}%
(K/p^{k})(\mathcal{O}_{S})\overset{\delta}{\longrightarrow}L_{K(1)}%
(K/p^{k})(\mathsf{LCA}_{\mathcal{O}_{S},ad})\right) \nonumber\\
&  \qquad\cong\pi_{-i-j}L_{K(1)}\operatorname*{cofib}\left(  (K/p^{k}%
)(\mathcal{O}_{S})\overset{\delta}{\longrightarrow}(K/p^{k})(\mathsf{LCA}%
_{\mathcal{O}_{S},ad})\right) \label{ljk1}\\
&  \qquad\cong\pi_{-i-j}L_{K(1)}(K/p^{k})K(\mathsf{LCA}_{\mathcal{O}_{S}%
})\qquad\text{(by Thm. \ref{thm_main_strong}).}\nonumber
\end{align}
(Step 2) We proceed by unravelling the $E_{2}$-page of the spectral sequence
in Eq. \ref{ljbj1}. We rewrite the Galois cohomology groups in terms of
$H^{i}(G_{S},-)\cong\operatorname*{Ext}\nolimits_{G_{S}}^{i}(\mathbf{Z},-)$.
For brevity, we drop spelling out the second slot in the restricted products
$\left.  \underset{v\in S\;}{%
{\textstyle\prod\nolimits^{\prime}}
}\right.  (-:-)$. Since $\operatorname*{RHom}$ commutes with cones,
\begin{align}
&  \operatorname*{RHom}\nolimits_{G_{S}}\left(  \operatorname*{cofib}\left(
\mathbf{Z}/p^{k}(-\tfrac{j}{2})\rightarrow\left.  \underset{v\in S\;}{%
{\textstyle\prod\nolimits^{\prime}}
}\right.  \operatorname*{I}\nolimits_{G_{v}}^{G_{S}}\mathbf{Z}/p^{k}%
(-\tfrac{j}{2})\right)  \right) \nonumber\\
&  \cong\operatorname*{cofib}\left(  \operatorname*{RHom}\nolimits_{G_{S}%
}\left(  \mathbf{Z},\mathbf{Z}/p^{k}(-\tfrac{j}{2})\right)  \rightarrow
\operatorname*{RHom}\nolimits_{G_{S}}\left(  \mathbf{Z},\left.  \underset{v\in
S\;}{%
{\textstyle\prod\nolimits^{\prime}}
}\right.  \operatorname*{I}\nolimits_{G_{v}}^{G_{S}}\mathbf{Z}/p^{k}%
(-\tfrac{j}{2})\right)  \right) \nonumber\\
&  \cong\operatorname*{cofib}\left(  \operatorname*{RHom}\nolimits_{G_{S}%
}\left(  \mathbf{Z},\mathbf{Z}/p^{k}(-\tfrac{j}{2})\right)  \rightarrow\left.
\underset{v\in S\;}{%
{\textstyle\prod\nolimits^{\prime}}
}\right.  \operatorname*{RHom}\nolimits_{G_{S}}\left(  \mathbf{Z}%
,\operatorname*{I}\nolimits_{G_{v}}^{G_{S}}\mathbf{Z}/p^{k}(-\tfrac{j}%
{2})\right)  \right)  \label{lwui6}%
\end{align}
since $\operatorname*{Hom}$-groups commute with arbitrary limits in their
second argument and moreover the present $\operatorname*{Hom}$-group commutes
also with filtering colimits in the second argument since $\mathbf{Z}$ is a
compact object in $G_{S}$-modules. Now we use input from Class Field
Theory:\ We shall employ the exact sequence of $G_{S}$-modules%
\begin{equation}
E_{S}\hookrightarrow J_{S}\twoheadrightarrow C_{S} \label{ljbj4}%
\end{equation}
of \cite[Ch. 1, \S 4, Proof of the main theorem]{MR2261462}. The definitions
can be found loc. cit. For later use, we also record the following
isomorphisms: In each later use $M$ will be some Tate twist $\mathbf{Z}%
/p^{k}(j)$ and then $M^{d}=\mathbf{Z}/p^{k}(1-j)$, and $(-)^{\ast
}=\operatorname*{Hom}_{\mathsf{Ab}}(-,\mathbf{Q}/\mathbf{Z})$ is the
non-topological version of the Pontryagin dual (we review its relation to the
genuine Pontryagin dual in Example \ref{ex_PontryaginWithQZCoeffs}).%
\begin{equation}%
\begin{tabular}
[c]{rclll}%
$\operatorname*{Ext}\nolimits_{G_{S}}^{r}\left(  M,C_{S}\right)  $ & $\cong$ &
$H^{2-r}(\mathcal{O}_{S},M)^{\ast}$ & $\qquad$ & for $r\geq1$\\
$\operatorname*{Ext}\nolimits_{G_{S}}^{r}\left(  M,E_{S}\right)  $ & $\cong$ &
$H^{r}(\mathcal{O}_{S},M^{d})$ & $\qquad$ & for $r\geq0$\\
$\operatorname*{Ext}\nolimits_{G_{S}}^{r}\left(  M,J_{S}\right)  $ & $\cong$ &
$P_{S}^{r}(F,M^{d})$ & $\qquad$ & for $r\geq1$%
\end{tabular}
\ \ \ \ \ \ \ \ \ \ \label{ljbj2}%
\end{equation}
The first isomorphism is \cite[Thm. 4.6 (a)]{MR2261462}. The conditions are
met since $M$ is finite, and the condition $p\in P$ (in the notation loc.
cit.) is satisfied since $\frac{1}{p}\in\mathcal{O}_{S}$ (by the
$p$-cyclotomic tower, see \cite[Ch. 1, beginning of \S 4]{MR2261462} for an
explanation). As $M$ is annihilated by $p$, so are both sides of the
isomorphism, so we need not restrict to the $p$-torsion part as in loc. cit.
The second isomorphism is \cite[Lemma 4.12]{MR2261462}. The third isomorphism
is \cite[Lemma 4.13]{MR2261462}, again using that we shall only invoke this
for finite $M$. The restriction to $r\geq1$ in two of these isomorphisms
causes some subtleties. Let us discuss this in detail. For $r=0$ the correct
replacement of the last isomorphism in Eq. \ref{ljbj2} is (also by \cite[Lemma
4.13]{MR2261462})%
\begin{align}
\operatorname*{Ext}\nolimits_{G_{S}}^{0}\left(  M,J_{S}\right)   &  \cong%
\prod_{v\in S}H^{0}(G_{v},M^{d})\label{ljbj2_c}\\
&  \cong P_{S}^{0}(F,M^{d})\oplus\bigoplus_{v\in S_{\infty}}H^{0}(F_{v}%
,M^{d})\text{.}\nonumber
\end{align}
Note that this is precisely the \textquotedblleft anomaly\textquotedblright%
\ which we have already encountered in Lemma
\ref{lemma_specseq_for_restricted_product_inside}, Rmk.
\ref{rmk_TheStoryAboutTateCohomology2} and Diagram \ref{lwui2}. As such, this
apparent \textquotedblleft anomaly\textquotedblright\ actually fits perfectly
into our framework.\newline(Step 3) Using these facts (notably the last two
equations from Eqs. \ref{ljbj2} and the $r=0$ correction in Eq. \ref{ljbj2_c}%
), we may identify Eq. \ref{lwui6} with%
\begin{align}
&  \cong\operatorname*{cofib}\left(  \operatorname*{RHom}\nolimits_{G_{S}%
}\left(  \mathbf{Z}/p(1+\tfrac{j}{2}),E_{S}\right)  \rightarrow
\operatorname*{RHom}\nolimits_{G_{S}}\left(  \mathbf{Z}/p(1+\tfrac{j}%
{2}),J_{S}\right)  \right) \nonumber\\
&  \cong\operatorname*{RHom}\nolimits_{G_{S}}\left(  \mathbf{Z}/p(1+\tfrac
{j}{2}),\operatorname*{cofib}\left(  E_{S}\longrightarrow J_{S}\right)
\right) \label{ljk2}\\
&  \cong\operatorname*{RHom}\nolimits_{G_{S}}\left(  \mathbf{Z}/p(1+\tfrac
{j}{2}),C_{S}\right)  \text{,}\nonumber
\end{align}
where the last isomorphism comes from Eq. \ref{ljbj4}. Summarizing this
computation, we have obtained a convergent spectral sequence%
\begin{align}
&  E_{2}^{i,j}:=\operatorname*{Ext}\nolimits_{G_{S}}^{i}\left(  \mathbf{Z}%
/p(1+\tfrac{j}{2}),C_{S}\right) \label{ljk3}\\
&  \qquad\qquad\Rightarrow\pi_{-i-j}L_{K(1)}(K/p)K(\mathsf{LCA}_{\mathcal{O}%
_{S}})\text{.}\nonumber
\end{align}
This proves Prop. \ref{prop_duality1}.
\end{proof}

\subsection{Arbitrary $S$\label{subsect_ArbitraryS}}

In this subsection $S$ can be any set of places of $F$ as long as $\frac{1}%
{p}\in\mathcal{O}_{S}$. The latter implies that $S$ contains at least one
finite place and thus Def. \ref{def_2} is available. Along with the Galois
modules defined in \S \ref{subsect_SInfPlaces}, we need the following: For
$L/F$ a fixed finite extension, write

$S_{\infty}^{\prime}$ for the infinite places of $L$,

$V_{L,S}:=\left.  \underset{v\in S_{\infty}^{\prime}\setminus(S^{\prime}\cap
S_{\infty}^{\prime})}{\prod}\right.  L_{v}^{\times}$ for the
unwanted\footnote{in the sense that we want to remove them a posteriori, just
as \textsc{LCA}$_{\mathcal{O}_{S}}$ was defined by quotienting the
corresponding archimedean factors away.} infinite place factors of the
$S^{\prime}$-ideles,

and $V_{S}:=\operatorname*{colim}_{L}V_{L,S}$. The choice of places
$S_{\infty}^{\prime}\setminus(S^{\prime}\cap S_{\infty}^{\prime})$ corresponds
to the one in Eq. \ref{lmips9}.

\begin{lemma}
\label{lemma_JSRemovePlaces}We have a commutative diagram%
\begin{equation}%
\xymatrix{
& V_{S} \ar[r]^{\simeq} \ar@{^{(}->}[d] & V_{S} \ar@{^{(}->}[d] \\
E_{S} \ar@{^{(}->}[r] \ar@{=}[d] & J_{S} \ar@{->>}[r] \ar@{->>}[d] & C_{S}
\ar@{->>}[d] \\
E_{S} \ar@{^{(}->}[r] & J_{S}^{\setminus\infty} \ar[r] & C_{S}^{\setminus
\infty}\text{,}
}
\label{l_Diag_RemoveArchimedeanPlacesForGaloisModules}%
\end{equation}
where $J_{S}^{\setminus\infty}$, $C_{S}^{\setminus\infty}$ are
\textit{defined} to be the respective cokernels.
\end{lemma}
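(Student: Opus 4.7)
The plan is to construct the diagram in three steps, reducing the assertion to the snake lemma applied to the idele-class exact sequence $E_{S} \hookrightarrow J_{S} \twoheadrightarrow C_{S}$ from class field theory (Eq. \ref{ljbj4}). Throughout, I reinterpret $J_{S}$, $C_{S}$, $E_{S}$ as the $(S \cup S_{\infty})$-versions of these Galois modules, which is the natural enlargement that allows $V_{S}$ (which is supported at places in $S_{\infty}^{\prime} \setminus (S^{\prime} \cap S_{\infty}^{\prime})$) to be viewed as a subgroup of $J_{S}$.

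At finite level $L/F$, the group $V_{L,S}$ embeds in $J_{L, S \cup S_{\infty}}$ as the direct factor indexed by the unwanted infinite places of $L$, and these embeddings are compatible with transition maps under $L \subseteq L^{\prime}$, so passing to the colimit yields $V_{S} \hookrightarrow J_{S}$. Composing with $J_{S} \twoheadrightarrow C_{S}$ defines the right-hand vertical map $V_{S} \to C_{S}$, and the upper square commutes by this very definition. The key nontrivial statement is that $V_{S} \to C_{S}$ is injective, i.e.\ $V_{S} \cap E_{S} = 0$ inside $J_{S}$. At level $L$, an element of this intersection is a global $(S \cup S_{\infty})^{\prime}$-unit $u$ whose diagonal image is supported on $S_{\infty}^{\prime} \setminus (S^{\prime} \cap S_{\infty}^{\prime})$, meaning $u = 1$ at every place of the complementary set $S^{\prime}$. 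Since $\tfrac{1}{p} \in \mathcal{O}_{S}$ forces $S$ to contain at least one finite place, $S^{\prime}$ contains a finite place $v$; the injectivity of the diagonal embedding $L \hookrightarrow L_{v}$ then forces $u = 1$ globally. Taking the colimit over $L$ gives $V_{S} \cap E_{S} = 0$.

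It remains to produce the bottom row. I would apply the snake lemma to the map of short exact sequences formed by the top two rows of the target diagram, with the leftmost vertical arrow being the zero map $0 \to E_{S}$. All three kernels of the vertical maps vanish by the previous step, so the resulting snake sequence collapses to a short exact sequence
\[
0 \longrightarrow E_{S} \longrightarrow J_{S}^{\setminus\infty} \longrightarrow C_{S}^{\setminus\infty} \longrightarrow 0,
\]
with $J_{S}^{\setminus\infty}$ and $C_{S}^{\setminus\infty}$ identified as the cokernels of the respective vertical quotients, precisely as required. The only real obstacle is keeping the indexing of idele factors straight so that $V_{S}$ is honestly a subgroup of $J_{S}$; once that bookkeeping is aligned, the lemma is a formal consequence of the classical idele-class sequence combined with the local injectivity $L \hookrightarrow L_{v}$ at a finite place.
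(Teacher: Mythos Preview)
Your proposal is correct and supplies the details that the paper omits: the paper's own proof is the single word ``Straightforward.'' Your identification of $J_S$, $E_S$, $C_S$ with their $(S\cup S_\infty)$-versions is the right reading of the notation in \S\ref{subsect_ArbitraryS}, since otherwise $V_S$ would have no natural map into $J_S$; and your verification that $V_S \cap E_S = 0$ via injectivity of $L \hookrightarrow L_v$ at a finite place $v \in S'$ is exactly the content behind the hooked arrow $V_S \hookrightarrow C_S$ in the diagram. The snake lemma then delivers the exact bottom row, which is also how the paper later uses the lemma (see the exact sequence $E_S \hookrightarrow J_S^{\setminus\infty} \twoheadrightarrow C_S^{\setminus\infty}$ in Step~2 of the proof of Prop.~\ref{prop_duality2}). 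In short, you have written out what the author deemed too routine to record.
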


\begin{proof}
Straightforward.
\end{proof}

Diag. \ref{l_Diag_RemoveArchimedeanPlacesForGaloisModules} is the Galois
module counterpart of Diagram
\ref{l_Diag_RemoveArchimedeanPlacesCategorically}.

\begin{proposition}
[Dualized spectral sequence]\label{prop_duality2}Let $K$ denote non-connective
$K$-theory. Let $p$ be an odd prime, $F$ a number field, $S$ any (possibly
infinite) set of places such that $\frac{1}{p}\in\mathcal{O}_{S}$. Suppose
$k\geq1$. Then there is a convergent spectral sequence%
\begin{align*}
&  E_{2}^{i,j}:=\operatorname*{Ext}\nolimits_{G_{S}}^{i}\left(  \mathbf{Z}%
/p^{k}(1+\tfrac{j}{2}),C_{S}^{\setminus\infty}\right) \\
&  \qquad\qquad\Rightarrow\pi_{-i-j}L_{K(1)}(K/p^{k})(\left.  \mathsf{LC}%
\mathcal{O}_{S}\right.  )\text{.}%
\end{align*}
with differential $d_{r}^{i,j}\colon E_{r}^{i,j}\rightarrow E_{r}^{i+r,i-r+1}$
of cohomological type. Alternatively, one may phrase its $E_{2}$-page terms
as
\begin{equation}
E_{2}^{i,j}\cong H^{2-i}(\mathcal{O}_{S},\mathbf{Z}/p^{k}(1+\tfrac{j}%
{2}))^{\ast} \label{lmips14}%
\end{equation}
for all $i,j\in\mathbf{Z}$. We shall call this the\emph{ dualized form}.
\end{proposition}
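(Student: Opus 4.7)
The plan is to follow the template established for Prop.~\ref{prop_duality1}, but with two coordinated modifications: the category $\mathsf{LCA}_{\mathcal{O}_{S}}$ is replaced by $\mathsf{LC}\mathcal{O}_{S}$, and the class-field-theoretic short exact sequence $E_{S}\hookrightarrow J_{S}\twoheadrightarrow C_{S}$ is replaced by its counterpart $E_{S}\hookrightarrow J_{S}^{\setminus\infty}\twoheadrightarrow C_{S}^{\setminus\infty}$ of Lemma~\ref{lemma_JSRemovePlaces}. The matching of these two modifications will be the crux.

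First, I would apply $L_{K(1)}(-/p^{k})$ to the fiber sequence $K(\mathcal{O}_{S})\to K(\mathsf{LC}\mathcal{O}_{S,ad})\to K(\mathsf{LC}\mathcal{O}_{S})$ of Prop.~\ref{prop_IdentifyAdelicBlocks_2}(2). By Prop.~\ref{prop_IdentifyAdelicBlocks_2}(4), the middle term is a restricted-product colimit of $\prod_{v\in S\setminus S'}L_{K(1)}(K/p^{k})(\mathcal{O}_{v})\times\prod_{v\in S'}L_{K(1)}(K/p^{k})(F_{v})$, now ranging over \emph{only} $v\in S$, in contrast to $v\in S\cup S_{\infty}$ in the proof of Prop.~\ref{prop_duality1}. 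Running the individual Thomason descent spectral sequences in parallel produces a restricted-product descent spectral sequence whose $E_{2}$-page no longer includes any $H^{0}$-contributions from $v\in S_{\infty}\setminus(S\cap S_{\infty})$. Assembling this with the Thomason descent spectral sequence for $\mathcal{O}_{S}$ itself yields a descent spectral sequence abutting to $\pi_{-i-j}L_{K(1)}(K/p^{k})(\mathsf{LC}\mathcal{O}_{S})$ with $E_{2}$-page
\[
H^{i}\bigl(G_{S},\ \operatorname*{cofib}(\mathbf{Z}/p^{k}(-\tfrac{j}{2})\to \textstyle\prod\nolimits^{\prime}_{v\in S}\operatorname*{I}\nolimits_{G_{v}}^{G_{S}}\mathbf{Z}/p^{k}(-\tfrac{j}{2}))\bigr).
\]

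Next I would mirror Steps~2--3 of the proof of Prop.~\ref{prop_duality1}: rewrite this as $\operatorname*{RHom}_{G_{S}}(\mathbf{Z}/p^{k}(1+\tfrac{j}{2}),-)$ applied to the cofiber of $E_{S}\to J_{S}^{\setminus\infty}$, using the identifications of Eq.~\ref{ljbj2} together with the analog of Eq.~\ref{ljbj2_c} for $J_{S}^{\setminus\infty}$, which should read $\operatorname*{Ext}^{0}_{G_{S}}(M,J_{S}^{\setminus\infty})\cong P_{S}^{0}(F,M^{d})\oplus\bigoplus_{v\in S\cap S_{\infty}}H^{0}(F_{v},M^{d})$. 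This anomaly sum now only involves the infinite places \emph{in} $S$, matching precisely the reduced restricted product --- this is the critical alignment between the categorical and Galois-theoretic modifications. The bottom row of Diagram~\ref{l_Diag_RemoveArchimedeanPlacesForGaloisModules} then identifies the resulting cofiber with $\operatorname*{RHom}_{G_{S}}(\mathbf{Z}/p^{k}(1+\tfrac{j}{2}),C_{S}^{\setminus\infty})$, yielding the $E_{2}$-page claimed in the proposition.

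For the dualized form, Eq.~\ref{lmips14}, I would apply Milne's duality $\operatorname*{Ext}^{r}_{G_{S}}(M,C_{S})\cong H^{2-r}(\mathcal{O}_{S},M)^{\ast}$ from \cite[Thm.~4.6~(a)]{MR2261462} for $r\geq 1$. The discrepancy between $C_{S}$ and $C_{S}^{\setminus\infty}$ is controlled by the extension by $V_{S}$; since $p$ is odd while the archimedean local Galois groups have order at most $2$, the positive-degree $G_{S}$-cohomology of $V_{S}$ with $p$-primary coefficients vanishes. Hence the duality extends uniformly to all $r\geq 0$, identifying $\operatorname*{Ext}^{i}_{G_{S}}(M,C_{S}^{\setminus\infty})$ with $H^{2-i}(\mathcal{O}_{S},M)^{\ast}$ without the $r=0$ anomaly present for $C_{S}$. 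The main obstacle will be the careful bookkeeping that aligns the anomaly contributions in Eq.~\ref{ljbj2_c} with the precise infinite places removed by passing from $\mathsf{LCA}_{\mathcal{O}_{S}}$ to $\mathsf{LC}\mathcal{O}_{S}$ on the one side, and from $C_{S}$ to $C_{S}^{\setminus\infty}$ on the other --- it is exactly this conspiracy that produces the clean uniform statement.
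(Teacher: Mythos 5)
Your proposal follows the paper's proof essentially verbatim: replace $\mathsf{LCA}_{\mathcal{O}_S}$ by $\mathsf{LC}\mathcal{O}_S$, replace $J_S\twoheadrightarrow C_S$ by $J_S^{\setminus\infty}\twoheadrightarrow C_S^{\setminus\infty}$, re-run the restricted-product descent spectral sequence of \S\ref{sect_LocalDuality}, and feed in Milne's class-formation identifications. Two points, however, deserve flagging.

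Your formula $\operatorname*{Ext}^{0}_{G_{S}}(M,J_{S}^{\setminus\infty})\cong P_{S}^{0}(F,M^{d})\oplus\bigoplus_{v\in S\cap S_{\infty}}H^{0}(F_{v},M^{d})$ is actually the correct long-exact-sequence output of quotienting $J_S$ by $V_S$; $V_S$ removes only the infinite places \emph{not} in $S$. The paper's Eq.~\ref{lmips10} states $\operatorname*{Ext}^{0}_{G_{S}}(M,J_{S}^{\setminus\infty})\cong P_{S}^{0}(F,M^{d})$, which tacitly assumes $S\cap S_{\infty}=\varnothing$. Your more careful bookkeeping is commendable, but it silently conflicts with the very conclusion you are trying to reach: if some infinite place $v\in S\cap S_{\infty}$ survives, then the extra $H^{0}(F_{v},M^{d})$ propagates through the long exact sequence for $E_S\hookrightarrow J_S^{\setminus\infty}\twoheadrightarrow C_S^{\setminus\infty}$ into $\operatorname*{Ext}^{0}_{G_{S}}(M,C_{S}^{\setminus\infty})$, and the claimed uniform dualized form $E_{2}^{0,j}\cong H^{2}(\mathcal{O}_S,\cdot)^{\ast}$ fails at $i=0$ --- this is precisely the anomaly that blocked stating a dualized form in Prop.~\ref{prop_duality1}. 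You should either record that the uniform dualized form requires $S\cap S_{\infty}=\varnothing$ (which is what Theorem~A uses), or notice and point out that the proposition is being applied only in that regime.

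Second, the sentence ``Hence the duality extends uniformly to all $r\geq 0$'' is where the actual work is being skipped. The vanishing of $\operatorname*{Ext}^{r}_{G_S}(M,V_S)$ for $r\geq 1$ gives you $\operatorname*{Ext}^{r}_{G_S}(M,C_S^{\setminus\infty})\cong\operatorname*{Ext}^{r}_{G_S}(M,C_S)$ for $r\geq 1$ and a short exact sequence $0\to\operatorname*{Hom}(M,V_S)\to\operatorname*{Ext}^{0}_{G_S}(M,C_S)\to\operatorname*{Ext}^{0}_{G_S}(M,C_S^{\setminus\infty})\to 0$; but Milne's Thm.~4.6(a) only handles $r\geq 1$, and nothing about $V_S$-vanishing alone identifies the $r=0$ quotient with $H^{2}(\mathcal{O}_S,M)^{\ast}$. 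The paper's Step~2 does this by invoking Milne's Thm.~4.6(b) (the cokernel description via $D_S(L)$), or equivalently one can read the resulting long exact sequence for $E_S\to J_S^{\setminus\infty}\to C_S^{\setminus\infty}$ against the Poitou--Tate nine-term sequence to extract the $r=0$ isomorphism. You should make one of these ingredients explicit; the ``hence'' as written is a non sequitur.
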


We may spell out the entries of the $E_{2}$-page:%
\begin{equation}%
\begin{tabular}
[c]{rccccc}%
$\vdots\,\,$ & \multicolumn{1}{|c}{} & $\vdots$ & $\vdots$ & $\vdots$ & \\
$\mathsf{2}$ & \multicolumn{1}{|c}{$\quad0\quad$} & $H^{2}(\mathcal{O}%
_{S},\mathbf{Z}/p^{k}(2))^{\ast}$ & $H^{1}(\mathcal{O}_{S},\mathbf{Z}%
/p^{k}(2))^{\ast}$ & $H^{0}(\mathcal{O}_{S},\mathbf{Z}/p^{k}(2))^{\ast}$ &
$\quad0\quad$\\
$\mathsf{1}$ & \multicolumn{1}{|c}{$\quad0\quad$} & $0$ & $0$ & $0$ &
$\quad0\quad$\\
$\mathsf{0}$ & \multicolumn{1}{|c}{$\quad0\quad$} & $H^{2}(\mathcal{O}%
_{S},\mathbf{Z}/p^{k}(1))^{\ast}%
{\cellcolor{lg}}%
$ & $H^{1}(\mathcal{O}_{S},\mathbf{Z}/p^{k}(1))^{\ast}$ & $H^{0}%
(\mathcal{O}_{S},\mathbf{Z}/p^{k}(1))^{\ast}$ & $\quad0\quad$\\
$\mathsf{-1}$ & \multicolumn{1}{|c}{$\quad0\quad$} & $0$ & $0%
{\cellcolor{lg}}%
$ & $0$ & $\quad0\quad$\\
$\mathsf{-2}$ & \multicolumn{1}{|c}{$\quad0\quad$} & $H^{2}(\mathcal{O}%
_{S},\mathbf{Z}/p^{k}(0))^{\ast}$ & $H^{1}(\mathcal{O}_{S},\mathbf{Z}%
/p^{k}(0))^{\ast}$ & $H^{0}(\mathcal{O}_{S},\mathbf{Z}/p^{k}(0))^{\ast}%
{\cellcolor{lg}}%
$ & $\quad0\quad$\\
$\vdots\,\,$ & \multicolumn{1}{|c}{} & $\vdots$ & $\vdots$ & $\vdots$ &
\\\cline{2-6}
& $\mathsf{-1}$ & $\mathsf{0}$ & $\mathsf{1}$ & $\mathsf{2}$ & $\mathsf{\cdots
}$%
\end{tabular}
\ \label{lmips14_b}%
\end{equation}

\begin{proof}
This is entirely analogous to the proof of Prop. \ref{prop_duality1}, since
$\left.  \mathsf{LC}\mathcal{O}_{S}\right.  $ differs from $\mathsf{LCA}%
_{\mathcal{O}_{S}}$ just by quotienting out some factors corresponding to the
infinite places, and the same happens with $C_{S}^{\setminus\infty}$. Hence,
we will only indicate how to modify the proof of Prop. \ref{prop_duality1},
and our changes only really concern Step 2 of that proof.\newline(Step 1) We
proceed as in Step 1 of the previous proof, i.e., we also consider%
\[
\delta\colon K(\mathcal{O}_{S})\longrightarrow\left.  \underset{v\in
S\;}{\prod\nolimits^{\prime}}\right.  \left(  K(F_{v}):K(\mathcal{O}%
_{v})\right)  \text{.}%
\]
Eq. \ref{lwui5} is still true, now using Prop.
\ref{prop_IdentifyAdelicBlocks_2} (1), (4) instead. The limit term of the
resulting spectral sequence can be computed as in Eq. \ref{ljk1}, now relying
on the modified fiber sequence of Prop. \ref{prop_IdentifyAdelicBlocks_2} (3)
instead. Eq. \ref{ljk1}, describing the limit term, becomes%
\begin{align*}
E_{2}^{i,j}  &  :=H^{i}\left(  G_{S},\ldots\right) \\
&  \qquad\Rightarrow\pi_{-i-j}L_{K(1)}(K/p^{k})K(\left.  \mathsf{LC}%
\mathcal{O}_{S}\right.  )
\end{align*}
for the modified category $\left.  \mathsf{LC}\mathcal{O}_{S}\right.  $ of
Def. \ref{def_2}. Everything then goes through verbatim until, inclusively,
Eq. \ref{lwui6}. This isomorphism is valid as stated, however the
identification with $E_{S}\hookrightarrow J_{S}\twoheadrightarrow C_{S}$ of
Eq. \ref{ljbj4} does not quite work because some infinite places might be
missing which \textit{are} present in $J_{S}$ and $C_{S}$.\newline(Step 2,
where things differ a bit) We amend the mismatch at the archimedean places by
Lemma \ref{lemma_JSRemovePlaces}. From the columns of Diag.
\ref{l_Diag_RemoveArchimedeanPlacesForGaloisModules} we get long exact
sequences%
\[
0\rightarrow\operatorname*{Ext}\nolimits_{G_{S}}^{0}(M,V_{S})\rightarrow
\operatorname*{Ext}\nolimits_{G_{S}}^{0}(M,J_{S})\rightarrow
\operatorname*{Ext}\nolimits_{G_{S}}^{0}(M,J_{S}^{\setminus\infty}%
)\rightarrow\operatorname*{Ext}\nolimits_{G_{S}}^{1}(M,V_{S})\rightarrow
\cdots\text{.}%
\]
Since both $\mathbf{C}^{\times}$ and $\mathbf{R}^{\times}$ are $p$-divisible
(since $p$ is odd) and our $M$ is always a finite $p$-primary Galois module,
\cite[Example 0.8]{MR2261462} yields%
\begin{align*}
\operatorname*{Ext}\nolimits_{G_{S}}^{r}(M,V_{S})  &  \cong H^{r}%
(G_{S},\operatorname*{Hom}(M,V_{S}))\qquad\\
&  \cong\bigoplus_{v\in S_{\infty}\setminus(S\cap S_{\infty})}H^{r}%
(G_{S},\operatorname*{I}\nolimits_{G_{v}}^{G_{S}}M^{d})\qquad\text{(for all
}r\geq0\text{).}%
\end{align*}
Since $G_{v}$ for $v\in S_{\infty}$ has \'{e}tale $p$-cohomological dimension
zero, this is zero for $r\geq1$. An analogous computation applies to the
$C_{S}$-counterpart. Thus,%
\[
0\rightarrow\bigoplus_{v\in S_{\infty}\setminus(S\cap S_{\infty})}H^{0}%
(G_{S},\operatorname*{I}\nolimits_{G_{v}}^{G_{S}}M^{d})\rightarrow
\operatorname*{Ext}\nolimits_{G_{S}}^{0}\left(  M,J_{S}\right)  \rightarrow
\operatorname*{Ext}\nolimits_{G_{S}}^{0}(M,J_{S}^{\setminus\infty}%
)\rightarrow0
\]
is exact. Thus, Eq. \ref{ljbj2_c} implies%
\begin{equation}
\operatorname*{Ext}\nolimits_{G_{S}}^{0}(M,J_{S}^{\setminus\infty})\cong
P_{S}^{0}(F,M^{d})\text{.}\nonumber
\end{equation}
For $C_{S}^{\setminus\infty}$ the story is analogous, but a little more
involved: In the computation in \cite[Ch. 1, \S 4, p. 59]{MR2261462} (or
spelled out in a lot more detail in \cite[Lemma 8.6.12]{MR2392026}) the kernel
of the left downward arrow loc. cit. is $\bigoplus_{v\in S_{\infty}%
\setminus(S\cap S_{\infty})}H^{0}(G_{S},\operatorname*{I}\nolimits_{G_{v}%
}^{G_{S}}M^{d})$, so that we get%
\[
0\rightarrow\bigoplus_{v\in S_{\infty}\setminus(S\cap S_{\infty})}H^{0}%
(G_{S},\operatorname*{I}\nolimits_{G_{v}}^{G_{S}}M^{d})\rightarrow
\operatorname*{Ext}\nolimits_{G_{S}}^{0}\left(  M,C_{S}\right)  \rightarrow
\operatorname*{Ext}\nolimits_{G_{S}}^{0}\left(  M,C_{S}^{\setminus\infty
}\right)  \rightarrow0\text{,}%
\]
but by \cite[Thm. 4.6 (b)]{MR2261462}%
\[
\operatorname*{coker}\left(  \operatorname*{Ext}\nolimits_{G_{S}}^{0}\left(
M,D_{S}(L)\right)  \longrightarrow\operatorname*{Ext}\nolimits_{G_{S}}%
^{0}\left(  M,C_{S}\right)  \text{ }\right)  \text{ }\cong H^{2}%
(\mathcal{O}_{S},M)^{\ast}%
\]
(for a suitable extension $L/F$), one can connect both computations to obtain
$\operatorname*{Ext}\nolimits_{G_{S}}^{0}(M,C_{S}^{\setminus\infty})\cong
H^{2}(\mathcal{O}_{S},M)^{\ast}$. Combining this with the vanishing of the
higher $\operatorname*{Ext}$-groups for $V_{S}$ and Eq. \ref{ljbj2}, we get%
\begin{equation}%
\begin{tabular}
[c]{rclll}%
$\operatorname*{Ext}\nolimits_{G_{S}}^{r}(M,C_{S}^{\setminus\infty})$ &
$\cong$ & $H^{2-r}(\mathcal{O}_{S},M)^{\ast}$ & $\qquad$ & for $r\in
\mathbf{Z}$\\
$\operatorname*{Ext}\nolimits_{G_{S}}^{r}(M,E_{S})$ & $\cong$ & $H^{r}%
(\mathcal{O}_{S},M^{d})$ & $\qquad$ & for $r\in\mathbf{Z}$\\
$\operatorname*{Ext}\nolimits_{G_{S}}^{r}(M,J_{S}^{\setminus\infty})$ &
$\cong$ & $P_{S}^{r}(F,M^{d})$ & $\qquad$ & for $r\in\mathbf{Z}$%
\end{tabular}
\ \ \ \ \ \ \ \label{lmips10}%
\end{equation}
along with an exact sequence of $G_{S}$-modules%
\[
E_{S}\hookrightarrow J_{S}^{\setminus\infty}\twoheadrightarrow C_{S}%
^{\setminus\infty}\text{.}%
\]
(Step 3) Now we can proceed as in Step 3 of the proof of Prop.
\ref{prop_duality1}. Eq. \ref{lwui6} unravels as
\begin{align*}
&  \cong\operatorname*{cofib}\left(  \operatorname*{RHom}\nolimits_{G_{S}%
}\left(  \mathbf{Z}/p(1+\tfrac{j}{2}),E_{S}\right)  \rightarrow
\operatorname*{RHom}\nolimits_{G_{S}}\left(  \mathbf{Z}/p(1+\tfrac{j}%
{2}),J_{S}^{\setminus\infty}\right)  \right) \\
&  \cong\operatorname*{RHom}\nolimits_{G_{S}}\left(  \mathbf{Z}/p(1+\tfrac
{j}{2}),\operatorname*{cofib}\left(  E_{S}\longrightarrow J_{S}^{\setminus
\infty}\right)  \right) \\
&  \cong\operatorname*{RHom}\nolimits_{G_{S}}\left(  \mathbf{Z}/p(1+\tfrac
{j}{2}),C_{S}^{\setminus\infty}\right)  \text{,}%
\end{align*}
which is the counterpart of Eq. \ref{ljk2}. As in Eq. \ref{ljk3}, we arrive at
a spectral sequence whose $E_{2}$- and limit page are%
\begin{align*}
&  E_{2}^{i,j}:=\operatorname*{Ext}\nolimits_{G_{S}}^{i}\left(  \mathbf{Z}%
/p^{k}(1+\tfrac{j}{2}),C_{S}^{\setminus\infty}\right) \\
&  \qquad\qquad\Rightarrow\pi_{-i-j}L_{K(1)}(K/p^{k})K(\left.  \mathsf{LC}%
\mathcal{O}_{S}\right.  )\text{.}%
\end{align*}
Our claim follows from using the first isomorphism of Eq. \ref{lmips10} to
rewrite the $E_{2}$-page terms under the duality pairing of the underlying
class formation.
\end{proof}

\subsection{Proof of the Duality Theorem}

As in \S \ref{sect_Appendix_DualityInKOneLocalHptyCat} we write $\mathsf{K}$
for the $K(1)$-local homotopy category at the prime $p$. Its intrinsic tensor
structure arises from $K(1)$-localizing the usual smash product.

\begin{theorem}
[Duality, Main Formulation]\label{thm_DualityMain}Let $K$ denote
non-connective $K$-theory. Let $p$ be an odd prime, $F$ a number field, $S$ a
(possibly infinite) set of finite places of $F$ such that $\frac{1}{p}%
\in\mathcal{O}_{S}$. Then the pairing%
\begin{equation}
L_{K(1)}K(\mathsf{LCA}_{\mathcal{O}_{S}})\otimes_{\mathsf{K}}L_{K(1)}%
K(\mathcal{O}_{S})\longrightarrow L_{K(1)}K(\mathsf{LCA}_{\mathcal{O}_{S}%
})\overset{\operatorname*{Tr}_{F,S}}{\longrightarrow}I_{\mathbf{Z}_{p}%
}\mathbb{S}_{\widehat{p}} \label{lwo1}%
\end{equation}
is left perfect\footnote{We recall our usage of the terms \emph{left/right
perfect} in \S \ref{subsect_PerfectPairings}.}. Its adjoint%
\begin{equation}
L_{K(1)}K(\left.  \mathsf{LC}\mathcal{O}_{S}\right.  )\longrightarrow
I_{\mathbf{Z}_{p}}L_{K(1)}K(\mathcal{O}_{S}) \label{lwo2}%
\end{equation}
is an equivalence. If $S$ contains only finitely many places, the pairing is
left \emph{and} right perfect, and in particular the opposite adjoint%
\begin{equation}
L_{K(1)}K(\mathcal{O}_{S})\longrightarrow I_{\mathbf{Z}_{p}}L_{K(1)}%
K(\mathsf{LCA}_{\mathcal{O}_{S}}) \label{lwo3}%
\end{equation}
is also an equivalence.
\end{theorem}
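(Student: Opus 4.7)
The plan is to prove the adjoint map in Eq. \ref{lwo2} is an equivalence by comparing two spectral sequences level by level modulo $p^k$, and then passing to the $p$-complete limit. First, I would reduce to showing that for each $k\ge 1$ the finite level adjoint
\[
L_{K(1)}(K/p^{k})(\left.\mathsf{LC}\mathcal{O}_{S}\right.)\longrightarrow I_{\mathbf{Q}_{p}/\mathbf{Z}_{p}}L_{K(1)}(K/p^{k})(\mathcal{O}_{S})
\]
induced by the finite level trace of Eq. \ref{l_FiniteLevelVersionOfTraceMap} is an equivalence. Both sides are $p$-complete, so this is enough. On the left, Prop.~\ref{prop_duality2} provides the dualized spectral sequence with $E_{2}$-page $H^{2-i}(\mathcal{O}_{S},\mathbf{Z}/p^{k}(1+\tfrac{j}{2}))^{\ast}$. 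On the right, one applies the Pontryagin dual $(-)^\ast=\operatorname{Hom}(-,\mathbf{Q}_p/\mathbf{Z}_p)$ to the Thomason descent spectral sequence of Eq. \ref{lDescentSpectralSequence} for $\mathcal{O}_S$, which (after regrading $i\mapsto 2-i$, $j\mapsto -1-j$, and shifting to account for the $\Sigma^{-1}I_{\mathbf{Q}_p/\mathbf{Z}_p}\cong I_{\mathbf{Z}_p}$ identification) yields precisely the same $E_2$-groups. So already at the level of abstract $E_2$-pages, the two sides match.

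Second, I would show that the trace map actually induces this identification on $E_2$. The essential point is to identify the map of $E_2$-pages with the Poitou--Tate local duality pairings place-by-place. Here is where I would use Prop.~\ref{prop_VIsASectionOfE}: the trace is uniquely determined by its restriction to $\left.\mathsf{LC}\mathcal{O}_{\mathbf{Q},\{p\}}\right.$ and the compatibility with forgetful functors, hence by Prop.~\ref{prop_TraceBigCompat} the induced pairing on $E_2$ is, at each place $v$, the inverse of the Hasse invariant $\operatorname{inv}_v\colon H^2(F_v,\mathbf{Z}/p^k(1))\overset{\sim}{\to}\tfrac{1}{p^k}\mathbf{Z}/\mathbf{Z}$ composed with the local cup product. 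By class field theory this is exactly local Tate duality, and the sum over $v\in S$ followed by the global reciprocity relation is precisely the Poitou--Tate duality pairing between $H^i(\mathcal{O}_S,\mathbf{Z}/p^k(j))$ and $H^{2-i}(\mathcal{O}_S,\mathbf{Z}/p^k(1-j))$ used implicitly in Prop.~\ref{prop_duality2}. Therefore the induced map of $E_2$-pages is an isomorphism, hence so is the map of abutments.

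For the pairing in Eq. \ref{lwo1}, note that the trace map is a map of $K(\mathcal{O}_S)$-modules by the $K(\mathcal{O}_S)$-linearity from \S\ref{subsect_RingStructures} (Prop.~\ref{prop_RingStructuresOnSpectra}), so left perfectness of the pairing is equivalent to the adjoint in Eq. \ref{lwo2} being an equivalence once one passes from $\mathsf{LCA}_{\mathcal{O}_S}$ to $\left.\mathsf{LC}\mathcal{O}_S\right.$; the discrepancy is the contribution of finitely many real/complex vector modules, which becomes trivial after $K(1)$-localization away from $p=2$ (using that $L_{K(1)}K(\mathbf{R})$ and $L_{K(1)}K(\mathbf{C})$ are controlled by $p$-cohomological dimension $0$ for odd $p$). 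For finite $S$, the right perfectness (equivalently the opposite adjoint being an equivalence in Eq. \ref{lwo3}) then follows from reflexivity of Anderson duality, which holds because $\pi_\ast L_{K(1)}K(\mathcal{O}_S)$ and $\pi_\ast L_{K(1)}K(\mathsf{LCA}_{\mathcal{O}_S})$ are finitely generated $\mathbf{Z}_p$-modules when $S$ is finite (by the descent spectral sequences and finiteness of the relevant Galois cohomology groups).

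The main obstacle I expect is the second step: verifying that the trace-induced map of spectral sequences really is Poitou--Tate duality on $E_2$, and not merely abstractly an isomorphism of the same-looking abelian groups. This requires carefully tracking the pairing through the identifications in Eq. \ref{ljbj2}/\ref{lmips10} and the construction of the trace from $\left.\mathsf{LC}\mathcal{O}_{\mathbf{Q},\{p\}}\right.$ in Thm.~\ref{thm_BMTraceForQ}, and confirming that the forgetful functor $\left.\mathsf{LC}\mathcal{O}_{F,S}\right.\to\left.\mathsf{LC}\mathcal{O}_{\mathbf{Q},\{p\}}\right.$ implements exactly the corestriction/sum-of-local-invariants on Galois cohomology. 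Much of this is assembled in Prop.~\ref{prop_VIsASectionOfE} and Prop.~\ref{prop_TraceBigCompat}, which is why those compatibility results were set up in advance; combining them with the explicit form of Poitou--Tate duality completes the identification.
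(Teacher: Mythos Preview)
Your overall strategy matches the paper's: reduce modulo $p^{k}$, compare the dualized spectral sequence of Prop.~\ref{prop_duality2} for $\mathsf{LC}\mathcal{O}_{S}$ against the Pontryagin-dualized Thomason spectral sequence for $\mathcal{O}_{S}$, identify the induced map on $E_{2}$-pages with the Poitou--Tate duality pairing, conclude the finite-level adjoint is an equivalence, then pass to the $p$-complete limit; reflexivity for finite $S$ gives the opposite direction. The paper executes the key identification somewhat differently from what you outline: rather than summing local Tate dualities over places, it first shows (Step~1) that the module pairing Eq.~\ref{lmx4rep} induces the cup product on the spectral sequences, rewrites this as a Yoneda product on $\operatorname{Ext}_{G_{S}}$, and then (Step~3) applies the global class-formation isomorphism $\Upsilon\colon\operatorname{Ext}^{i}_{G_{S}}(M,C_{S}^{\setminus\infty})\overset{\sim}{\to}H^{2-i}(\mathcal{O}_{S},M)^{\ast}$ directly, so that the pairing becomes the evaluation pairing on the nose. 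This avoids having to assemble the global statement from local pieces.

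There is, however, a genuine error in your handling of the passage from $\mathsf{LCA}_{\mathcal{O}_{S}}$ to $\mathsf{LC}\mathcal{O}_{S}$. You write that the discrepancy, namely the contribution of real and complex vector modules, ``becomes trivial after $K(1)$-localization away from $p=2$''. This is false: $L_{K(1)}K(\mathbf{C})$ is $p$-completed $\operatorname{KU}$, and $L_{K(1)}K(\mathbf{R})$ is likewise nonzero (indeed $H^{0}(F_{v},\mathbf{Z}/p^{k}(j))\cong\mathbf{Z}/p^{k}$ for any archimedean $v$ when the twist is trivial, as the paper repeatedly stresses in Rmk.~\ref{rmk_TheStoryAboutTateCohomology2} and Pitfall~\ref{pitfall_l1}). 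Having $p$-cohomological dimension zero means the descent spectral sequence is concentrated in a single column, not that the spectrum vanishes. What is true, and what the paper uses, is that the \emph{trace map} factors through the quotient $\mathsf{LC}\mathcal{O}_{S}$ (since vector modules over $\mathcal{O}_{S}$ map to vector modules over $\mathbf{Z}[\tfrac{1}{p}]$, which are killed in $\mathsf{LC}\mathcal{O}_{\mathbf{Q},\{p\}}$), so the pairing is already defined on the quotient and the entire argument takes place there; the paper's Step~2 makes this descent explicit. You should not try to argue that $K(\mathsf{LCA}_{\mathcal{O}_{S}})$ and $K(\mathsf{LC}\mathcal{O}_{S})$ become $K(1)$-locally equivalent, because they do not.
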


At present, chromatic homotopy theory for condensed spectra is not available
in the literature. However, once this is established, the above result will
give a left and right perfect pairing of condensed spectra even if $S$
infinite. The failure of reflexivity is a problem of the category
$\mathsf{Ab}$, imported to $\mathsf{Sp}$, and not really an artifact of arithmetic.

\begin{proof}
(Step 1) The bi-exact pairing%
\begin{equation}
\mathsf{LCA}_{\mathcal{O}_{S}}\times\mathsf{Proj}_{\mathcal{O}_{S}%
,fg}\longrightarrow\mathsf{LCA}_{\mathcal{O}_{S}} \label{lmx4rep}%
\end{equation}
of Eq. \ref{lmx4} (\S \ref{subsect_RingStructures}) induces a pairing on
$K$-theory, and using multiplication on the Moore spectrum, also on
$K/p^{k}\cong K\otimes_{\mathsf{Sp}}\mathbb{S}/p^{k}$. The resulting pairing%
\[
(K/p^{k})(\mathsf{LCA}_{\mathcal{O}_{S}})\otimes_{\mathsf{Sp}}(K/p^{k}%
)(\mathcal{O}_{S})\longrightarrow(K/p^{k})(\mathsf{LCA}_{\mathcal{O}_{S}})
\]
then gives rise to a $K(1)$-local pairing%
\[
L_{K(1)}(K/p^{k})(\mathsf{LCA}_{\mathcal{O}_{S}})\otimes_{\mathsf{K}}%
L_{K(1)}(K/p^{k})(\mathcal{O}_{S})\longrightarrow L_{K(1)}(K/p^{k}%
)(\mathsf{LCA}_{\mathcal{O}_{S}})
\]
and

\begin{itemize}
\item $E_{\bullet}^{\bullet,\bullet}(\mathsf{LCA}_{\mathcal{O}_{S}})$, the
descent spectral sequences for $\mathsf{LCA}_{\mathcal{O}_{S}}$ supplied by
Prop. \ref{prop_duality1}, as well as

\item $E_{\bullet}^{\bullet,\bullet}(\mathcal{O}_{S})$, the ordinary descent
spectral sequence as in
\S \ref{sect_IndividualThomasonDescentSpectralSequences},
\end{itemize}

acquire an induced multiplicative structure. We want to unravel this pairing
of spectral sequences%
\begin{equation}
E_{r}^{i,j}(\mathsf{LCA}_{\mathcal{O}_{S}})\otimes_{\mathbf{Z}}E_{r}%
^{i^{\prime},j^{\prime}}(\mathcal{O}_{S})\longrightarrow E_{r}^{i+i^{\prime
},j+j^{\prime}}(\mathsf{LCA}_{\mathcal{O}_{S}})\text{.} \label{lmips22}%
\end{equation}
In order to compute what the multiplication does on $K$-theory, note that our
pairing is compatible with the square%
\begin{equation}%
\xymatrix{
\mathsf{Proj}_{\mathcal{O}_{S},fg} \times\mathsf{Proj}_{\mathcal{O}_{S},fg}
\ar[d] \ar[r]^-{\otimes} & \mathsf{Proj}_{\mathcal{O}_{S},fg} \ar[d] \\
\mathsf{LCA}_{\mathcal{O}_{S},ad} \times\mathsf{Proj}_{\mathcal{O}_{S},fg}
\ar[r]_-{\otimes} & \mathsf{LCA}_{\mathcal{O}_{S},ad}.
}
\label{lmips23}%
\end{equation}
This renders $K(\mathsf{LCA}_{\mathcal{O}_{S},ad})$ a $K(\mathcal{O}_{S}%
)$-algebra and by Thm. \ref{thm_main_strong} we have%
\[
K(\mathsf{LCA}_{\mathcal{O}_{S}})\cong\operatorname*{cofib}\left(
K(\mathcal{O}_{S})\rightarrow K(\mathsf{LCA}_{\mathcal{O}_{S},ad})\right)
\text{.}%
\]
Thus, we may compute the pairing on the level of%
\[
K(\mathsf{LCA}_{\mathcal{O}_{S},ad})\otimes K(\mathcal{O}_{S})\longrightarrow
K(\mathsf{LCA}_{\mathcal{O}_{S},ad})\text{.}%
\]
But $\mathsf{LCA}_{\mathcal{O}_{S},ad}$ can, under Prop.
\ref{prop_IdentifyAdelicBlocks}, be identified with a restricted product
category. Using this identification, our pairing reduces to the usual tensor
product%
\[
\mathsf{Proj}_{F_{v,}fg}\otimes\mathsf{Proj}_{\mathcal{O}_{S},fg}%
\longrightarrow\mathsf{Proj}_{F_{v,}fg}\qquad\text{resp.}\qquad\mathsf{Proj}%
_{\mathcal{O}_{v,}fg}\otimes\mathsf{Proj}_{\mathcal{O}_{S},fg}\longrightarrow
\mathsf{Proj}_{\mathcal{O}_{v,}fg}\text{.}%
\]
The descent spectral sequences $E_{\bullet}^{\bullet,\bullet}(\mathsf{LCA}%
_{\mathcal{O}_{S}})$ and $E_{\bullet}^{\bullet,\bullet}(\mathcal{O}_{S})$ both
come from filtering the respective $K$-theory spectra. Since for the usual
tensor product it is known that the pairing on the spectral sequence is
compatible with the cup product, we conclude that the same is true in our
setting. We shall denote the cup product by \textquotedblleft$\smile
$\textquotedblright\ below. It will be useful to rewrite these cup products in
terms of $\operatorname*{Ext}$-groups:

Let $a,b,u,v\in\mathbf{Z}$ be arbitrary. Recall that for each pairing%
\[
\mathbf{Z}/p^{k}\left(  u\right)  \otimes\mathbf{Z}/p^{k}\left(  v\right)
\longrightarrow\mathbf{Z}/p^{k}\left(  u+v\right)
\]
the adjoint%
\begin{equation}
\mathbf{Z}/p^{k}\left(  u\right)  \longrightarrow\operatorname*{Hom}\left(
\mathbf{Z}/p^{k}\left(  v\right)  ,\mathbf{Z}/p^{k}\left(  u+v\right)
\right)  \tag{$\ast$}%
\end{equation}
induces a commutative square\footnote{Recall that $\operatorname*{Hom}$-groups
compose in the order $\operatorname*{Hom}(B,C)\times\operatorname*{Hom}%
(A,B)\rightarrow\operatorname*{Hom}(A,C)$.}%
\[%
\xymatrix{
H^{a}\left(  -,\mathbf{Z}/p^{k}\left(  u\right)  \right) \otimes H^{b}%
\left(  -,\mathbf{Z}/p^{k}\left(  v\right)  \right)
\ar[r]^-{\smile} \ar[d]^{\ast\otimes1} &
H^{a+b}\left(  -,\mathbf{Z}/p^{k}\left(  u+v\right)  \right) \ar@{=}[dd] \\
\operatorname*{Ext}^{a}(\mathbf{Z},\operatorname*{Hom}\left(  \mathbf{Z}%
/p^{k}\left(  v\right)  ,\mathbf{Z}/p^{k}\left(  u+v\right)  \right))
\otimes H^{b}\left(  -,\mathbf{Z}/p^{k}\left(  v\right)  \right) \ar[d] \\
\operatorname*{Ext}^{a}(\mathbf{Z}/p^{k}\left(  v\right)  ,\mathbf{Z}%
/p^{k}\left(  u+v\right)  ) \otimes\operatorname*{Ext}^{b}\left(
\mathbf{Z},\mathbf{Z}/p^{k}\left(  v\right)  \right) \ar[r] &
\operatorname*{Ext}^{a+b}\left(  \mathbf{Z},\mathbf{Z}/p^{k}\left(  u+v\right)
\right)
}%
\]
expressing the cup product on Galois cohomology through the Yoneda product on
$\operatorname*{Ext}$-groups (see \cite[Prop. 16.17]{MR4174395} or \cite[Ch.
I, Prop. 0.14]{MR2261462}).

Applying this to the pairing of Eq. \ref{lmips22}, we obtain%
\begin{align}
&  \operatorname*{Ext}\nolimits_{G_{S}}^{i}\left(  \mathbf{Z}/p^{k}%
(1+\tfrac{j}{2}),C_{S}\right)  \otimes\operatorname*{Ext}\nolimits_{G_{S}%
}^{i^{\prime}}\left(  \mathbf{Z},\mathbf{Z}/p^{k}(-\tfrac{j^{\prime}}%
{2})\right) \label{lmips18}\\
&  \qquad\qquad\qquad\qquad\longrightarrow\left\{
\begin{array}
[c]{ll}%
\operatorname*{Ext}\nolimits_{G_{S}}^{i+i^{\prime}}\left(  \mathbf{Z}%
,C_{S}\right)  & \text{for }j+j^{\prime}=-2\\
0 & \text{otherwise.}%
\end{array}
\right. \nonumber
\end{align}
(Step 2) The above story descends to the quotient category $\left.
\mathsf{LC}\mathcal{O}_{S}\right.  $: The bi-exact pairing of Eq.
\ref{lmx4rep} induces a bi-exact pairing%
\begin{equation}
\left.  \mathsf{LC}\mathcal{O}_{S}\right.  \times\mathsf{Proj}_{\mathcal{O}%
_{S},fg}\longrightarrow\left.  \mathsf{LC}\mathcal{O}_{S}\right.
\label{lmx4repB}%
\end{equation}
since a real vector space, tensored with a finite rank free $\mathcal{O}_{S}%
$-module, again carries a real vector space topology. More specifically, it
follows that $F_{\left\langle S\right\rangle }$-modules (equipped with the
real topology) are sent to $F_{\left\langle S\right\rangle }$-modules. The
pairing now has the format%
\begin{equation}
K(\left.  \mathsf{LC}\mathcal{O}_{S}\right.  )\otimes_{\mathsf{Sp}%
}K(\mathcal{O}_{S})\longrightarrow K(\left.  \mathsf{LC}\mathcal{O}%
_{S}\right.  )\text{.} \label{lmips20}%
\end{equation}
One gets an analogous spectral sequence pairing, with $C_{S}^{\setminus\infty
}$ instead of $C_{S}$. All one has to do is checking that all arguments in
Step 1 are compatible with quotienting out $\mathsf{Proj}_{F_{\left\langle
S\right\rangle },fg}$, and replace the spectral sequence $E_{\bullet}%
^{\bullet,\bullet}(\mathsf{LCA}_{\mathcal{O}_{S}})$ of Prop.
\ref{prop_duality1} by $E_{\bullet}^{\bullet,\bullet}(\left.  \mathsf{LC}%
\mathcal{O}_{S}\right.  )$ of Prop. \ref{prop_duality2}. We will omit the
details, as the proof of Prop. \ref{prop_duality2} already took the time to
explain how to implement this switch. Instead of Eq. \ref{lmips18}, we now get%
\begin{align}
&  \underline{\operatorname*{Ext}\nolimits_{G_{S}}^{i}\left(  \mathbf{Z}%
/p^{k}(1+\tfrac{j}{2}),C_{S}^{\setminus\infty}\right)  }\otimes
\operatorname*{Ext}\nolimits_{G_{S}}^{i^{\prime}}\left(  \mathbf{Z}%
,\mathbf{Z}/p^{k}(-\tfrac{j^{\prime}}{2})\right) \nonumber\\
&  \qquad\qquad\qquad\qquad\longrightarrow\left\{
\begin{array}
[c]{ll}%
\underline{\operatorname*{Ext}\nolimits_{G_{S}}^{i+i^{\prime}}\left(
\mathbf{Z},C_{S}^{\setminus\infty}\right)  } & \text{for }j+j^{\prime}=-2\\
0 & \text{otherwise.}%
\end{array}
\right.  \label{lmips19}%
\end{align}
Ignore the underlines for the moment, we only added them to refer to these
terms later. The situation $i+i^{\prime}=2$ is special: then the pairing of
Eq. \ref{lmips18} takes values in $\operatorname*{Ext}\nolimits_{G_{S}}%
^{2}(\mathbf{Z},C_{S})$, which under the invariant map of the underlying class
formation%
\begin{equation}
\operatorname*{inv}\colon\operatorname*{Ext}\nolimits_{G_{S}}^{2}\left(
\mathbf{Z},C_{S}\right)  \cong H^{2}(\mathcal{O}_{S},C_{S})\cong%
\mathbf{Q}/\mathbf{Z}\text{,} \label{lwcz1}%
\end{equation}
is the target of the Poitou--Tate dualizing pairing. This carries over to
$C_{S}^{\setminus\infty}$ without change: We have $\operatorname*{Ext}%
\nolimits_{G_{S}}^{2}(M,C_{S}^{\setminus\infty})\cong\operatorname*{Ext}%
\nolimits_{G_{S}}^{2}\left(  M,C_{S}\right)  $. This comes from the vanishing
of the higher Galois cohomology of $V_{S}$, as was discussed in the proof of
Prop. \ref{prop_duality2}. Restricting to $i+i^{\prime}=2$, we therefore get a
pairing%
\begin{align}
&  \underline{\operatorname*{Ext}\nolimits_{G_{S}}^{i}\left(  \mathbf{Z}%
/p^{k}(1+\tfrac{j}{2}),C_{S}^{\setminus\infty}\right)  }\otimes
\operatorname*{Ext}\nolimits_{G_{S}}^{i^{\prime}}\left(  \mathbf{Z}%
,\mathbf{Z}/p^{k}(-\tfrac{j^{\prime}}{2})\right) \nonumber\\
&  \qquad\qquad\qquad\qquad\longrightarrow\left\{
\begin{array}
[c]{ll}%
\underline{\operatorname*{Ext}\nolimits_{G_{S}}^{2}\left(  \mathbf{Z}%
,C_{S}^{\setminus\infty}\right)  }\cong\mathbf{Q}/\mathbf{Z} & \text{for
}i+i^{\prime}=2\text{ and }j+j^{\prime}=-2\\
0 & \text{otherwise.}%
\end{array}
\right.  \label{lmips19_pairing_form}%
\end{align}
(Step 3) The next step is best explained in a toy model: Suppose $p\colon
A\times B\longrightarrow\mathbf{Q}/\mathbf{Z}$ is a pairing of abelian groups.
If it is left perfect, i.e., its adjoint $A\rightarrow B^{\ast}%
:=\operatorname*{Hom}(B,\mathbf{Q}/\mathbf{Z})$ is an equivalence, we may
isomorphically replace $A$ by its dual. This transforms the pairing $p$ into
the evaluation pairing.%
\begin{equation}%
\xymatrix{
A \times B \ar[r]^-{p} \ar[d] & \mathbf{Q}/\mathbf{Z} \ar@{=}[d] \\
B^{\ast} \times B \ar[r]_-{\operatorname{ev}} & \mathbf{Q}/\mathbf{Z}
}
\label{lmips24}%
\end{equation}
For the present proof, we will apply this mechanism to the duality pairing of
the underlying class formation of global class field theory. The pairing is%
\begin{equation}
\operatorname*{Ext}\nolimits_{G_{S}}^{i}(M,C_{S}^{\setminus\infty})\otimes
H^{2-i}(\mathcal{O}_{S},M)\longrightarrow\operatorname*{Ext}\nolimits_{G_{S}%
}^{2}(M,C_{S}^{\setminus\infty})\overset{\sim}{\longleftarrow}%
\operatorname*{Ext}\nolimits_{G_{S}}^{2}(M,C_{S})\overset{\sim}{\underset
{\operatorname*{inv}}{\longrightarrow}}\mathbf{Q}/\mathbf{Z}\text{,}
\label{lmips25}%
\end{equation}
which is left perfect and induces the isomorphism%
\[
\Upsilon\colon\operatorname*{Ext}\nolimits_{G_{S}}^{i}(M,C_{S}^{\setminus
\infty})\overset{\sim}{\longrightarrow}H^{2-i}(\mathcal{O}_{S},M)^{\ast}%
\qquad\quad\text{for all }i\in\mathbf{Z}\text{.}%
\]
See Milne \cite[Ch. I, \S 1, The main theorem, around Thm. 1.8]{MR2261462} or
Harari \cite[\S 16.3, beginning]{MR4174395}. We have already used $\Upsilon$
before: It is the isomorphism providing the \textit{dualized form} of Eq.
\ref{lmips14} for our spectral sequence $E_{\bullet}^{\bullet,\bullet}(\left.
\mathsf{LC}\mathcal{O}_{S}\right.  )$, and is the same isomorphism used in Eq.
\ref{lmips10} (and even before in Eq. \ref{ljbj2}). Now feed the pairing of
Eq. \ref{lmips25} into the mechanism of Eq. \ref{lmips24}. Note that thanks to
restricting to $i+i^{\prime}=2$ and $j+j^{\prime}=-2$, this is just an
instance of Eq. \ref{lmips25}, applied to the special case $M:=\mathbf{Z}%
/p^{k}(1+\tfrac{j}{2})$. Thus, Eq. \ref{lmips24} becomes%
\[%
\xymatrix{
{\operatorname*{Ext}\nolimits_{G_{S}}^{i}\left(  \mathbf{Z}/p^{k}(1+\tfrac
{j}{2}),C_{S}^{\setminus\infty}\right)} \otimes{\operatorname*{Ext}%
\nolimits_{G_{S}}^{i^{\prime}}\left(  \mathbf{Z},\mathbf{Z}/p^{k}%
(-\tfrac{j^{\prime}}{2})\right)} \ar[r] \ar[d]^{\Upsilon\otimes1} &
{\operatorname*{Ext}\nolimits_{G_{S}}^{2}\left(  \mathbf{Z},C_{S}%
^{\setminus\infty}\right)} \ar[r]^-{\sim} \ar[d]^*[@]{\sim} &
\mathbf{Q}/\mathbf{Z} \ar@{=}[d] \\
{H^{2-i}(\mathcal{O}_{S},\mathbf{Z}/p^{k}(1+\tfrac{j}{2}))^{\ast}}
\otimes{\operatorname*{Ext}\nolimits_{G_{S}}^{i^{\prime}}\left(  \mathbf
{Z},\mathbf{Z}/p^{k}(-\tfrac{j^{\prime}}{2})\right)} \ar[r]_-{\operatorname
{ev}} &  {H^{2-(i+i^{\prime})}(\mathcal{O}_{S},\mathbf{Z}/p^{k})^{\ast}}
\ar[r]_-{\sim} & \mathbf{Q}/\mathbf{Z}.
}%
\]
We may therefore continue with the pairing in the lower row. It is zero unless
$i+i^{\prime}=2$ and $j+j^{\prime}=-2$, and in this case becomes the
evaluation pairing%
\begin{align}
&  H^{2-i}(\mathcal{O}_{S},\mathbf{Z}/p^{k}(1+\tfrac{j}{2}))^{\ast}%
\otimes\operatorname*{Ext}\nolimits_{G_{S}}^{2-i}\left(  \mathbf{Z}%
,\mathbf{Z}/p^{k}(1+\tfrac{j}{2})\right) \nonumber\\
&  \qquad\qquad\qquad\qquad\underset{\operatorname*{ev}}{\longrightarrow}%
H^{0}(\mathcal{O}_{S},\mathbf{Z}/p^{k})^{\ast}\cong\mathbf{Z}/p^{k}\text{.}
\label{limps26b}%
\end{align}
By construction, this map is a section to the canonical inclusion of
$H^{0}(\mathcal{O}_{S},\mathbf{Z}/p^{k})^{\ast}\cong\mathbf{Z}/p^{k}$ into
$\pi_{0}(K/p^{k})(\left.  \mathsf{LC}\mathcal{O}_{S}\right.  )$. So far, the
entire discussion has concerned the pairing of Eq. \ref{lmips20}. Next, we
postcompose the $K(1)$-localized form of this pairing with the trace map of
\S \ref{sect_TraceMap}.%
\[
L_{K(1)}K(\left.  \mathsf{LC}\mathcal{O}_{S}\right.  )\otimes_{\mathsf{K}%
}L_{K(1)}K(\mathcal{O}_{S})\longrightarrow L_{K(1)}K(\left.  \mathsf{LC}%
\mathcal{O}_{S}\right.  )\overset{\operatorname*{Tr}\nolimits_{F,S}%
}{\longrightarrow}I_{\mathbf{Z}_{p}}\mathbb{S}_{\widehat{p}}\text{.}%
\]
Modulo $p^{k}$ we get the corresponding pairing with the finite level version
of the trace (Eq. \ref{l_FiniteLevelVersionOfTraceMap}), i.e.,%
\begin{equation}
L_{K(1)}(K/p^{k})(\left.  \mathsf{LC}\mathcal{O}_{S}\right.  )\otimes
_{\mathsf{Sp}}L_{K(1)}(K/p^{k})(\mathcal{O}_{S})\rightarrow L_{K(1)}%
(K/p^{k})(\left.  \mathsf{LC}\mathcal{O}_{S}\right.  )\overset
{\operatorname*{Tr}\nolimits_{F,S}/p^{k}}{\longrightarrow}I_{\mathbf{Q}%
_{p}/\mathbf{Z}_{p}}\mathbb{S}/p^{k}\text{.} \label{lmips26}%
\end{equation}
By construction, on $\pi_{0}$ the rightmost map in Eq. \ref{lmips26} amounts,
by construction of the trace map, to the map $V_{F,S,k}$ in%
\begin{equation}%
\xymatrix{
H^{0}(\mathcal{O}_{S},\mathbf{Z}/p^{k})^{\ast} \ar@{^{(}->}[r]^-{E_{F,S,k}}
\ar[dr]_*[@]{\sim} & \pi_{0}L_{K(1)}(K/p^{k})(\left.  \mathsf{LC}\mathcal
{O}_{S}\right.  ) \ar[d]^{V_{F,S,k}=\pi_{0}({\operatorname*{Tr}\nolimits
_{F,S}/p^{k}})} \\
& {\frac{1}{p^{k}}\mathbf{Z}/\mathbf{Z}},
}
\label{lmips28}%
\end{equation}
where%
\[
E_{F,S,k}\colon H^{0}(\mathcal{O}_{S},\mathbf{Z}/p^{k})^{\ast}\hookrightarrow
\pi_{0}L_{K(1)}(K/p^{k})(\left.  \mathsf{LC}\mathcal{O}_{S}\right.  )
\]
is the natural inclusion coming from the spectral sequence of $E_{\bullet
}^{\bullet,\bullet}(\left.  \mathsf{LC}\mathcal{O}_{S}\right.  )$. It agrees,
as the name suggests, which the explicit construction of Eq. \ref{l_EMap}. The
commutativity of Diag. \ref{lmips28} was proven in Prop.
\ref{prop_VIsASectionOfE} (1), and part (2) of the same result shows that
$\pi_{0}(\operatorname*{Tr}\nolimits_{F,S}/p^{k})=V_{F,S,k}$.\ We conclude
that the pairing in Eq. \ref{lmips26} induces on the homotopy groups
$\pi_{\ast}$ a graded pairing
\[
\pi_{r}\otimes\pi_{s}\longrightarrow\pi_{r+s}\text{.}%
\]
For $r+s\neq0$ this pairing is zero and for $r+s=0$ it is a left perfect
pairing, or, in other words, the adjoint%
\begin{equation}
L_{K(1)}(K/p^{k})(\left.  \mathsf{LC}\mathcal{O}_{S}\right.  )\longrightarrow
F_{\mathsf{Sp}}\left(  L_{K(1)}(K/p^{k})(\mathcal{O}_{S}),I_{\mathbf{Q}%
_{p}/\mathbf{Z}_{p}}\mathbb{S}/p^{k}\right)  \cong I_{\mathbf{Q}%
_{p}/\mathbf{Z}_{p}}L_{K(1)}(K/p^{k})(\mathcal{O}_{S})
\label{lmips_Step3_Conclusion}%
\end{equation}
is an equivalence.\newline(Step 4) In Step 3 we have proven that Eq.
\ref{lmips26} is a left perfect pairing for all $k\geq1$. It remains to show
that our claim is true without going modulo $p^{k}$. This is harmless as our
spectra are $p$-complete (see Rmk. \ref{rmk_CanTestPerfectPairingModP}). In
detail: The adjoint of the pairing in Eq. \ref{lmips20} is
\begin{align*}
L_{K(1)}K(\left.  \mathsf{LC}\mathcal{O}_{S}\right.  )  &  \longrightarrow
F_{\mathsf{Sp}}\left(  L_{K(1)}K(\mathcal{O}_{S}),L_{K(1)}K(\left.
\mathsf{LC}\mathcal{O}_{S}\right.  )\right) \\
&  \longrightarrow F_{\mathsf{Sp}}\left(  L_{K(1)}K(\mathcal{O}_{S}%
),I_{\mathbf{Z}_{p}}\mathbb{S}_{\widehat{p}}\right)  \cong I_{\mathbf{Z}_{p}%
}L_{K(1)}K(\mathcal{O}_{S})\text{.}%
\end{align*}
We need to check that this map is an equivalence. We find that%
\[
L_{K(1)}K(\left.  \mathsf{LC}\mathcal{O}_{S}\right.  )\longrightarrow
I_{\mathbf{Z}_{p}}L_{K(1)}K(\mathcal{O}_{S})
\]
is by Eq. \ref{lrrb1} equivalent to%
\[
\Sigma L_{K(1)}K(\left.  \mathsf{LC}\mathcal{O}_{S}\right.  )\longrightarrow
I_{\mathbf{Q}_{p}/\mathbf{Z}_{p}}L_{K(1)}K(\mathcal{O}_{S})
\]
being an equivalence. Since both sides are $p$-complete, it suffices to check
that this map is an equivalence $\operatorname{mod}p$, i.e., it suffices to
show that%
\[
\left(  \Sigma L_{K(1)}K(\left.  \mathsf{LC}\mathcal{O}_{S}\right.  )\right)
/p\longrightarrow\left(  I_{\mathbf{Q}_{p}/\mathbf{Z}_{p}}L_{K(1)}%
K(\mathcal{O}_{S})\right)  /p
\]
is an equivalence. From $X\overset{\cdot p}{\rightarrow}X\rightarrow X/p$ we
get $I_{\mathbf{Q}_{p}/\mathbf{Z}_{p}}X\overset{\cdot p}{\leftarrow
}I_{\mathbf{Q}_{p}/\mathbf{Z}_{p}}X\leftarrow I_{\mathbf{Q}_{p}/\mathbf{Z}%
_{p}}(X/p)$ and therefore $(I_{\mathbf{Q}_{p}/\mathbf{Z}_{p}}X)/p\cong%
\Sigma(I_{\mathbf{Q}_{p}/\mathbf{Z}_{p}}(X/p))$. Hence,%
\[
\Sigma L_{K(1)}(K/p)(\left.  \mathsf{LC}\mathcal{O}_{S}\right.
)\longrightarrow\Sigma\left(  I_{\mathbf{Q}_{p}/\mathbf{Z}_{p}}L_{K(1)}%
(K/p)(\mathcal{O}_{S})\right)  \text{.}%
\]
Thus, it remains to check that%
\[
L_{K(1)}(K/p)(\left.  \mathsf{LC}\mathcal{O}_{S}\right.  )\longrightarrow
I_{\mathbf{Q}_{p}/\mathbf{Z}_{p}}L_{K(1)}(K/p)(\mathcal{O}_{S})
\]
is an equivalence. However, this was already settled in Step 3, notably Eq.
\ref{lmips_Step3_Conclusion}. This proves that the pairing in Eq. \ref{lwo1}
is left perfect, and equivalently that Eq. \ref{lwo2} is an equivalence. If
$S$ is finite, all Galois cohomology groups on the $E_{2}$-page of the
spectral sequence of Prop. \ref{prop_duality2} are finite. Thus, so are the
homotopy groups of $L_{K(1)}(K/p^{k})(\left.  \mathsf{LC}\mathcal{O}%
_{S}\right.  )$. Just by finiteness, these spectra are reflexive for
Brown--Comenetz duality. Hence, the pairing in Eq. \ref{lmips26} will
additionally be right perfect and an analogue of Eq.
\ref{lmips_Step3_Conclusion} holds for the opposite adjoint. The same argument
as in Step 4 then settles the opposite duality, i.e., Eq. \ref{lwo3}. This
finishes the proof.
\end{proof}

\subsection{Consequences \&\ Complements}

We may immediately deduce the simplified formulation from the introduction.
Let $p$ be an odd prime, $F$ a number field, $S$ a (possibly infinite) set of
finite places of $F$ such that $\frac{1}{p}\in\mathcal{O}_{S}$.

\begin{proof}
[Proof of Thm. \ref{thmann_ThmA}]In order to avoid having to set up $\left.
\mathsf{LC}\mathcal{O}_{S}\right.  $ in full generality already in the
introduction, we briefly defined $\mathsf{LCA}_{\mathcal{O}_{S}}^{\circ}$ as
the quotient of $\mathsf{LCA}_{\mathcal{O}_{S}}$ modulo those $\mathcal{O}%
_{S}$-modules whose underlying LCA\ group is a real vector space.\ The latter
is the same as the category of all vector $\mathcal{O}_{S}$-modules and thus
$\mathsf{LCA}_{\mathcal{O}_{S}}^{\circ}=_{def}\left.  \mathsf{LC}%
\mathcal{O}_{S}\right.  $ since by assumption $S$ contains only finite places.
\end{proof}

\subsubsection{Transfers\label{subsect_Transfers}}

Now let $S$ be any (possibly infinite) set of places of $F$, but such that
$\frac{1}{p}\in\mathcal{O}_{F,S}$. If $F^{\prime}/F$ is a finite extension,
let $S(F^{\prime})$ denote the places of $F^{\prime}$ lying above $S$. Let
$S^{\prime}$ be any set of places of $F^{\prime}$ such that $S^{\prime}$
contains $S(F^{\prime})$.

\begin{definition}
\label{def_Transfers}We denote the extension by $f\colon\operatorname*{Spec}%
F^{\prime}\rightarrow\operatorname*{Spec}F$.

\begin{enumerate}
\item Define the \emph{pushforward}%
\[
f_{\ast}\colon\left.  \mathsf{LC}\mathcal{O}_{F^{\prime},S^{\prime}}\right.
\longrightarrow\left.  \mathsf{LC}\mathcal{O}_{F,S}\right.
\]
on the level of $\mathsf{LCA}_{\mathcal{O}_{F^{\prime},S^{\prime}}}$ as
forgetting the locally compact $\mathcal{O}_{F^{\prime},S^{\prime}}$-module
structure in favour of the $\mathcal{O}_{F^{\prime},S^{\prime}}$-module structure.

\item Suppose $S^{\prime}=S(F^{\prime})$. Define the \emph{pullback}%
\begin{align*}
f^{\ast}\colon\left.  \mathsf{LC}\mathcal{O}_{F,S}\right.   &  \longrightarrow
\left.  \mathsf{LC}\mathcal{O}_{F^{\prime},S^{\prime}}\right. \\
M  &  \longmapsto M\otimes_{\mathcal{O}_{F,S}}\mathcal{O}_{F^{\prime
},S^{\prime}}%
\end{align*}
on the level of $\mathsf{LCA}_{\mathcal{O}_{F,S}}$ by the topological tensor
product of \cite{MR0215016}. Without relying on this tensor product, it can
equivalently be defined as%
\begin{equation}
f^{\ast}M:=\operatorname*{Hom}\nolimits_{\mathsf{LCA}_{\mathcal{O}_{F^{\prime
},S^{\prime}}}}(\mathcal{O}_{F^{\prime}},M^{\vee})^{\vee}\text{,}
\label{lmips32}%
\end{equation}
where the $\operatorname*{Hom}$-group is equipped with the compact-open topology.
\end{enumerate}

Both functors are exact and induce maps on $K$-theory, which we will also call
the pushforward and pullback.
\end{definition}

The pushforward sends $F_{\left\langle S^{\prime}\right\rangle }^{\prime}%
$-modules to $F_{\left\langle S\right\rangle }$-modules, so even though we
have only really given the definition for $\mathsf{LCA}_{\mathcal{O}%
_{F^{\prime},S^{\prime}}}$, it follows that it remains well defined on the
quotient $\left.  \mathsf{LC}\mathcal{O}_{F^{\prime},S^{\prime}}\right.  $.
The same applies to the pullback.

The definition of the pushforward also makes sense for infinite extensions
$F^{\prime}/F$. As this leaves the scope of this text, we will not discuss
this further.

\begin{proof}
Only the pullback requires a discussion. The Hom-tensor adjunction yields%
\begin{align*}
\mathcal{O}_{F^{\prime},S^{\prime}}\otimes_{\mathcal{O}_{F,S}}M  &
\cong\mathcal{O}_{F^{\prime}}\otimes_{\mathcal{O}_{F}}M\\
&  \cong\operatorname*{Hom}\nolimits_{\mathsf{LCA}_{\mathcal{O}_{F^{\prime}}}%
}(\mathcal{O}_{F^{\prime}}\otimes_{\mathcal{O}_{F}}M,\mathbb{T})^{\vee}%
\cong\operatorname*{Hom}\nolimits_{\mathsf{LCA}_{\mathcal{O}_{F}}}%
(\mathcal{O}_{F^{\prime}},M^{\vee})^{\vee}\text{.}%
\end{align*}
Since $\mathcal{O}_{F^{\prime}}$ is a finitely generated discrete group, the
group $\operatorname*{Hom}\nolimits_{\mathsf{LCA}_{\mathcal{O}_{F}}%
}(\mathcal{O}_{F^{\prime}},M^{\vee})$, equipped with the compact-open
topology, is a locally compact $\mathcal{O}_{F^{\prime},S^{\prime}}$-module
(see \cite[Thm. 4.3 ($2^{\prime}$)]{MR0215016}). In particular, so is its
Pontryagin dual. This proves that Eq. \ref{lmips32} gives an alternative
construction of the pullback. It also settles that the relevant tensor product
is indeed locally compact. We remind the reader that the tensor product of
\cite{MR0215016} in general outputs a topological module which has no reason
to be locally compact.
\end{proof}

\begin{example}
If $\mathcal{O}_{F^{\prime}}$ is a free $\mathcal{O}_{F}$-module (which in
general need not be the case), one can take any $\mathcal{O}_{F}$-module
isomorphism $\alpha\colon\mathcal{O}_{F^{\prime}}\simeq\mathcal{O}%
_{F}^{[F^{\prime}:F]}$ and equip%
\[
\mathcal{O}_{F^{\prime},S^{\prime}}\otimes_{\mathcal{O}_{F,S}}M\cong%
\mathcal{O}_{F^{\prime}}\otimes_{\mathcal{O}_{F}}M\overset{\alpha}{\simeq
}M^{[F^{\prime}:F]}%
\]
with the product topology. Since $\mathcal{O}_{F^{\prime}}$ is discrete, this
agrees with the previous definition. It is easy to see that the resulting
topology is independent of the choice of $\alpha$. If $\mathcal{O}_{F^{\prime
}}$ fails to be free over $\mathcal{O}_{F}$, one can still use this technique,
applied to a free cover. However, it gets increasingly cumbersome to show that
this yields a well-defined topology.
\end{example}

\begin{lemma}
\label{lemma_TransferCompat}Suppose we are in the setting described at the
beginning of \S \ref{subsect_Transfers}. Consider the fiber sequence of Thm.
\ref{thm_main_strong} for both $F$ and $F^{\prime}$. Then the subdiagrams of%
\[%
\xymatrix{
K(\mathcal{O}_{F^{\prime},S^{\prime}}) \ar@<-2ex>[d]_{f_{\ast}} \ar[r] &
{\left.  \underset{v^{\prime}\in S^{\prime}\;}{\prod\nolimits^{\prime}}%
\right.  \left(  K(F_{v^{\prime}}^{\prime}):K(\mathcal{O}_{F^{\prime
},v^{\prime}})\right)} \ar@<-2ex>[d]_{f_{\ast}} \ar[r] &
K(\mathsf{LCA}_{\mathcal{O}_{F^{\prime},S^{\prime}}}) \ar@<-2ex>[d]_{f_{\ast}}
\\
K(\mathcal{O}_{F,S}) \ar@<-2ex>[u]_{f^{\ast}} \ar[r] &
{\left.  \underset{v\in S\;}{\prod\nolimits^{\prime}}\right.  \left
(  K(F_{v}):K(\mathcal{O}_{v})\right)} \ar@<-2ex>[u]_{f^{\ast}} \ar[r] &
K(\mathsf{LCA}_{\mathcal{O}_{F,S}}) \ar@<-2ex>[u]_{f^{\ast}}
}%
\]
consisting only of (horizontal and upward) or (horizontal and downward)
arrows, commute. For the left and middle term, we define $f^{\ast}$ and
$f_{\ast}$ as usual for $K$-theory, on the right term we employ Def.
\ref{def_Transfers}.
\end{lemma}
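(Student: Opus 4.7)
The plan is to verify each square at the level of exact functors between the underlying exact categories and then invoke functoriality of non-connective $K$-theory. First I would pin down each vertical functor explicitly. On the left column, $f_{\ast}$ is restriction of scalars $\mathsf{Proj}_{\mathcal{O}_{F',S'},fg} \to \mathsf{Mod}_{\mathcal{O}_{F,S}}$ (with image in $\mathsf{Proj}_{\mathcal{O}_{F,S},fg}$ after resolving, by regularity), and $f^{\ast}$ is $M \mapsto M \otimes_{\mathcal{O}_{F,S}} \mathcal{O}_{F',S'}$. On the right column, they are the exact functors of Def.~\ref{def_Transfers}. On the middle column, after transport through the equivalence $\Psi$ of Prop.~\ref{prop_IdentifyAdelicBlocks}, $f_{\ast}$ corresponds to regrouping the places $v' \in S'$ lying over a common $v \in S$ and restricting scalars entrywise along $F'_{v'}/F_v$ and $\mathcal{O}_{F',v'}/\mathcal{O}_{F,v}$; the functor $f^{\ast}$ corresponds to entrywise extension of scalars along the same ring maps, with $\prod_{v'\mid v} F'_{v'}$ being the tensor $F_v \otimes_F F'$.

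The single key input is the canonical topological isomorphism
\[
\mathcal{O}_{F',S'} \otimes_{\mathcal{O}_{F,S}} \mathbb{A}_{F,S} \;\cong\; \mathbb{A}_{F',S'},
\]
which comes by taking the restricted product of the well-known place-by-place decompositions $F_v \otimes_F F' \cong \prod_{v'\mid v} F'_{v'}$ and the corresponding statement for the compact clopen subgroups $\prod_{v'\mid v}\mathcal{O}_{F',v'}$. This identification makes the squares attached to the first (``tensor-with-ad\`eles'') horizontal arrow commute for both choices of vertical transfer, by associativity of the tensor product: for $f_{\ast}$ one uses $f_{\ast}(M \otimes_{\mathcal{O}_{F',S'}} \mathbb{A}_{F',S'}) \cong (f_{\ast}M) \otimes_{\mathcal{O}_{F,S}} \mathbb{A}_{F,S}$, and for $f^{\ast}$ one uses the associativity of $\otimes$ against $\mathcal{O}_{F',S'}$. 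The squares attached to the second horizontal arrow, which factors through $\mathsf{LCA}_{\mathcal{O}_{S},ad} \hookrightarrow \mathsf{LCA}_{\mathcal{O}_{S}}$, commute tautologically for $f_{\ast}$: the forgetful functor restricted to adelic modules is literally the composite of the forgetful functor on all LCA modules with the inclusion, and similarly for $f^{\ast}$, using that extension of scalars by $\mathcal{O}_{F',S'}$ sends adelic $\mathcal{O}_{F,S}$-modules to adelic $\mathcal{O}_{F',S'}$-modules (by the identification above).

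The main obstacle is the topological bookkeeping: one must check that the compact-open/topological tensor product invoked in Def.~\ref{def_Transfers} really produces the ad\`elic topology on $\mathbb{A}_{F',S'}$, and that the associativity isomorphisms above are homeomorphisms, not merely algebraic isomorphisms. This reduces, place-by-place, to the fact that $F_v \otimes_F F'$ carries a unique topology making it a topological $F_v$-algebra of finite rank (agreeing with the product topology $\prod_{v'\mid v} F'_{v'}$), together with the corresponding statement for $\mathcal{O}_{F,v}$-algebras of finite rank, and extending to restricted products via the standard compatibility of compact clopen subgroups under base change in all but finitely many places. Once this topological check is done, the commutativity of both squares in the lemma follows from functoriality of $K$ applied to the commuting squares of exact categories, and from the equivalences of Prop.~\ref{prop_IdentifyAdelicBlocks} and Thm.~\ref{thm_main_strong} used to identify the middle and right columns.
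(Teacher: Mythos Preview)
Your proposal is correct and follows essentially the same approach as the paper's proof, just spelled out in considerably more detail. The paper dismisses the $f_{\ast}$ squares as ``immediate'' and for $f^{\ast}$ says one must compute the pullback of Eq.~\ref{lmips32} on adelic modules and observe it agrees with the usual tensor product in the $p$-adic (resp.\ real) vector space topology---which is exactly your ``topological bookkeeping'' check, arrived at via the ad\`ele base-change isomorphism $\mathcal{O}_{F',S'}\otimes_{\mathcal{O}_{F,S}}\mathbb{A}_{F,S}\cong\mathbb{A}_{F',S'}$.
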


\begin{proof}
For $f_{\ast}$ this is immediate. For $f^{\ast}$ one needs to compute the
pullback of Eq. \ref{lmips32} for adelic modules. One finds that it agrees
with the usual tensor product with respect to the $p$-adic (resp. real) vector
space structures. Once this has been checked, the rest is again immediate.
\end{proof}

\begin{proof}
[Proof of Thm. \ref{thmann_D}](Step 1) We will prove this by analyzing the
transfers on the level of the relevant descent spectral sequences for both
sides of the pairing in Eq. \ref{lmips22}. For $E_{r}^{i^{\prime},j^{\prime}%
}(\mathcal{O}_{S})\Rightarrow K(\mathcal{O}_{S})$ it is well-known that the
pullback corresponds to restriction and the pushforward to corestriction. For
$E_{r}^{i,j}(\mathsf{LCA}_{\mathcal{O}_{S}})\Rightarrow K(\mathsf{LCA}%
_{\mathcal{O}_{S}})$ the analogous property is shown in Lemma
\ref{lemma_TransferCompat}. In particular, it follows that we could
equivalently define the transfers by taking the induced map on the cofiber%
\[
\operatorname{cofib}\left(  K(\mathcal{O}_{S})\longrightarrow\left.
\underset{v\in S\;}{%
{\textstyle\prod\nolimits^{\prime}}
}\right.  \left(  K(F_{v}):K(\mathcal{O}_{v})\right)  \right)  \overset{\sim
}{\longrightarrow}K(\mathsf{LCA}_{\mathcal{O}_{S}})\text{.}%
\]
This reduces proving our claim to showing that restriction and corestriction
get interchanged by duality. In Galois cohomology the projection formula has
the concrete shape%
\begin{equation}
x\smile\operatorname*{cor}\nolimits_{F}^{F^{\prime}}(y)=\operatorname*{cor}%
\nolimits_{F}^{F^{\prime}}\left(  \operatorname*{res}\nolimits_{F}^{F^{\prime
}}x\smile y\right)  \text{.} \label{lmips29}%
\end{equation}
For every class formation the diagram%
\begin{equation}%
\xymatrix{
H^2(\mathcal{O}_{F^{\prime},S^{\prime}},C_{F^{\prime},S^{\prime}}%
) \ar[d]_{\operatorname{cor}^{F^{\prime}}_{F}} \ar[r]^-{\operatorname
{inv}_{F^{\prime}}}_-{\sim} & \mathbf{Q}/\mathbf{Z} \ar@{=}[d] \\
H^2(\mathcal{O}_{F,S},C_{F,S}) \ar[r]_-{\operatorname{inv}_F}^-{\sim}
& \mathbf{Q}/\mathbf{Z}
}
\label{lmips30}%
\end{equation}
commutes. Since $H^{2}(\mathcal{O}_{F,S},C_{S})\cong\operatorname*{Ext}%
\nolimits_{G_{S}}^{2}(\mathbf{Z},C_{S})\cong\operatorname*{Ext}%
\nolimits_{G_{S}}^{2}(\mathbf{Z},C_{S}^{\setminus\infty})$ and analogously for
$\mathcal{O}_{F^{\prime},S^{\prime}}$, we may use this to relate the pairings
in Eq. \ref{lmips19_pairing_form} for both $F$ and $F^{\prime}$. We compute%
\[
\operatorname*{inv}\nolimits_{F}\left(  x\smile\operatorname*{cor}%
\nolimits_{F}^{F^{\prime}}(y)\right)  \underset{\text{Eq. \ref{lmips29}}}%
{=}\operatorname*{inv}\nolimits_{F}\left(  \operatorname*{cor}\nolimits_{F}%
^{F^{\prime}}\left(  \operatorname*{res}\nolimits_{F}^{F^{\prime}}x\smile
y\right)  \right)  \underset{\text{Eq. \ref{lmips30}}}{=}\operatorname*{inv}%
\nolimits_{F^{\prime}}\left(  \operatorname*{res}\nolimits_{F}^{F^{\prime}%
}x\smile y\right)  \text{.}%
\]
It follows that restriction is left adjoint (in the sense of a bilinear
pairing) to corestriction. As these are the Galois cohomological counterparts
of pullback and pushforward, the corresponding claim holds for the $E_{2}%
$-page of the relevant descent spectral sequences, and therefore for
$L_{K(1)}K(-)$. As this argument essentially relies on a property of class
formations, it is also valid in the setting of Thm.
\ref{thmann_ThmA_LocalVersion}.
\end{proof}

\begin{conjecture}
Let $K$ denote non-connective $K$-theory. Let $p$ be an odd prime, $F$ a
number field, $S$ a finite set of finite places of $F$ such that $\frac{1}%
{p}\in\mathcal{O}_{S}$. Then there is an equivalence of fiber sequences%
\[%
\xymatrix{
L_{K(1)}K(\mathcal{O}_{S}) \ar[r] \ar@{=>}[d] &
L_{K(1)}K(\mathsf{LC}\mathcal{O}_{S,ad}) \ar[r] \ar@{<=>}[d] &
L_{K(1)}K(\mathsf{LC}\mathcal{O}_{S}) \\
I_{\mathbf{Z}_{p}}L_{K(1)}K(\mathsf{LC}\mathcal{O}_{S}) \ar[r] &
I_{\mathbf{Z}_{p}}L_{K(1)}K(\mathsf{LC}\mathcal{O}_{S,ad}) \ar[r] & I_{\mathbf
{Z}_{p}}L_{K(1)}K(\mathcal{O}_{S}) \ar@{=>}[u]
}%
\]
where the lower row is the Anderson dual of the top row (perhaps up to sign).
\end{conjecture}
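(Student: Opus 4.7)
The right vertical equivalence is precisely Theorem~\ref{thm_DualityMain} (Eq.~\ref{lwo2}), and the left one is the reciprocal equivalence (Eq.~\ref{lwo3}), which is available because $S$ is finite. So the plan is to (a) construct the middle equivalence by Anderson self-duality of $L_{K(1)}K(\mathsf{LC}\mathcal{O}_{S,ad})$, and (b) check that the three equivalences assemble into a morphism of fiber sequences.

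For (a), since $S$ is finite, Prop.~\ref{prop_IdentifyAdelicBlocks_2}~(1) gives an equivalence of exact categories
\[
\mathsf{LC}\mathcal{O}_{S,ad}\simeq \prod_{v\in S}\mathsf{Mod}_{F_v,fg},
\]
so $L_{K(1)}K(\mathsf{LC}\mathcal{O}_{S,ad})\simeq \prod_{v\in S} L_{K(1)}K(F_v)$. Each factor $L_{K(1)}K(F_v)$ is Anderson self-dual by Thm.~\ref{thmann_ThmA_LocalVersion} (Blumberg--Mandell for non-archimedean $v$, and the local version at the archimedean places\footnote{The archimedean contributions are finite products of $L_{K(1)}K(\mathbf{R})$ and $L_{K(1)}K(\mathbf{C})$, handled by Thm.~\ref{thmann_ThmA_LocalVersion}.}). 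Since $I_{\mathbf{Z}_p}$ converts finite products into finite products (as a right adjoint; and equivalently, since $L_{K(1)}$ commutes with them), the product of local self-dualities gives the middle equivalence
\[
L_{K(1)}K(\mathsf{LC}\mathcal{O}_{S,ad})\xrightarrow{\;\sim\;} I_{\mathbf{Z}_p}L_{K(1)}K(\mathsf{LC}\mathcal{O}_{S,ad}).
\]

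For (b), we must check that the two squares of the proposed diagram commute. Both squares reduce, after taking Anderson duals, to checking that the pairings constructed in \S\ref{sect_Duality} and in Thm.~\ref{thmann_ThmA_LocalVersion} fit together along the ring map $\mathcal{O}_{S}\to\mathbb{A}_{S}$ and the quotient functor $\mathsf{LC}\mathcal{O}_{S,ad}\to\mathsf{LC}\mathcal{O}_S$. Concretely, the compatibility of the left square amounts to the statement that tensoring with $\mathbb{A}_S$ is adjoint, under the local self-duality of $L_{K(1)}K(\mathsf{LC}\mathcal{O}_{S,ad})$, to the cokernel map $L_{K(1)}K(\mathsf{LC}\mathcal{O}_{S,ad})\to L_{K(1)}K(\mathsf{LC}\mathcal{O}_S)$. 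On the descent-spectral-sequence level (compare the $E_2$-pages of Lemma~\ref{lemma_specseq_for_restricted_product_inside} and Prop.~\ref{prop_duality2}), this is the assertion that the Poitou--Tate pairings $H^i(\mathcal{O}_S,M)\times H^{2-i}(\mathcal{O}_S,M^d)\to \mathbf{Q}/\mathbf{Z}$ and $P^i_S(F,M)\times P^{2-i}_S(F,M^d)\to\mathbf{Q}/\mathbf{Z}$ are compatible along localization/restriction, i.e., the \emph{reciprocity} identity $\sum_v \mathrm{inv}_v=0$ — which is built into the classical nine-term Poitou--Tate sequence already used in the proof of Prop.~\ref{prop_duality2}. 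The right square is the dual statement.

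The main obstacle I expect is bookkeeping rather than new input: one must align the trace $\operatorname{Tr}_{F,S}$ of Def.~\ref{def_BigTrace} with the product of local trace maps $\prod_v \operatorname{Tr}_{F_v}$ (in the sense of Thm.~\ref{thmann_ThmA_LocalVersion}) via the restricted-product model, and verify that the resulting square
\[
\xymatrix{
L_{K(1)}K(\mathsf{LC}\mathcal{O}_{S,ad})\otimes L_{K(1)}K(\mathsf{LC}\mathcal{O}_{S,ad}) \ar[r] \ar[d] & I_{\mathbf{Z}_p}\mathbb{S}_{\widehat p} \ar@{=}[d] \\
L_{K(1)}K(\mathsf{LC}\mathcal{O}_{S})\otimes L_{K(1)}K(\mathcal{O}_{S}) \ar[r] & I_{\mathbf{Z}_p}\mathbb{S}_{\widehat p}
}
\]
commutes (the left downward arrow being induced by the cokernel and by tensoring with $\mathbb{A}_S$). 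This is essentially Prop.~\ref{prop_VIsASectionOfE} paired with Lemma~\ref{lemma_TransferCompat}: the first guarantees the trace on the global side is the unique section compatible with the local invariants, and the second guarantees compatibility of transfers under the $\otimes\mathbb{A}_S$ map. Once this single compatibility is checked at the level of $\pi_0$ modulo $p^k$ (as in Step~3 of the proof of Thm.~\ref{thm_DualityMain}), $p$-completeness propagates it to the full equivalence, and the three-by-three lemma for fiber sequences does the rest.
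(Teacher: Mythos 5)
First, to be clear about what is being reviewed: the statement in question is a \emph{Conjecture} in the paper — the paper does not prove it, and in fact the remark immediately following it says that ``with the current method of proof, setting this up appears to be a torturous undertaking.'' So you are not reconstructing a proof the paper already gave; you are sketching a potential proof of something the paper leaves explicitly open.

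Your plan assembles the right ingredients: the right vertical is Eq.~\ref{lwo2}, the left vertical is Eq.~\ref{lwo3} (available since $S$ is finite), and the middle is the product of local self-dualities from Thm.~\ref{thmann_ThmA_LocalVersion} over the finite places in $S$. (A small correction: the footnote about archimedean contributions is moot — for $S$ consisting of finite places, $\mathsf{LC}\mathcal{O}_{S,ad}$ has no archimedean factors, since the quotient by $\mathsf{Proj}_{F_\infty,fg}$ has precisely killed them.) You correctly locate the substance of the compatibility in Poitou--Tate reciprocity and the alignment of $\operatorname{Tr}_{F,S}$ with the local traces.

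The gap is in the last paragraph, and it is precisely the gap the paper points out. You propose to check ``this single compatibility ... at the level of $\pi_0$ modulo $p^k$'' and to let ``$p$-completeness propagate it.'' That method is valid for showing that a \emph{map} of $p$-complete spectra is an equivalence (as in Step 4 of the proof of Thm.~\ref{thm_DualityMain}), but it does not establish that a \emph{square} of spectra commutes: agreeing on $\pi_*$ does not imply that the two composites are homotopic, and ``an equivalence of fiber sequences'' requires producing an actual filling homotopy for the right (or left) square and then identifying the induced map on (co)fibers with your independently-constructed vertical. This coherence datum — ``the underlying bi-Cartesian square of spectra, including the null homotopy'' — is exactly what the remark after the conjecture flags as the obstacle. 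Finally, invoking a ``three-by-three lemma'' at the end does not produce this datum either; it would at best give you a map on the fiber once you have a coherently commuting right square, and you would then still have to compare that induced map with Eq.~\ref{lwo3}. So the plan is on the right track but, as written, it asserts rather than achieves the step the paper identifies as the hard one, and the mechanism offered (reduction to $\pi_0$ mod $p^k$) is the wrong tool for that step.
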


By an equivalence of fiber sequences we mean that the underlying bi-Cartesian
square of spectra, including the null homotopy, comes with an equivalence to
the dual square. With the current method of proof, setting this up appears to
be a torturous undertaking.

\subsection{Expectations}

For all regular schemes $X$ there should exist a category $\mathsf{C}_{X}$ of
\textit{derived locally compact} $\mathcal{O}_{X}$-modules.

\begin{conjecture}
\label{c1}We restrict our attention to regular schemes which are flat, finite
type and separated over some ring of $S$-integers $\mathcal{O}_{S}$ of a
number field.

\begin{enumerate}
\item The association $U\mapsto\mathsf{C}_{U}$ should satisfy a Zariski
cosheaf property, i.e., if $U\subseteq U^{\prime}$ are opens, instead of a
restriction, there should be an extension of sections, an exact functor%
\[
\mathsf{C}_{U}\longrightarrow\mathsf{C}_{U^{\prime}}\text{.}%
\]

\item For every proper morphism $f\colon X\rightarrow Y$ there should be a
pullback\footnote{so exactly opposite to what one would usually expect}
$f^{\ast}\colon\mathsf{C}_{Y}\rightarrow\mathsf{C}_{X}$.

\item For every smooth morphism $f\colon X\rightarrow Y$ there should be a
pushforward $f_{\ast}\colon\mathsf{C}_{X}\rightarrow\mathsf{C}_{Y}$.

\item There should be a motivic weight filtration on $K(\mathsf{C}_{X})$ and
\textquotedblleft locally compact motivic cohomology\textquotedblright%
\ $H_{\operatorname*{lc}}^{\ast}(-,\mathbf{Z}(n))$ and a Atiyah--Hirzebruch
type spectral sequence $H_{\operatorname*{lc}}^{\ast}(-,\mathbf{Z}%
(\ast))\Rightarrow K(\mathsf{C}_{X})$ for this filtration.

\item Once $\frac{1}{p}\in\mathcal{O}_{S}$, $K(\mathsf{C}_{X})$ and $K(X)$
should be $K(1)$-local Anderson duals for the prime $p$, up to a shift.
\end{enumerate}

For regular scheme of finite type and separated over $\mathbf{F}_{p}$, there
should be an analogous formalism, valid away from $p$.
\end{conjecture}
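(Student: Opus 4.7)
The plan is to construct $\mathsf{C}_X$ by globalizing the condensed-module picture that already underlies $\mathsf{LCA}_{\mathcal{O}_S}$ in Krull dimension one. For an affine regular scheme $\operatorname{Spec}R$ of finite type over some $\mathcal{O}_S$ with $\tfrac{1}{p}\in\mathcal{O}_S$, take $\mathsf{C}_{\operatorname{Spec}R}$ to be a full stable subcategory of condensed $R$-module spectra carved out by a derived local compactness constraint; in dimension one this should recover $\mathsf{D}^b_\infty(\mathsf{LCA}_{\mathcal{O}_S})$ (or rather its $\mathsf{LC}$-variant of Def.~\ref{def_NonArchLCAOS}) up to the archimedean adjustments. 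Globalize by Zariski (or Nisnevich) descent, using as covers the affine opens $U\subseteq U'$. The first obstacle is pinning down the correct local compactness condition: the computations in \cite{lcahighdim} indicate that for surfaces plain local compactness is insufficient, so one must use $n$-Tate objects or a suitable derived condensed analogue, and verify that these reduce to $\mathsf{LCA}_{\mathcal{O}_S}$ on affines of dimension one.

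The cosheaf property (1) and transfers (2)--(3) should then be a consequence of a six-functor formalism on $X\mapsto \mathsf{C}_X$. An open immersion $j\colon U\hookrightarrow U'$ gives the extension functor $j_!$ (not $j^\ast$), consistent with the observation in the introduction that finite generation and local compactness are exchanged under the transfers. For a proper $f\colon X\to Y$ the pullback $f^\ast$ is defined as restriction of scalars, generalizing the pushforward $f_\ast$ on the algebraic side (cf.\ Def.~\ref{def_Transfers}); for smooth $f$ the pushforward $f_\ast$ is an integration/induction functor whose construction in dimension one reduces to the functors of Prop.~\ref{prop_IdentifyAdelicBlocks_2}. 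Proving the expected base change and projection formulas is substantial but follows the pattern of Clausen's work once the categorical setup is in place.

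For (4), the motivic filtration on $K(\mathsf{C}_X)$ should be a \emph{locally-compact} companion of the motivic filtration on $K(X)$ (Bhatt--Morrow--Scholze, Bachmann--Elmanto--Morrow), with graded pieces $H^\ast_{\mathrm{lc}}(X,\mathbf{Z}(n))$ equal to compactly-supported motivic cohomology. The resulting Atiyah--Hirzebruch spectral sequence should, in dimension one and after $K(1)$-localization, specialize to the descent spectral sequence built in Prop.~\ref{prop_duality2}, whose $E_2$-terms are $H^\ast(\mathcal{O}_S,\mathbf{Z}/p^k(j))^\ast\cong\operatorname{Ext}^\ast_{G_S}(\mathbf{Z}/p^k(1+j),C_S^{\setminus\infty})$; matching these two filtrations in the one-dimensional case is the natural consistency check.

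Finally, for (5) the strategy is to imitate the present paper. Using the structure morphism $\pi\colon X\to\operatorname{Spec}\mathbf{Z}[\tfrac1p]$, one defines a trace $\operatorname{Tr}_X\colon L_{K(1)}K(\mathsf{C}_X)\to \Sigma^{-c}I_{\mathbf{Z}_p}\mathbb{S}_{\widehat{p}}$ by pushing forward to $\mathsf{C}_{\operatorname{Spec}\mathbf{Z}[1/p]}$ and composing with the already-constructed trace of Def.~\ref{def_BigTrace}, where $c$ records the appropriate dimensional shift. The adjoint map $L_{K(1)}K(\mathsf{C}_X)\to \Sigma^{-c}I_{\mathbf{Z}_p}L_{K(1)}K(X)$ is then checked to be an equivalence by reducing to affines via the sheaf/cosheaf properties of (1) and on affines by comparing the motivic AH spectral sequences on both sides under Artin--Verdier duality for $\mathbf{Z}/p^k(j)$ on regular $\mathbf{Z}[\tfrac1p]$-schemes. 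The hardest step will be the extension to infinite $S$ and to non-proper $X$: already in dimension one this required the delicate Poitou--Tate machinery of \S\ref{sect_Duality} rather than étale Artin--Verdier, and no analogous Poitou--Tate sequence is available in higher dimension, so a genuinely new input (plausibly in the condensed/six-functor language) will be needed to handle the non-compact, infinite-$S$ case that is required for a statement about $K(F)$ itself.
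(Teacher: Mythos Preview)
The statement you are addressing is a \emph{conjecture}, not a theorem: the paper offers no proof of it. It appears in the subsection ``Expectations'' as a list of desiderata for a hypothetical category $\mathsf{C}_X$ of derived locally compact modules over higher-dimensional regular schemes, and the paper explicitly defers this to ``the follow-up article'' and notes that even the correct definition of $\mathsf{C}_X$ is not yet settled (plain local compactness is known to be insufficient in higher dimension by the computations cited from \cite{lcahighdim}).

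What you have written is therefore not a proof but a research programme. As such it is reasonable and indeed quite close in spirit to what the paper gestures at: globalize via a cosheaf property, build a six-functor formalism with the transfers reversed relative to the algebraic side, construct a trace by pushing forward to $\operatorname{Spec}\mathbf{Z}[\tfrac{1}{p}]$ and invoking Def.~\ref{def_BigTrace}, and verify perfectness by comparing descent spectral sequences under Artin--Verdier/Poitou--Tate duality. You correctly identify the main obstacle the paper itself flags, namely that no Poitou--Tate-type machinery is available in higher dimension for the infinite-$S$/non-proper case. But none of the steps you outline is actually carried out, and several of them (the correct derived local compactness condition, the six-functor package, the higher-dimensional duality input) are genuinely open. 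So there is nothing here to compare against a proof in the paper, because neither you nor the paper proves the statement.
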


In order to incorporate the prime $p$, one should switch to Selmer $K$-theory,
as envisioned by Clausen \cite{clausen}. There should be a definition of such
a category for all qcqs schemes, but then it should be a bit more tricky than
just demanding some form of derived local compactness. This can be seen from
\cite{MR4028830,MR4118150}, which show how for non-regular schemes one does
not get the right object even in the situation of a one-dimensional arithmetic
scheme. For example, $\mathsf{LCA}_{\mathfrak{A}}$ is \textit{not} the right
concept if one were to adapt the present text to working with non-maximal
orders $\mathfrak{A}\subsetneqq\mathcal{O}_{F}$ in a number field.

There is also a non-commutative version of all this. We have decided not to
include this into this manuscript, but see \cite{clausennc} for some first
steps. The geometric counterpart should be a version of the above conjecture,
twisted by Azumaya sheaves.

\appendix

\section{Proof of Theorem \ref{thmann_ThmA_LocalVersion}}

We have deferred the proof of Thm. \ref{thmann_ThmA_LocalVersion} to the
appendix because it is largely independent of the much more involved case of
number fields.

\textit{There is another subtle point: }Throughout this entire text, for any
topological ring $R$ the category $\mathsf{LCA}_{R}$ always refers to locally
compact topological $R$-modules with continuous $R$-module homomorphisms. In
the main body of the text, we only use this for rings with the discrete
topology. This is, at times, different in the Appendices (and only in them).
As a result, some simplifications are not longer possible. In particular, the
alternative description of $\mathsf{LCA}_{R}$ in Rmk.
\ref{rmk_WhatDoesLCAMeanForSomeRing} as $\mathsf{Fun}(\left\langle
\mathcal{O}_{S}\right\rangle $,$\mathsf{LCA}_{\mathbf{Z}})$ is only valid if
$R$ carries the discrete topology.

\begin{proof}
[Proof of Theorem \ref{thmann_ThmA_LocalVersion}]\textit{(Local version,
non-archimedean)} Suppose $F$ is a finite extension of $\mathbf{Q}_{\ell}$. By
\cite[Thm. 1.4]{MR4121155} the spectrum $K(F)$ is Anderson self-dual in the
$K(1)$-local homotopy category, namely%
\[
I_{\mathbf{Z}_{p}}L_{K(1)}K(F)\cong L_{K(1)}K(F)\text{.}%
\]
By \cite[\S A.2, notably Prop. A.4]{kthyartin} there is an exact equivalence
of exact categories $\mathsf{Proj}_{F,fg}\overset{\sim}{\rightarrow
}\mathsf{LCA}_{F}$, for topological locally compact $F$-vector spaces. This
proves the claim. \textit{(Local version, archimedean)} Straightforward direct
verification. \textit{(Finite field version) }The Anderson dual of
$K(\mathbf{F}_{q})$ is also studied by Blumberg--Mandell--Yuan \cite[Prop.
6.5]{bmychromaticconvergence}. The shift $\Sigma$ in the dual they observe can
be interpreted on the locally compact side by a shift which exists
\textit{before} localizing to the $K(1)$-local world. It then becomes an
instance of the delooping property of Tate categories. We explain this in a
self-contained way: Suppose $F$ is a finite extension of $\mathbf{F}_{\ell}$.
By \cite[proof of Prop. A.1]{kthyartin} there is an exact equivalence of exact
categories $\Gamma\colon\mathsf{LCA}_{F}\overset{\sim}{\rightarrow
}\mathsf{Tate}(\mathsf{Mod}_{F,fg})$, which is also called $\Gamma$ loc. cit.,
so%
\begin{equation}
K(\mathsf{LCA}_{F})\cong K(\mathsf{Tate}(\mathsf{Mod}_{F,fg}))\cong\Sigma
K(\mathsf{Mod}_{F,fg})=\Sigma K(F)\text{,} \label{l_sDeloopClassical}%
\end{equation}
where the second equivalence is Sho Saito's delooping theorem for Tate
categories \cite{MR3317759}. The cohomological dimension of finite fields is
just one (the absolute Galois group is $\widehat{\mathbf{Z}}$), so the
Thomason descent spectral sequence looks like in Diag. \ref{lwui1}, but with
all the $H^{2}$-terms being zero. Combining this with the equivalence of Eq.
\ref{l_sDeloopClassical} (run backwards), we obtain%
\begin{align*}
\pi_{2j+1}L_{K(1)}K(\mathsf{LCA}_{F})\otimes\mathbb{S}/p^{k}  &  \cong
H^{0}(F,\mathbf{Z}/p^{k}\mathbf{Z}(j))\\
\pi_{2j}L_{K(1)}K(\mathsf{LCA}_{F})\otimes\mathbb{S}/p^{k}  &  \cong
H^{1}(F,\mathbf{Z}/p^{k}\mathbf{Z}(j))
\end{align*}
for all $j\in\mathbf{Z}$ and $k\geq1$. For $j=0$ the second isomorphism yields
a compatible system of isomorphisms to $\mathbf{Z}/p^{k}\mathbf{Z}$, so for
the colimit we obtain%
\[
\pi_{0}\left(  L_{K(1)}K(\mathsf{LCA}_{F})\otimes\mathbb{S}\mathbf{Q}%
_{p}/\mathbf{Z}_{p}\right)  \cong H^{1}(F,\mathbf{Q}_{p}/\mathbf{Z}_{p}%
)\cong\mathbf{Q}_{p}/\mathbf{Z}_{p}\text{.}%
\]
Rephrasing this slightly using $\mathbb{S}\mathbf{Q}_{p}/\mathbf{Z}_{p}%
\cong\Sigma\mathcal{N}$, this defines a map
\[
\pi_{0}\left(  L_{K(1)}\Sigma K(\mathsf{LCA}_{F})\otimes\mathcal{N}\right)
\longrightarrow\mathbf{Q}_{p}/\mathbf{Z}_{p}%
\]
and thus $L_{K(1)}\Sigma K(\mathsf{LCA}_{F})\rightarrow I_{\mathbf{Q}%
_{p}/\mathbf{Z}_{p}}\mathbb{S}_{\widehat{p}}\cong\Sigma I_{\mathbf{Z}_{p}%
}\mathbb{S}_{\widehat{p}}$ by the properties of Brown--Comenetz and Anderson
duals in the $p$-complete setting (concretely, this has used Eq. \ref{lqwxt3}
and Eq. \ref{lrrb1}). Desuspension yields the map%
\begin{equation}
L_{K(1)}K(\mathsf{LCA}_{F})\longrightarrow I_{\mathbf{Z}_{p}}\mathbb{S}%
_{\widehat{p}}\text{,} \label{lrrb2}%
\end{equation}
which will act as the finite field analogue of the role of the trace map of
\S \ref{sect_TraceMap}. The pairing of \S \ref{subsect_RingStructures} can
more generally be set up for arbitrary rings $R$ with the discrete topology,
giving $K(\mathsf{LCA}_{R})$ a $K(R)$-module structure. The resulting pairing%
\[
K(R)\otimes_{\mathsf{Sp}}K(\mathsf{LCA}_{R})\longrightarrow K(\mathsf{LCA}%
_{R})
\]
for our special case $R:=F$, composed with the trace map of Eq. \ref{lrrb2},
then induces a pairing%
\[
L_{K(1)}K(F)\otimes_{\mathsf{K}}L_{K(1)}K(\mathsf{LCA}_{F})\longrightarrow
L_{K(1)}K(\mathsf{LCA}_{F})\longrightarrow I_{\mathbf{Z}_{p}}\mathbb{S}%
_{\widehat{p}}\text{.}%
\]
Unravelling what this pairing does, one shows that this is a perfect pairing
by showing that it induces a perfect pairing on homotopy groups, which in turn
follows from the existence of a perfect pairing on the $E_{2}$-page of the
associated Thomason descent spectral sequence. Said pairing (for
$L_{K(1)}K\otimes\mathbb{S}/p^{k}$) is
\[
H^{i}(F,\mathbf{Z}/p^{k}\mathbf{Z}(j))\otimes_{\mathbf{Z}}H^{1-i}%
(F,\mathbf{Z}/p^{k}\mathbf{Z}(-j))\longrightarrow H^{1}(F,\mathbf{Z}%
/p^{k}\mathbf{Z}(0))\cong\mathbf{Z}/p^{k}\mathbf{Z}%
\]
and is perfect by the Galois cohomological duality for finite fields (or one
could just say that this holds because $\operatorname*{Gal}%
(F^{\operatorname*{sep}}/F)\cong\widehat{\mathbf{Z}}$ being a profinitely free
profinite group of rank one). This finishes the proof.
\end{proof}

The above argument is entirely analogous to the one in \S \ref{sect_Duality}
and as is explained there, it is indeed sufficient to show that the perfect
pairings are perfect $\operatorname{mod}p$.

\begin{corollary}
\label{cor_KOneSelfDualityOfAdelicModules}Let $p$ be an odd prime, $F$ a
number field, $S$ a finite set of places of $F$ containing the infinite places
and such that $\frac{1}{p}\in\mathcal{O}_{S}$. Then we have the self-duality%
\[
I_{\mathbf{Z}_{p}}L_{K(1)}K(\mathcal{O}_{S,ad})\cong L_{K(1)}K(\mathsf{LCA}%
_{\mathcal{O}_{S},ad})\text{.}%
\]

\end{corollary}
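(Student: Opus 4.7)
The plan is to reduce the claim to a product of local self-dualities, exploiting that finite $S$ turns every restricted product in sight into a genuine finite product, which is much better-behaved under all the operations involved.

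First, I would apply Corollary \ref{cor_LCAAdQuasiAbelian}: for finite $S$ there is an exact equivalence $\mathsf{LCA}_{\mathcal{O}_S, ad} \simeq \mathsf{Mod}_{R, fg}$ where $R := \prod_{v \in S} F_v$ is the finite product ring. Since $K$-theory commutes with finite products of exact categories, this gives
\[
K(\mathsf{LCA}_{\mathcal{O}_S, ad}) \simeq \bigoplus_{v \in S} K(F_v).
\]
Because $L_{K(1)}$ commutes with finite products (and these agree with finite coproducts in $\mathsf{Sp}$), we then get
\[
L_{K(1)} K(\mathsf{LCA}_{\mathcal{O}_S, ad}) \simeq \prod_{v \in S} L_{K(1)} K(F_v).
\]

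Next I would invoke Anderson self-duality at each local factor. For $v \in S$ archimedean, and for $v$ finite with residue characteristic $\ell \neq p$, this is exactly Theorem \ref{thmann_ThmA_LocalVersion} (noting that for finite residue field local fields the LCA interpretation collapses, so the theorem is literally a self-duality of $L_{K(1)} K(F_v)$). For the remaining case of $v$ lying over $p$ (unavoidably in $S$ because $\tfrac{1}{p} \in \mathcal{O}_S$), the paper's own local theorem excludes $\ell = p$, so I would cite the original Blumberg--Mandell statement \cite[Thm.~1.4]{MR4121155} directly for the $p$-adic local field $F_v$. In each case one obtains $I_{\mathbf{Z}_p} L_{K(1)} K(F_v) \simeq L_{K(1)} K(F_v)$.

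Finally, since $I_{\mathbf{Z}_p}$ is a contravariant functor on $\mathsf{Sp}$ that turns finite coproducts into finite products (which coincide), one concludes
\[
I_{\mathbf{Z}_p} L_{K(1)} K(\mathsf{LCA}_{\mathcal{O}_S, ad}) \simeq \prod_{v \in S} I_{\mathbf{Z}_p} L_{K(1)} K(F_v) \simeq \prod_{v \in S} L_{K(1)} K(F_v) \simeq L_{K(1)} K(\mathsf{LCA}_{\mathcal{O}_S, ad}).
\]
The main obstacle is really only a bookkeeping one: one must invoke local self-duality at the places above $p$, which is not covered by this paper's Theorem \ref{thmann_ThmA_LocalVersion} and has to be imported from Blumberg--Mandell. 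I would also point out that the statement asserts only an abstract equivalence of spectra; matching it with a concrete global trace pairing (as in Theorem \ref{thm_DualityMain}) is not needed here and would essentially amount to redoing the main duality theorem.
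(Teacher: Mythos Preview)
Your proposal is correct and follows essentially the same route as the paper: decompose $\mathsf{LCA}_{\mathcal{O}_S,ad}$ for finite $S$ into a finite product over the places $v\in S$ (via Prop.~\ref{prop_IdentifyAdelicBlocks} or equivalently Cor.~\ref{cor_LCAAdQuasiAbelian}), then take the finite sum of local Anderson self-dualities. Your bookkeeping remark is well taken: since $\tfrac{1}{p}\in\mathcal{O}_S$ forces $S$ to contain the places above $p$, and Theorem~\ref{thmann_ThmA_LocalVersion} as stated excludes $\ell=p$, one must indeed cite \cite[Thm.~1.4]{MR4121155} directly for those factors --- a point the paper's one-line proof glosses over.
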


The self-duality does not hold for infinite sets $S$, as explained in Pitfall
\ref{pitfall_l1}.

\begin{proof}
Use Prop. \ref{prop_IdentifyAdelicBlocks} and take the sum of the
self-dualities provided by Thm. \ref{thmann_ThmA_LocalVersion}.
\end{proof}

\begin{pitfalls}
\label{pitfall_l1}\label{rmk_TheStoryAboutTateCohomology3}The proof of Cor.
\ref{cor_KOneSelfDualityOfAdelicModules} is easy enough not to raise concerns.
Nonetheless, there are two subtle points:\ (1) One might wonder whether this
self-duality holds for infinite sets of places. This is particularly tempting
since there is a very general Pontryagin self-duality%
\begin{equation}
P^{i}(F,M)\longrightarrow P^{2-i}(F,M)^{\vee}\qquad\text{(for all }%
i\in\mathbf{Z}\text{)} \label{lww8}%
\end{equation}
for the groups $P^{i}$ for any finite Galois module $M$, once equipped with
natural locally compact topologies, see \cite[Ch. VIII, Prop. 8.5.2]%
{MR2392026}. This duality is valid for any choice of $S$ and clearly one would
want to model Cor. \ref{cor_KOneSelfDualityOfAdelicModules} on this. However,
as this is a topological Pontryagin duality, one needs to reduce it to a
merely algebraic duality with respect to $\operatorname*{Hom}_{\mathsf{Ab}%
}(-,\mathbf{Q}/\mathbf{Z})$ on the underlying groups to reformulate it in
terms of Brown--Comenetz duals (and, as discussed in
\S \ref{subsect_BCvsAndersonDuals}, the same applies to $p$-complete Anderson
duals). As we explain in detail in Example \ref{ex_PontryaginWithQZCoeffs},
this reformulation is only possible if the compact (profinite) groups in
question are topologically finitely generated. This means that our
$p$-complete spectra need to have finitely generated $\mathbf{Z}_{p}$-modules
as homotopy groups, and this fails once $S$ is infinite. (2) At infinite
places, the duality in Eq. \ref{lww8} relies on a perfect duality of Tate
cohomology groups $\widehat{H}^{0}\otimes\widehat{H}^{2}\rightarrow\widehat
{H}^{2}$, which moreover are $2$-torsion at worst. As we have already
explained in Rmk. \ref{rmk_TheStoryAboutTateCohomology1} and Rmk.
\ref{rmk_TheStoryAboutTateCohomology2}, this type of duality does not surface
in the homotopy groups of $K(1)$-localizations. Instead, the duality in Thm.
\ref{thmann_ThmA_LocalVersion} for $\mathbf{R}$ and $\mathbf{C}$ uses just the
self-duality of $H^{0}$ with itself in ordinary Galois cohomology,
contributing $p$-primary torsion. It might appear confusing to replace a
$2$-torsion phenomenon by a $p$-torsion one, but note that these are two
entirely independent stories: The $2$-torsion duality in Tate cohomology has
actual arithmetic significance in Poitou--Tate duality (e.g., the Hilbert
symbol at $\mathbf{R}$ stems from it), but is irrelevant for us, since our $p$
is odd. The $p$-primary torsion self-duality on $H^{0}$ is rather dull. It has
no arithmetic relevance and has no counterpart in Poitou--Tate duality.
\end{pitfalls}

\begin{pitfalls}
The approach of Vista \ref{vista_1} should permit to generalize Cor.
\ref{cor_KOneSelfDualityOfAdelicModules} to infinite sets $S$.
\end{pitfalls}

\section{Locally compact modules with cardinality
bounds\label{sect_Appendix_SetTheory}}

Let $R$ be a Noetherian ring. One might feel more comfortable to avoid working
with the category of \emph{all} $R$-modules, and instead impose some
cardinality bound. Similarly, it might be desirable to restrict to locally
compact modules satisfying some size constraint. In this appendix we sketch
how this can be done, and we ultimately see that cardinality bounds, once big
enough, do not affect the $K$-theory (or other localizing invariants) at all.

Let $\kappa$ be an infinite cardinal and let $\mathsf{Mod}_{R}^{\leq\kappa}$
denote the abelian category of $R$-modules which admit a $\kappa$-generating
set. Alternatively, $\mathsf{Mod}_{R}^{\leq\kappa}\cong\mathsf{Ind}_{\kappa
}(\mathsf{Mod}_{R,fg})$, where $\mathsf{Ind}_{\kappa}$ denotes the
$\operatorname*{Ind}$-category based on diagrams of cardinality at most
$\kappa$. We recall that every locally compact abelian group $M$ admits a
non-canonical decomposition $M\simeq\mathbf{R}^{n}\oplus N$ with%
\begin{equation}
C\hookrightarrow N\twoheadrightarrow D\text{,} \label{lsets1}%
\end{equation}
where $C$ is a compact clopen subgroup of $N$ and $D$ is discrete. The
isomorphism class of $N$ is uniquely determined \cite{MR578649} (a nice proof
can be found in \cite[Prop. 33]{MR1856638}).

\begin{definition}
\label{dsets1}Let $\kappa$ be an infinite cardinal. Let $\mathsf{LCA}%
_{\mathbf{Z}}^{\leq\kappa}$ be the category of locally compact abelian groups
which admit a decomposition as in Eq. \ref{lsets1} such that $D\in
\mathsf{Mod}_{\mathbf{Z}}^{\leq\kappa}$ and $C^{\vee}\in\mathsf{Mod}%
_{\mathbf{Z}}^{\leq\kappa}$.
\end{definition}

Since $C$ is compact, its Pontryagin dual $C^{\vee}$ is discrete and therefore
can be regarded as an object in $\mathsf{Mod}_{\mathbf{Z}}$. If we have a
second decomposition $C^{\prime}\hookrightarrow N\twoheadrightarrow D^{\prime
}$ as in Eq. \ref{lsets1} such that $C^{\prime}\subseteq C$, then
$C/C^{\prime}$ must be finite (since $C$ is compact and $C^{\prime}$ is open
in $N$, and therefore $C/C^{\prime}$ carries the discrete topology).
Correspondingly, both $C$ and $D$ are well-defined up to a zig-zag of finite
subquotients. Hence, if the conditions in Def. \ref{dsets1} hold for one
sequence as in Eq. \ref{lsets1}, they hold for any. Using this, one easily
shows the following.

\begin{lemma}
$\mathsf{LCA}_{\mathbf{Z}}^{\leq\kappa}$ is a quasi-abelian category.
\end{lemma}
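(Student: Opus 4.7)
The strategy is to exhibit $\mathsf{LCA}_{\mathbf{Z}}^{\leq\kappa}$ as a full subcategory of $\mathsf{LCA}_{\mathbf{Z}}$ closed under finite direct sums, kernels, and cokernels. Any such full subcategory of a quasi-abelian category inherits the quasi-abelian structure, with kernels, cokernels, pullbacks and pushouts computed in the ambient category, and the axiom that strict monics are stable under pushout and strict epics under pullback inherited automatically. The contents of Def.~\ref{dsets1} then reduce the verification to a purely module-theoretic and Pontryagin-duality bookkeeping, leveraging two facts established before the lemma: that the decomposition $C\hookrightarrow N\twoheadrightarrow D$ is unique up to replacing $C$ by a finite-index subgroup, and that the conditions in Def.~\ref{dsets1} therefore do not depend on the chosen decomposition.

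The first reduction is to observe that $\mathsf{Mod}_{\mathbf{Z}}^{\leq\kappa}$ is a Serre subcategory of $\mathsf{Mod}_{\mathbf{Z}}$: for $\mathbf{Z}$ Noetherian and $\kappa$ infinite, submodules of $\kappa$-generated modules are $\kappa$-generated (by Noetherianity applied at the level of $\kappa$-indexed filtered colimits of finitely generated submodules), and extensions and quotients of $\kappa$-generated modules obviously are. The second reduction is that Pontryagin duality $(-)^{\vee}$ is an exact equivalence $\mathsf{LCA}_{\mathbf{Z}}^{op}\simeq\mathsf{LCA}_{\mathbf{Z}}$ which interchanges the two conditions in Def.~\ref{dsets1}: it sends $\mathbf{R}^{n}\oplus N$ with $C\hookrightarrow N\twoheadrightarrow D$ to $\mathbf{R}^{n}\oplus N^{\vee}$ with $D^{\vee}\hookrightarrow N^{\vee}\twoheadrightarrow C^{\vee}$, swapping the roles of $D$ and $C^{\vee}$. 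Hence $\mathsf{LCA}_{\mathbf{Z}}^{\leq\kappa}$ is stable under duality, so closure under cokernels is equivalent to closure under kernels, and only the latter needs to be verified. Closure under finite direct sums is immediate because the decomposition data adds componentwise and $\mathsf{Mod}_{\mathbf{Z}}^{\leq\kappa}$ is closed under finite direct sums.

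For closure under kernels, let $f\colon M\to L$ be a morphism in $\mathsf{LCA}_{\mathbf{Z}}^{\leq\kappa}$ and set $K:=\ker(f)$, a closed subgroup of $M\cong \mathbf{R}^{n}\oplus N$ with $C\hookrightarrow N\twoheadrightarrow D$ as in Def.~\ref{dsets1}. The subgroup $U:=\mathbf{R}^{n}\oplus C\subseteq M$ is clopen, so $K\cap U$ is a clopen subgroup of $K$. Since the quotient $K/(K\cap U)$ embeds into $M/U\cong D$, it lies in $\mathsf{Mod}_{\mathbf{Z}}^{\leq\kappa}$ by the Serre property. To handle $K\cap U$, choose any decomposition $K\cong\mathbf{R}^{m}\oplus K'$ with $C_{K'}\hookrightarrow K'\twoheadrightarrow D_{K'}$; intersecting $K$ with the clopen $U$ replaces $C_{K'}$ by a subgroup of finite index, so we may assume $C_{K'}\subseteq K\cap U$ and in fact $C_{K'}\subseteq C$ after absorbing finitely many discrete cosets, because $C_{K'}$ is connected-plus-profinite and $U/(\mathbf{R}^{n}\oplus C)=0$. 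Being a closed subgroup of the compact group $C$, the dual $C_{K'}^{\vee}$ is a quotient of $C^{\vee}$, and thus again lies in $\mathsf{Mod}_{\mathbf{Z}}^{\leq\kappa}$. Finally, the discrete quotient of $K$ for this decomposition is an extension of the $\kappa$-generated group $K/(K\cap U)$ by a finitely generated piece arising from $(K\cap U)/C_{K'}$ (a discrete subquotient of $\mathbf{R}^{n}$), and such extensions remain $\kappa$-generated. By the invariance of the conditions in Def.~\ref{dsets1} under change of decomposition, $K\in\mathsf{LCA}_{\mathbf{Z}}^{\leq\kappa}$.

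The main obstacle is the bookkeeping in the last paragraph: separating, within a closed subgroup $K\subseteq M$, the vector contribution coming from $\mathbf{R}^{n}$, the compact contribution coming from $C$, and the discrete contribution coming from $D$, and arranging the decomposition so that each of the two conditions in Def.~\ref{dsets1} can be checked against a Serre-closure property. The uniqueness of the decomposition up to finite index (noted in the text just before the lemma) is exactly what makes this bookkeeping painless: any two candidate compact clopen subgroups of $K$ differ by a finite group, which is negligible for the $\kappa$-generation conditions. Once the kernel case is verified, cokernels follow by duality and the quasi-abelian structure is inherited from $\mathsf{LCA}_{\mathbf{Z}}$, completing the proof.
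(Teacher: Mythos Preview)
The paper does not actually prove this lemma; it says only ``Using this, one easily shows the following,'' where ``this'' refers to the remark that the pair $(C,D)$ in the decomposition is well-defined up to a zig-zag of finite subquotients. Your proposal is precisely the natural way to cash out that hint: exhibit $\mathsf{LCA}_{\mathbf{Z}}^{\leq\kappa}$ as a full subcategory of the quasi-abelian $\mathsf{LCA}_{\mathbf{Z}}$ closed under kernels and cokernels, use the self-duality of the definition under Pontryagin duality to reduce to one of the two, and verify closure via the Serre property of $\mathsf{Mod}_{\mathbf{Z}}^{\leq\kappa}$ together with the finite-index invariance. So strategy and paper agree.

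Two minor imprecisions in your kernel argument are worth cleaning up. First, the reason a compact $C_{K'}\subseteq U=\mathbf{R}^n\oplus C$ lands in $C$ is simply that its projection to $\mathbf{R}^n$ is compact and hence zero; the phrase ``connected-plus-profinite'' is not the operative mechanism. Second, you write $(K\cap U)/C_{K'}$ where you mean $(K\cap U)/(\mathbf{R}^m\oplus C_{K'})$, and the claim that this discrete group is finitely generated is not literally ``a subquotient of $\mathbf{R}^n$'' (think of $\mathbf{Z}\subset\mathbf{R}$: the discrete part of the subgroup is $\mathbf{Z}$ while that of the ambient is $0$). A clean fix is to dualize: $(K\cap U)^{\vee}$ is a quotient of $U^{\vee}=\mathbf{R}^n\oplus C^{\vee}$, the image of the clopen $\mathbf{R}^n$ is of the form $\mathbf{R}^{n'}\oplus\mathbf{T}^{n''}$, and the compact clopen part of $(K\cap U)^{\vee}$ is therefore commensurable with $\mathbf{T}^{n''}$, so its dual $D'$ is commensurable with $\mathbf{Z}^{n''}$, hence finitely generated. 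With these corrections the argument goes through.
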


As always, taking closed injections as admissible monics and open surjections
as admissible epics, $\mathsf{LCA}_{\mathbf{Z}}^{\leq\kappa}$ carries an exact
structure (which agrees with the maximal exact structure). Next, define
$\mathsf{LCA}_{\mathcal{O}_{S}}^{\leq\kappa}:=\mathsf{LCA}_{\mathcal{O}_{S}%
}\cap\mathsf{LCA}_{\mathbf{Z}}^{\leq\kappa}$. Note that Pontryagin duality
restricts to an exact equivalence of exact categories%
\[
(-)^{\vee}\colon\mathsf{LCA}_{\mathcal{O}_{S}}^{\leq\kappa}\longrightarrow
\mathsf{LCA}_{\mathcal{O}_{S}}^{\leq\kappa,op}\text{,}%
\]
just by the self-dual nature of Def. \ref{dsets1}. All the arguments in the
present text (as well as \cite{MR4028830,kthyartin}) work in $\mathsf{LCA}%
_{\mathcal{O}_{S}}^{\leq\kappa}$ for any choice of infinite cardinal $\kappa$.

\begin{example}
Our definition of $\mathsf{LCA}_{\mathcal{O}_{S}}^{\leq\kappa}$ does not imply
that all modules in the category have $\kappa$-generating sets. For example,
$\mathbf{R}$, $\mathbf{Z}_{p}$, $\mathbf{Q}_{p}\in\mathsf{LCA}_{\mathbf{Z}%
}^{\leq\kappa}$ even for $\kappa=\aleph_{0}$, although these abelian groups
are uncountable.
\end{example}

Most crucially, the\ Eilenberg swindles which we use at various points in
proofs, only require the existence of countable products of compact modules,
and countable coproducts of discrete modules.\ These indeed exist in any such
$\mathsf{LCA}_{\mathcal{O}_{S}}^{\leq\kappa}$. Finally, for cardinals
$\kappa\leq\kappa^{\prime}$, there is the inclusion as a fully exact
subcategory%
\[
\mathsf{LCA}_{\mathcal{O}_{S}}^{\leq\kappa}\hookrightarrow\mathsf{LCA}%
_{\mathcal{O}_{S}}^{\leq\kappa^{\prime}}%
\]
(and analogously for adelic, ind-c.g., etc. modules), inducing maps to any
localizing invariant $K\colon\operatorname*{Cat}_{\infty}^{\operatorname*{ex}%
}\rightarrow\mathsf{A}$. As our computations work for any $\mathsf{LCA}%
_{\mathcal{O}_{S}}^{\leq\kappa}$, this shows that the value $K(\mathsf{LCA}%
_{\mathcal{O}_{S}}^{\leq\kappa})$ (resp. $\mathsf{LCA}_{\mathcal{O}_{S}%
,ad}^{\leq\kappa}$) remains constant along increasing $\kappa$. This justifies
why it is essentially irrelevant to discuss cardinality constraints in this text.

As an aside, we mention a different perspective.

\begin{lemma}
For $\kappa=\aleph_{0}$, $\mathsf{LCA}_{\mathcal{O}_{S}}^{\leq\aleph_{0}}$ is
equivalent to the category of second countable locally compact $\mathcal{O}%
_{S}$-modules.
\end{lemma}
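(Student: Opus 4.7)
Both candidate categories are defined as full subcategories of $\mathsf{LCA}_{\mathcal{O}_S}$, with the same morphisms (continuous $\mathcal{O}_S$-module homomorphisms). Hence it suffices to show that an object $M\in\mathsf{LCA}_{\mathcal{O}_S}$ lies in $\mathsf{LCA}_{\mathcal{O}_S}^{\leq\aleph_0}$ if and only if its underlying topological space is second countable. Fix a decomposition $M\simeq\mathbf{R}^n\oplus N$ and $C\hookrightarrow N\twoheadrightarrow D$ as in Eq.\ \ref{lsets1}; by the remark following Def.\ \ref{dsets1} the condition in the definition is independent of the choice. The plan is to match each half of the defining condition (countability of $D$ and of $C^\vee$) to a second-countability statement about the corresponding piece of $M$.

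First I would handle the forward direction. Assume $M$ is second countable. Second countability is inherited by subspaces and by continuous open images, so $C\subseteq N\subseteq M$ is second countable, and $D\cong N/C$ is second countable. Being second countable and discrete, $D$ is countable, i.e., $D\in\mathsf{Mod}_{\mathbf{Z}}^{\leq\aleph_0}$. The compact group $C$ is second countable, hence metrizable (Urysohn), and a standard fact from the theory of compact abelian groups (Hewitt--Ross, \S 24) says that a compact Hausdorff abelian group is metrizable if and only if its Pontryagin dual is countable. Thus $C^\vee\in\mathsf{Mod}_{\mathbf{Z}}^{\leq\aleph_0}$, so $M\in\mathsf{LCA}_{\mathcal{O}_S}^{\leq\aleph_0}$.

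For the reverse direction, assume $D$ and $C^\vee$ are countable. Then $D$ is countable discrete, hence second countable; $C$ is compact with countable dual, hence metrizable, hence second countable; and $\mathbf{R}^n$ is second countable. It remains to promote this to $N$ and then to $M$. Because $C$ is open in $N$ and $N/C\cong D$ is countable, $N=\bigsqcup_{d\in D}(x_d+C)$ is a countable disjoint union of clopen translates of the second countable space $C$, which forces $N$ itself to be second countable. Finally, a finite product of second countable spaces is second countable, so $M\simeq\mathbf{R}^n\oplus N$ is second countable. This proves the equivalence on objects and hence the equivalence of categories.

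The only non-elementary input is the Pontryagin-duality characterization of metrizable compact abelian groups in terms of countability of their duals; everything else is a routine manipulation with the basic permanence properties of second countability under closed subspaces, open quotients, finite products, and countable disjoint unions of clopen pieces. I do not expect any obstacle beyond quoting this classical fact.
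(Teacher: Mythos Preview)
Your proof is correct and follows essentially the same route as the paper. The only cosmetic difference is in the compact piece: you pass through metrizability and the Hewitt--Ross characterization, whereas the paper simply invokes that Pontryagin duality preserves second countability (so $C$ is second countable iff $C^{\vee}$ is, iff $C^{\vee}$ is countable since it is discrete). Your version is also more explicit about reassembling $M$ from $\mathbf{R}^n$, $C$, and $D$; the paper leaves this step implicit.
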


\begin{proof}
First, a discrete space is second countable iff it is countable. Hence, the
condition $D\in\mathsf{Mod}_{\mathbf{Z}}^{\leq\aleph_{0}}$ in Def.
\ref{dsets1} is equivalent to $D$ being second countable. Next, Pontryagin
duality sends second countable spaces to second countable spaces, so the
compact piece $C$ is second countable iff $C^{\vee}$ is countable. Finally,
vector modules are always second countable.
\end{proof}

\section{\label{sect_Appendix_DualityInKOneLocalHptyCat}Duality in the
$K(1)$-local homotopy category}

Let $p$ be an odd prime. We write $(\mathsf{Sp},\otimes_{\mathsf{Sp}})$ for
the symmetric monoidal stable $\infty$-category of spectra with respect to the
smash product. Concretely, we take symmetric spectra as the implementation and
largely follow Schwede's notation \cite{symspec}. The homotopy category
$Ho(\mathsf{Sp},\otimes_{\mathsf{Sp}})$ then is naturally a tensor
triangulated category and, stronger, an algebraic stable homotopy category in
the sense of \cite[Def. 1.1.4]{MR1388895}, \cite[\S 1.3]{MR1601906}. Write
$\mathbb{S}A$ for a Moore spectrum of the abelian group $A$. We will only ever
need $\mathbb{S}\mathbf{Z}/p^{k}\cong\mathbb{S}/p^{k}$ and its limits and
colimits, and we may fix a concrete choice for these. We use the shorthand
$\mathcal{N}:=\Sigma^{-1}\mathbb{S}\mathbf{Q}_{p}/\mathbf{Z}_{p}$ and denote
periodic complex $K$-theory by $\operatorname*{KU}$. Let $\mathsf{K}$ be the
$K(1)$-local homotopy category for the prime $p$. We write $L_{K(1)}$ for the
corresponding localization functor,%
\[
L_{K(1)}\colon\mathsf{Sp}\longrightarrow\mathsf{Sp}\text{,}%
\]
so $\mathsf{K}$ is the subcategory of $K(1)$-local objects. Then $\mathsf{K}$
again is a symmetric monoidal stable $\infty$-category, now under%
\[
X\otimes_{\mathsf{K}}Y:=L_{K(1)}(X\otimes_{\mathsf{Sp}}Y)\text{.}%
\]
and $Ho(\mathsf{K},\otimes_{\mathsf{K}})$ is again an algebraic stable
homotopy category \cite[Thm. 7.1]{MR1601906}. While its tensor structure
requires localizing again, the function spectrum%
\begin{equation}
F_{\mathsf{Sp}}(X,Y)=F_{\mathsf{K}}(X,Y) \label{lchh1}%
\end{equation}
is automatically $K(1)$-local. The $K(1)$-local spectra are in particular
$p$-complete. We also use a `dual counterpart' $L_{\operatorname*{KU}%
}\mathsf{Sp}_{\operatorname*{tor}}$, the localization of $p$-torsion spectra
at complex $K$-theory. The natural adjunction for smashing with $\mathcal{N}$,%
\[
F_{\mathsf{Sp}}(\mathcal{N}\otimes_{\mathsf{Sp}}X,Y)\cong F_{\mathsf{Sp}%
}(X,F_{\mathsf{Sp}}(\mathcal{N},Y))\text{.}%
\]
The adjoint functor pair induces an equivalence of stable $\infty$-categories%
\[
\mathsf{K}=L_{K(1)}\mathsf{Sp}\overset{\sim}{\longrightarrow}%
L_{\operatorname*{KU}}\mathsf{Sp}_{\operatorname*{tor}}\text{,}%
\]
where $X\mapsto X\otimes_{\mathsf{Sp}}\mathcal{N}$ (resp. $F_{\mathsf{Sp}%
}(\mathcal{N},X)\leftarrow X$ in the opposite direction). This equivalence at
first exists between merely $p$-complete vs. $p$-torsion spectra in
$\mathsf{Sp}$, but it also adapts to the $\operatorname*{KU}$-local setting.
See \cite{luriedagcomp} for a much more general setting.

\subsection{Brown--Comenetz and Anderson
Duality\label{subsect_BCvsAndersonDuals}}

Let us review Brown--Comenetz and Anderson duality in the $p$-complete
setting, as set up in \cite{MR1601906,MR2327028}. A very readable overview can
be found in \cite[\S 1, \S 2]{MR2946825}. Suppose $J$ is an injective abelian
group. Write $\mathsf{Sp}_{\widehat{p}}$ for the stable $\infty$-category of
$p$-complete spectra. The functor $\mathfrak{D}_{J}\colon\mathsf{Sp}%
_{\widehat{p}}^{op}\longrightarrow\mathsf{Ab}$ given by%
\begin{equation}
Z\mapsto\operatorname*{Hom}\nolimits_{\mathsf{Ab}}\left(  \pi_{0}%
(Z\otimes_{\mathsf{Sp}}\mathcal{N}),J\right)  \label{lqwxa1}%
\end{equation}
is a cohomological functor. In more detail: Fiber sequences in $Z$ induce a
long exact sequence of homotopy groups. Since $J$ is injective,
$\operatorname*{Hom}\nolimits_{\mathsf{Ab}}(-,J)$ is an exact functor, so all
in all we obtain a contravariant functor sending fiber sequences to exact
sequences. Moreover, arbitrary coproducts in $Z$ commute with the tensor
product\footnote{which is a smash product in the present situation}, and with
$\pi_{0}$. As $\operatorname*{Hom}\nolimits_{\mathsf{Ab}}(-,J)$ maps
coproducts to products, the claim follows. As a result, there is a unique
representing spectrum $D_{J}$ such that $\mathfrak{D}_{J}%
(Z)=\operatorname*{Hom}\nolimits_{Ho(\mathsf{Sp})}(Z,D_{J})$. Finally, note
that since $Z$ is $p$-complete, $\pi_{0}(Z\otimes_{\mathsf{Sp}}\mathcal{N})$
is a $p$-torsion group, so that $\operatorname*{Hom}\nolimits_{\mathsf{Ab}%
}(\pi_{0}(Z\otimes_{\mathsf{Sp}}\mathcal{N}),J)$ is (classically and derived)
$p$-complete. Hence, $D_{J}$ is itself a $p$-complete spectrum. If
$J\rightarrow J^{\prime}$ is a morphism of injective abelian groups, we obtain
an induced morphism of cohomological functors $\mathfrak{D}_{J}\rightarrow
\mathfrak{D}_{J^{\prime}}$, and a corresponding representing map of spectra
$D_{J}\rightarrow D_{J^{\prime}}$. The $p$-complete \emph{Brown--Comenetz
dual} of a $p$-complete spectrum $X$ is defined as%
\begin{equation}
I_{\mathbf{Q}_{p}/\mathbf{Z}_{p}}X:=F_{\mathsf{Sp}}(X,D_{\mathbf{Q}%
_{p}/\mathbf{Z}_{p}}\mathbb{)}\text{.} \label{lqwxt2}%
\end{equation}
In particular, instead of $D_{\mathbf{Q}_{p}/\mathbf{Z}_{p}}$ we can also
write $I_{\mathbf{Q}_{p}/\mathbf{Z}_{p}}\mathbb{S}_{\widehat{p}}$,
where$\mathbb{\ S}_{\widehat{p}}$ denotes the $p$-completed sphere spectrum.
Since both $X$ and $D_{\mathbf{Q}_{p}/\mathbf{Z}_{p}}$ are $p$-complete, so is
$I_{\mathbf{Q}_{p}/\mathbf{Z}_{p}}X$. Using this notation and unravelling the
representability yields the equation%
\begin{equation}
\operatorname*{Hom}\nolimits_{\mathsf{Sp}}(Z,I_{\mathbf{Q}_{p}/\mathbf{Z}_{p}%
}\mathbb{S}_{\widehat{p}})\cong\operatorname*{Hom}\nolimits_{\mathsf{Ab}%
}\left(  \pi_{0}(Z\otimes_{\mathsf{Sp}}\mathcal{N}),\mathbf{Q}_{p}%
/\mathbf{Z}_{p}\right)  \text{,} \label{lqwxt3}%
\end{equation}
which we use to set up morphisms to $I_{\mathbf{Q}_{p}/\mathbf{Z}_{p}%
}\mathbb{S}_{\widehat{p}}$. For brevity, we write $G^{\ast}%
:=\operatorname*{Hom}_{\mathsf{Ab}}(X,\mathbf{Q}_{p}/\mathbf{Z}_{p})$.

\begin{remark}
\label{rmk_ClassicalBrownComenetzDual}The original Brown--Comenetz dual is
defined on arbitrary spectra, and based on representing the functor%
\[
Z\mapsto\operatorname*{Hom}\nolimits_{\mathsf{Ab}}\left(  \pi_{0}%
Z,\mathbf{Q}/\mathbf{Z}\right)
\]
instead of the one in Eq. \ref{lqwxa1}. Restricting to its $p$-typical part,
i.e., $\mathbf{Q}_{p}/\mathbf{Z}_{p}\subset\mathbf{Q}/\mathbf{Z}$, the functor
$Z\mapsto\operatorname*{Hom}\nolimits_{\mathsf{Ab}}(\pi_{0}Z,\mathbf{Q}%
_{p}/\mathbf{Z}_{p})$ still has the property that if $Z$ happens to be a
$p$-complete spectrum, its so defined Brown--Comenetz dual will usually fail
to be $p$-complete. One might therefore prefer to denote the dual of Eq.
\ref{lqwxt2} by $\widehat{I}_{\mathbf{Q}_{p}/\mathbf{Z}_{p}}$ and call it the
$p$-complete dual, as to avoid misunderstandings. This is a matter of taste.
\end{remark}

\begin{corollary}
\label{cor_duals_c1}

\begin{enumerate}
\item $\pi_{n}(I_{\mathbf{Q}_{p}/\mathbf{Z}_{p}}X)\cong\pi_{-n}(X\otimes
_{\mathsf{Sp}}\mathcal{N})^{\ast}$

\item $\pi_{n}(I_{\mathbf{Q}_{p}/\mathbf{Z}_{p}}X/p^{k})\cong\pi_{-n}%
(X/p^{k})^{\ast}$
\end{enumerate}
\end{corollary}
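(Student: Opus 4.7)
The plan is to deduce both identities from the representability equation Eq. \ref{lqwxt3} defining the $p$-complete Brown--Comenetz dual. The only additional inputs are the smash-function spectrum adjunction and a short computation with Moore spectra.

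For part (1), I would first rewrite the homotopy group as mapping out of a shifted copy of $X$. Using Eq. \ref{lqwxt2} and the usual adjunction,
\begin{equation*}
\pi_n(I_{\mathbf{Q}_p/\mathbf{Z}_p} X) = [\mathbb{S}^n, F_{\mathsf{Sp}}(X, I_{\mathbf{Q}_p/\mathbf{Z}_p}\mathbb{S}_{\widehat{p}})] \cong [\Sigma^n X, I_{\mathbf{Q}_p/\mathbf{Z}_p}\mathbb{S}_{\widehat{p}}].
\end{equation*}
Since $\Sigma^n X$ is again $p$-complete, applying Eq. \ref{lqwxt3} with $Z := \Sigma^n X$ identifies this group with $\operatorname{Hom}_{\mathsf{Ab}}(\pi_0(\Sigma^n X \otimes_{\mathsf{Sp}} \mathcal{N}), \mathbf{Q}_p/\mathbf{Z}_p)$, and shifting the suspension outside of $\pi_0$ yields $\pi_{-n}(X \otimes_{\mathsf{Sp}} \mathcal{N})^\ast$. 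This is (1).

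For part (2), I would parse $I_{\mathbf{Q}_p/\mathbf{Z}_p} X / p^k$ as $I_{\mathbf{Q}_p/\mathbf{Z}_p}(X/p^k)$. Since $X/p^k$ is $p^k$-torsion, it is automatically $p$-complete, so part (1) applies and gives $\pi_n(I_{\mathbf{Q}_p/\mathbf{Z}_p}(X/p^k)) \cong \pi_{-n}((X/p^k) \otimes_{\mathsf{Sp}} \mathcal{N})^\ast$. It then remains to establish $(X/p^k) \otimes_{\mathsf{Sp}} \mathcal{N} \simeq X/p^k$. The key computation is $\mathbb{S}/p^k \otimes \mathbb{S}\mathbf{Q}_p/\mathbf{Z}_p \simeq \Sigma \mathbb{S}/p^k$, which I would obtain by smashing the cofiber sequence $\mathbb{S}\mathbf{Z}_p \to \mathbb{S}\mathbf{Q}_p \to \mathbb{S}\mathbf{Q}_p/\mathbf{Z}_p$ with $\mathbb{S}/p^k$: the middle term vanishes because $\mathbf{Q}_p$ is uniquely $p$-divisible, while the leftmost term reduces to $\mathbb{S}/p^k$ because $\mathbb{S}/p^k$ is already $p$-complete. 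The shift is absorbed by the defining $\Sigma^{-1}$ in $\mathcal{N} = \Sigma^{-1}\mathbb{S}\mathbf{Q}_p/\mathbf{Z}_p$, producing $(X/p^k) \otimes \mathcal{N} \simeq X/p^k$ and therefore (2).

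No step presents a genuine obstacle; the argument is essentially formal once one commits to unfolding Eq. \ref{lqwxt3}. The only care required is in tracking suspensions through the Moore spectrum computation in part (2) and verifying the $p$-completeness hypothesis each time Eq. \ref{lqwxt3} is invoked.
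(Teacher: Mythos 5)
Your proposal is correct and is essentially the proof the paper has in mind: the corollary is stated without proof precisely because it follows formally from Eq.~\ref{lqwxt3} together with the smash/function-spectrum adjunction, which is what you do in (1), and part (2) then reduces to the observation that $\mathcal{N}\otimes_{\mathsf{Sp}}\mathbb{S}/p^{k}\simeq\mathbb{S}/p^{k}$, matching the paper's remark that (2) ``reflects the properties of the classical Brown--Comenetz dual.'' Your parsing of $I_{\mathbf{Q}_{p}/\mathbf{Z}_{p}}X/p^{k}$ as $I_{\mathbf{Q}_{p}/\mathbf{Z}_{p}}(X/p^{k})$ is the correct one (the alternative parsing would introduce the shift of Eq.~\ref{lmips27} and give $\pi_{1-n}$ rather than $\pi_{-n}$); the only cosmetic point is that the cleanest justification for $\mathbb{S}\mathbf{Z}_{p}\otimes\mathbb{S}/p^{k}\simeq\mathbb{S}/p^{k}$ is that $\mathbf{Z}_{p}/p^{k}\mathbf{Z}_{p}\cong\mathbf{Z}/p^{k}$ with no $\operatorname{Tor}$, rather than appealing to $p$-completeness of $\mathbb{S}/p^{k}$, but this does not affect the validity of the argument.
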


Property (2) reflects the properties of the classical Brown--Comenetz dual of
Rmk. \ref{rmk_ClassicalBrownComenetzDual}.

We can also work with $J:=\mathbf{Q}_{p}$. The exact sequence%
\[
\mathbf{Z}_{p}\hookrightarrow\mathbf{Q}_{p}\twoheadrightarrow\mathbf{Q}%
_{p}/\mathbf{Z}_{p}\text{,}%
\]
which can be read as an injective resolution of $\mathbf{Z}_{p}$, motivates
defining a further dual as%
\[
I_{\mathbf{Z}_{p}}X:=F_{\mathsf{Sp}}\left(  X,\operatorname*{fib}\left(
D_{\mathbf{Q}_{p}}\rightarrow D_{\mathbf{Q}_{p}/\mathbf{Z}_{p}}\right)
\right)  \text{.}%
\]
This corresponds to the $p$-complete \emph{Anderson dual }of a $p$-complete
spectrum. However, nothing much interesting happens here because
$\mathfrak{D}_{\mathbf{Q}_{p}}(Z)=\operatorname*{Hom}\nolimits_{\mathsf{Ab}%
}(\pi_{0}(Z\otimes_{\mathsf{Sp}}\mathcal{N}),\mathbf{Q}_{p})$ is the zero
functor since $\pi_{0}$ is always $p$-torsion for $p$-complete input spectra
$Z$. Hence, both duals just differ by a shift:%
\begin{align}
I_{\mathbf{Z}_{p}}X  &  \cong F\left(  X,\operatorname*{fib}\left(
0\rightarrow D_{\mathbf{Q}_{p}/\mathbf{Z}_{p}}\right)  \right)  \cong F\left(
X,\Sigma^{-1}D_{\mathbf{Q}_{p}/\mathbf{Z}_{p}}\right) \label{lrrb1}\\
&  \cong\Sigma^{-1}F\left(  X,D_{\mathbf{Q}_{p}/\mathbf{Z}_{p}}\right)
\cong\Sigma^{-1}I_{\mathbf{Q}_{p}/\mathbf{Z}_{p}}X\text{.}\nonumber
\end{align}
As a result, in the setting of this text, it does not make too much of a
difference whether we talk about Brown--Comenetz or Anderson duals. Finally,
the above discussion for $p$-complete spectra applies verbatim to the
$K(1)$-local homotopy category. Once we work in the $K(1)$-local setting, by
Gross--Hopkins even switching to $p$-complete Spanier--Whitehead duals is
roughly equivalent to taking any of the aforementioned duals\footnote{Recall
that there is no difference between using $F_{\mathsf{K}}$ or writing
$F_{\mathsf{Sp}}$ for $K(1)$-local input.}%
\begin{equation}
F_{\mathsf{K}}(X,I_{\mathbf{Q}_{p}/\mathbf{Z}_{p}}\mathbb{S}_{\widehat{p}%
}\mathbb{)}\cong F_{\mathsf{K}}(X,\mathbb{S}_{\widehat{p}})\otimes
_{\mathsf{K}}I_{\mathbf{Q}_{p}/\mathbf{Z}_{p}}\mathbb{S}_{\widehat{p}}\text{,}
\label{l_GHDuality}%
\end{equation}
where $\mathbb{S}_{\widehat{p}}:=L_{K(1)}\mathbb{S}$ now denotes the
$K(1)$-local version of the $p$-complete sphere, which now has become
$\otimes_{\mathsf{K}}$-invertible in the monoidal structure. See \cite[Cor.
10.6]{MR1601906} or the articles \cite{MR1763961}, \cite{MR1217353}.

\subsection{Perfect pairings\label{subsect_PerfectPairings}}

A pairing $\mu\colon X\otimes Y\rightarrow Z$ is called \emph{left perfect} if
the Hom-tensor adjunction $\operatorname*{Hom}(X\otimes Y,Z)\cong%
\operatorname*{Hom}(X,\operatorname*{Hom}(Y,Z))$ sends $\mu$ to an equivalence
$X\overset{\sim}{\rightarrow}\operatorname*{Hom}(Y,Z)$. We call the pairing
$\mu$ \emph{right perfect} if the opposite pairing $\mu^{\operatorname*{op}%
}\colon Y\otimes X\rightarrow Z$ is a left perfect pairing. Call a pairing
\emph{perfect} if it is both left and right perfect.

\begin{remark}
\label{rmk_CanTestPerfectPairingModP}For $p$-complete spectra it is equivalent
to check whether the pairing is left perfect after tensoring with
$\mathbb{S}/p$. This is seen as follows:\ $\mu$ is left perfect if $f\colon
X\overset{\sim}{\rightarrow}\operatorname*{Hom}(Y,Z)$ is an equivalence, but a
morphism $f$ of $p$-complete spectra is an equivalence iff $f\otimes
\mathbb{S}/p$ is an equivalence. To see this, note that fiber
$\operatorname*{fib}(f)/p\simeq0$ implies that multiplication by $p$ is an
equivalence, hence so it is for any power, so $\operatorname*{fib}%
(f)/p^{n}\simeq0$. But then $\operatorname*{fib}(f)\cong\lim
\operatorname*{fib}(f)/p^{n}$ by $p$-completeness, but the latter vanishes.
The converse is clear. Thus, $f$ is an equivalence iff $X/p\overset{\sim
}{\rightarrow}\operatorname*{Hom}(Y,Z)/p$ is an equivalence, and running the
definition of a left perfect pairing backwards, this holds iff $\mu/p\colon
X/p\otimes Y/p\rightarrow Z/p$ is left perfect.
\end{remark}

\subsection{Reflexivity}

In this section, we discuss why the direction $L_{K(1)}K(\mathcal{O}_{S})\cong
I_{\mathbf{Z}_{p}}L_{K(1)}K(\mathsf{LCA}_{\mathcal{O}_{S}})$ of Thm.
\ref{thmann_ThmA} only holds for finite $S$ and how one might resolve this
issue. Fix an object $D$ in some (stable $\infty$-)category. Define
$I(X):=\operatorname*{Hom}(X,D)$. Then by adjunction there is a natural
induced \emph{double dual map} $X\rightarrow I^{2}X$. We call $X$
\emph{reflexive} (for the chosen type of duality) if the double dual map is an isomorphism.

\begin{example}
Let $R$ be any discrete ring. Then in $\mathsf{LCA}_{R}$, every object is
reflexive for Pontryagin duality, $D:=\mathbf{R}/\mathbf{Z}$ (the circle group).
\end{example}

\begin{example}
The $K(1)$-local Brown--Comenetz dual is reflexive for spectra such that all
their homotopy groups $\pi_{i}X$ are finitely generated $\mathbf{Z}_{p}%
$-modules\footnote{Since the homotopy groups are derived $p$-complete, they
carry a \textit{canonical} $\mathbf{Z}_{p}$-module structure.}. The reason for
this is solely a phenomenon within abelian groups and we explain it in Example
\ref{ex_PontryaginWithQZCoeffs}. Since the Brown--Comenetz dual and the
Anderson dual only differ by a shift, the same applies there.
\end{example}

\begin{example}
\label{ex_PontryaginWithQZCoeffs}Take $R:=\mathbf{Z}$ in the previous example.
Then Pontryagin duality induces an anti-equivalence between the following full
subcategories of $\mathsf{LCA}_{\mathbf{Z}}$:
\[
\text{discrete torsion abelian groups}\leftrightarrow\text{profinite abelian
groups.}%
\]
One can try to reconstruct this duality from the underlying abelian group
structures \emph{without the topology}. On the left, the underlying abelian
group of the dual can be computed by%
\[
G^{\vee}=_{def}\operatorname*{Hom}\nolimits_{\mathsf{LCA}_{\mathbf{Z}}%
}(G,\mathbf{R}/\mathbf{Z})=\operatorname*{Hom}\nolimits_{\mathsf{Ab}%
}(G,\mathbf{Q}/\mathbf{Z})
\]
since $G$ is torsion and there is no difference between plain algebraic and
continuous group homomorphisms. The reverse direction is tricky. Since $G$ is
profinite, one might be tempted to define the dual by $\operatorname*{Hom}%
\nolimits_{\mathsf{LCA}_{\mathbf{Z}}}(\underleftarrow{\lim}_{H}G/H,\mathbf{R}%
/\mathbf{Z})$, where $H$ runs through all finite index subgroups of $H$. Then
each $G/H$ is finite, so the inverse limit is naturally a profinite group.
This idea is based on the hope that%
\[
G=\underleftarrow{\lim}_{H}G/H\text{,}%
\]
would reconstruct $G$ purely from the group structure. This idea indeed works
if $G$ is topologically finitely generated, since then profinite completion is
an idempotent functor. However, in general, the profinite completion of a
profinite group yields a different group. The introduction to \cite{MR2276769}
provides more background on the general problem. If $G$ happens to be
topologically finitely generated, one indeed gets%
\begin{align*}
G^{\vee}=_{def}\operatorname*{Hom}\nolimits_{\mathsf{LCA}_{\mathbf{Z}}%
}(G,\mathbf{R}/\mathbf{Z})  &  \cong\operatorname*{Hom}\nolimits_{\mathsf{LCA}%
_{\mathbf{Z}}}(\underleftarrow{\lim}_{H}G/H,\mathbf{R}/\mathbf{Z})\\
&  \cong\operatorname*{Hom}\nolimits_{\mathsf{Ab}}(G,\mathbf{Q}/\mathbf{Z}%
)\text{,}%
\end{align*}
i.e., both duals can be defined purely on the level of their underlying
abelian groups.
\end{example}

\begin{example}
By Lemma \ref{lemma_specseq_for_restricted_product_inside} (for $j=1$) we have%
\[
\pi_{1}L_{K(1)}(K/p)(\mathsf{LCA}_{\mathcal{O}_{S},ad})\cong P_{S}%
^{1}(F,\mathbf{Z}/p(1))\text{.}%
\]
For all finite places $v\in S$ we may write $F_{v}^{\times}\simeq\left\langle
\pi_{v}\right\rangle \times\mathcal{O}_{v}^{\times}$ for $\pi_{v}$ a local
uniformizer, and thus%
\[
\simeq\underset{v\in S}{\left.  \prod\nolimits^{\prime}\right.  }%
(F_{v}^{\times}/p:\mathcal{O}_{v}^{\times}/p)\cong\bigoplus_{v\in S\setminus
S_{\infty}}\mathbf{F}_{p}\oplus\underset{v\in S\setminus S_{\infty}}{\left.
\prod\right.  }\mathcal{O}_{v}^{\times}/p
\]
and the direct sum fails to be a finitely generated $\mathbf{Z}_{p}$-algebra
as soon as $S$ is infinite.
\end{example}

\begin{vista}
\label{vista_1}I would hope that one can develop a category of condensed
spectra so that for each $X$, each $\pi_{i}X$ is a condensed abelian group.
Then we may speak of \emph{locally compact spectra} $\mathsf{Sp}_{lc}$ as
those where all $\pi_{i}X$ come from an LCA\ group. In this subcategory, a
suitable form of Brown representability should yields a genuine Pontryagin
dualizing object $I_{\mathbf{R/Z}}$, modifying Eq. \ref{lqwxt3} to%
\[
\operatorname*{Hom}\nolimits_{\mathsf{Sp}_{lc}}(Z,I_{\mathbf{R/Z}}%
\mathbb{S})\cong\operatorname*{Hom}\nolimits_{\mathsf{LCA}_{\mathbf{Z}}%
}\left(  \pi_{0}Z,\mathbf{R/Z}\right)  \text{.}%
\]
The proof of reflexivity \textit{for all locally compact spectra} then should
be reduced to classical Pontryagin duality. To prove all this, one can
probably use a version of Clausen--Scholze's fully faithful embedding
$D^{b}(\mathsf{LCA}_{\mathbf{Z}})\hookrightarrow D(\mathsf{Cond}%
(\mathsf{Ab}))$ \cite[Lec. IV, Cor. 4.9]{condensedmath}. This is however not a
perfectly fitting counterpart to this putative subcategory%
\[
\mathsf{Sp}_{lc}\hookrightarrow\mathsf{Cond}(\mathsf{Sp})\text{,}%
\]
because of the much laxer boundedness condition:\ The spectra would not be
assumed to be connective or co-connective. The local compactness assumption
should be a purely levelwise one. Next, one would need to develop chromatic
homotopy theory in this setting, at least to the extent to have $L_{K(1)}$
available. I envison this being applied to non-connective $K$-theory, but then
instead of $p$-complete spectra one should get the $p$-completion as
topologically $p$-torsion compact ($\Leftrightarrow$ pro-$p$ profinite
spectra) spectra sitting inside $\mathsf{Sp}_{lc}$. I thank Ko Aoki and Achim
Krause for some conversations around this matter.
\end{vista}

\section{Proof of Lemma \ref{lemma_crit}\label{appendix_ProofOfLemma_Crit}}

We repeat the statement of the lemma.

\begin{lemma}
\label{lemma_crit copy}Let $K$ denote non-connective $K$-theory. Suppose
$k\geq1$. Let $S^{\prime}$ be a finite set of finite places such that
$S^{\prime}\subseteq S$. Assume that $S^{\prime}$ contains all the places of
$S$ which lie over the prime $p$. Then the natural morphism%
\[
L_{K(1)}\left(
{\textstyle\prod\nolimits_{v\in S\setminus S^{\prime}}}
(K/p^{k})(\mathcal{O}_{v})\right)  \longrightarrow%
{\textstyle\prod\nolimits_{v\in S\setminus S^{\prime}}}
L_{K(1)}(K/p^{k})(\mathcal{O}_{v})
\]
is an equivalence. As the intrinsic product in the $K(1)$-local homotopy
category can be computed in $\mathsf{Sp}$, the meaning of the product on the
right side is unambiguous.
\end{lemma}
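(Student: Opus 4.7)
The plan is to present $L_{K(1)}$ via Thomason-style Bott-element inversion and then verify, using the Thomason descent spectral sequence, that the Bott action on $\pi_*$ stabilizes \emph{uniformly} in $v$. Once this uniform stabilization is in place, interchanging the product with the telescope defining Bott inversion is automatic.

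First, because $S'$ contains every place of $S$ lying above $p$, each $v \in S\setminus S'$ has residue characteristic $\ell \neq p$, and hence $\tfrac{1}{p} \in \mathcal{O}_v$. By Thomason's theorem there is a natural equivalence
\[
L_{K(1)}(K/p^{k})(\mathcal{O}_v) \simeq (K(\mathcal{O}_v)/p^{k})[\beta^{-1}],
\]
where the Bott element $\beta$ may be chosen once and for all, coming from a fixed universal class in $\pi_*(K(\mathbf{Z}[\tfrac{1}{p}])/p^{k})$ pushed forward along the structural morphism $\mathbf{Z}[\tfrac{1}{p}] \to \mathcal{O}_v$. Consequently, the Bott classes are compatible with projection from the infinite product, and Eq.~\ref{lwui3} is reduced to showing that the canonical map
\[
\Bigl( {\textstyle\prod_{v\in S\setminus S'}} (K/p^{k})(\mathcal{O}_v) \Bigr)[\beta^{-1}] \longrightarrow {\textstyle\prod_{v\in S\setminus S'}} (K(\mathcal{O}_v)/p^{k})[\beta^{-1}]
\]
is an equivalence. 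By Remark \ref{rmk_CanTestPerfectPairingModP}-style reasoning (both sides are $p$-complete), it even suffices to treat $k=1$.

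The crux is a uniform stabilization estimate: for every integer $i$ there should exist an integer $N_i$, \emph{independent of} $v$, such that multiplication by $\beta^{N_i}$ induces an isomorphism on $\pi_i(K(\mathcal{O}_v)/p^{k})$ for every $v \in S\setminus S'$. Granted this, the telescope defining Bott-inversion is degree-wise eventually constant with a bound independent of $v$; since arbitrary products of spectra compute homotopy groups degree-wise, and filtered colimits of eventually-constant systems commute with products, the two sides have the same $\pi_i$ and the equivalence follows.

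The main obstacle is establishing the uniform bound $N_i$. This is exactly where the hypothesis $v \nmid p$ is essential: every such $\mathcal{O}_v$ has $p$-\'etale cohomological dimension at most $2$, so the Thomason descent spectral sequence recalled in \S\ref{sect_IndividualThomasonDescentSpectralSequences} has $E_2$-page concentrated in three columns and collapses at $E_2$, uniformly in $v$. This implies that each $\pi_i(K(\mathcal{O}_v)/p^{k})$ carries a filtration of length $\leq 3$ whose associated graded pieces are computed by finite \'etale cohomology groups $H^{s}(\mathcal{O}_v,\mathbf{Z}/p^{k}(t))$ on which the universal Bott element acts by shifting the Tate twist $t$. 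A finite number of multiplications by $\beta$ (bounded, in fact, by the filtration length) therefore suffices to reach the $\beta$-periodic regime on $\pi_i$, and this bound depends only on $i$ and not on $v$. The formal machinery packaging this uniform control into an equivalence of $K(1)$-local product spectra is furnished by \cite{MR4296353} and \cite{MR4444265}.
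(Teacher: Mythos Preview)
Your approach and the paper's share the core idea of modeling $L_{K(1)}$ on $p$-torsion spectra as Bott-element inversion, but the executions diverge. The paper works in the motivic stable homotopy category over a Dedekind base $\operatorname{Spec}\mathcal{O}_T$: it realizes the infinite product as a module sheaf $\mathcal{F}=\prod_{v}i_{v\ast}i_v^{\ast}\mathsf{KGL}_{\mathcal{O}_T}/p$, invokes \cite[Thm.~1.6]{MR4444265} to obtain \'etale hyperdescent for $\mathcal{F}[\beta^{-1}]$, and then reads off the homotopy groups from the \'etale descent spectral sequence for $\mathcal{F}^{\acute{e}t}[\beta^{-1}]$. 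Your route is more elementary: a direct uniform-in-$v$ stabilization of $\pi_i$ under multiplication by $\beta$, after which interchanging the telescope with the product is formal.

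There is, however, a genuine gap in your justification of the uniform bound $N_i$. The Thomason descent spectral sequence of \S\ref{sect_IndividualThomasonDescentSpectralSequences} converges to $\pi_{\ast}L_{K(1)}(K/p^k)(\mathcal{O}_v)$, \emph{not} to $\pi_{\ast}(K/p^k)(\mathcal{O}_v)$; so it does not by itself furnish a length-$\leq 3$ \'etale filtration on the latter, and hence does not supply the uniform $N_i$ you need. What is actually required is that the comparison map $(K/p^k)(\mathcal{O}_v)\to L_{K(1)}(K/p^k)(\mathcal{O}_v)$ be an isomorphism in degrees $\geq c$ for some $c$ depending only on $\operatorname{cd}_p(\mathcal{O}_v)\leq 2$. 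For these particular $\mathcal{O}_v$ (complete DVRs with finite residue field of characteristic $\neq p$) this follows most cheaply from Gabber rigidity $K(\mathcal{O}_v)/p^k\simeq K(\kappa(v))/p^k$ together with Quillen's explicit computation for finite fields; in greater generality it is the Quillen--Lichtenbaum statement (Rost--Voevodsky). Once this input is inserted, your eventually-constant-telescope argument goes through. Two smaller points: the universal Bott class over $\mathbf{Z}[\tfrac{1}{p}]$ lives in degree $2(p-1)$ (Snaith's element), not degree~$2$, so your reference to ``$\beta$'' should be read accordingly; and the paper explicitly cautions that the qcqs-descent results of \cite{MR4296353} do \emph{not} directly apply here (the product is of categories, not of rings), so your closing appeal to that reference is not doing any work.
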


\begin{proof}
Before we begin, let us recall that while $K$-theory commutes with infinite
products of categories, it does \textit{not} commute with infinite products of
rings\footnote{to see this, note that for any ring $R$, the tuple
$(R^{1},R^{2},R^{3},\ldots)$ is an object in $\prod_{\mathbf{N}}%
\mathsf{Proj}_{R,fg}$, but this is not even a finitely generated module over
the product ring $R_{\infty}:=\prod_{\mathbf{N}}R$, as anything surjected onto
by a finite direct sum $R_{\infty}^{\oplus n}$ has at most rank $n$ in each
slot. Thus, the underlying categories are different. Based on this idea, one
can build concrete counterexamples, as in \cite[Ch. II, Example 2.2.3]%
{MR3076731}, since $K(\prod_{\mathbf{N}}\mathsf{Proj}_{R,fg})\cong%
\prod_{\mathbf{N}}K(R)$.}, so one cannot unravel the product on the left side
as $K/p^{k}\left(  \prod\mathcal{O}_{v}\right)  $. While $\operatorname*{Spec}%
\prod\mathcal{O}_{v}$ is qcqs, this means that the qcqs descent results of
Clausen--Mathew \cite{MR4296353} do not directly apply. We use a different
idea and work over a Dedekind domain and realize the infinite product of
$K$-theory spectra as a product sheaf in this setting. This product sheaf is a
\textit{module} over ordinary $K$-theory, and then the method of Thomason's
construction of a convergent \'{e}tale descent spectral sequence for computing
the $K(1)$-localization, in the guise of inverting a Bott element $\beta$,
goes through for this module sheaf, having the corresponding infinite products
on its $E_{1}$-page. We chose to implement this using the technology of
\cite{MR4444265}. (Step 1) Our claim is proven if we can prove it for any set
of places $S$ of $F$ which

\begin{itemize}
\item contains all infinite places, and

\item contains no place which lies over $p$,
\end{itemize}

and $S^{\prime}=\varnothing$ (because the claim only refers to the difference
set $S\setminus S^{\prime}$). Thus, henceforth, we will assume that
$S^{\prime}=\varnothing$. Define $T:=\{$all places of $F\}\setminus S$. The
set $T$ contains only finite places and contains all places which lie above
$p$. We will now use \cite[Thm. 1.6]{MR4444265}. Our base scheme\footnote{in
\cite{MR4444265} the base scheme is called $S$, conflicting with our notation}
is $\operatorname*{Spec}\mathcal{O}_{T}$, where $\mathcal{O}_{T}$ is defined
as in Eq. \ref{l_Def_RingOS}, and the prime which is called $\ell$ loc. cit.
is taken to be $p$. Since $F$ is a number field and $\frac{1}{p}\in
\mathcal{O}_{T}$, all assumptions of the cited theorem are met. Write
$\operatorname*{SH}_{\operatorname*{Nis}}(\mathcal{O}_{T})$ for the stable
motivic category of \cite[\S 2.1.8]{MR4444265} and $\mathsf{KGL}%
_{\mathcal{O}_{T}}$ for the motivic spectrum representing Algebraic
$K$-theory. We identify the finite places $v\in S$ with closed points in
$\operatorname*{Spec}\mathcal{O}_{T}$. The collection of closed immersions
$i_{v}\colon\operatorname*{Spec}\kappa(v)\hookrightarrow\operatorname*{Spec}%
\mathcal{O}_{T}$ for $v\in S$ induces a natural map\footnote{which is a closed
immersion if and only if $S$ is finite}%
\[
\coprod_{v\in S}\operatorname*{Spec}\kappa(v)\overset{i_{S}}{\longrightarrow
}\operatorname*{Spec}\mathcal{O}_{T}%
\]
from the universal property of coproducts. The scheme on the left side is just
the disjoint union of the $\operatorname*{Spec}\kappa(v)$. Attached to the
adjunction $i_{S}^{\ast}\rightleftarrows i_{S\ast}$ in $\operatorname*{SH}%
_{\operatorname*{Nis}}(\mathcal{O}_{T})$ we get a unit $1\rightarrow i_{S\ast
}i_{S}^{\ast}$ and applied to $\mathsf{KGL}_{\mathcal{O}_{T}}/p$, this induces
a map%
\begin{equation}
\mathsf{KGL}_{\mathcal{O}_{T}}/p\longrightarrow i_{S\ast}i_{S}^{\ast
}\mathsf{KGL}_{\mathcal{O}_{T}}/p \label{lmark_1}%
\end{equation}
and%
\begin{equation}
\mathcal{F}:=i_{S\ast}i_{S}^{\ast}\mathsf{KGL}_{\mathcal{O}_{T}}/p\cong%
\prod_{v\in S}i_{v\ast}i_{v}^{\ast}\mathsf{KGL}_{\mathcal{O}_{T}}/p
\label{lmips6}%
\end{equation}
as $i_{v\ast}$, $i_{v}^{\ast}$ are exact. The map in Eq. \ref{lmark_1} renders
$i_{S\ast}i_{S}^{\ast}\mathsf{KGL}_{\mathcal{O}_{T}}/p$ an $\mathsf{MGL}%
_{\mathcal{O}_{T}}$-module.\footnote{Up to deleting various factors, including
the one of the generic point, this map is just the first map in the Godement
resolution for the Nisnevich topology.} We find%
\[
\operatorname*{Hom}\nolimits_{Ho(\operatorname*{SH}_{\operatorname*{Nis}}%
)}(\mathbf{1}_{\mathcal{O}_{T}}[n],\mathcal{F})\cong\pi_{n}\left(  \prod_{v\in
S}(K/p)(\mathcal{O}_{v})\right)
\]
In particular,\footnote{We have no use for the bigraded homotopy groups
$\pi_{n,q}\mathcal{F}$. Concretely, $\pi_{n}\mathcal{F}$ refers to $\pi
_{n,0}\mathcal{F}$.}%
\begin{equation}
\pi_{n}\mathcal{F}:=\operatorname*{Hom}\nolimits_{Ho(\operatorname*{SH}%
_{\operatorname*{Nis}})}(\mathbf{1}_{\mathcal{O}_{T}}[n],\mathcal{F})\cong%
\pi_{n}\left(  \prod_{v\in S}(K/p)(\mathcal{O}_{v})\right)  \text{.}
\label{lmips7}%
\end{equation}
\newline The computation readily generalizes to

\begin{itemize}
\item $\mathcal{O}_{T}$-schemes $X\rightarrow\operatorname*{Spec}%
\mathcal{O}_{T}$ (so that $\operatorname*{Hom}\nolimits_{Ho(\operatorname*{SH}%
\nolimits_{\operatorname*{Nis}})}(\mathbf{1}_{X}[n],\mathcal{F})$ computes the
$K$-theory of the respective basechange to $X$),

\item the \'{e}tale hypersheafification $\mathcal{F}^{\acute{e}t}%
:=\varepsilon_{\ast}\varepsilon^{\ast}\mathcal{F}$, where $\varepsilon$ is the
change-of-topology functor%
\[
\varepsilon^{\ast}\colon\operatorname*{SH}\nolimits_{\operatorname*{Nis}%
}\rightleftarrows\operatorname*{SH}\nolimits_{\mathrm{\acute{e}t}}%
\colon\varepsilon_{\ast}\text{.}%
\]

\end{itemize}

(Step 2) Temporarily, let us assume that $F$ contains a primitive $p$-th root
of unity $\zeta_{p}\in F$. Choosing this root, we can choose a Bott element
$\beta$, i.e., any element%
\begin{equation}
(\pi_{2}K(F))/p\hookrightarrow\overset{\beta\in}{\pi_{2}(K(F)\otimes
\mathbb{S}/p)}\twoheadrightarrow\left.  _{p}\pi_{1}K(F)\right.
\label{l_kx_1c}%
\end{equation}
which maps to $\zeta_{p}\in\pi_{1}K(F)\cong F^{\times}$. There is a convergent
\'{e}tale descent spectral sequence the hypersheafification $\mathcal{F}%
^{\acute{e}t}$. Evaluated on $\mathcal{O}_{T}$, it takes the shape%
\begin{equation}
E_{1}^{a,b}:=H^{2a-b}(\mathcal{O}_{T},\underline{\pi}_{a}(\mathcal{F}%
^{\acute{e}t}))\Rightarrow\pi_{b-a}(\mathcal{F}^{\acute{e}t})\text{.}
\label{lmf3pre}%
\end{equation}
Next, we invert the Bott element. We may model inverting $\beta$ as a
telescope localization%
\begin{equation}
\mathcal{F}^{\acute{e}t}[\beta^{-1}]\cong\operatorname*{colim}\left(
\mathcal{F}^{\acute{e}t}\overset{\cdot\beta}{\longrightarrow}\Sigma
^{-2}\mathcal{F}^{\acute{e}t}\overset{\cdot\beta}{\longrightarrow}\Sigma
^{-4}\mathcal{F}^{\acute{e}t}\overset{\cdot\beta}{\longrightarrow}%
\cdots\right)  \text{.} \label{l_kx_1d}%
\end{equation}
We can form the corresponding colimit of spectral sequences as in Eq.
\ref{lmf3pre}. On the one hand, this yields a new convergent spectral
sequence, baptized $E_{\bullet}^{\bullet,\bullet}[\beta^{-1}]$, such that
\begin{equation}
E_{1}^{a,b}[\beta^{-1}]:=H^{2a-b}(\mathcal{O}_{T},\underline{\pi}%
_{a}(\mathcal{F}^{\acute{e}t}[\beta^{-1}]))\Rightarrow\pi_{b-a}(\mathcal{F}%
^{\acute{e}t}[\beta^{-1}])\text{.} \label{l_kx_1e}%
\end{equation}
Next, one verifies the following:\medskip

\textbf{Claim A}: \textit{We claim that the }$E_{1}^{a,b}$\textit{-page of the
previous spectral sequence in Eq. \ref{lmf3pre} simplifies to}%
\begin{equation}
E_{1}^{a,b}=\prod_{v\in S}H^{2a-b}\left(  \mathcal{O}_{T},\mathbf{Z}/p\right)
\label{l_ky_m1}%
\end{equation}
\textit{and that multiplication with }$\beta$\textit{ acts through isomorphism
in the direct system of Eq. \ref{l_kx_1d}.}\newline(Step 3) By \textsl{Claim
A}, the telescopic colimit of the respective $E_{1}$-pages in Eq.
\ref{l_kx_1d} is equivalent to a constant inductive system. Hence, in Eq.
\ref{l_kx_1e} inverting $\beta^{-1}$ does nothing on the $E_{1}$-page and
moreover by Eq. \ref{l_ky_m1} the remaining terms simplify to%
\begin{equation}
E_{1}^{a,b}[\beta^{-1}]:=\prod_{v\in S}H^{2a-b}\left(  \mathcal{O}%
_{T},\mathbf{Z}/p\right)  \Rightarrow\pi_{b-a}(\mathcal{F}^{\acute{e}t}%
[\beta^{-1}])\text{.} \label{l_kx_1f}%
\end{equation}
It remains to identify the limit of this spectral sequence. From \cite[Thm.
1.6 (i)]{MR4444265} we deduce that the Bott-inverted $\mathcal{F}[\beta^{-1}]$
satisfies \'{e}tale hyperdescent. It follows that the natural morphism to the
hypersheafification must be an equivalence. This provides the first
equivalence in $\mathcal{F}[\beta^{-1}]\overset{\sim}{\longrightarrow
}\mathcal{F}[\beta^{-1}]^{\acute{e}t}\overset{\sim}{\longleftarrow}%
\mathcal{F}^{\acute{e}t}[\beta^{-1}]$. The second equivalence uses that
hypersheafification (a left adjoint) commutes with colimits. Using these
equivalences, the limit term in Eq. \ref{l_kx_1f} becomes%
\begin{equation}
\Rightarrow\pi_{b-a}(\mathcal{F}[\beta^{-1}])\text{.} \label{lmips8}%
\end{equation}
Next, we replace the Bott inversion by $K(1)$-localization.\medskip

\textbf{Claim B}:\textit{ We have}
\[
L_{K(1)}\prod_{v\in S}(K/p)(\mathcal{O}_{v})\cong\left(  \prod_{v\in
S}(K/p)(\mathcal{O}_{v})\right)  [\beta^{-1}]\text{.}%
\]

To see this, recall that the localization functor $L_{K(1)}$ can be realized
as $p$-completion, followed by $\mathsf{\operatorname*{KU}}$-localization. The
spectrum $\prod_{v\in S}(K/p)(\mathcal{O}_{v})$ is bounded $p$-torsion, so the
$p$-completion acts as the identity. The $\operatorname*{KU}$-localization is
smashing, i.e.,%
\begin{align}
L_{K(1)}\prod_{v\in S}(K/p)(\mathcal{O}_{v})  &  \cong L_{\operatorname*{KU}%
}\prod_{v\in S}(K/p)(\mathcal{O}_{v})\nonumber\\
&  \cong\left(  \prod_{v\in S}K(\mathcal{O}_{v})\right)  \otimes_{\mathsf{Sp}%
}\mathbb{S}/p\otimes_{\mathsf{Sp}}\mathbb{S}_{\operatorname*{KU}}%
\label{lmf4}\\
&  \cong\left(  \prod_{v\in S}(K/p)(\mathcal{O}_{v})\right)  [\beta^{-1}]
\label{lmf5}%
\end{align}
for $\mathbb{S}_{\operatorname*{KU}}:=L_{\operatorname*{KU}}\mathbb{S}$ the
$\operatorname*{KU}$-local sphere. The identification of the Bott element with
the inversion of Adams maps in Eq. \ref{lmf5} is a famous computation entirely
on the Moore spectrum $\mathbb{S}/p$ and $\mathbb{S}_{\operatorname*{KU}}$ (a
guide through this is for example in Mitchell \cite[Thm. 1.4 of Bousfield and
Thm. 2.3 of Snaith]{MR1069739} or more briefly in Waldhausen \cite[\S 4
Appendix, p. 193]{MR764579} or see Jardine's textbook \cite[\S 7.5, p.
276]{MR1437604} for a sketch). In particular, while these computations are
usually employed for the $K$-theory of a scheme, they also apply to the
infinite product in Eq. \ref{lmf4}. This proves \textsl{Claim B}.

(Step 4) Using a version of Eq. \ref{lmips7} (adapted to the $\beta$-inverted
$\mathcal{F}$), \textsl{Claim B} shows that the homotopy groups of
$L_{K(1)}\prod_{v\in S}(K/p)(\mathcal{O}_{v})$ in the ordinary stable homotopy
category agree with the homotopy groups $\pi_{n,0}$ of $\mathcal{F}[\beta
^{-1}]$ in $\operatorname*{SH}_{\operatorname*{Nis}}$. Thus, the spectral
sequence of Eq. \ref{l_kx_1f}, along with the new limit term of Eq.
\ref{lmips8}, becomes
\[
E_{1}^{a,b}[\beta^{-1}]:=\prod_{v\in S}H^{2a-b}\left(  \mathcal{O}%
_{T},\mathbf{Z}/p\right)  \Rightarrow\pi_{b-a}(\mathcal{F}[\beta^{-1}%
])\cong\pi_{b-a}L_{K(1)}\left(  \prod_{v\in S}(K/p)(\mathcal{O}_{v})\right)
\text{.}%
\]
This shows that the natural morphism%
\begin{equation}
L_{K(1)}\left(
{\textstyle\prod\nolimits_{v\in S\setminus S^{\prime}}}
(K/p)(\mathcal{O}_{v})\right)  \longrightarrow%
{\textstyle\prod\nolimits_{v\in S\setminus S^{\prime}}}
L_{K(1)}(K/p)(\mathcal{O}_{v}) \label{l_ky_1c}%
\end{equation}
induces an isomorphism on all homotopy groups and therefore is an
equivalence.\newline(Step 5) In Step 2 we had assumed that $\zeta_{p}\in F$.
This limitation can be removed in the standard way: The $K$-groups have finite
transfers and the degree of the field extension $F(\zeta_{p})/F$ is a divisor
of $p-1$, and in particular coprime to $p$. As all homotopy groups which occur
in our claim are $p$-primary groups, the standard transfer argument allows us
to reduce to proving our claim for $F(\zeta_{p})$. For details, see Jardine
\cite[Ch. 7, Cor. 7.29]{MR1437604} and use this for each factor in the
infinite product. Secondly, \ref{l_ky_1c} only proves our claim for $K/p$.
However, since both sides of Eq. \ref{lwui3} are $p$-complete spectra, one can
test whether the map is an equivalence by proving that it is an equivalence
mod $p$. This proves the lemma.
\end{proof}

\begin{acknowledgement}
I thank Markus Spitzweck for several e-mails helping me with some ideas and
technical hurdles. This work would not exist without the profound support by
M. Groechenig, K. R\"{u}lling and M. Wendt. I\ deeply appreciate their help.
Conversations with K. Aoki, D. Clausen, A. Krause, H. Krause, M. Spitzweck and
M. Wendt have repeatedly inspired and renewed my interest in this project.
\end{acknowledgement}

%

{\today}%

\bibliographystyle{amsalpha}
\bibliography{ollinewbib}

\end{document}